\documentclass[leqn,10pt]{amsart}
\usepackage[utf8]{inputenc}
\usepackage{a4}
\usepackage{amsmath,amssymb,amscd, bbm}
\usepackage{enumerate}
\usepackage[mathscr]{eucal}
\usepackage[all]{xy}
\usepackage{hyperref}

\usepackage{pgflibrarysnakes}
\usepackage{pgflibraryarrows}
\usepackage{tikz}
\usetikzlibrary{decorations.pathmorphing, patterns,mindmap,trees}

\newtheorem{thm}{Theorem}[section]
\newtheorem{lemma}[thm]{Lemma}

\newtheorem{cor}[thm]{Corollary}

\theoremstyle{definition}
\newtheorem{defi}[thm]{Definition}
\newtheorem{exa}[thm]{Example}
\newtheorem{exas}[thm]{Examples}

\theoremstyle{remark}
\newtheorem{rem}[thm]{Remark}
\newtheorem{rems}[thm]{Remarks}
\numberwithin{equation}{section}

\newcommand{\nocontentsline}[3]{}
\newcommand{\tocless}[2]{\bgroup\let\addcontentsline=\nocontentsline#1{#2}\egroup}

\newcommand{\extend}[1]{#1}
\newcommand{\Jei}{U}
\newcommand{\gebiet}{O}
\newcommand{\subord}{\precsim}
\newcommand{\strongeq}{\approx}
\allowdisplaybreaks

\newcommand{\ELLONE} {$\ell_1$\,}

\newcommand{\embeds}{\hookrightarrow}

\usepackage{stmaryrd}

\newcommand{\Bignorm}[1]{\Bigl\| #1 \Bigr\|}

\newcommand {\sfrac}[2] { {{}^{#1}\!\!/\!{}_{#2}}}
\newcommand {\pihalbe}{\sfrac{\pi}2}
\newcommand {\einhalb}{\sfrac{1}2}
\newcommand{\emdf}{\em}

\newcommand{\Id}{\mathrm{I}}

\newcommand{\vanish}[1]{\relax}

\newcommand{\suchthat}{\,\,|\,\,}


\newcommand{\scrF}{\mathscr{F}}

\DeclareMathOperator{\calE}{\mathcal{E}}
\DeclareMathOperator{\calF}{\mathcal{F}}

\newcommand{\N}{\mathbb{N}}
\newcommand{\Z}{\mathbb{Z}}

\newcommand{\R}{\mathbb{R}}
\newcommand{\C}{\mathbb{C}}

\newcommand{\K}{\mathbb{K}}

\newcommand{\ud}{\mathrm{d}}
\newcommand{\ue}{\mathrm{e}}
\newcommand{\ui}{\mathrm{i}}

\newcommand{\We}{\mathrm{W}}
\newcommand{\Ha}{\mathrm{H}}


\newcommand{\vphi}{\varphi}
\newcommand{\veps}{\varepsilon}


\newcommand{\sector}[1]{\mathrm{S}_{#1}}

\newcommand{\strip}[1]{\mathrm{St}_{#1}}

\newcommand{\torus}{\mathbb{T}}


\newcommand{\mfrac}[2]{#1/#2}

\newcommand{\Laplace}{\Delta}

\newcommand{\restrict}{\big|}

\newcommand{\Sum}[2][\relax]{%
 \ifx#1\relax \sideset{}{_{#2}}\sum 
 \else \sideset{}{^{#1}_{#2}}\sum
 \fi}

\newcommand{\car}{\mathbf{1}}
\DeclareMathOperator{\re}{Re}
\DeclareMathOperator{\im}{Im}
\newcommand{\konj}[1]{\overline{#1}}

\newcommand{\abs}[1]{\left| #1 \right|}
\renewcommand{\abs}[1]{\left\vert#1\right\vert}




\newcommand{\Ce}{\mathrm{C}}

\newcommand{\ce}{\mathrm{c}}

\newcommand{\Ell}[1]{\mathrm{L}_{#1}}


\newcommand{\resolv}{\varrho}

%
%
\newcommand{\ohne}{\setminus}

\newcommand{\leer}{\emptyset}


\newcommand{\dann}{\rightarrow}

\newcommand{\Iff}{\Leftrightarrow}



\newcommand{\nach}{\circ}
\newcommand{\pfeil}{\longrightarrow}
\newcommand{\tpfeil}{\longmapsto}
\DeclareMathOperator{\dom}{dom}
\DeclareMathOperator{\ran}{ran}

%
%

\newcommand{\cls}[1]{\overline{#1}}

\newcommand{\rand}{\partial}
\DeclareMathOperator{\supp}{supp}


\DeclareMathOperator{\diag}{diag}

\newcommand{\spann}{\mathrm{span}}
\DeclareMathOperator{\tr}{tr}

\newcommand{\tensor}{\otimes}

\newcommand{\absconv}{\mathrm{absconv}}
\newcommand{\cabsconv}{\overline{\absconv}}


\DeclareMathOperator{\Lin}{\mathcal{L}}

\newcommand{\norm}[2][\relax]{%
   \ifx#1\relax \ensuremath{\left\Vert#2\right\Vert}
   \else \ensuremath{\left\Vert#2\right\Vert_{#1}}
   \fi}

\makeatletter
\newcommand{\sprod}[2]{\ensuremath{%
  \setbox0=\hbox{\ensuremath{#2}}
  \dimen@\ht0
  \advance\dimen@ by \dp0
  \left(\left.#1\rule[-\dp0]{0pt}{\dimen@}\,\right|#2\hspace{1pt}\right)}}
 \makeatother
\newcommand{\dprod}[2]{\ensuremath{\left<#1,#2\right>}}
\newcommand{\eprod}[3][\relax]{%
 \ifx#1\relax #2\diamond #3 
 \else #2\diamond_{#1} #3
 \fi}



%
%

\newcommand{\Fourier}{\mathcal{F}}
\newcommand{\fourier}[1]{\widehat{#1}}




%
%

\newcounter{aufzi}
\newenvironment{aufzi}{\begin{list}{ {\upshape\alph{aufzi})}}{
        \usecounter{aufzi}
        \topsep1ex
        \parsep0cm
        \itemsep1ex
        \leftmargin0.8cm
        \labelwidth0.5cm
        \labelsep0.3cm
}}
{\end{list}}

\newcounter{aufzii}
\newenvironment{aufzii}{\begin{list}{\hfill {\upshape 
(\roman{aufzii})}}{
        \usecounter{aufzii}
        \topsep1ex
        \parsep0cm
        \itemsep1ex
        \leftmargin0.8cm
        \labelwidth0.5cm
        \labelsep0.3cm
         \itemindent0cm
}}
{\end{list}}

\newcounter{aufziii}
\newenvironment{aufziii}{\begin{list}{ {\upshape\arabic{aufziii})}}{
        \usecounter{aufziii}
        \topsep1ex
        \parsep0cm
        \itemsep1ex
        \leftmargin0.8cm
        \labelwidth0.5cm
        \labelsep0.3cm
}}
{\end{list}}

 

\let\upi\pi

\def\tp{\mathrm{t}}
\def\ct{\mathrm{c}}
\def\bbD{\mathbb D}
\def\elementary{\calE}
\def\De{\mathrm{D}}

\def\Exp{\mathbb{E}}
\def\Pe{\mathrm{P}}
\def\co{\mathrm{c}_0}
\def\ui{\mathrm{i}}
\def\Nuc{\mathrm{Nu}}
\def\nuc{\mathrm{nu}}
\def\dann{\,\Rightarrow\,}
\def\LeMerdy{Le$\,$Merdy}

\date{\today}

\begin{document}

\title[Square function estimates and functional calculi]{Square
function estimates and functional calculi}

\author[Bernhard H. Haak]{Bernhard H. Haak}
\address{%
Institut de Math\'ematiques de Bordeaux\\Universit\'e Bordeaux 1\\351, cours de la Lib\'eration\\33405 Talence CEDEX\\FRANCE}
\email{bernhard.haak@math.u-bordeaux1.fr}

\author{Markus Haase}
\address{Delft Institute of Applied Mathematics, Delft University of Technology,
P.O. Box 5031, 2600 GA Delft, The Netherlands}
\email{m.h.a.haase@tudelft.nl}

\subjclass{Primary 47A60; Secondary 34G10 47D06 47N20}
\renewcommand{\subjclassname}{\textup{2000} Mathematics Subject
    Classification}

\thanks{The first mentioned author kindly acknowledges the support from the ANR projects
 ANR-09-BLAN-0058-01 and ANR-12-BS01-0013-02}
\date{\today}

\dedicatory{To the memory of Nigel Kalton (1946--2010)}

\begin{abstract}
In this paper the notion of an abstract square function (estimate)
is introduced as an operator $X \to \gamma(H;Y)$, where
$X,Y$ are Banach spaces, $H$ is a Hilbert space, and
$\gamma(H;Y)$ is the space
of $\gamma$-radonifying operators. By the seminal work of Kalton
and Weis, this definition is a coherent generalisation of the
classical notion of square function appearing in the theory of singular 
integrals. Given an abstract functional calculus $(\calE, \calF, \Phi)$
on a Banach space $X$, where $\calF(\gebiet)$ is an algebra of scalar-valued
functions on a set $\gebiet$, we define a square function $\Phi_\gamma(f)$ 
for certain {\em $H$-valued} functions $f$ on $\gebiet$.
The assignment $f \mapsto \Phi_\gamma(f)$
then becomes a {\em vectorial} functional calculus, and a 
``square function estimate'' for $f$ simply means 
the boundedness of $\Phi_\gamma(f)$. In this view, all
results linking  square function estimates
with the boundedness of a certain (usually the $\Ha^\infty$-) functional 
calculus simply assert that certain square function estimates
imply other square function estimates. In the present paper
several results of this type are proved in an abstract setting, 
based on the principles of {\em subordination}, 
{\em integral representation}, and a new  
boundedness concept for subsets of Hilbert spaces,  the so-called
{\em \ELLONE-frame-boundedness}.
These abstract
results are then applied to the $\Ha^\infty$-calculus for
sectorial and strip type operators. For example, it is proved
that any strip type operator with bounded scalar 
$\Ha^\infty$-calculus on a strip over a Banach space with finite
cotype has a bounded vectorial $\Ha^\infty$-calculus on every larger
strip. 
\end{abstract}

\maketitle

\tableofcontents

\section{Introduction}\label{s.intro}

Square functions and square function estimates are a classical topic and
a central tool in harmonic analysis, in particular in the so-called
Littlewood--Paley theory. Their history can be traced back to almost a century ago,
see \cite{Ste1958} for a historical account and \cite{Ste1961a,Ste1961b,Stein:HA}
for the development from  the 1960s on.  
One of the classical instances of a square function is 
\begin{equation}  \label{eq:square-function}
  (S_\phi f)(x) := \Bigl( \int_0^\infty \bigl| (\phi_t \ast f)(x) \bigr|^2 \,
\tfrac{\ud{t}}t\Bigr)^\einhalb
\end{equation}
where $\phi \in \Ell{2}(\R^d)$ decays reasonably fast at infinity
and $\phi_t(x) = t^{-d} \phi(x/t)$ for $x\in \R^d$ and $t > 0$. A
``square function estimate'' then reads
\begin{equation}  \label{eq:square-function-est}
  \norm{S_\phi f}_{\Ell{p}} = 
\Big\| \Bigl( \int_0^\infty \bigl| (\phi_t \ast f)(x) \bigr|^2 \,\tfrac{\ud{t}}t\Bigr)^\einhalb
\Big\|_{\Ell{p}} \lesssim \norm{f}_{\Ell{p}}.
\end{equation}
In many situations, $\phi$ is radial. Then its Fourier transform
is radial, too, and can be written as $\fourier{\phi}(\xi) = \psi(\abs{\xi})$
for $\xi \in \R^d$. Hence,
\[ 
\phi_t * f  = \scrF^{-1}(\widehat \phi(t \xi)  \cdot \widehat{f\,}(\xi) ) 
           = \scrF^{-1}(\widehat{\psi}(|t \xi|) \cdot \widehat{f\,}(\xi)) =  
\psi(t\,\sqrt{- \Laplace})\, f,
\]
where we employ the functional calculus for the Laplace, or better, the Poisson operator.
Hence, the abstract form of \eqref{eq:square-function-est} is
\begin{equation}  \label{eq:sfe-abs} 
\Big\| \Bigl( \int_0^\infty \abs{ \psi(tA)f}^2 \,\tfrac{\ud{t}}t\Bigr)^\einhalb
\Big\|_{\Ell{p}} \lesssim \norm{f}_{\Ell{p}},
\end{equation}
where $A := \sqrt{-\Laplace}$; and 
taking $\psi(z) := z \ue^{-z}$ we recover the classical Littlewood-Paley $g$-function.

From the mid 1980's on, the theory of functional calculus for sectorial
operators was developed by several people. Building on the seminal
works \cite{McI1986} and \cite{Cowling} and inspired by \cite{BoyDeL1992} 
Cowling, Doust, McIntosh and Yagi in \cite{CowDouMcIYag1996} 
established a strong link between
the boundedness of the $\Ha^\infty$-calculus for sectorial operators $A$ on 
(closed subspaces of) $\Ell{p}$-spaces 
and square functions of  the form \eqref{eq:sfe-abs}. 
Kalton and Weis in an unpublished and unfortunately never completed
manuscript \cite{KalWei2004} then showed how one could
pass from $\Ell{p}$-spaces to  general Banach spaces.
Their manuscript subsequently circulated and inspired
a considerable amount of research, e.g. 
\cite{BatMubVor2012,FroWei2006,HaaNeeVer2007,HaaKun2006,HytNeePor2008,%
Hyt2007,KaiWei2008,KalKunWei2006,%
KalNeeVerWei2008,KriLeM2010,%
LeM2004,LeMXu2012,NeeWei2005,%
NeeWei2006,NeeVerWei2007,Haa2011}.
It is also the starting point
of the present article.

The main novelty in Kalton and Weis' approach from \cite{KalWei2004} 
was to employ the class of so-called $\gamma$-radonifying operators in order
to define square functions. This step 
is motivated by  two observations. On the one hand, 
\[
\Bigl(\int_0^\infty \abs{ \psi(tA)f}^2 \,\tfrac{\ud{t}}t\Bigr)^\einhalb
  = \Bigl( \sum_{k=1}^\infty  \abs{[Tf]e_n}^2 \Bigr)^\einhalb
\]
where $(e_n)_{n\in \N}$ is an orthonormal basis of 
$H := \Ell{2}(\R_+;\sfrac{\ud{t}}{t})$ and 
$Tf : H \to X := \Ell{p}(\R^d)$ is the operator defined by
\[ [Tf]\,h := \int_{^0}^\infty h(t)\psi(tA)f\, \tfrac{\ud{t}}{t} \qquad (h \in H).
\]
(This holds true in each Banach lattice $X$, see
Section~\ref{ss:banach-lattices} below.) The second, more decisive
step, is based on the norm equivalence
\[   \Big\| \Bigl( {\sum}_k \abs{x_k}^2 \Bigr)^\einhalb \Big\|_X
 \sim  
\Bigl(    \Exp \big\|{\sum}_k \gamma_k  \tensor x_k\big\|_X^2 \Bigr)^\einhalb 
\]
where $(\gamma_k)_k$ is an independent sequence of standard Gaussian
random variables.  (This equivalence holds true in any Banach lattice $X$
of finite cotype, see Theorem~\ref{acsf.t.blfc-discrete}.)  Hence, the
square function estimate \eqref{eq:sfe-abs} can be reformulated as
\begin{equation}  \label{eq:sfe-abs-gamma} 
\Bigl(    \Exp \big\|{\sum}_k \gamma_k  \tensor [Tf]e_k\big\|_X^2 \Bigr)^\einhalb 
 \lesssim \norm{f}_X
\end{equation}
with $Tf$ as above. In other words, $Tf \in \gamma(H;X)$,  the space of
{\em $\gamma$-radonifying} operators, and $\norm{Tf}_{\gamma} \lesssim
\norm{f}_X$ (see Section~\ref{ss:banach-lattices} below).  In this
formulation of the square function estimate the 
lattice structure of $X= \Ell{p}$ does not appear any more
and hence it can be used
to define square function estimates over general Banach spaces $X$.

In the present paper we follow this approach, but transcend it in two points.
The first, minor, point is that we propose a {\em definition}
of a general square function as any linear operator
\[ T: \dom(T) \to \gamma(H;Y)
\]
where $X,Y$ are Banach spaces and $\dom(T)\subseteq X$ is a linear subspace.
A {\em square function estimate} for the square function $T$ then just
asserts its boundedness 
\[ \norm{Tx}_{\gamma(H;Y)} \lesssim \norm{x}_X.
\] 
If we admit finite dimensional Hilbert spaces $H$ here,
any (bounded) operator can be viewed as a trivial
square function (estimate).

The second and more important point of our approach is that for the
square functions of functional calculus type as before, we want to
systematise the dependence on the function $\psi$, e.g., in order
to cover square functions associated with expressions
of the form
\[ \psi(t,A) \quad \text{instead of}\quad \psi(tA).
\]
(We work with the functional calculus for sectorial operators for the time being,
as did Kalton and Weis).
Although we do not claim that this could not be done by the conservative
(Kalton-Weis) approach, we find it more natural to follow
the  {\em basic ideology of functional calculus}, i.e.,  to replace
working with operators by working with functions. This leads to
a re-reading of the operator $Tf$ from above as
\begin{equation}\label{intro:eq:twist}
 [Tf]\,h = \int_0^\infty h(t) \psi(tA)f \tfrac{\ud{t}}{t} = 
\Bigl( \int_0^\infty h(t) \psi(tz) \tfrac{\ud{t}}{t}\Bigr)(A)f.
\end{equation} 
(For good functions $\psi$ this is possible by the definition 
of $\psi(tA)$ as a Cauchy integral and Fubini's theorem.) 
The function $\psi(t,z) = \psi(tz)$ can now be viewed as a function of
two parameters, but since only integration with respect to $t$ is
performed here, it should better be viewed as a mapping
\[ \Psi:  \sector{\omega} \to H = 
\Ell{2}(\R_+;\sfrac{\ud{t}}{t}),\qquad 
\Psi(z) = (t  \mapsto \psi(t,z)) 
\]
The square function $T$ from above then appears as resulting
from inserting $A$ into the $H$-valued $\Ha^\infty$-function $\Psi$,
by defining
\[ \Psi(A)x := h \mapsto (z \mapsto \dprod{h}{\Psi(z)})(A)x : H \to X
\]
In this way,  the problem of a square function estimate turns into the problem
of the boundedness of the operator $\Psi(A)$. As such,
it is recognised as just another instance of the  central problem of functional
calculus, namely whether applying an unbounded functional calculus to a certain
function leads to a bounded operator or not. The only 
difference now is that the functions we are considering have to be
$H$-valued, and one should think of a functional calculus
as a module rather than an algebra homomorphism.

In this view, the classical line of research on the connection of square function estimates
and the boundedness of a certain (usually the $\Ha^\infty$-) functional calculus
changes its face, and the so far differently looking theorems become just 
instances of one type: namely how certain square function estimates
imply other square function estimates. 

\smallskip

In the present paper we have analysed
theorems of this type and could reduce them to {\em three basic principles}.
The first one is {\em subordination}, by which we mean 
that the square functions are connected via a bounded  operator
between  the underlying Hilbert spaces. 
The second is an abstract version of
how square function estimates can be proved via 
{\em integral representation} theorems. Here a deep result from the
theory of $\gamma$-radonifying operators plays a central role. 
The third one 
is based on a new (natural, but still rather enigmatic)
boundedness concept for subsets of Hilbert spaces, the so-called
{\em \ELLONE-frame-boundedness.} Basically, it asserts that 
if the $H$-valued function  $\Psi$ as above has \ELLONE-frame-bounded
range, then the associated square function $\Psi(A)$ 
is bounded (Theorem~\ref{lob.t.l1-sfe}).

The abstract results are then applied to operators of 
strip type (and hence to sectorial operators via the 
$\exp/\log$-correspondence). One of the main results here, Theorem 
\ref{exas.t:bdd-vectorial-calc}, states
that a densely defined operator (on a Banach space with finite cotype) 
with a bounded scalar $\Ha^\infty$-calculus
on a strip has a bounded vectorial $\Ha^\infty$-calculus on each larger
strip. (This result has been obtained independently of us by 
\LeMerdy\  in  \cite{LeMerdy:sqf-Ritt}.) Our proof is  based 
on a simple representation formula for holomorphic functions
on strips, see Section~\ref{ss:cauchy-gauss-repres}.

Subsequently we consider several other integral representation formulae for
analytic functions on strips 
and interpret their application in the light of our theory \
(Sections~\ref{ss:poisson-repres}--\ref{ss:sing-int-repres}).

\smallskip

By performing the twist in \eqref{intro:eq:twist}, our theory of square function
estimates becomes just a natural part of functional calculus theory. 
The technicalities involving integrals 
over vector-valued functions (like $t\mapsto \psi(tA)f$ as above)
are avoided. As a consequence (and maybe surprisingly)
the concepts of $\gamma$- or $R$-boundedness
do not appear in the present paper. We shall devote a 
future work to the detailed study of the role these concepts play
for square function estimates, and how it can be incorporated into the general
theory we develop here.

\bigskip
\noindent
{\em Notation and Terminology}\\
Banach spaces are denoted by $X,Y,Z$ and understood to be {\em complex}
unless otherwise noted.
For a closed linear operator $A$ on a complex Banach space $X$ we
denote by $\dom(A),$ $\ran(A)$, $\ker(A)$, $\sigma(A)$ and $\resolv(A)$  the
{\em domain}, the {\em range}, the {\em kernel}, the {\em spectrum} 
 and the {\em resolvent set} of $A$, respectively.
The norm-closure of the range is
written as $\cls{\ran}(A)$. The space of bounded linear operators
on $X$ is denoted by $\Lin(X)$.  For two possibly unbounded linear
operators $A,B$ on $X$ their {\em product} $AB$ is defined on
its natural domain $\dom(AB) := \{ x \in \dom(B) \suchthat Bx \in \dom(A)\}$.
An inclusion $A\subseteq B$ denotes inclusion of graphs, i.e., it means
that $B$ extends $A$. 

The inner product of two elements $f,g$ of a Hilbert space $H$ 
is generically written as $\sprod{f}{g}$ or $\sprod{f}{g}_H$. 
The duality between
a Banach space $X$ and its dual space $X'$ is denoted by 
$\dprod{\cdot}{\cdot}$ or $\dprod{\cdot}{\cdot}_{X, X'}$. 
We usually do {\em not} identify a Hilbert space $H$ with its dual space 
$H'$, except in the case that $H$ is given concretely as
$H= \Ell{2}(\Omega)$, in which case we identify $H'$ with  $H$
via the canonical duality. 

For an open subset $\gebiet \subseteq \C$ of the complex plane we
let $\Ha^\infty(\gebiet)$ be the algebra of bounded holomorphic functions
on $\gebiet$ with norm $\norm{f}_{\Ha^\infty} := \sup\{ \abs{f(z)} \suchthat 
z\in \gebiet\}$.

Unless explicitly noted  otherwise, the real line
$\R$ carries Lebesgue measure $\ud{t}$, and the
set   $(0, \infty)$ of positive reals carries the measure 
$\sfrac{\ud{t}}{t}$. We abbreviate
\[ \Ell{p}^*(0, \infty) := \Ell{p}((0, \infty); \sfrac{\ud{t}}{t}) \qquad 
(0 < p \le \infty).
\]
The {\em Fourier transform} of a function $f\in \Ell{1}(\R)$ is
\[ \Fourier(f)(t) = \fourier{f}(t) = \int_\R f(s) \ue^{\ui s t} \, \ud{s}
\qquad (t\in \R).
\]
The {\em inverse Fourier transform} is then given  by the formula
\[ (\Fourier^{-1}g)(s) = g^\vee(s) = \frac{1}{2\pi} \int_\R g(t)
\ue^{\ui s t}\, \ud{t} \qquad(s\in \R) 
\]
for $g \in \Ell{1}(\R)$.

\section{$\gamma$-Summing and $\gamma$-radonifying operators}
\label{s.gamma-radonifying} 

In this chapter we review and develop the theory of $\gamma$-summing
and $\gamma$-radonifying operators to an extent that serves our
purposes. At many places we shall simply refer to the excellent recent
article \cite{Nee2010} of van Neerven that contains also historical
remarks and an extensive bibliography on the topic. We include proofs
either for convenience or when we deviate from or go beyond van
Neerven's work.

Essentially, all presented results in this chapter are from or
inspired by the unpublished and actually never completed preprint
\cite{KalWei2004} by Kalton and Weis. Our contribution consists mostly
in presenting the results with full and concise proofs, and we give
full credits to these authors for the results themselves.

However, we want to stress the fact that whereas the two mentioned
works deal exclusively with {\em real} Banach spaces, we develop the
theory for complex spaces. The reason is that we are interested in
functional calculus questions, where contour integrals are ubiquitous.
For the theory we need the notion of a {\em complex} standard Gaussian
random variable, by which we mean a random variable $\gamma$ of the
form
\[ 
   \gamma = \gamma_r + \ui\,\gamma_i
\]
where $\gamma_r$ and $\gamma_i$ are independent standard {\em real}
Gaussians. Basically, the whole theory for real spaces carries over to
complex spaces when real Gaussians are replaced by complex ones.

\subsection{Definition and the ideal property}\label{gamma.ss.def}

Let $H$ be a Hilbert space and $X$ a Banach space over the scalar
field $\K\in \{\R,\C\}$. A linear operator $T: H \to X$ is called {\em
  $\gamma$-summing} if
\[ 
\norm{T}_{\gamma} := \sup_F \, \Exp\left(\norm{\Sum{e\in F} \gamma_e
    \tensor Te}_X^2\right)^{\einhalb} < \infty,
\]
where the supremum is taken over all finite orthonormal systems $F
\subseteq H$ and $(\gamma_e)_{e\in F}$ is an independent collection of
$\K$-valued standard Gaussian random variables on some probability
space. We let
\[ 
   \gamma_\infty(H;X) := \{ T: H \pfeil X \suchthat \text{$T$ is
  $\gamma$-summing}\}
\]
the space of $\gamma$-summing operators of $H$ into $X$. It is clear
that each $\gamma$-summing operator is bounded with $\norm{T} \le
\norm{T}_{\gamma}$.

\begin{rem}[Real vs.~Complex]\label{gamma.r.rc1}
  In the case $\K = \C$ we can view the complex spaces $H,X$ as real
  spaces, and we shall indicate this by writing $H_r, X_r$.  Then
  $H_r$ is a real Hilbert space with respect to the scalar product
  $\sprod{f}{g}_r := \re\sprod{f}{g}$. For $\C$-linear $T: H \to X$ we
  now have two definitions of $\norm{T}_\gamma$, one using
  $\dprod{\cdot}{\cdot}_r$-orthonormal systems (called $\R$-ons's for
  short) and real Gaussians, and the other using $\C$-orthonormal
  systems and complex Gaussians. We claim that both definitions lead
  to the same quantity. In particular, one has
\[ 
   \gamma_\infty(H;X) = \gamma_\infty(H_r;X_r)\cap \Lin(H;X).
\]
In order to see this we note first that if $\{ e_1, \dots, e_d\}$ is a
$\C$-orthonormal system, then $\{ e_1,\dots, e_d, \ui e_1, \dots, \ui
e_d\}$ is an $\R$-ons. Hence, if $\tilde{\gamma}_j = \gamma_j +
\ui\gamma_j'$ are independent complex standard Gaussians,
\[ 
    \Exp\norm{ {\sum}_{j} \tilde{\gamma}_j Te_j}^2
=   \Exp\norm{ {\sum}_{j} \gamma_j T(e_j) + \gamma_j' T(\ui e_j)}^2
\le  \norm{T}_{\gamma, \R}^2
\]
with the obvious meaning of $\norm{T}_{\gamma,\R}$.  This yields
$\norm{T}_{\gamma,\C} \le \norm{T}_{\gamma,\R}$. On the other hand,
let $\{f_1, \dots, f_n\}$ be an $\R$-ons and let $\gamma_1, \dots,
\gamma_n$ be real standard Gaussians. Pick a $\C$-ons $\{e_1, \dots,
e_n\}$ such that $f_k \in \spann_\C\{ e_1, \dots, e_n\}$ for each $k$.
Then we can find $\lambda_{kj}= a_{kj} + \ui b_{kj}$ such that
\[ 
    f_k = \sum_{j} (a_{kj} + \ui b_{kj})e_j = \sum_j a_{kj} e_j +
b_{kj}(\ui e_j) \qquad (1\le k \le n).
\]
Define the real matrices $A := (a_{kj})_{k,j}$, 
$B := (b_{kj})_{k,j}$ and  $C := [A\, B]$, as well as
$g_j := e_j$ for $1\le j \le n$ and $g_j := \ui e_j$ for $n< j \le 2n$. 
Then, by the contraction
principle (Theorem~\ref{gamma.t.contr}),
\begin{align*}
  \Exp&\norm{ {\sum}_{k=1}^n \gamma_k Tf_k}^2 = \Exp\norm{
    {\sum}_{k=1}^n \gamma_k a_{kj} T(f_k) + b_{kj} T(\ui f_k)}^2 \\ &
  = \Exp\norm{ {\sum}_{k=1}^n {\sum}_{j=1}^{2n} \gamma_k c_{kj}
    Tg_j}^2 \le \norm{C}^2 \Exp\norm{ {\sum}_{j=1}^{2n} \gamma_j
    Tg_j}^2 \\ & = \norm{C}^2 \Exp\norm{ {\sum}_{j=1}^{n} (\gamma_j +
    \ui \gamma_{n+j}) Te_j}^2 \le \norm{C}^2 \norm{T}_{\gamma,\C}^2.
\end{align*}
But $c_{kj} = \dprod{f_k}{g_j}_r$ and hence $\norm{C}\le 1$. This
yields $\norm{T}_{\gamma,\R} \le \norm{T}_{\gamma,\C}$ and concludes
the proof of the claim.
\end{rem}

The following approximation property is   \cite[Prop.~3.18]{Nee2010}.

\begin{lemma}[$\gamma$-Fatou I]\label{gamma.l.fatou1}
  Let $(T_n)_{n\ge 1}$ be a bounded sequence in $\gamma_\infty(H;X)$
  such that $T_n \to T\in \Lin(H;X)$ in the weak operator
  topology. Then $T \in \gamma_\infty(H;X)$ and
\[
   \norm{T}_\gamma \le \liminf_{n \to \infty} \norm{T_n}_{\gamma}.
\]
\end{lemma}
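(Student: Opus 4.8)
The statement is a Fatou-type lemma: weak operator convergence of a bounded sequence $(T_n)$ in $\gamma_\infty(H;X)$ forces the weak operator limit $T$ to be $\gamma$-summing with norm controlled by $\liminf_n \norm{T_n}_\gamma$. The plan is to go directly to the definition of $\norm{\cdot}_\gamma$ as a supremum over finite orthonormal systems, and to use that for a \emph{fixed} finite system $F = \{e_1,\dots,e_d\} \subseteq H$ and fixed independent standard Gaussians $(\gamma_e)_{e\in F}$ on a fixed probability space, the map $(S_1,\dots,S_d) \mapsto \Exp\norm{\sum_{j} \gamma_{e_j}\tensor S_j}_X^2$ is a continuous (even lower semicontinuous suffices) function of the finitely many vectors $S_1,\dots,S_d \in X$ in the norm topology, and hence also lower semicontinuous when the $S_j$ range over $X$ equipped with a weaker topology. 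The point is that evaluating on a fixed finite $F$ only involves $T e_1, \dots, T e_d$, i.e. finitely many vectors of $X$.

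First I would fix an arbitrary finite orthonormal system $F \subseteq H$ and fix an independent family $(\gamma_e)_{e\in F}$ of $\K$-valued standard Gaussians. Since $T_n \to T$ in the weak operator topology, $T_n e \to Te$ weakly in $X$ for each $e \in F$; as $F$ is finite this gives weak convergence of the whole tuple. Now for each realization $\omega$, $\sum_{e\in F}\gamma_e(\omega)\, T_n e \to \sum_{e\in F}\gamma_e(\omega)\, T e$ weakly in $X$, so by weak lower semicontinuity of the norm, $\norm{\sum_{e\in F}\gamma_e(\omega)\, Te}_X \le \liminf_n \norm{\sum_{e\in F}\gamma_e(\omega)\, T_n e}_X$ pointwise in $\omega$. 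Applying Fatou's lemma (the classical one, on the probability space) to the squares,
\[
\Exp\norm{\Sum{e\in F}\gamma_e \tensor Te}_X^2 \le \liminf_{n\to\infty} \Exp\norm{\Sum{e\in F}\gamma_e\tensor T_n e}_X^2 \le \liminf_{n\to\infty} \norm{T_n}_\gamma^2.
\]
Taking the supremum over all finite orthonormal systems $F$ on the left yields $\norm{T}_\gamma^2 \le \liminf_n \norm{T_n}_\gamma^2 < \infty$, which is exactly the claim, including $T \in \gamma_\infty(H;X)$.

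The only genuine subtlety — the step I'd watch most carefully — is the interchange of the supremum over $F$ with the $\liminf$: we must bound a \emph{fixed} $F$'s expectation by $\liminf_n \norm{T_n}_\gamma^2$ first (which is fine, since $\Exp\norm{\sum_{e\in F}\gamma_e\tensor T_n e}_X^2 \le \norm{T_n}_\gamma^2$ for every $n$), and only afterwards take $\sup_F$; doing it in the other order would be unjustified. One should also note in passing that since $T$ is assumed to lie in $\Lin(H;X)$, no separate argument for boundedness of $T$ is needed; the hypothesis supplies it, and the estimate upgrades it to $\gamma$-summability. In the complex case one simply works with complex Gaussians throughout, as sanctioned by Remark~\ref{gamma.r.rc1}; nothing changes in the argument.
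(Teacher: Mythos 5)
Your proof is correct, and it is the standard argument: fix a finite orthonormal system, use weak convergence of the finitely many vectors $T_ne$, weak lower semicontinuity of the norm pointwise in $\omega$, the classical Fatou lemma on the probability space, and only then take the supremum over $F$ — exactly the order of operations you flag as the delicate point. The paper itself gives no proof but simply cites \cite[Prop.~3.18]{Nee2010}; your argument is precisely the one behind that reference, so there is nothing to compare beyond noting that you have supplied the details the paper delegates to the literature.
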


It is easy to see that $\gamma_\infty(H;X)$ contains all finite rank
operators.  The closure in $\gamma_\infty(H;X)$ of the space of finite
rank operators is denoted by $\gamma(H;X)$, and its elements $T \in
\gamma(H;X)$ are called {\em $\gamma$-radonifying}.

The following property is one of the cornerstones of the theory.

\begin{thm}[Ideal Property]\label{gamma.t.ideal}
  Let $Y$ be another Banach space and $K$ another Hilbert space, let
  $L : X \to Y$ and $R: K \to H$ be bounded linear operators, and let
  $T \in \gamma_{\infty}(H;X)$. Then
\[  
LTR \in \gamma_{\infty}(K;Y)\quad \text{and}\quad \quad
\norm{LTR}_{\gamma} \le \norm{L}_{\Lin(X;Y)} \norm{T}_\gamma
\norm{R}_{\Lin(K;H)}.
\]
If $T \in \gamma(H;X)$, then $LTR \in \gamma(K;Y)$.
\end{thm}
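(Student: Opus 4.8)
The plan is to prove the norm estimate for $\gamma$-summing operators first and then deduce the $\gamma$-radonifying statement by a density argument. For the norm inequality, fix a finite orthonormal system $F \subseteq K$ and an independent family $(\gamma_e)_{e\in F}$ of standard Gaussians. The key observation is that $R$ maps $F$ to a finite family $(Re)_{e\in F}$ in $H$ which need not be orthonormal, so one cannot directly plug it into the definition of $\norm{T}_\gamma$; instead I would perform a Gram--Schmidt type reduction. Concretely, let $S := \spann\{Re \suchthat e \in F\} \subseteq H$, pick an orthonormal basis $(f_j)_j$ of $S$, and write $Re = \sum_j c_{ej} f_j$ with the matrix $C = (c_{ej})$. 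Then
\[
\Exp\Bignorm{\Sum{e\in F} \gamma_e \tensor LTRe}_Y^2
= \Exp\Bignorm{\Sum{e\in F}\Sum{j} \gamma_e c_{ej}\, LTf_j}_Y^2
\le \norm{L}^2 \norm{C}^2\, \Exp\Bignorm{\Sum{j}\gamma_j\tensor Tf_j}_Y^2
\le \norm{L}^2\norm{C}^2\norm{T}_\gamma^2,
\]
where the first inequality uses boundedness of $L$ together with the contraction principle (Theorem~\ref{gamma.t.contr}) exactly as in Remark~\ref{gamma.r.rc1}, and the second uses that $(f_j)_j$ is an orthonormal system in $H$. Finally $\norm{C} \le \norm{R}$: indeed $C$ is (the matrix of) the composition of the isometric inclusion $\spann F \embeds K$, the operator $R$, and the coisometry $H \to S$ given by orthogonal projection followed by the coordinate isomorphism, so $\norm{C} \le \norm{R}_{\Lin(K;H)}$. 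Taking the supremum over all finite orthonormal systems $F$ gives $LTR \in \gamma_\infty(K;Y)$ with the asserted bound.

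For the last assertion, suppose $T \in \gamma(H;X)$, i.e.\ $T$ is a limit in $\gamma_\infty(H;X)$ of finite rank operators $T_n$. By the norm estimate just proved, $\norm{LTR - LT_nR}_\gamma \le \norm{L}\,\norm{T - T_n}_\gamma\,\norm{R} \to 0$, so $LTR$ is a $\gamma_\infty$-limit of the operators $LT_nR$. Each $LT_nR$ has finite rank (composition with $T_n$ of finite rank), hence lies in $\gamma(H;X)$ trivially; therefore $LTR \in \gamma(K;Y)$, which is by definition the $\gamma_\infty$-closure of the finite rank operators.

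The only genuinely delicate point is the estimate $\norm{C} \le \norm{R}$, i.e.\ organizing the Gram--Schmidt/orthogonal-projection bookkeeping so that the relevant matrix is honestly a piece of $R$ sandwiched between an isometry and a coisometry; once that is set up correctly everything else is the contraction principle and a density argument. One should also double-check that the argument is insensitive to the real-vs-complex distinction, but since the contraction principle is available in the complex setting (as used in Remark~\ref{gamma.r.rc1}) this causes no additional trouble. A cosmetic alternative avoiding Gram--Schmidt is to factor $R$ through $\abs{R} = (R^*R)^{1/2}$ via polar decomposition and use that $\abs{R}$ maps orthonormal systems to systems with Gram matrix of norm $\le \norm{R}^2$, but the projection argument above seems the most transparent.
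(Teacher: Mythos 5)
Your argument is essentially identical to the paper's proof: the paper likewise handles $L$ directly, picks an orthonormal system spanning $\{Re \suchthat e\in F\}$, expands $Re$ in that system, applies the contraction principle (Theorem~\ref{gamma.t.contr}), and uses $\norm{A}_{\ell_2^m\to\ell_2^n}\le\norm{R}$, with the $\gamma$-radonifying case following from the norm estimate and finite-rank approximation exactly as you describe. The only blemish is a typo in your final paragraph where $LT_nR$ should be said to lie in $\gamma(K;Y)$ rather than $\gamma(H;X)$; the mathematics is fine.
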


\begin{proof}
  One can handle the left-hand and the right-hand side separately, the
  first being straightforward. For the latter, pick a finite orthonormal
  system $\{e_1, \dots, e_n\}$ within $K$. Then find an orthonormal system
   $\{f_1, \dots, f_m\}$ with 
\[ 
   \spann\{ Re_1, \dots, Re_n\}  = \spann\{ f_1, \dots, f_m\}.
\]
Consequently $Re_k = \sum_{j=1}^m a_{kj}f_j$ for some scalar 
$(n{\times}m)$-matrix $A = (a_{kj})_{k j}$. Then, by Theorem~\ref{gamma.t.contr} below,
\begin{align*}
  & \Exp \norm{ {\sum}_{k=1}^n \gamma_k T R e_k}^2 
   = \Exp\norm{ {\sum}_{k=1}^n \gamma_k T {\sum}_{j=1}^m a_{kj}f_j}^2 \\ 
  & = \Exp\norm{ {\sum}_{k=1}^n {\sum}_{j =1}^m \gamma_k a_{kj} Tf_j}^2
  \le \norm{A}^2 \Exp\norm{ {\sum}_{j =1}^m \gamma_j Tf_j}^2 
  \le \norm{A}^2 \norm{T}_\gamma^2.
\end{align*}
Since $\norm{A}_{\ell_2^m\to \ell_2^n} \le \norm{R}_{K \to H}$, the
claim is proved.
\end{proof}

See \cite[Theorem~6.2]{Nee2010} for a slightly different proof.  Based on
the ideal property, we can show that in the case $\K = \C$ a
difference between the complex and real approach to $\gamma(H;X)$ is
only virtual.

\begin{rem}[Real vs Complex, again]\label{gamma.r.rc2}
  Let $H,X$ be complex spaces. We claim that
\[ 
\gamma(H;X) = \{ T \in \gamma(H_r;X_r) \suchthat \text{$T$ is
  $\C$-linear}\} = \gamma(H_r;X_r) \cap \Lin(H;X).
\]
The inclusion ``$\subseteq$'' is trivial, so suppose that $T: H \to X$
is $\C$-linear and in $\gamma(H_r;X_r)$. Then there is a sequence
$T_n$ of real-linear finite rank operators such that $\norm{T_n -
  T}_\gamma \to 0$. Define $S_nx := \einhalb (T_nx - \ui T_n(ix))$. Then each
$S_n$ is a $\C$-linear finite rank operator $\norm{S_n - T}_\gamma 
\to 0$.  To prove this we note that the operator $M: x \mapsto \ui x$ is a
linear isometry on $H_r$ commuting with $T$, whence
\[  2 \norm{S_n - T}_\gamma \le 
\norm{T_n - T}_\gamma + \norm{ M^{-1}T_nM - T}_\gamma
\le \norm{T_n - T}_\gamma \to 0
\]
by the ideal property. It follows that $T \in \gamma(H;X)$, as
claimed.
\end{rem}

One might ask whether $\gamma_\infty(H;X)$ can differ from
$\gamma(H;X)$. An example from Linde and Pietsch, reproduced in
\cite[Exa.~4.4]{Nee2010}, shows that this indeed happens if $X= \co$.
On the other hand, by a theorem of Hoffman-J{\o}rgensen and Kwapie\'n,
if $X$ does not contain $\ce_0$ then $\gamma(H;X) =
\gamma_\infty(H;X)$, see \cite[Theorem~4.3]{Nee2010}.  Although this result
was obtained for real spaces only, Remark~\ref{gamma.r.rc2} shows that
it continues to hold in the complex case.

\medskip

For later reference, we quote the following approximation results from
\cite[Corollaries~6.4 and 6.5]{Nee2010}. Their proofs are straightforward from
the ideal property.

\begin{thm}[Approximation]\label{gamma.t.approx}
  Let $H,K$ be Hilbert and $X,Y$ be Banach spaces, and let $T \in
  \gamma_\infty(H;X)$.  Then the following assertions hold: 
\begin{aufzi}
\item\label{item:gamma.t.approx.a} If $(L_\alpha)_\alpha\subseteq
  \Lin(X;Y)$ is a uniformly bounded net that converges strongly to $L
  \in \Lin(X;Y)$, then $L_\alpha T \to LT$ in $\gamma_\infty(H;Y)$.
\item\label{item:gamma.t.approx.b} If $(R_\alpha^*)_\alpha \subseteq
  \Lin(H;K)$ is a uniformly bounded net that converges strongly to
  $R^* \in \Lin(H;K)$, then $TR_\alpha \to TR$ in
  $\gamma_\infty(K;X)$.
\end{aufzi}
\end{thm}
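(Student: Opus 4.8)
The plan is to deduce both parts of Theorem~\ref{gamma.t.approx} directly from the Ideal Property (Theorem~\ref{gamma.t.ideal}) together with a uniform boundedness / density argument. The two statements are formally dual to each other under $T \mapsto T^*$ (modulo the fact that $\gamma_\infty$ does not behave perfectly under adjoints), so I would prove \ref{item:gamma.t.approx.a} in detail and then indicate how \ref{item:gamma.t.approx.b} follows by the same scheme applied on the Hilbert-space side.

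\medskip

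For \ref{item:gamma.t.approx.a}: first, the Ideal Property immediately gives $L_\alpha T \in \gamma_\infty(H;Y)$ for each $\alpha$, with $\norm{L_\alpha T}_\gamma \le \norm{L_\alpha}\,\norm{T}_\gamma$, and since the net $(L_\alpha)$ is uniformly bounded, say by $C$, the net $(L_\alpha T)_\alpha$ is bounded in $\gamma_\infty(H;Y)$. Second, I would treat the case of a finite rank $T$ first: if $T = \sum_{k=1}^n (\cdot \mid e_k)\, x_k$ for some orthonormal $\{e_1,\dots,e_n\}$ and $x_1,\dots,x_n \in X$, then $L_\alpha T - LT = \sum_k (\cdot\mid e_k)\,(L_\alpha - L)x_k$ is again finite rank, and
\[
\norm{(L_\alpha - L)T}_\gamma \le \sum_{k=1}^n \norm{(L_\alpha - L)x_k}_Y \longrightarrow 0
\]
by strong convergence $L_\alpha \to L$ (here one uses that on finite rank operators the $\gamma$-norm is dominated by the sum of the norms of the ``coefficients'', which is immediate from the triangle inequality under the expectation). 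Third, for general $T \in \gamma_\infty(H;X)$ I would approximate: in fact it suffices to handle $T \in \gamma(H;X)$, but since the statement is about all of $\gamma_\infty(H;X)$, I would instead argue by an $\varepsilon/3$ estimate using a finite rank operator $S$ with $\norm{T - S}_\gamma$ small --- wait, such an $S$ need not exist for $T \in \gamma_\infty \setminus \gamma$. So the cleaner route is: decompose $L_\alpha T - LT = L_\alpha(T - S) + (L_\alpha - L)S - L(T-S)$ where $S$ is finite rank; the first and third terms are bounded by $(C + \norm{L})\norm{T-S}_\gamma$ and the middle term tends to $0$ by the finite rank case. This requires $S$ finite rank with $\norm{T - S}_\gamma$ small, i.e.\ $T \in \gamma(H;X)$. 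For $T \in \gamma_\infty(H;X) \setminus \gamma(H;X)$ one must instead use the $\gamma$-Fatou lemma (Lemma~\ref{gamma.l.fatou1}): restrict $T$ to finite dimensional subspaces $H_m \uparrow H$, apply the finite rank argument on each $H_m$, and pass to the limit in $m$, invoking Fatou to control $\norm{L_\alpha T - LT}_\gamma$ by $\liminf_m \norm{(L_\alpha - L)T P_m}_\gamma$ --- this is the step I expect to be the main technical obstacle, since one has to interchange the limit in $\alpha$ with the limit in $m$ carefully, and it is precisely here that uniform boundedness of $(L_\alpha)$ is used in an essential way.

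\medskip

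For \ref{item:gamma.t.approx.b}: the Ideal Property gives $TR_\alpha \in \gamma_\infty(K;X)$ with $\norm{TR_\alpha}_\gamma \le \norm{T}_\gamma \norm{R_\alpha}$, and $\norm{R_\alpha} = \norm{R_\alpha^*}$ is uniformly bounded by hypothesis, so the net $(TR_\alpha)_\alpha$ is bounded in $\gamma_\infty(K;X)$. Again reduce first to $T$ finite rank, $T = \sum_k (\cdot \mid e_k)\, x_k$: then $T R_\alpha - TR = \sum_k (R_\alpha \,\cdot \mid e_k)\,x_k - (R\,\cdot\mid e_k)\,x_k = \sum_k (\cdot \mid R_\alpha^* e_k - R^* e_k)\,x_k$, whose $\gamma$-norm is at most $\sum_k \norm{R_\alpha^* e_k - R^* e_k}_K\, \norm{x_k}_X \to 0$ by the strong convergence $R_\alpha^* \to R^*$. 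Then promote to general $T$ by the same finite rank approximation / Fatou argument as in part \ref{item:gamma.t.approx.a}, now cutting down on the $K$-side: write $T R_\alpha - TR = S(R_\alpha - R) + (T - S)(R_\alpha - R)$ with $S$ finite rank approximating $T$ in $\gamma$-norm (for $T \in \gamma(H;X)$), and use $\gamma$-Fatou for $T \in \gamma_\infty(H;X)$ in general. The essential inputs throughout are exactly the two already-quoted results: the Ideal Property for the norm bounds and the finite rank reductions, and $\gamma$-Fatou I for passing from finite rank to the general case.
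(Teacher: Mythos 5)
Your finite-rank computation and the $\varepsilon/3$ decomposition driven by the ideal property (Theorem~\ref{gamma.t.ideal}) are exactly the intended argument: the paper gives no proof of Theorem~\ref{gamma.t.approx} but quotes Corollaries~6.4 and 6.5 of \cite{Nee2010} with the remark that the proofs are straightforward from the ideal property, and those corollaries are proved by precisely your scheme. In particular, for $T\in\gamma(H;X)$ (hence whenever $X$ contains no copy of $\co$, so that $\gamma_\infty(H;X)=\gamma(H;X)$) your treatment of both parts is complete and correct.

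The genuine gap is the last step, where you try to reach a general $T\in\gamma_\infty(H;X)\setminus\gamma(H;X)$ via the $\gamma$-Fatou Lemma~\ref{gamma.l.fatou1}. That lemma only gives lower semicontinuity of the $\gamma$-norm along weak operator limits; it cannot produce the norm convergence $L_\alpha T\to LT$, and no argument can close this step, because in that generality the assertion is false. Take the Linde--Pietsch operator $T\in\gamma_\infty(\ell_2;\co)\setminus\gamma(\ell_2;\co)$ mentioned after Remark~\ref{gamma.r.rc2}, and let $L_n$ be the coordinate projections on $\co$ (for part (b), take $K=H=\ell_2$ and $R_n=R_n^*$ the orthogonal projections onto the first $n$ basis vectors). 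These are contractions converging strongly to the identity, and each $L_nT$ (resp.\ $TR_n$) has finite rank; norm convergence to $T$ would place $T$ in the $\gamma_\infty$-closure of the finite rank operators, i.e.\ in $\gamma(\ell_2;\co)$, a contradiction. So the hypothesis has to be read as $T\in\gamma(H;X)$ --- which is how the quoted Corollaries~6.4 and 6.5 of \cite{Nee2010} are stated --- and with that reading your $\varepsilon/3$ argument already finishes the proof, with no Fatou step needed.
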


Note that if $T \in \gamma(H;X)$ the operators $LT$ and $TR$ are
again $\gamma$-radonifying, by the ideal property.

\subsection{Fourier series and nuclear operators}\label{ss.nuk-op}

For $g \in H$ we let $\konj{g} := \sprod{\cdot}{g} \in H'$, i.e.,
\[ 
   H \pfeil H', \qquad g \tpfeil \konj{g} = \sprod{\cdot}{g} 
\] 
is the canonical (conjugate-linear) bijection of $H$ onto its dual.
The definition
\begin{equation}\label{gamma.eq.dual-hil-sp1}
 \sprod{\konj{g}}{\konj{h}}_{H'} := \sprod{h}{g}_H \qquad (g,h\in H)
\end{equation}
turns $H'$ canonically into a Hilbert space, and a short computation
yields $\konj{\konj{g}} = g$ under the canonical identification $H =
H''$.  Moreover, \eqref{gamma.eq.dual-hil-sp1} becomes
\begin{equation}\label{gamma.eq.dual-hil-sp2}
  \sprod{\konj{x}}{\konj{y}}_H = \sprod{y}{x}_{H'}   \qquad (x,y\in H').
\end{equation}
If $H = \Ell{2}(\Omega) = \Ell{2}(\Omega;\K)$ for some measure space
$(\Omega,\Sigma,\mu)$, we can identify $H' = \Ell{2}(\Omega)$ via the
duality
\begin{equation}\label{gamma.eq.L2-dual}
  H \times H \pfeil \K,\qquad (h,g) \tpfeil \dprod{h}{g} := \int_\Omega
  h \cdot g \, \ud{\mu} \qquad (h,g \in \Ell{2}(\Omega)).
\end{equation}
Under this identification, the conjugate $\konj{g}$ of $g \in H$ as
defined above coincides with the usual complex conjugate of $g$ as a
function on $\Omega$.

\smallskip

Every finite rank operator $T: H \to X$ has the form
\begin{equation}\label{gamma.eq.finite-rank}
  T = \Sum[n]{j=1} \konj{g_j} \tensor x_j,
\end{equation}
and one can view $\gamma(H;X)$ as a completion of the algebraic 
tensor product $H' \tensor X$ with respect to the $\gamma$-norm.

Note that if $e_1, \dots, e_n$ is an orthonormal system in $H$, then 
$\konj{e_1}, \dots, \konj{e_n}$ is an orthonormal system in $H'$, dual to 
$\{e_1, \dots, e_n\}$ in the sense that
\[ 
\dprod{e_j}{\konj{e_k}} = \dprod{e_j}{\konj{e_k}}_{H,H'} = \delta_{jk}
\qquad (j,k = 1, \dots, n).
\]
The following shows that a ``Gaussian sum'' in a Banach space $X$ can
be regarded as a $\gamma$-norm of a finite rank operator.

\begin{lemma}\label{gamma.l.finran}
Let $g_1, \dots, g_m \in H$ be an orthonormal system in  $H$ and
  $x_1, \dots , x_m \in X$. Then
\[ 
  \norm{{\sum}_{j=1}^m \konj{g_j}\tensor x_j}_\gamma^2
= \Exp\norm{{\sum}_{j=1}^n \gamma_j x_j}_X^2.
\]
\end{lemma}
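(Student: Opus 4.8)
The plan is to unwind both sides directly from the definition of the $\gamma$-norm, using the fact that the $g_j$ form an orthonormal system to make the supremum in the definition of $\norm{\cdot}_\gamma$ collapse to a single term. First I would observe that the finite rank operator $T := \sum_{j=1}^m \konj{g_j}\tensor x_j$ satisfies $Tg_j = x_j$ and, more importantly, $Tg = \sum_j \sprod{g}{g_j} x_j$ for all $g \in H$, so that $T$ vanishes on $(\spann\{g_1,\dots,g_m\})^\perp$. Hence for any finite orthonormal system $F \subseteq H$, writing each $e \in F$ as $e = e' + e''$ with $e' \in \spann\{g_1,\dots,g_m\}$ and $e'' \perp g_j$ for all $j$, we have $Te = Te'$, and the coefficients of the $e'$ with respect to the $g_j$ form (a submatrix of) an orthonormal system, i.e.\ the map $e \mapsto (\sprod{e}{g_j})_j$ is a partial isometry. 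By the contraction principle (Theorem~\ref{gamma.t.contr}) this shows $\Exp\norm{\sum_{e\in F}\gamma_e Te}^2 \le \Exp\norm{\sum_{j=1}^m \gamma_j x_j}^2$, so the supremum defining $\norm{T}_\gamma$ is at most the right-hand side.

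For the reverse inequality I would simply take $F = \{g_1,\dots,g_m\}$ itself, which is an admissible finite orthonormal system in the definition of $\norm{T}_\gamma$; then $Tg_j = x_j$ gives $\Exp\norm{\sum_{j=1}^m \gamma_j Tg_j}^2 = \Exp\norm{\sum_{j=1}^m \gamma_j x_j}^2 \le \norm{T}_\gamma^2$. Combining the two inequalities yields the claimed identity. (I note in passing the harmless index clash in the statement, where the left-hand sum runs to $m$ and the right-hand sum is written with $n$; both should read $m$, and the argument makes this clear.)

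An even shorter route, if one prefers to quote the ideal property (Theorem~\ref{gamma.t.ideal}) rather than re-derive the contraction argument: pick any orthonormal basis $(e_k)_k$ of a Hilbert space $K$ of dimension $m$ and let $R : K \to H$ be the isometry with $Re_k = g_k$. Then $T = S R^*$ where... actually it is cleanest to write $T = \widetilde{T}\, P$ with $P : H \to \spann\{g_j\}$ the orthogonal projection, realize $\spann\{g_j\} \cong \ell_2^m$ unitarily via $g_j \mapsto \mathrm{e}_j$, and reduce to the case $H = \ell_2^m$, $g_j = \mathrm{e}_j$, where $\norm{\sum_j \konj{\mathrm{e}_j}\tensor x_j}_\gamma^2 = \Exp\norm{\sum_j \gamma_j x_j}^2$ holds because every unit vector of $\ell_2^m$ has coefficient vector of norm one and the contraction principle again pins down the supremum. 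The orthogonal projection is norm-one, so by the ideal property passing from $H = \ell_2^m$ to general $H$ changes nothing.

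The only real subtlety — and the step I would be most careful about — is verifying that a finite orthonormal system $F$ in $H$ produces, after composition with $T$, a Gaussian sum controlled by $\Exp\norm{\sum_j \gamma_j x_j}^2$ with constant exactly $1$: this is precisely where one needs that $e \mapsto (\sprod{e}{g_j})_{j=1}^m$ restricted to an orthonormal system has operator norm $\le 1$ as a matrix, which follows from Bessel's inequality (the rows are orthonormal or shorter). Everything else is bookkeeping. I would present the contraction-principle version as the main proof since it is self-contained given Theorem~\ref{gamma.t.contr}, which the paper invokes repeatedly nearby.
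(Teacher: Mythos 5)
Your proposal is correct and follows essentially the same route as the paper: expand $Te_k=\sum_j\sprod{e_k}{g_j}x_j$ for an arbitrary finite orthonormal system, apply the contraction principle (Theorem~\ref{gamma.t.contr}) using that the coefficient matrix $(\sprod{e_k}{g_j})_{k,j}$ is a contraction, and obtain equality by choosing $F=\{g_1,\dots,g_m\}$. The Bessel-inequality justification of $\norm{A}\le 1$ and the remark on the $m$/$n$ index clash are fine; the alternative reduction to $\ell_2^m$ via the ideal property is unnecessary but harmless.
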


\begin{proof}
  Let $e_1, \dots, e_n$ be any finite orthonormal system in $H$ and let $T$ be
defined by \eqref{gamma.eq.finite-rank}.  Then
\[ 
    \Exp\norm{ {\sum}_{k=1}^n \gamma_k Te_k}^2
=   \Exp\norm{ {\sum}_{k=1}^n \gamma_k {\sum}_{j=1}^m \sprod{e_k}{g_j} x_j}^2
\le \Exp\norm{ {\sum}_{j=1}^m \gamma_j x_j}^2
\]
by Theorem~\ref{gamma.t.contr}, since the scalar matrix $A :=
(\sprod{e_k}{g_j})_{k, j}$ satisfies $\norm{A}\le 1$. On the other
hand, if we take $n= m$ and $e_k := g_k$, then we obtain equality.
\end{proof}

Let $(e_\alpha)_{\alpha \in A}$ be an orthonormal {\em basis} of $H$.
For a finite set $F \subseteq A$, let
\[ P_F := {\sum}_{\alpha\in F} \konj{e_\alpha} \tensor e_\alpha
\]
be the orthogonal projection onto $\spann\{ e_\alpha \suchthat
\alpha\in F\}$.  The net $(P_F)_F$ is uniformly bounded and converges
strongly to the identity on $H$. Hence, the following is a consequence
of Theorem~\ref{gamma.t.approx}, part~\ref{item:gamma.t.approx.b}.

\begin{cor}[Fourier Series]\label{gamma.c.fourier-series}
If $T \in \gamma(H;X)$ and
  $(e_\alpha)_\alpha$ is any orthonormal basis of $H$, then
\[ 
  {\sum}_{\alpha} \konj{e_\alpha} \tensor  Te_\alpha = T
\]
in the norm of $\gamma(H;X)$.
\end{cor}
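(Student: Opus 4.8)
The plan is to deduce this directly from the approximation result, Theorem~\ref{gamma.t.approx}\ref{item:gamma.t.approx.b}, exactly as the text preceding the statement already suggests. First I would fix an orthonormal basis $(e_\alpha)_{\alpha\in A}$ of $H$ and, for each finite $F\subseteq A$, form the orthogonal projection $P_F = \sum_{\alpha\in F}\konj{e_\alpha}\tensor e_\alpha$ onto $\spann\{e_\alpha : \alpha\in F\}$. The net $(P_F)_F$, directed by inclusion, is uniformly bounded (each $P_F$ has norm $\le 1$) and converges strongly to $\Id_H$; since $P_F$ is self-adjoint, we have $P_F = P_F^*$, so the hypothesis of part~\ref{item:gamma.t.approx.b} (strong convergence of the net of adjoints) is met with $R_F := P_F$ and $R := \Id_H$.

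Next I would apply Theorem~\ref{gamma.t.approx}\ref{item:gamma.t.approx.b} to conclude that $T P_F \to T$ in $\gamma_\infty(H;X)$; and since $T\in\gamma(H;X)$, the ideal property (Theorem~\ref{gamma.t.ideal}) guarantees that each $TP_F$ lies in $\gamma(H;X)$, so the convergence takes place in $\gamma(H;X)$ as claimed. The only remaining point is to identify $TP_F$ with the partial sum $\sum_{\alpha\in F}\konj{e_\alpha}\tensor Te_\alpha$: for $h\in H$ one computes
\[
(TP_F)h = T\Bigl(\Sum{\alpha\in F}\sprod{h}{e_\alpha}e_\alpha\Bigr)
= \Sum{\alpha\in F}\sprod{h}{e_\alpha}\,Te_\alpha
= \Bigl(\Sum{\alpha\in F}\konj{e_\alpha}\tensor Te_\alpha\Bigr)h,
\]
using linearity of $T$ and the definition $\konj{e_\alpha}\tensor Te_\alpha : h\mapsto \sprod{h}{e_\alpha}Te_\alpha$. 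Hence the net of partial sums $\bigl(\sum_{\alpha\in F}\konj{e_\alpha}\tensor Te_\alpha\bigr)_F$ converges to $T$ in the $\gamma(H;X)$-norm, which is precisely the assertion.

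I do not anticipate a genuine obstacle here: the statement is essentially a packaging of the quoted approximation theorem together with the strong convergence $P_F\to\Id_H$. The only mild subtlety worth spelling out is that when $H$ is non-separable the ``series'' is to be read as the limit of the net of finite partial sums (all but countably many terms vanish in the $\gamma$-norm anyway, since $T$, being $\gamma$-radonifying, can be approximated by finite-rank operators); making this explicit avoids any ambiguity in the meaning of $\sum_\alpha$. Everything else is the routine identification above and an appeal to Theorem~\ref{gamma.t.approx}.
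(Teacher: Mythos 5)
Your proof is correct and follows exactly the route the paper takes: the paper derives this corollary as an immediate consequence of Theorem~\ref{gamma.t.approx}, part~\ref{item:gamma.t.approx.b}, applied to the uniformly bounded net of (self-adjoint) finite-rank projections $P_F$ converging strongly to the identity, with the same identification $TP_F = \sum_{\alpha\in F}\konj{e_\alpha}\tensor Te_\alpha$ that you spell out. Your additional remarks (the ideal property keeping $TP_F$ in $\gamma(H;X)$ and the reading of the sum as a net limit in the non-separable case) are accurate elaborations of details the paper leaves implicit.
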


It follows from Lemma~\ref{gamma.l.finran} that
\[ 
  \norm{\konj{g}\tensor x}_\gamma = \norm{g}_H \norm{x}_X
= \norm{\konj{g}}_{\konj{H}} \norm{x}_X
\]
for every $g\in H$, $x\in X$, i.e., the $\gamma$-norm is a
cross-norm. The following is an immediate consequence.
(Recall that $T$ is a nuclear operator if $T =
\sum_{n\ge 0} \konj{g_n} \tensor x_n$ for some $g_n \in H, x_n \in X$
with $\sum_{n\ge 0} \norm{g_n}_H \norm{x_n}_X < \infty$.)

\begin{cor}\label{gamma.c.nuclear}
A nuclear operator $T:H \to X$ is
$\gamma$-radonifying and $\norm{T}_\gamma \le
\norm{T}_{\nuc}$. 
\end{cor}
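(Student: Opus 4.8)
The plan is to deduce Corollary~\ref{gamma.c.nuclear} directly from the cross-norm identity that precedes it, together with the completeness of $\gamma(H;X)$ and the triangle inequality for the $\gamma$-norm. First I would write the given nuclear representation $T = \sum_{n\ge 0} \konj{g_n} \tensor x_n$ with $\sum_{n\ge 0} \norm{g_n}_H \norm{x_n}_X < \infty$, and consider the partial sums $T_N := \sum_{n=0}^N \konj{g_n} \tensor x_n$. Each $T_N$ is a finite rank operator, hence lies in $\gamma(H;X)$.

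Next I would use the fact that $\norm{\konj{g}\tensor x}_\gamma = \norm{g}_H \norm{x}_X$ (established just above the corollary from Lemma~\ref{gamma.l.finran}) together with the triangle inequality for $\norm{\cdot}_\gamma$ to estimate, for $M < N$,
\[
\norm{T_N - T_M}_\gamma = \Bignorm{\sum_{n=M+1}^N \konj{g_n}\tensor x_n}_\gamma \le \sum_{n=M+1}^N \norm{g_n}_H \norm{x_n}_X,
\]
which tends to $0$ as $M,N\to\infty$ by the summability assumption. Hence $(T_N)_N$ is a Cauchy sequence in the Banach space $\gamma(H;X)$, so it converges there to some $S\in\gamma(H;X)$; since convergence in $\gamma$-norm implies convergence in operator norm, and $T_N \to T$ pointwise (indeed in operator norm, by the same estimate with $T$ in place of $T_M$ and using that $\norm{\cdot}\le\norm{\cdot}_\gamma$), we get $S = T$. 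Therefore $T \in \gamma(H;X)$. Finally, $\norm{T}_\gamma = \lim_N \norm{T_N}_\gamma \le \sum_{n\ge 0}\norm{g_n}_H\norm{x_n}_X$; taking the infimum over all nuclear representations of $T$ gives $\norm{T}_\gamma \le \norm{T}_{\nuc}$.

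There is no real obstacle here — the statement is a formal consequence of the cross-norm property and completeness. The only point requiring a small amount of care is the identification of the $\gamma$-limit $S$ of the partial sums with $T$ itself, which follows because $\norm{\cdot}\le\norm{\cdot}_\gamma$ forces $\gamma$-convergence to imply operator-norm (indeed weak operator) convergence, where the pointwise limit is manifestly $T$. One could alternatively invoke Lemma~\ref{gamma.l.fatou1} ($\gamma$-Fatou) to get $T\in\gamma_\infty(H;X)$ with the norm bound, but the direct completeness argument additionally yields membership in the smaller space $\gamma(H;X)$, which is what is asserted.
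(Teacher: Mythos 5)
Your argument is correct and is exactly the one the paper has in mind: the corollary is stated there as an immediate consequence of the cross-norm identity $\norm{\konj{g}\tensor x}_\gamma = \norm{g}_H\norm{x}_X$, and your write-up simply makes the absolute-convergence/completeness details (and the identification of the $\gamma$-limit with $T$ via $\norm{\cdot}\le\norm{\cdot}_\gamma$) explicit. Nothing is missing.
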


 The following application turns out to be quite useful.

\begin{lemma}\label{gamma.l.nuc}
  Let $H,X$ as before, and let $(\Omega,\Sigma,\mu)$ be a measure
  space.  Suppose that $f: \Omega \to H$ and $g : \Omega \to X$ are
  (strongly) $\mu$-measurable and
\[ 
   \int_\Omega \norm{f(t)}_H \norm{g(t)}_X \, \mu(\ud{t}) < \infty.
\]
Then $\konj{f} \tensor g \in \Ell{1}(\Omega; \gamma(H;X))$, and $T :=
\int_\Omega \konj{f} \tensor g \,\ud{\mu} \in \gamma(H;X)$ satisfies
\[ 
   Th = \int_\Omega \sprod{h}{f(t)} g(t)\, \mu(\ud{t}) \qquad (h \in H)
\]
and
\[ 
  \norm{T}_\gamma \le \int_\Omega \norm{f(t)}_H \norm{g(t)}_X \, \mu(\ud{t}).
\]
\end{lemma}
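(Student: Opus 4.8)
The plan is to reduce everything to the scalar-valued integral $\int_\Omega \norm{f(t)}_H \norm{g(t)}_X\,\mu(\ud t) < \infty$ via Bochner-integral machinery, using Corollary~\ref{gamma.c.nuclear} to control the $\gamma$-norm of the integrand. First I would observe that the map $t \mapsto \konj{f(t)} \tensor g(t)$ takes values in $\gamma(H;X)$ (each $\konj{f(t)}\tensor g(t)$ is a rank-one operator, hence finite rank, hence $\gamma$-radonifying) and is strongly $\mu$-measurable: it is the composition of the strongly measurable map $t \mapsto (f(t),g(t))$ into $H \times X$ with the continuous bilinear map $(h,x) \mapsto \konj h \tensor x$ into $\gamma(H;X)$. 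By the cross-norm property established just above the corollary, $\norm{\konj{f(t)}\tensor g(t)}_\gamma = \norm{f(t)}_H \norm{g(t)}_X$, so the hypothesis says exactly that $\konj f \tensor g \in \Ell 1(\Omega;\gamma(H;X))$. Hence the Bochner integral $T := \int_\Omega \konj f \tensor g\,\ud\mu$ exists as an element of the Banach space $\gamma(H;X)$, and the norm estimate $\norm{T}_\gamma \le \int_\Omega \norm{f(t)}_H\norm{g(t)}_X\,\mu(\ud t)$ is just the standard inequality $\norm{\int \varphi\,\ud\mu} \le \int \norm\varphi\,\ud\mu$ for Bochner integrals.

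It remains to identify the action of $T$ on an arbitrary $h \in H$. The point is that evaluation at $h$, i.e. the map $S \mapsto Sh$, is a bounded linear operator $\gamma(H;X) \to X$ (bounded since $\norm{Sh}_X \le \norm S_{\Lin(H;X)}\norm h_H \le \norm S_\gamma \norm h_H$, using that $\gamma$-summing operators are bounded with operator norm $\le$ $\gamma$-norm, as noted after the definition of $\gamma_\infty$). A bounded linear operator commutes with the Bochner integral, so
\[
  Th = \Bigl(\int_\Omega \konj{f(t)}\tensor g(t)\,\mu(\ud t)\Bigr)h
     = \int_\Omega \bigl(\konj{f(t)}\tensor g(t)\bigr)h \,\mu(\ud t)
     = \int_\Omega \sprod{h}{f(t)}\,g(t)\,\mu(\ud t),
\]
where the last step uses $(\konj{g}\tensor x)h = \sprod{h}{g}x$ by definition of $\konj g$; the integrand on the right is a genuine $X$-valued Bochner-integrable function since $\norm{\sprod{h}{f(t)}g(t)}_X \le \norm h_H \norm{f(t)}_H\norm{g(t)}_X$.

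I do not expect a serious obstacle here: the statement is essentially a packaging of the fact that $\konj{(\cdot)}\tensor(\cdot)$ is a continuous bilinear map with the cross-norm identity, combined with basic properties of the Bochner integral. The only points requiring a word of care are the strong measurability of $t \mapsto \konj{f(t)}\tensor g(t)$ (handled by composing with the continuous bilinear map and invoking Pettis measurability, given that $f$ and $g$ are separately strongly measurable) and the interchange of $T$ with evaluation at $h$ (handled by the boundedness of evaluation and the commutation of bounded operators with Bochner integrals). Everything else is routine.
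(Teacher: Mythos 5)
Your proof is correct, and it is the argument the paper evidently intends: the lemma is stated without proof as a routine consequence of the cross-norm identity $\norm{\konj{g}\tensor x}_\gamma = \norm{g}_H\norm{x}_X$ established just before it. All the ingredients you invoke are available in the text --- strong measurability of $t\mapsto \konj{f(t)}\tensor g(t)$ via the continuous bilinear map, completeness of $\gamma(H;X)$ so the Bochner integral lands there, the standard norm inequality for Bochner integrals, and commuting the integral with the point evaluation $S\mapsto Sh$, which is bounded because $\norm{S}_{\Lin(H;X)}\le \norm{S}_\gamma$ as noted after the definition of $\gamma_\infty(H;X)$.
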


\subsection{Trace duality}\label{gamma.ss.dual}

We follow \cite{BlaTarVid2000,KalWei2004} and identify the dual of
$\gamma(H;X)$ with a subspace of $\Lin(H';X')$ via {\em trace
  duality}.  For a finite rank operator $U: H \to H$ given by
\[
 U := {\sum}_{j=1}^n g_j' \tensor h_j
\]
for certain $g_1',\dots, g_n' \in H'$ and $h_1, \dots, h_n\in H$,
its {\em trace} is
\[
  \tr(U) = {\sum}_{j=1}^n  \dprod{h_j}{g_j'}. 
\] 
This is independent of the representation of $U$, see
\cite[p.~125]{DieJarTon1995}.  Now, for $V\in \Lin(H';X')$ we define
\[ 
\norm{V}_{\gamma'} 
:= \sup \Big\{ \abs{\tr(V'U)} \suchthat U \in \Lin(H;X),\, \norm{U}_\gamma \le 1,\,
\dim \ran(U) < \infty\Big\},
\]
where we regard $V'U : H \to X \subseteq X'' \to H'' = H$, and let
\[ 
\gamma'(H';X') := \{ V\in \Lin(H';X') \suchthat 
\norm{V}_{\gamma'} < \infty\}.
\]
By a short computation, if $U \in \Lin(H;X)$ has the representation $U
= \sum_{j=1}^n g_j'\tensor x_j$ and $V \in \Lin(H';X')$, then
\begin{equation}\label{gamma.eq.trace}
 \tr(V'U) = \sum_{j=1}^n \dprod{x_j}{ Vg_j'}.
\end{equation}

\begin{lemma}[$\gamma'$-Fatou]
  Let $(V_n)_n$ be a bounded sequence in $\gamma'(H';X')$ and let $V :
  H'\to X'$ be such that $\dprod{x}{V_nh'} \to \dprod{x}{Vh'}$ for all
  $x\in X$ and $h'\in H'$. Then $V \in \gamma'(H';X')$ and
\[ 
  \norm{V}_{\gamma'} \le \liminf_{n \to \infty} \norm{V_n}_{\gamma'}
\]
\end{lemma}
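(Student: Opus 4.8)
The plan is to mimic the proof of the $\gamma$-Fatou lemma (Lemma~\ref{gamma.l.fatou1}) but in the trace-duality picture. First I would unwind the definition of $\norm{\cdot}_{\gamma'}$: fix a finite-rank operator $U \in \Lin(H;X)$ with $\norm{U}_\gamma \le 1$, written in a representation $U = \sum_{j=1}^m g_j' \tensor x_j$ with $g_j' \in H'$, $x_j \in X$. By formula \eqref{gamma.eq.trace} we have, for every $n$,
\[
  \tr(V_n'U) = \sum_{j=1}^m \dprod{x_j}{V_n g_j'}.
\]
This is a \emph{finite} sum, so the hypothesis $\dprod{x}{V_n h'} \to \dprod{x}{Vh'}$ applied with $x = x_j$ and $h' = g_j'$ gives $\tr(V_n'U) \to \sum_{j=1}^m \dprod{x_j}{V g_j'} = \tr(V'U)$, the last equality again by \eqref{gamma.eq.trace} (which makes sense as soon as $V$ is a bounded linear map $H' \to X'$, which it is by hypothesis, or at worst can be arranged on the relevant finite-dimensional subspace).

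Then I would estimate: for each $n$,
\[
  \abs{\tr(V_n'U)} \le \norm{V_n}_{\gamma'} \norm{U}_\gamma \le \norm{V_n}_{\gamma'},
\]
so passing to the limit in $n$ yields $\abs{\tr(V'U)} \le \liminf_{n\to\infty} \norm{V_n}_{\gamma'}$. Taking the supremum over all finite-rank $U$ with $\norm{U}_\gamma \le 1$ gives exactly $\norm{V}_{\gamma'} \le \liminf_{n\to\infty} \norm{V_n}_{\gamma'}$, which in particular is finite (since $(V_n)_n$ is bounded in $\gamma'$), so $V \in \gamma'(H';X')$. That is the whole argument.

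I do not expect a serious obstacle here; the lemma is essentially a soft consequence of the definition of $\gamma'$ as a supremum of a family of functionals that are, for each fixed finite-rank $U$, continuous in the relevant pointwise sense. The one point deserving a line of care is checking that $\tr(V'U)$ is well defined and given by \eqref{gamma.eq.trace} for the limiting operator $V$ — i.e.\ that $V$ indeed maps $H' \to X'$ boundedly, which is part of the standing hypothesis, and that the value does not depend on the chosen representation of $U$, which is the representation-independence of the trace recalled from \cite[p.~125]{DieJarTon1995}. Beyond that it is just interchanging a finite sum with a limit.
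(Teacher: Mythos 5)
Your proof is correct and is essentially the paper's own argument: both use the trace formula \eqref{gamma.eq.trace} to see that $\tr(V_n'U) \to \tr(V'U)$ for every finite-rank $U$, then pass to the liminf in the estimate $\abs{\tr(V_n'U)} \le \norm{V_n}_{\gamma'}\norm{U}_\gamma$ and take the supremum over $U$ with $\norm{U}_\gamma\le 1$. The paper merely states this in one line; your version spells out the same steps in detail.
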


\begin{proof}
  It follows from \eqref{gamma.eq.trace} that $\tr(V_n'U) \to
  \tr(V'U)$ for every $U: H \to X$ of finite rank. The claim follows.
\end{proof}

We now turn to an alternative description of the $\gamma'$-norm.  To
this end we note the following auxiliary result. We let  $\Nuc(H)$
denote the class of {\em nuclear operators} on $H$, also called operators
of {\em trace class}, with its natural norm $\norm{\cdot}_\nuc$.

\begin{lemma}\label{gamma.l.dual-nuc}
  If $T \in \Nuc(H)$ then $\tr(TA) = \norm{T}_\nuc$ for some $A\in
  \Lin(H)$, $\norm{A}\le 1$.
\end{lemma}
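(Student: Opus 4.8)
The plan is to reduce the statement to the singular value (Schmidt) decomposition of a trace-class operator on a Hilbert space. First I would recall that every $T \in \Nuc(H)$ admits a representation
\[
  T = \sum_{n} s_n\, \konj{e_n} \tensor f_n, \qquad\text{i.e.,}\qquad Th = \sum_{n} s_n \sprod{h}{e_n}\, f_n \quad (h\in H),
\]
where $(e_n)_n$ and $(f_n)_n$ are orthonormal systems in $H$ and $s_n \ge 0$ with $\sum_{n} s_n = \norm{T}_\nuc$; this comes from the polar decomposition $T = V\abs{T}$ combined with the spectral decomposition of the positive trace-class operator $\abs{T} = (T^*T)^{1/2}$, the identity $\norm{T}_\nuc = \tr\abs{T} = \sum_n s_n$ being the definition of the nuclear norm on a Hilbert space.

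Next I would construct $A$ by exchanging the two orthonormal systems, setting
\[
  Ah := \sum_{n} \sprod{h}{f_n}\, e_n \qquad (h\in H), \qquad\text{i.e.,}\qquad A = \sum_{n} \konj{f_n} \tensor e_n.
\]
Since $(f_n)_n$ and $(e_n)_n$ are orthonormal, $A$ is a partial isometry (an isometry of $\cls{\spann}\{f_n : n\}$ onto $\cls{\spann}\{e_n : n\}$, vanishing on the orthogonal complement), so $\norm{A}\le 1$, as needed. Equivalently, one could bypass the Schmidt decomposition and simply take $A := V^*$, the adjoint of the partial isometry in the polar decomposition of $T$, which again has $\norm{A}=\norm{V}\le 1$.

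It then remains to compute $\tr(TA)$. Using the two displays above together with the orthonormality of $(e_n)_n$,
\[
  TAh = \sum_{m} s_m \sprod{Ah}{e_m}\, f_m = \sum_{m} s_m \sprod{h}{f_m}\, f_m \qquad (h\in H),
\]
so $TA = \sum_{m} s_m\, \konj{f_m} \tensor f_m$ is a positive trace-class operator whose finite partial sums $\sum_{m\le N} s_m\, \konj{f_m}\tensor f_m$ converge to it in nuclear norm (since $\sum_m s_m < \infty$ and $\norm{\konj{f_m}\tensor f_m}_\nuc = 1$). Applying the defining formula \eqref{gamma.eq.trace} for the trace to each partial sum and letting $N\to\infty$ — legitimate because the trace is continuous on $\Nuc(H)$, with $\abs{\tr S}\le\norm{S}_\nuc$ — yields $\tr(TA) = \sum_m s_m = \norm{T}_\nuc$. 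In the polar-decomposition variant one instead invokes cyclicity of the trace: $\tr(TV^*) = \tr(V^*V\abs{T}) = \tr(P\abs{T}) = \tr\abs{T}$, where $P = V^*V$ is the orthogonal projection onto $\cls{\ran(\abs{T})}$ and hence fixes $\abs{T}$.

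I do not anticipate a genuine obstacle here: the only substantial inputs are the singular value decomposition of a trace-class operator and the elementary properties of the trace functional on $\Nuc(H)$ (continuity in nuclear norm, or cyclicity), after which the computation is immediate. The one point deserving a word of care is that the trace in the statement is a priori defined only on finite-rank operators via \eqref{gamma.eq.trace}, so one should note at the outset that it extends continuously, by density of the finite-rank operators, to all of $\Nuc(H)$ — and it is this extension that legitimises the limit passage above.
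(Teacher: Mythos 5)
Your proof is correct and follows essentially the same route as the paper: the singular value decomposition $T=\sum_n s_n\,\konj{e_n}\tensor f_n$, the exchange operator $A=\sum_n \konj{f_n}\tensor e_n$ with $\norm{A}\le 1$, and the computation $\tr(TA)=\sum_n s_n=\norm{T}_\nuc$. Your added remarks — that the trace, a priori defined on finite-rank operators by \eqref{gamma.eq.trace}, extends continuously to $\Nuc(H)$, and the alternative via the polar decomposition and cyclicity — are sound but only make explicit what the paper's proof uses tacitly.
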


\begin{proof}
  By a standard result of Hilbert space operator theory, $T$ has the
  representation
\[ 
  T = {\sum}_{j\in J} s_j \konj{e_j} \tensor f_j
\]
where $J$ is either finite or $J =\N$, the $e_j$ as well as the $f_j$
form orthonormal systems, and the numbers $s_j > 0$ are the singular
values of $T$. Define $A := \sum_{j \in J} \konj{f_j} \tensor e_j$,
where in case $J = \N$ the series converges strongly. Then $\norm{A}
\le 1$ and $TA = {\sum}_{j\in J} s_j \konj{A^*e_j} \tensor f_j$.
Hence
\[ 
   \tr(TA) = \sum_{j\in J} s_j \sprod{f_j}{A^*e_j} = \sum_{j\in J} s_j =
\norm{T}_\nuc.
\]
\end{proof}

As a consequence we arrive at the following characterisation of the
$\gamma'$-norm.

\begin{cor}\label{gamma.c.dual-nuc}
Let $V \in \Lin(H';X')$. Then
\[ 
  \norm{V}_{\gamma'} = 
   \sup \Big\{ \norm{V'U}_{\nuc} \suchthat U \in \Lin(H;X),\,
   \norm{U}_\gamma \le 1,\, \dim \ran(U) < \infty\Big\}.
\]
\end{cor}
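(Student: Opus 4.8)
The plan is to prove two inequalities. For "$\le$", start from the definition of $\norm{V}_{\gamma'}$ as the supremum of $\abs{\tr(V'U)}$ over finite-rank $U$ with $\norm{U}_\gamma \le 1$. For each such $U$, the operator $V'U : H \to H$ is of finite rank, hence nuclear, and the general estimate $\abs{\tr(S)} \le \norm{S}_\nuc$ (valid for any nuclear operator $S$ on a Hilbert space, since the trace is the sum of the diagonal entries in a singular-value decomposition and $\abs{\sum_j s_j \sprod{f_j}{A^*e_j}} \le \sum_j s_j$) gives $\abs{\tr(V'U)} \le \norm{V'U}_\nuc$. Taking the supremum over admissible $U$ yields $\norm{V}_{\gamma'} \le \sup\{\norm{V'U}_\nuc : \norm{U}_\gamma \le 1,\ \dim\ran(U) < \infty\}$.

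For the reverse inequality "$\ge$", fix a finite-rank $U \in \Lin(H;X)$ with $\norm{U}_\gamma \le 1$. Then $T := V'U$ is a nuclear operator on $H$, so by Lemma~\ref{gamma.l.dual-nuc} there exists $A \in \Lin(H)$ with $\norm{A} \le 1$ and $\tr(TA) = \tr(V'UA) = \norm{T}_\nuc = \norm{V'U}_\nuc$. Now set $\tilde U := UA : H \to X$. This is again a finite-rank operator, and by the ideal property (Theorem~\ref{gamma.t.ideal}) we have $\norm{\tilde U}_\gamma = \norm{UA}_\gamma \le \norm{U}_\gamma \norm{A}_{\Lin(H)} \le 1$. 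Hence $\tilde U$ is admissible in the defining supremum for $\norm{V}_{\gamma'}$, and therefore
\[
  \norm{V'U}_\nuc = \tr(V'UA) = \tr(V'\tilde U) \le \abs{\tr(V'\tilde U)} \le \norm{V}_{\gamma'}.
\]
Taking the supremum over all admissible $U$ gives the inequality "$\ge$", and the two together prove the claim.

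The only point requiring a little care is the identity $\tr(V'UA) = \tr(TA)$ with $T = V'U$: this is just associativity of operator composition inside the trace of a finite-rank operator, which is legitimate since $UA$ is finite-rank whenever $U$ is, and $V'(UA) = (V'U)A$. I do not expect any genuine obstacle here; the argument is a direct combination of Lemma~\ref{gamma.l.dual-nuc} (the "trace norm is attained against a contraction" statement) with the right-hand ideal property of the $\gamma$-norm. The elementary bound $\abs{\tr S}\le\norm{S}_\nuc$ used in the easy direction can either be quoted or re-derived in one line from the singular-value representation already recalled in the proof of Lemma~\ref{gamma.l.dual-nuc}.
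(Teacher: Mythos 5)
Your proof is correct and follows essentially the same route as the paper: the easy direction via $\abs{\tr(V'U)} \le \norm{V'U}_\nuc$, and the reverse direction by applying Lemma~\ref{gamma.l.dual-nuc} to $T := V'U$ and using the ideal property of the $\gamma$-norm on $UA$ (your normalisation $\tilde U := UA$ is just a cosmetic repackaging of the paper's chain $\tr(V'UA) \le \norm{V}_{\gamma'}\norm{U}_\gamma\norm{A}$). No gaps.
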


\begin{proof}
  Let $U:H \to X$ be of finite rank with $\norm{U}_\gamma \le
  1$. Then $\abs{\tr(V'U)} \le \norm{V'U}_\nuc$. On the other hand, by
  applying Lemma~\ref{gamma.l.dual-nuc} to $T := V'U$ we find $A\in
  \Lin(H)$ with $\norm{A}\le 1$ and
\[ 
    \norm{V'U}_\nuc = \tr(V'UA)  \le  \norm{V}_{\gamma'} \norm{UA}_\gamma
\le \norm{V}_{\gamma'} \norm{U}_\gamma \norm{A} 
\le \norm{V}_{\gamma'}
\]
by the ideal property.
\end{proof}

As a consequence of Corollary~\ref{gamma.c.dual-nuc} we obtain the
ideal property of $\gamma'(H';X')$.

\begin{cor}[Ideal Property]\label{gamma.c.dual-ideal}
  Let $R:H \to K$ and $L: Y \to X$ be bounded operators, and $V \in
  \gamma'(H';X')$. Then $L'VR' \in \gamma'(K';Y')$ with
\[ 
  \norm{L'VR'}_{\gamma'} \le \norm{L} \norm{V}_{\gamma'} \norm{R}.
\]
\end{cor}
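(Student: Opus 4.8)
The plan is to deduce the ideal property of $\gamma'(H';X')$ from the characterisation of the $\gamma'$-norm via nuclear norms of finite-rank operators (Corollary~\ref{gamma.c.dual-nuc}), together with the ordinary ideal property of $\gamma(H;X)$ (Theorem~\ref{gamma.t.ideal}). First I would note that $L'VR' \in \Lin(K';Y')$ is clear, since $R':K' \to H'$, $V:H' \to X'$, and $L':X' \to Y'$ are all bounded. The content of the statement is the norm estimate, and by Corollary~\ref{gamma.c.dual-nuc} this amounts to bounding $\norm{(L'VR')'W}_\nuc$ uniformly over finite-rank $W:K \to Y$ with $\norm{W}_\gamma \le 1$.

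The key computation is the transpose identity $(L'VR')' = R''V'L''$, which under the canonical embeddings restricts to $RV'L$ acting $K \to Y \subseteq Y'' \xrightarrow{L''} X'' \xrightarrow{V'} H'' = H \xrightarrow{R''} K'' = K$; more usefully, for a finite-rank $W:K\to Y$ we have $(L'VR')'W = R\,(V'(L W))$ where $LW:K \to X$ is again finite rank. So the plan is: given $W:K\to Y$ of finite rank with $\norm{W}_\gamma \le 1$, set $U := LW:K \to X$. Then $U$ is finite rank and, by the ideal property of $\gamma$ (Theorem~\ref{gamma.t.ideal}), $\norm{U}_\gamma = \norm{LW}_\gamma \le \norm{L}\,\norm{W}_\gamma \le \norm{L}$. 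Now $(L'VR')'W = R(V'U)$, and since the nuclear operators form an ideal with $\norm{RT}_\nuc \le \norm{R}\,\norm{T}_\nuc$, we get $\norm{(L'VR')'W}_\nuc = \norm{R V'U}_\nuc \le \norm{R}\,\norm{V'U}_\nuc \le \norm{R}\,\norm{V}_{\gamma'}\,\norm{U}_\gamma \le \norm{R}\,\norm{L}\,\norm{V}_{\gamma'}$, where the middle inequality is exactly Corollary~\ref{gamma.c.dual-nuc} applied to $V$ and the finite-rank operator $U$. Taking the supremum over all such $W$ and applying Corollary~\ref{gamma.c.dual-nuc} to $L'VR'$ yields $\norm{L'VR'}_{\gamma'} \le \norm{L}\,\norm{V}_{\gamma'}\,\norm{R}$.

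The main thing to get right — and the only genuine subtlety — is the bookkeeping of transposes and canonical embeddings: one must verify that $(L'VR')'$, when precomposed with a finite-rank $W:K\to Y$ and interpreted via the natural maps $Y\hookrightarrow Y''$, $X\hookrightarrow X''$, $H''=H$, $K''=K$ as in the definition of $\norm{\cdot}_{\gamma'}$, really does equal $R\circ V'\circ(L\circ W)$ with $LW:K\to X$ finite rank and $V'(LW):H\to X$ interpreted in the same way used for $V'U$ in Corollary~\ref{gamma.c.dual-nuc}. This is routine linear algebra on the algebraic tensors $W = \sum_j k_j' \tensor y_j$ using \eqref{gamma.eq.trace}, but it is where sign/adjoint errors would creep in. Once that identification is in place, everything else is the two ideal properties (for $\gamma$ and for $\Nuc$) chained together. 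I expect no real obstacle beyond this identification; the proof is short and formal, mirroring the proof of Corollary~\ref{gamma.c.dual-nuc} itself.
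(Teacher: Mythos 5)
Your proposal is correct and follows essentially the same route as the paper: for finite-rank $W\colon K\to Y$ with $\norm{W}_\gamma\le 1$ one writes $(L'VR')'W = R\,V'(LW)$, then chains the nuclear ideal property, Corollary~\ref{gamma.c.dual-nuc} applied to $V$ and $LW$, and the $\gamma$-ideal property (Theorem~\ref{gamma.t.ideal}) to get the bound $\norm{R}\norm{V}_{\gamma'}\norm{L}$. The transpose bookkeeping you flag is exactly what the paper handles silently via the identity $(L'VR')'U = RV'(L''U)$ with $L''U = LU$ on $Y$-valued finite-rank operators.
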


\begin{proof}
  Let $U: K \to Y$ be of finite rank. Then
\begin{align*}
  \norm{(L'VR')'U}_\nuc & = \norm{R V'(L''U)}_\nuc \le \norm{R}
  \norm{V'(LU)}_\nuc \le \norm{R} \norm{V'}_{\gamma'} \norm{LU}_\gamma
  \\ & \le \norm{R} \norm{V'}_{\gamma'} \norm{L} \norm{U}_\gamma
\end{align*}
by the ideal property of  $\Nuc(K)$ and $\gamma(K;Y)$.
\end{proof}

With the following results we extend \cite[Prop.~5.1 and
5.2]{KalWei2004}.

\begin{thm}\label{gamma.t.dual}
\begin{aufzi}
\item\label{item:gamma.t.dual.a} If $U \in \gamma(H;X)$ and $V \in
  \gamma'(H';X')$, then $V'U \in \Nuc(H)$ with $\norm{V'U}_\nuc \le
  \norm{V}_{\gamma'}\norm{U}_\gamma$.  Moreover, the mapping
\[ \gamma'(H';X') \pfeil \Lin\big(\gamma(H;X); \Nuc(H)\big), 
\qquad V \tpfeil ( U \tpfeil V'U)
\]
is isometric.

\item\label{item:gamma.t.dual.b} The bilinear mapping (``trace
  duality'')
  \[ \gamma(H;X) \times \gamma'(H';X') \pfeil \C,\qquad (U,V) \tpfeil
  \dprod{U}{V} := \tr(V'U)
\]
establishes an isometric isomorphism $\gamma(H;X)' \cong \gamma'(H';X')$.

\item\label{item:gamma.t.dual.d} Let $(e_\alpha)_\alpha$ be an orthonormal basis of $H$. Then 
\[   \dprod{U}{V} = \tr(V'U) = \sum_{\alpha}  \dprod{Ue_\alpha}{V\konj{e_\alpha}}_{X,X'}
\]
for every $U \in \gamma(H;X)$ and $V \in \gamma'(H';X')$.

\item\label{item:gamma.t.dual.c} If $V \in \gamma(H';X')$ then $V \in
  \gamma'(H';X')$, with $\norm{V}_{\gamma'} \le \norm{V}_\gamma$.
\end{aufzi}
\end{thm}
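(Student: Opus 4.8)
The plan is to establish \ref{item:gamma.t.dual.a} first, deduce \ref{item:gamma.t.dual.d} and \ref{item:gamma.t.dual.b} from it, and prove \ref{item:gamma.t.dual.c} by a separate Gaussian argument. For \ref{item:gamma.t.dual.a} the key input is Corollary~\ref{gamma.c.dual-nuc}, which for finite rank $U$ with $\norm{U}_\gamma\le 1$ already gives $\norm{V'U}_\nuc\le\norm{V}_{\gamma'}$, hence $\norm{V'U}_\nuc\le\norm{V}_{\gamma'}\norm{U}_\gamma$ for all finite rank $U$ by homogeneity. To pass to arbitrary $U\in\gamma(H;X)$ I would approximate $U$ by finite rank $U_n$ in the $\gamma$-norm; then $(V'U_n)_n$ is Cauchy in the Banach space $\Nuc(H)$ and hence converges there, while simultaneously $V'U_n\to V'U$ in $\Lin(H)$ since $V'$ is bounded and $\norm{\cdot}\le\norm{\cdot}_\gamma$. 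As $\Nuc(H)$ embeds continuously into $\Lin(H)$, the two limits coincide, so $V'U\in\Nuc(H)$ and the estimate persists in the limit. Thus $V\mapsto(U\mapsto V'U)$ has norm $\le\norm{V}_{\gamma'}$; the reverse inequality is precisely the finite-rank supremum defining $\norm{V}_{\gamma'}$, so the map is isometric, in particular injective.

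By \ref{item:gamma.t.dual.a} the trace $\tr(V'U)$ is then well defined for every $U\in\gamma(H;X)$ and $U\mapsto\tr(V'U)$ is $\gamma$-continuous with norm $\le\norm{V}_{\gamma'}$. For \ref{item:gamma.t.dual.d} I would expand $U=\sum_\alpha\konj{e_\alpha}\tensor Ue_\alpha$ in the norm of $\gamma(H;X)$ via Corollary~\ref{gamma.c.fourier-series} and apply \eqref{gamma.eq.trace} to the finite partial sums, getting $\tr(V'U)=\sum_\alpha\dprod{Ue_\alpha}{V\konj{e_\alpha}}$ as a convergent net. For \ref{item:gamma.t.dual.b}, the continuity just noted makes $V\mapsto\dprod{\cdot}{V}$, with $\dprod{U}{V}:=\tr(V'U)$, a map of norm $\le\norm{V}_{\gamma'}$ into $\gamma(H;X)'$, and the defining supremum for $\norm{V}_{\gamma'}$ shows it is isometric. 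For surjectivity, given $\phi\in\gamma(H;X)'$ I would define $V\colon H'\to X'$ by $\dprod{x}{Vg'}:=\phi(g'\tensor x)$ — legitimate because $g'\tensor x$ is a rank-one element of $\gamma(H;X)$ of $\gamma$-norm $\norm{g'}_{H'}\norm{x}_X$ — so that $\norm{V}\le\norm{\phi}$; then \eqref{gamma.eq.trace} gives $\tr(V'U)=\phi(U)$ on finite rank $U$, whence $\norm{V}_{\gamma'}=\norm{\phi}$ by density of the finite rank operators, so $V\in\gamma'(H';X')$, and $\dprod{\cdot}{V}=\phi$ by continuity and density.

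For \ref{item:gamma.t.dual.c} I would take $V\in\gamma(H';X')$ and a finite rank $U\colon H\to X$ with $\norm{U}_\gamma\le 1$, write $U=\sum_{i=1}^n\konj{e_i}\tensor Ue_i$ for an orthonormal system $e_1,\dots,e_n$ in $H$ (so $\konj{e_1},\dots,\konj{e_n}$ is orthonormal in $H'$), and read off $\tr(V'U)=\sum_i\dprod{Ue_i}{V\konj{e_i}}$ from \eqref{gamma.eq.trace}. Letting $Q$ be the orthogonal projection of $H'$ onto $\spann\{\konj{e_1},\dots,\konj{e_n}\}$, the ideal property gives $\norm{VQ}_\gamma\le\norm{V}_\gamma$, and Lemma~\ref{gamma.l.finran} identifies $\norm{U}_\gamma^2=\Exp\norm{\sum_i\gamma_i Ue_i}_X^2$ and $\norm{VQ}_\gamma^2=\Exp\norm{\sum_i\gamma_i V\konj{e_i}}_{X'}^2$. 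Averaging the expression for $\tr(V'U)$ against an independent standard Gaussian sequence so as to pair $Ue_i$ with $V\konj{e_i}$ and then applying Cauchy--Schwarz gives $\abs{\tr(V'U)}\le\norm{U}_\gamma\norm{VQ}_\gamma\le\norm{V}_\gamma$, and the supremum over such $U$ yields $\norm{V}_{\gamma'}\le\norm{V}_\gamma$. The step I expect to require the most care is this Gaussian averaging over complex scalars: one must use the correct second-moment identities for complex standard Gaussians — pairing the $Ue_i$ against the complex-conjugated Gaussians recovers $\tr(V'U)$ with the right constant, whereas pairing without conjugation would give zero — so that everything really reduces to Cauchy--Schwarz and Lemma~\ref{gamma.l.finran}; over real scalars it is routine. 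Keeping the canonical identifications $H''=H$ and $X\embeds X''$ consistent when forming $V'U$ is an unavoidable but harmless piece of bookkeeping throughout.
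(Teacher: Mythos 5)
Your proposal is correct and follows essentially the paper's own route: part~\ref{item:gamma.t.dual.a} via Corollary~\ref{gamma.c.dual-nuc} plus finite-rank approximation (the Cauchy-in-$\Nuc(H)$ detail you supply is exactly what the paper leaves implicit), part~\ref{item:gamma.t.dual.d} via Corollary~\ref{gamma.c.fourier-series} and \eqref{gamma.eq.trace}, and part~\ref{item:gamma.t.dual.b} with the identical surjectivity construction $(Vh')(x):=\Lambda(h'\tensor x)$. For part~\ref{item:gamma.t.dual.c}, which the paper disposes of by citing \cite[Theorem~10.9]{Nee2010}, your direct argument --- writing a finite rank $U$ as $\sum_i \konj{e_i}\tensor Ue_i$, pairing against conjugated complex Gaussians, and using Lemma~\ref{gamma.l.finran}, the ideal property for $VQ$, and Cauchy--Schwarz --- is precisely the standard proof of that cited result and is sound, including your correct observation that the conjugation in the Gaussian pairing is what recovers $\tr(V'U)$ rather than zero.
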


\begin{proof}
 ~\ref{item:gamma.t.dual.a}~follows from
  Corollary~\ref{gamma.c.dual-nuc} and approximation of a general $U
  \in \gamma(H;X)$ by finite rank operators.

\noindent
 ~\ref{item:gamma.t.dual.b} By~\ref{item:gamma.t.dual.a} the trace
  duality is well defined, and it reproduces the norm on
  $\gamma'(H';X')$ by construction. For surjectivity, let $\Lambda :
  \gamma(H;X) \to \C$ be a bounded functional and define
\[ 
  V : H'\pfeil X',  \qquad 
(Vh')(x) := \Lambda(h'\tensor x).
\]
A short computation reveals that $\tr(V'U) = \Lambda(U)$ for every
rank-one operator $U = h'\tensor x$. Hence $\tr(V'U) = \Lambda(U)$
even for every finite rank-operator $U: H \to X$.  But this implies
that $V \in \gamma'(H';X')$ and that $V$ induces $\Lambda$.

\noindent
\ref{item:gamma.t.dual.d} By Corollary~\ref{gamma.c.fourier-series},
$U = \sum_\alpha \konj{e_\alpha} \tensor Ue_\alpha$ and the convergence
is in $\norm{\cdot}_\gamma$. Hence
\[ \dprod{U}{V} = \sum_\alpha \dprod{\konj{e_\alpha} \tensor Ue_\alpha}{V}
= \sum_\alpha \dprod{Ue_\alpha}{V\konj{e_\alpha}}_{X,X'}
\]
by \eqref{gamma.eq.trace}.

\noindent
\ref{item:gamma.t.dual.c}~is proved as in \cite[Theorem~10.9]{Nee2010}.
\end{proof}

\begin{rem}
It is shown in \cite[Sec.~10]{Nee2010} that equality $\gamma(H';X') =
\gamma'(H';X')$ holds if $X$ is $K$-convex.  By a result of Pisier,
a space $X$ is $K$-convex if and only if it has nontrivial type. See
\cite[Sec.~10]{Nee2010} for more about $K$-convexity in this context.
\end{rem}

\subsection{Spaces of finite cotype} 
\label{ss:finite-cotype}

A {\em Rademacher} variable is a $\pm 1$-valued Bernoulli-$(\einhalb, \einhalb)$
random variable. A {\em complex Rademacher} variable is a random
variable of the form
\[ 
   r = r_1 + \ui r_2
\]
where $r_1, r_2$ are independent real Rademachers on the same
probability space.  Unless otherwise stated, our Rademacher variables
are understood to be complex.

By \cite[Prop.~2.6]{Nee2010} (see also \cite[Lemma~12.11]{DieJarTon1995}) 
\begin{equation}\label{gamma.eq.r-to-gamma}
  \Exp \Big\|{\sum}_{j=1}^n r_j x_j \Big\|_X^q \le \, (\pihalbe)^{\sfrac{q}2} \,  
\Exp \Big\|{\sum}_{j=1}^n \gamma_j x_j \Big\|_X^q, 
\end{equation}
whenever $ 1\le q < \infty$, $n \in \N$, $x_1, \dots, x_n \in X$,
$r_1, \dots, r_n$ are complex Rademachers and $\gamma_1, \dots,
\gamma_n$ are complex Gaussians.  (Our reference uses real random
variables, but the complex case follows by a straightforward argument,
yielding the same constant.)
 
A converse estimate does not hold in general unless the Banach space
has finite cotype. Recall that a Banach space $X$ has \emph{type
  $p\in[1,2]$} if there exists a constant $\tp_p(X)\geq0$ such that
for all finite sequences $(x_n)_{n=1}^m$ in $X$,
\[
   \Bignorm{ {\sum}_n r_n x_n }_{\Ell{2}(\Omega; X)}   \le \tp_p(X) \,\,
 \norm{ (x_n)_n}_{\ell_p(X)},
\]
and $X$ has \emph{cotype $q\in[2,\infty]$} if for some constant
$\ct_q(X)\geq0$,
\[
  \norm{ (x_n)_n }_{\ell_q(E)} \le \ct_q(X)\,\,  \Bignorm{ {\sum}_n r_n x_n }_{\Ell{2}(\Omega, E)},
\]
We refer to \cite[Chapter 11]{DieJarTon1995} for definitions, properties
and references on the notions of type and cotype of a Banach space.
(Using real in place of complex Rademachers may alter the values of
$\tp_p(X)$ and $\ct_q(X)$ by universal factors, but does not make a
qualitative difference.)

Each Banach spaces has cotype $\infty$ and type $1$; therefore, $X$ is
said to have \emph{nontrivial type} if it has type $p$ for some $p>1$,
and it said to have \emph{finite cotype} if it has cotype $q$ for some
$q < \infty$. Each Banach space of nontrivial type has finite cotype,
but the converse is false.

\smallskip

It is important for us that if $X$ has finite cotype, then a converse
to \eqref{gamma.eq.r-to-gamma} holds.  Namely, we have the following
deep result from \cite[Theorem~12.27]{DieJarTon1995}.

\begin{thm}\label{gamma.t.gam-vs-rad}
Let $2 \le q < \infty$. Then there is a universal constant $m_q>0$ such that
\[ 
   \Exp\Bignorm{ {\sum}_{j=1}^n \gamma_j x_j }^2 
\le m_q^2 \,\ct_q(X)^2 \, \Exp\Bignorm{ {\sum}_{j=1}^n r_j x_j }^2
\]
whenever $X$ is a Banach space of cotype $q$ and $x_1, \dots, x_n \in
X$.
\end{thm}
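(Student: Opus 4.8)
The plan is to reduce to real scalars --- where the statement is exactly \cite[Theorem~12.27]{DieJarTon1995} --- and then to sketch the idea of that proof. If $X$ is complex, split $\gamma_j=\gamma_j^{(1)}+\ui\,\gamma_j^{(2)}$ and $r_j=r_j^{(1)}+\ui\,r_j^{(2)}$ into real and imaginary parts; then
\[
 {\sum}_j\gamma_j x_j={\sum}_j\gamma_j^{(1)}x_j+{\sum}_j\gamma_j^{(2)}(\ui x_j),\qquad
 {\sum}_j r_j x_j={\sum}_j r_j^{(1)}x_j+{\sum}_j r_j^{(2)}(\ui x_j),
\]
so both are Gaussian, resp.\ Rademacher, sums over the $2n$ vectors $x_1,\dots,x_n,\ui x_1,\dots,\ui x_n$ of the underlying real space $X_r$, precisely as in Remark~\ref{gamma.r.rc1}. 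Since $\ct_q(X_r)$ and $\ct_q(X)$ agree up to a universal factor, it suffices to treat real $X$ with real Gaussians $\gamma_j$ and real Rademachers $r_j$.

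For the real case I would work throughout in $\Ell{2}$ and fix $K>0$, splitting $\sum_j\gamma_j x_j$ as $\sum_j\gamma_j\car_{\abs{\gamma_j}\le K}x_j+\sum_j\gamma_j\car_{\abs{\gamma_j}>K}x_j$. Using the polar decomposition $\gamma_j=r_j\abs{\gamma_j}$, where $(r_j)_j=(\sgn\gamma_j)_j$ is an independent Rademacher family independent of $(\abs{\gamma_j})_j$, together with the contraction principle (Theorem~\ref{gamma.t.contr}) applied conditionally on $(\abs{\gamma_j})_j$, the bounded part is dominated by $K\,\bigl(\Exp\|\sum_j r_j x_j\|^2\bigr)^{\einhalb}$. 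For the tail I would decompose dyadically over the layers $L_\ell:=\{\,j:\abs{\gamma_j}\in(2^\ell K,2^{\ell+1}K]\,\}$, $\ell\ge 0$, on which $\abs{\gamma_j}\le 2^{\ell+1}K$ and each index lies independently with probability $q_\ell\le 2\,\ue^{-2^{2\ell-1}K^2}$. Minkowski's inequality in $\Ell{2}$, followed by symmetrisation and the contraction principle layer by layer, yields
\[
 \Bigl(\Exp\Bignorm{{\sum}_j\gamma_j\car_{\abs{\gamma_j}>K}x_j}^2\Bigr)^{\einhalb}
 \le{\sum}_{\ell\ge 0}2^{\ell+1}K\,\Bigl(\Exp\Bignorm{{\sum}_{j\in L_\ell}r_j x_j}^2\Bigr)^{\einhalb},
\]
where $L_\ell$ is a random ($q_\ell$-)subset of $\{1,\dots,n\}$.

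The decisive input --- and the only place the cotype hypothesis is used --- is a \emph{sparse sub-sum estimate}: for a $p$-random subset $J\subseteq\{1,\dots,n\}$,
\[
 \Bigl(\Exp\Bignorm{{\sum}_{j\in J}r_j x_j}^2\Bigr)^{\einhalb}
 \le c_q\,\ct_q(X)\,p^{1/q}\,\Bigl(\Exp\Bignorm{{\sum}_{j=1}^n r_j x_j}^2\Bigr)^{\einhalb},
\]
with $c_q$ depending only on $q$. Granting this, $\sum_{\ell\ge 0}2^{\ell+1}K\,q_\ell^{1/q}$ converges (it is dominated by its $\ell=0$ term because of the double-exponential decay of $q_\ell$ in $\ell$), so the tail is $\lesssim_q\ct_q(X)\,\bigl(\Exp\|\sum_j r_j x_j\|^2\bigr)^{\einhalb}$; adding the bounded part and absorbing $K$ (using $\ct_q(X)\ge 1$) proves the theorem.

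The main obstacle is precisely the sparse sub-sum estimate. The temptation is to bound the tail directly by the contraction principle using the largest surviving coefficient, but that costs a factor $\Exp\max_{j\le n}\abs{\gamma_j}\sim\sqrt{\log n}$ --- exactly the logarithm that finite cotype is designed to remove --- so a genuinely different argument is needed, and one must also arrange matters so that the proof of the sub-sum estimate does not circularly invoke the Gaussian--Rademacher comparison being established. These points are handled carefully in \cite[Chapter~12]{DieJarTon1995}, and for the purposes of this paper it is enough to quote the result.
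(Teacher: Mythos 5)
Your proposal matches the paper's treatment: the paper does not reprove this result but quotes it as \cite[Theorem~12.27]{DieJarTon1995}, noting (as you do via the splitting into real and imaginary parts, cf.\ Remark~\ref{gamma.r.rc1}) that the passage from real to complex Gaussians and Rademachers only affects constants by universal factors. Your additional sketch of the truncation/layer argument with the cotype-based sparse sub-sum estimate is a faithful outline of the standard proof behind the cited theorem, but for the purposes of this paper citing it, as you conclude, is exactly what is done.
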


The following observation is needed in the proof of Theorem~\ref{acsf.t.nsfo}
below.

\begin{lemma}\label{gamma.l.cotyp}
A Banach space $X$ has the same  type and cotype as $\gamma(H; X)$.
\end{lemma}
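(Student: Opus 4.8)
The plan is to establish the two-sided estimates $\tp_p(\gamma(H;X)) = \tp_p(X)$ for every $p\in[1,2]$ and $\ct_q(\gamma(H;X)) = \ct_q(X)$ for every $q\in[2,\infty)$ (the cases $p=1$ and $q=\infty$ hold for all Banach spaces and carry no content). One inequality is immediate: if $g\in H$ is a unit vector then $x\mapsto \konj{g}\tensor x$ embeds $X$ isometrically into $\gamma(H;X)$, since by Lemma~\ref{gamma.l.finran} the $\gamma$-norm is a cross-norm and $\norm{\konj{g}\tensor x}_\gamma=\norm{g}_H\norm{x}_X=\norm{x}_X$. As type and cotype pass to closed subspaces without enlarging the constants, this gives $\tp_p(X)\le\tp_p(\gamma(H;X))$ and $\ct_q(X)\le\ct_q(\gamma(H;X))$, and it remains to prove the reverse inequalities.

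For these, fix an orthonormal basis $(e_\alpha)_{\alpha\in A}$ of $H$, operators $T_1,\dots,T_n\in\gamma(H;X)$, and Rademacher variables $r_1,\dots,r_n$. The first step is to unfold the $\gamma$-norm of $\sum_j r_j T_j$. For finite $F\subseteq A$ write $P_F$ for the orthogonal projection onto $\spann\{e_\alpha:\alpha\in F\}$. By Corollary~\ref{gamma.c.fourier-series} one has $SP_F\to S$ in $\gamma(H;X)$ for each $S\in\gamma(H;X)$, and since $P_FP_{F'}=P_F$ whenever $F\subseteq F'$, the ideal property (Theorem~\ref{gamma.t.ideal}) shows that the net $F\mapsto\norm{SP_F}_\gamma$ increases to $\norm{S}_\gamma$. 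Applying Lemma~\ref{gamma.l.finran} to the orthonormal system $\{e_\alpha:\alpha\in F\}$ identifies, for each fixed realisation of the $r_j$,
\[
\bignorm{\bigl({\sum}_j r_j T_j\bigr)P_F}_\gamma^2 = \Exp_\gamma\Bignorm{{\sum}_{\alpha\in F}\gamma_\alpha{\sum}_j r_j T_j e_\alpha}_X^2 .
\]
Taking the supremum over $F$, interchanging it with the Rademacher average $\Exp_r$ --- permissible because the left-hand side is an increasing net in $F$ and $\Exp_r$ averages over finitely many sign patterns --- and using Fubini for $\Exp_r$ and $\Exp_\gamma$, I obtain
\[
\Exp_r\Bignorm{{\sum}_j r_j T_j}_\gamma^2 = \sup_F \Exp_\gamma\Exp_r\Bignorm{{\sum}_j r_j w_j^{F}}_X^2, \qquad w_j^{F}:={\sum}_{\alpha\in F}\gamma_\alpha T_j e_\alpha .
\]

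The second step is to freeze $\gamma$ and $F$, so that $w_1^F,\dots,w_n^F$ become fixed vectors of $X$ with $\Exp_\gamma\norm{w_j^F}_X^2=\norm{T_jP_F}_\gamma^2$, apply the type (resp.\ cotype) inequality of $X$ to the inner average over the $r_j$, and then integrate in $\gamma$. On the type side this produces
\[
\Exp_r\Bignorm{{\sum}_j r_j T_j}_\gamma^2 \le \tp_p(X)^2\,\sup_F\,\Exp_\gamma\Bigl({\sum}_j\norm{w_j^F}_X^p\Bigr)^{2/p}\le \tp_p(X)^2\,\sup_F\Bigl({\sum}_j\norm{T_jP_F}_\gamma^p\Bigr)^{2/p},
\]
the last step being the triangle inequality in $\Ell{2/p}(\Exp_\gamma)$, which is legitimate precisely because $2/p\ge1$; since $\norm{T_jP_F}_\gamma\le\norm{T_j}_\gamma$ this is bounded by $\tp_p(X)^2(\sum_j\norm{T_j}_\gamma^p)^{2/p}$, giving $\tp_p(\gamma(H;X))\le\tp_p(X)$. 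On the cotype side the same chain runs backwards: one gets $\Exp_r\norm{\sum_j r_j T_j}_\gamma^2\ge\ct_q(X)^{-2}\sup_F\Exp_\gamma(\sum_j\norm{w_j^F}_X^q)^{2/q}$, and Minkowski's integral inequality (here, the contractive inclusion $\ell_q(\Ell{2})\embeds\Ell{2}(\ell_q)$, valid because $q\ge2$) gives $\Exp_\gamma(\sum_j\norm{w_j^F}^q)^{2/q}\ge(\sum_j(\Exp_\gamma\norm{w_j^F}^2)^{q/2})^{2/q}=(\sum_j\norm{T_jP_F}_\gamma^q)^{2/q}$; as there are only finitely many $j$ and $\norm{T_jP_F}_\gamma\nearrow\norm{T_j}_\gamma$, the supremum over $F$ equals $(\sum_j\norm{T_j}_\gamma^q)^{2/q}$, whence $\ct_q(\gamma(H;X))\le\ct_q(X)$. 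Together with the isometric embedding above this yields the asserted equalities.

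I expect the only genuinely fiddly point to be the bookkeeping with the order of $\sup_F$, $\Exp_r$ and $\Exp_\gamma$ in the unfolding of the $\gamma$-norm: monotonicity of the net $\norm{SP_F}_\gamma$ (via the ideal property) together with finiteness of the Rademacher average make all the interchanges rigorous, but one must check that the reduction to finite $F$ is compatible with both an upper and a lower estimate. The conceptual heart of the argument is instead the lucky orientation of Minkowski's inequality in both regimes --- the $\Ell{2/p}$-triangle inequality for $p\le2$ on the type side and the reverse Minkowski inequality $\ell_q(\Ell{2})\embeds\Ell{2}(\ell_q)$ for $q\ge2$ on the cotype side --- which is exactly what decouples the type/cotype of $X$ (acting on the $r$-variable) from the $\gamma$-structure (living on the $H$-variable). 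As a by-product the argument gives equality of the constants themselves, and it is insensitive to the choice of real or complex Rademacher and Gaussian variables.
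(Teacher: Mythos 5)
Your proof is correct, but it takes a genuinely different route from the paper's. The paper fixes an orthonormal basis, expands each operator by Corollary~\ref{gamma.c.fourier-series}, and then runs a chain of Khinchine--Kahane inequalities, Rademacher--Gaussian comparisons (Theorem~\ref{gamma.t.gam-vs-rad}, which is where finite cotype of $X$ is used a second time) and a symmetry argument exchanging the two independent Gaussian families, ending with a bound of the form $\ct_q(\gamma(H;X)) \lesssim \ct_q(X)^2\, m^2\, m_q$. You instead condition on the Gaussian variables: after identifying $\Exp_r\bignorm{\sum_j r_jT_j}_\gamma^2$ with $\sup_F \Exp_\gamma\Exp_r\bignorm{\sum_j r_j w_j^F}_X^2$ (your interchange of $\sup_F$ with the finite Rademacher average, justified by the monotonicity of $F\mapsto\norm{SP_F}_\gamma$ coming from the ideal property, is sound), you apply the scalar type/cotype inequality of $X$ to the frozen vectors $w_j^F(\gamma)$ and then invoke Minkowski's mixed-norm inequality, whose orientation is favourable on both sides precisely because $p\le 2\le q$. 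This avoids the Gaussian--Rademacher comparison and the Kahane--Khintchine step altogether, and it buys the sharper quantitative conclusion $\tp_p(\gamma(H;X))=\tp_p(X)$ and $\ct_q(\gamma(H;X))=\ct_q(X)$, whereas the paper's route degrades the cotype constant to $\ct_q(X)^2$ up to universal factors (harmless for the paper, which only uses the lemma qualitatively, e.g.\ in Theorem~\ref{acsf.t.nsfo}). In essence your argument is the classical proof that $\Ell{2}(\Omega;X)$ inherits type and cotype from $X$, transported to $\gamma(H;X)$ through Lemma~\ref{gamma.l.finran}, which realises each truncation $T_jP_F$ isometrically as a Gaussian sum in $\Ell{2}(\Omega;X)$; the converse inequalities are handled exactly as in the paper, via the rank-one embedding $x\mapsto\konj{g}\tensor x$.
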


\begin{proof}
  We show the result only for the case of cotype.  For the type case
  the arguments are similar.  Suppose first that $X$ has cotype
  $q<\infty$, and let $(U_k)_k$ be a finite sequence in $\gamma(H;
  X)$. Fix an orthonormal basis $(e_\alpha)_\alpha$ of $H$.  Then $U_k
  = \sum_{\alpha} \konj{e_\alpha} \tensor U_ke_\alpha$ for each $k$ by
  Corollary~\ref{gamma.c.fourier-series}. Hence, with $F$ denoting
  finite subsets of the index set of the orthonormal basis,
\begin{align*}
{\sum}_{k} \norm{U_k}_{\gamma}^q 
& = {\sum}_k \lim_{F} \Bignorm{{\sum}_{\alpha\in F} \konj{e_\alpha}
  \tensor U_ke_\alpha }_\gamma^q
  =  \lim_{F} {\sum}_k \Bignorm{{\sum}_{\alpha \in F} \konj{e_\alpha}
    \tensor U_ke_\alpha }_\gamma^q\\ 
& \lesssim \sup_{F} {\sum}_k   \Exp' \Bignorm{{\sum}_{\alpha \in F}
  \gamma'_\alpha U_ke_\alpha }_X^q
  = \sup_{F} \Exp' {\sum}_k \Bignorm{{\sum}_{\alpha \in F}
    \gamma'_\alpha U_ke_\alpha }_X^q\\ 
& \lesssim \sup_{F}\,  \ct_q(X)^q\,  \Exp'\, \Exp \Bignorm{  {\sum}_k
  r_k  \Big({\sum}_{\alpha \in F} \gamma'_\alpha
  U_ke_\alpha\Big)}_X^q\\ 
& \lesssim \sup_{F}\,  \ct_q(X)^{2q}\, m^q \, m_q^q \,  \Exp'\,\Exp
\Bignorm{  {\sum}_k \gamma_k  \Big({\sum}_{\alpha \in F}
  \gamma'_\alpha U_ke_\alpha\Big) }_X^q\\ 
& = \sup_{F}\,  \ct_q(X)^{2q}\, m^q\, m_q^q\,  \Exp\,\Exp'
\Bignorm{{\sum}_{\alpha \in F}  \gamma_\alpha'  \Big({\sum}_k \gamma_k
  U_k\Big) e_\alpha }_X^q\\ 
& \lesssim \ct_q(X)^{2q}\, m^q \,m_q^q\,   \Exp\, \Bignorm{{\sum}_k
  \gamma_k U_k}_\gamma^q \\ 
& \lesssim \ct_q(X)^{2q}\, m^{2q} \,m_q^q\,   \Exp\,\Bignorm{{\sum}_k
  r_k U_k}_\gamma^q, 
\end{align*}
where $m := \sqrt{\pihalbe}$ and the non-mentioned constants come from
the Khinchine--Kahane inequalities.  It follows that
\[ 
\norm{ (U_k)_k }_{\ell_q(\gamma(H;X))} \lesssim \ct_q(X)^2\, m^{2}
\,m_q\, \Bignorm{{\sum}_k r_k U_k}_{\Ell{2}(\Omega;\gamma(H;X))}
\]
and this shows that $\ct_q(\gamma(H;X)) \lesssim \ct_q(X)^2\, m^{2} \,m_q$.

\smallskip
\noindent
For the converse  suppose
that $\gamma(H; X)$ has cotype $q<\infty$. Let $(x_k)_k$ be a finite
sequence in $X$ and let $e\in H$ be a unit vector. Abbreviate $E := \gamma(H;X)$
and $U_k := \konj{e}\tensor x_k$.  Then 
\begin{align*}
\Big({\sum}_k \norm{x_k}_X^q\Big)^{1/q} = \Big({\sum}_k \norm{U_k}_E^q\Big)^{1/q}
\le \ct_q(E) \Bignorm{{\sum}_k r_k U_k}_{\Ell{2}(\Omega;E)}. 
\end{align*}
Moreover,
\begin{align*}
\Bignorm{{\sum}_k r_k U_k}_{\Ell{2}(\Omega;E)}^2
= \Exp \Bignorm{{\sum}_k r_k \konj{e}\tensor x_k}_E^2
= \Exp \Bignorm{ \konj{e}\tensor \Big({\sum}_k r_k  x_k\Big)}_E^2
= \Exp \Bignorm{{\sum}_k r_k  x_k}_X^2,
\end{align*}
whence it follows that $\ct_q(X) \le \ct_q(E)$.
\end{proof}

The next result shows the significance of spaces of finite cotype 
for the theory of $\gamma$-radonifying operators.

\begin{thm}\label{gamma.t.kw-ess}
  Let $X$ be a Banach space of finite cotype $q < \infty$.  There is a
  constant $c = c(q, \ct_q(X))$ such that the following holds:
  Whenever $K$ is a compact Hausdorff space, $H$ is a Hilbert space
  and $T \in \Lin(H;X)$ is an operator that factorises as $T = UV$
  over $\Ce(K)$, i.e.,
\[
\xymatrix{
H \ar[dr]_V\ar[rr]^T &  & X\\
& \Ce(K)\ar[ur]_U
},
\]
then $T \in \gamma(H;K)$ and $\norm{T}_{\gamma(H;X)} \le c \norm{U} \norm{V}$. 
\end{thm}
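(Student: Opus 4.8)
The plan is to exploit the factorisation $T = UV$ over $\Ce(K)$ together with the known behaviour of $\gamma$-radonifying operators under composition (the ideal property, Theorem~\ref{gamma.t.ideal}) and the structure of operators into and out of a $\Ce(K)$-space. The key observation is that any bounded operator $V : H \to \Ce(K)$ is automatically $2$-summing (this is the classical Grothendieck-type factorisation for operators with range in a $\Ce(K)$-space), hence, by Pietsch factorisation, $V$ factors as $H \xrightarrow{S} \Ell{2}(K,\mu) \xrightarrow{J} \Ce(K)$ for a suitable probability measure $\mu$ on $K$, where $S$ is bounded (indeed $\norm{S} \le \pi_2(V) \lesssim \norm{V}$ up to the Grothendieck constant) and $J$ is the formal identity, which is bounded from $\Ell{2}(\mu)$ into $\Ell{\infty}(\mu) \supseteq \Ce(K)$ of norm one? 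No --- the right statement is that $V^*: \Ce(K)' \to H$ is $2$-summing, equivalently $V$ is $2$-summing, and $V = J \circ S$ with $S: H \to \Ell{2}(\mu)$ bounded and $J : \Ell{2}(\mu) \to \Ce(K)$ \emph{not} bounded but the composition controlled appropriately. I would instead route the argument through the \emph{identity} $\iota : \Ce(K) \to \Ell{2}(K,\mu)$, which is bounded of norm one since $\mu$ is a probability measure.

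More precisely, here is the route I would take. First, since $\Ce(K)$ has cotype $2$? No --- $\Ce(K)$ need not have finite cotype. The finite cotype hypothesis is on $X$, not on $\Ce(K)$; that is exactly why the statement is nontrivial. So the argument must push the cotype of $X$ through the operator $U: \Ce(K) \to X$. The correct mechanism: an operator $U$ from $\Ce(K)$ into a space $X$ of finite cotype is automatically $(q,1)$-summing (a theorem going back to Maurey), hence in particular $q$-summing, hence (by Pietsch) factors through $\Ell{q}(K,\nu)$ for a probability measure $\nu$, as $U = \tilde{U} \circ \iota_q$ with $\iota_q : \Ce(K) \to \Ell{q}(K,\nu)$ the canonical map and $\norm{\tilde U} \lesssim \pi_q(U) \le c(q,\ct_q(X))\,\norm{U}$. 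Second, I would use that $\Ell{q}(\nu)$, being an $\Ell{q}$-space with $q < \infty$, has property $(\alpha)$ or at least satisfies $\gamma(H;\Ell{q}(\nu)) = \Ell{q}(\nu; H)$ with equivalent norms (the square-function description of $\gamma$-spaces over $\Ell{q}$, which holds in any Banach lattice of finite cotype; cf.\ Section~\ref{ss:banach-lattices}, or Theorem~\ref{gamma.l.cotyp} applied to $\Ell{q}(\nu)$). Since $H$ is a Hilbert space, any bounded operator from $H$ into $\Ell{2}(\nu) \supseteq \Ell{q}(\nu)$-valued setting is $\gamma$-radonifying because... here I need to be careful: the composition $\iota_q \circ V : H \to \Ell{q}(\nu)$ must be shown to be $\gamma$-radonifying.

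The cleanest assembly: factor $T = U V$, then factor $U = \tilde U \circ \iota_q$ with $\iota_q : \Ce(K) \to \Ell{q}(K,\nu)$, so $T = \tilde U \circ (\iota_q V)$. Now $\iota_q V : H \to \Ell{q}(\nu)$; compose further with the bounded inclusion $\Ell{q}(\nu) \hookrightarrow \Ell{2}(\nu)$ (possible only if $q \le 2$, which fails for $q > 2$ --- so instead one uses that $\iota_2 V : H \to \Ell{2}(\nu)$ is Hilbert--Schmidt iff... no, a bounded operator from $H$ to $\Ell{2}(\nu)$ need not be $\gamma$-radonifying). The genuinely hard point, which I expect to be the main obstacle, is establishing that the $\Ell{q}(\nu)$-valued operator $\iota_q V$ is $\gamma$-radonifying with norm $\lesssim \norm{V}$: this is where one must invoke that $V : H \to \Ce(K)$ composed with $\iota_q$ lands in $\Ell{q}(\nu)$ \emph{and} that, for $q < \infty$, every bounded $H \to \Ell{q}(\nu)$ obtained this way is $\gamma$-bounded --- which follows because $\iota_q V$ factors, via Pietsch applied to the $2$-summing operator $V$, through $\Ell{2}(\mu)$ and then the pointwise multiplier / change-of-density argument identifies it as a Hilbert--Schmidt-type, hence $\gamma$-radonifying, operator. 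Once $\iota_q V \in \gamma(H;\Ell{q}(\nu))$ with the right norm bound, the ideal property (Theorem~\ref{gamma.t.ideal}) applied to the bounded operator $\tilde U : \Ell{q}(\nu) \to X$ gives $T = \tilde U (\iota_q V) \in \gamma(H;X)$ with $\norm{T}_{\gamma(H;X)} \le \norm{\tilde U}\,\norm{\iota_q V}_{\gamma} \lesssim c(q,\ct_q(X))\,\norm{U}\,\norm{V}$, which is the claim. I would double-check the measure-theoretic bookkeeping ($\mu$ versus $\nu$, boundedness of $\iota_q$ requiring a probability measure, and the constant tracking through Pietsch and through Theorem~\ref{gamma.t.gam-vs-rad}) at the end; the summing-operator factorisations are the conceptual core and the rest is routine.
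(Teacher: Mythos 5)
Your overall strategy --- push the finite cotype of $X$ through $U$ via Maurey's theorem to obtain a summing property, Pietsch-factorise, and finish with the ideal property of $\gamma(H;X)$ --- begins parallel to the paper's proof, but the way you complete it has a genuine gap, located exactly at the step you yourself flag as the ``genuinely hard point'', namely that $\iota_q V\in\gamma(H;\Ell{q}(\nu))$ with norm $\lesssim\norm{V}$. Your justification of it rests on the claim that $V\colon H\to\Ce(K)$ is $2$-summing (``Pietsch applied to the $2$-summing operator $V$''), and that claim is false: the canonical isometric embedding of an infinite-dimensional Hilbert space $H$ into $\Ce(B_{H'},\mathrm{weak}^*)$ maps an orthonormal sequence, which is weakly $2$-summable by Bessel's inequality, to a sequence of norm-one functions, so it is not $2$-summing. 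What Grothendieck's theorem gives is that the adjoint $V'\colon\Ce(K)'\to H'$ is $2$-summing (since $\Ce(K)'$ is an $\Ell{1}$-space); this does not pass back to $V$, and without it the ``Hilbert--Schmidt-type'' identification of $\iota_q V$ has no basis. A second, smaller error: $(q,1)$-summing does not imply $q$-summing (the implication goes the other way); the correct input, and the one the paper uses, is that $U$ is $p$-summing for each $p>q$ with $\pi_p(U)\le c(p,\ct_q(X))\norm{U}$, by \cite[Theorem~11.14]{DieJarTon1995}.

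Both defects are repairable, in two different ways. The paper's own proof avoids factorising $U$ at all: it applies the ideal property of $p$-summing operators to get $\pi_p(T)\le\pi_p(U)\norm{V}$ and then invokes the Linde--Pietsch theorem \cite{LinPie1974}, \cite[12.1]{Nee2010}, that a $p$-summing operator from a Hilbert space is $\gamma$-radonifying. Alternatively, your factorisation $T=\tilde U\,(\iota_p V)$ through $\Ell{p}(\nu)$ (with $p>q$, $\nu$ a probability measure on $K$) can be completed without any summing property of $V$: since $[(\iota_p V)h](\omega)=\dprod{h}{V'\delta_\omega}$, every finite orthonormal system $(e_n)$ satisfies the pointwise bound $\bigl({\sum}_n\abs{(Ve_n)(\omega)}^2\bigr)^{1/2}\le\norm{V'\delta_\omega}_{H'}\le\norm{V}$, so $\bignorm{({\sum}_n\abs{\iota_p Ve_n}^2)^{1/2}}_{\Ell{p}(\nu)}\le\norm{V}$; as $\Ell{p}(\nu)$ is a Banach lattice of finite cotype, Theorem~\ref{acsf.t.blfc-discrete} gives a uniform bound on the Gaussian sums, hence $\iota_p V\in\gamma_\infty(H;\Ell{p}(\nu))=\gamma(H;\Ell{p}(\nu))$ (the spaces coincide since $\Ell{p}(\nu)$ does not contain $\co$) with $\gamma$-norm $\lesssim\norm{V}$, and the ideal property finishes the argument. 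As written, however, the proposal does not prove the theorem.
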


\begin{proof}
  Let $X$ be of cotype $2\le q <\infty$ and fix $q < p < \infty$. By
  \cite[Theorem~11.14]{DieJarTon1995} the operator $U$ is $p$-absolutely
  summing, and one has $\pi_p(U) \le c \cdot \norm{U}$, where $c$
  depends on $p$ and $\ct_q(X)$.  By the ideal property for
  $p$-absolutely summing operators, $T$ is $p$-absolutely summing with
  $\pi_p(T) \le\pi_p(U) \norm{V}$.  Now, a theorem of Linde and
  Pietsch \cite[12.1]{Nee2010} \cite{LinPie1974} yields that $T \in
  \gamma(H;X)$ with $\norm{T}_\gamma \le \max\{K_{2,p}^\gamma,
  K_{p,2}^\gamma\} \pi_p(T)$. Here $K_{p,2}^\gamma$ and
  $K_{2,p}^\gamma$ are the constants in the Khinchine--Kahane
  inequalities for Gaussians, see \cite[Prop.2.7]{Nee2010}.  By taking
  the infimum over $p$ we remove the dependence of the constant on
  $p$.
\end{proof}

\subsection{The space  $\gamma(\Omega;X)$}

We now consider the case that $H = \Ell{2}(\Omega)$ for some
measure space $(\Omega,\Sigma,\mu)$.
For a $\mu$-measurable function $f: \Omega \to X$ we define 
\begin{align*}
  \Sigma_f & := \{ A\in \Sigma \suchthat \car_A f\in
  \Ell{2}(\Omega;X)\},\qquad \De_f := \{ \car_A g \suchthat A\in
  \Sigma_f, g\in \Ell{2}(\Omega)\}
\end{align*}
and the operator
\[ 
  \Jei_f : \De_f \pfeil X,\qquad \Jei_f(h) := \int_\Omega hf\, \ud{\mu}.
\]
We have collected some general facts about this construction in
Appendix~\ref{app.P2}.  There it is shown that $\De_f$ is dense in
$\Ell{2}(\Omega)$, and that $\Jei_f$ extends to a bounded operator
(denoted also by $\extend{\Jei}_f$) on the whole of $\Ell{2}(\Omega)$ if and
only if $f \in \Pe_2(\Omega;X)$, the space of weakly
$\Ell{2}$-functions. In this case, for any $h \in \Ell{2}(\Omega)$
the value $\Jei_f(h)$ is the {\em Pettis integral} of $hf$, i.e., it satisfies
\[ \dprod{\Jei_f(h)}{x'} = \int_\Omega h  \, (x'\nach f) \, \ud{\mu}.
\]
Now we let
\[ \gamma(\Omega;X) := \{ f\in \Pe_2(\Omega) \suchthat \extend{\Jei}_f \in 
\gamma(\Ell{2}(\Omega);X)\}
\]
and define $\gamma_\infty(\Omega;X)$ similarly. We abbreviate
$\norm{f}_\gamma := \norm{\Jei_f}_{\gamma}$ for $f\in \gamma_\infty(\Omega; X)$.

\smallskip

There is a $\gamma$-analogue of Lemma~\ref{P2.l.fatou}.  In fact, it
follows directly from that result and Lemma~\ref{gamma.l.fatou1}.

\begin{lemma}[$\gamma$-Fatou II]\label{gamma.l.fatou2}
  If $(f_n)_{n \in \N}$ is a bounded sequence in
  $\gamma_\infty(\Omega;X)$ with $f_n \to f$ almost everywhere, then
  $f\in \gamma_\infty(\Omega;X)$, $\extend{\Jei}_{f_n} \to \extend{\Jei}_f$
  strongly, and
\[ 
   \norm{f}_{\gamma} \le \liminf_{n\to \infty} \norm{f_n}_{\gamma}.
\]
\end{lemma}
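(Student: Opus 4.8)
The plan is to deduce this directly from the two results it is built on, as the preamble to the statement already hints. First I would recall the content of Lemma~\ref{P2.l.fatou} (the ``Pettis'' analogue, referred to in Appendix~\ref{app.P2}): if $(f_n)_n$ is a bounded sequence in $\Pe_2(\Omega;X)$ with $f_n \to f$ almost everywhere, then $f \in \Pe_2(\Omega;X)$ and $\extend{\Jei}_{f_n} \to \extend{\Jei}_f$ in the weak operator topology (or strongly; I would use whichever form that appendix lemma actually provides, and if only weak convergence is available I note that this is exactly what Lemma~\ref{gamma.l.fatou1} needs as input). The boundedness hypothesis $\sup_n \norm{f_n}_\gamma < \infty$ gives in particular $\sup_n \norm{\extend{\Jei}_{f_n}}_{\Lin} < \infty$ since $\norm{\cdot} \le \norm{\cdot}_\gamma$, so the hypotheses of the Pettis-level Fatou lemma are met and we obtain $f \in \Pe_2(\Omega;X)$ together with $\extend{\Jei}_{f_n} \to \extend{\Jei}_f$.

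Next I would feed this into Lemma~\ref{gamma.l.fatou1} ($\gamma$-Fatou I). The sequence $(\extend{\Jei}_{f_n})_n$ is bounded in $\gamma_\infty(\Ell{2}(\Omega);X)$ (by the hypothesis $\sup_n \norm{f_n}_\gamma < \infty$) and converges to $\extend{\Jei}_f \in \Lin(\Ell{2}(\Omega);X)$ in the weak operator topology; hence $\extend{\Jei}_f \in \gamma_\infty(\Ell{2}(\Omega);X)$, i.e. $f \in \gamma_\infty(\Omega;X)$, and
\[
 \norm{f}_\gamma = \norm{\extend{\Jei}_f}_\gamma \le \liminf_{n\to\infty} \norm{\extend{\Jei}_{f_n}}_\gamma = \liminf_{n\to\infty} \norm{f_n}_\gamma.
\]
Finally, the strong convergence $\extend{\Jei}_{f_n} \to \extend{\Jei}_f$ is either already delivered by Lemma~\ref{P2.l.fatou} directly, or—if that appendix result only yields weak operator convergence—upgraded here using the uniform $\gamma$-bound: a uniformly bounded net that converges in the weak operator topology and whose limit lies in $\gamma_\infty$ converges strongly on the dense subspace of ``nice'' functions $\De_f$ via the explicit integral formula for $\Jei_{f_n}(h)$, and uniform boundedness then propagates this to all of $\Ell{2}(\Omega)$.

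The only genuine point of care—hardly an obstacle—is making sure the mode of convergence asserted by the appendix lemma is strong enough to conclude the ``$\extend{\Jei}_{f_n}\to\extend{\Jei}_f$ strongly'' clause of the statement; everything else is a mechanical chaining of two already-cited results. I would phrase the final write-up in two or three sentences, essentially: ``apply Lemma~\ref{P2.l.fatou} to get $f\in\Pe_2(\Omega;X)$ and $\extend{\Jei}_{f_n}\to\extend{\Jei}_f$, then apply Lemma~\ref{gamma.l.fatou1}.''
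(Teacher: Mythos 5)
Your proposal is correct and is exactly the paper's argument: the authors prove this lemma by stating that it follows directly from Lemma~\ref{P2.l.fatou} combined with Lemma~\ref{gamma.l.fatou1}, which is the chaining you describe. Your only hesitation is unnecessary, since Lemma~\ref{P2.l.fatou} as stated already delivers the strong convergence $\extend{\Jei}_{f_n}\to\extend{\Jei}_f$, so no upgrading step is needed.
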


The spaces $\gamma(\Omega;X)$ and $\Pe_2(\Omega)$ are not
complete in general. This is different with
\[ 
  \gamma_2(\Omega;X) := \Ell{2}(\Omega;X) \cap \gamma(\Omega;X),
\]
which is a Banach space with respect to the norm
$\norm{f}_{\gamma_2}:= \norm{f}_{\Ell{2}} + \norm{\Jei_f}_\gamma$. Recall
that
\[ 
  \spann\{ \car_A \tensor x \suchthat A\in \Sigma,\,\,\mu(A) < \infty,\,\,
x\in X\}
\]
is called the space of ($X$-valued) {\em step functions}.

\begin{lemma}\label{gamma.l.gamma2}
  The space of step functions is dense in $\gamma_2(\Omega;X)$, i.e.,
  whenever $f\in \Ell{2}(\Omega;X)$ such that $\Jei_f\in
  \gamma(\Ell{2}(\Omega);X)$, then there is a sequence $(f_n)_n$ of
  $X$-valued step functions such that $\norm{f_n -f }_2 \to 0$ and
  $\norm{\Jei_{f_n} - \Jei_f}_\gamma \to 0$.
\end{lemma}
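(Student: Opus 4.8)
The plan is to prove density of step functions in $\gamma_2(\Omega;X)$ in two stages: first reduce to the case of finite measure spaces and bounded functions by a truncation argument, then handle that case by a conditional-expectation (martingale) approximation. For the first stage, given $f \in \gamma_2(\Omega;X)$, I would set $A_n := \{ t \in \Omega \suchthat \tfrac1n \le \norm{f(t)}_X \le n\}$ and $f_n := \car_{A_n} f$. Each $A_n$ has finite measure (since $f \in \Ell{2}(\Omega;X)$), and $f_n \to f$ in $\Ell{2}(\Omega;X)$ by dominated convergence. Moreover $\Jei_{f_n} = \Jei_f \circ M_n$ where $M_n$ is multiplication by $\car_{A_n}$ on $\Ell{2}(\Omega)$; these $M_n$ are uniformly bounded projections converging strongly to the identity, so by Theorem~\ref{gamma.t.approx}\,\ref{item:gamma.t.approx.b} we get $\Jei_{f_n} \to \Jei_f$ in $\gamma(\Ell{2}(\Omega);X)$, hence $f_n \to f$ in $\gamma_2(\Omega;X)$. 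Thus it suffices to approximate a function $f$ supported on a set of finite measure with $\tfrac1n \le \norm{f} \le n$; in particular such an $f$ lies in $\Ell{2}(\Omega;X)$ with finite-measure support, and one may as well assume $\mu(\Omega) < \infty$.

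For the second stage, on a finite measure space I would approximate $f$ by conditional expectations $\E(f \mid \Sigma_k)$, where $(\Sigma_k)_k$ is an increasing sequence of finite sub-$\sigma$-algebras whose union generates $\Sigma$ (up to $\mu$-null sets; such a sequence exists after reducing to a countably generated situation, which is harmless since $f$ and a countable approximating family involve only countably many sets). Write $P_k$ for the orthogonal projection on $\Ell{2}(\Omega)$ onto $\Ell{2}(\Omega,\Sigma_k)$, i.e.\ the $\Ell{2}$-realisation of $\E(\cdot \mid \Sigma_k)$. The key identity is $\Jei_{\E(f\mid \Sigma_k)} = P_k \circ \Jei_f \circ P_k$: testing against $x' \in X'$ and $h \in \Ell{2}(\Omega)$ one checks $\dprod{\Jei_{\E(f\mid\Sigma_k)}(h)}{x'} = \int h\, (x'\circ \E(f\mid\Sigma_k))\,\ud\mu = \int \E(h\mid\Sigma_k)\,(x'\circ f)\,\ud\mu = \dprod{\Jei_f(P_k h)}{x'}$, and since the left side depends only on $\E(h\mid\Sigma_k)$ one may insert the outer $P_k$ as well. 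Now $P_k \to \Id$ strongly on $\Ell{2}(\Omega)$ (martingale convergence), and $P_k$ is self-adjoint, so applying Theorem~\ref{gamma.t.approx} twice (part~\ref{item:gamma.t.approx.b} for the inner $P_k$, then part~\ref{item:gamma.t.approx.a} for the outer $P_k$, or simply note $P_k\Jei_f P_k \to \Jei_f$ by two strong-convergence steps with uniform bounds) gives $\Jei_{\E(f\mid\Sigma_k)} \to \Jei_f$ in $\gamma(\Ell{2}(\Omega);X)$. Simultaneously $\E(f\mid\Sigma_k) \to f$ in $\Ell{2}(\Omega;X)$ by the vector-valued martingale convergence theorem (valid for any Banach space $X$ since $f \in \Ell{2} \subseteq \Ell{1}$ on a finite measure space). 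Finally, each $\E(f\mid\Sigma_k)$ takes finitely many values, each on a set of finite measure, so it is a step function; combining with the first-stage reduction and a diagonal argument completes the proof.

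The step I expect to require the most care is the passage to a countably generated setting and the verification that the conditional expectation identity $\Jei_{\E(f\mid\Sigma_k)} = P_k \Jei_f P_k$ holds on all of $\Ell{2}(\Omega)$ (not just on $\De_f$), together with checking that $\E(f\mid\Sigma_k)$ indeed lies in $\Pe_2(\Omega)$ so that $\Jei$ makes sense for it — but on a finite measure space with $f$ bounded this is immediate. The strong-convergence invocations of Theorem~\ref{gamma.t.approx} are then routine. Alternatively, if one prefers to avoid martingales entirely, the same conclusion follows by approximating $f$ in $\Ell{2}(\Omega;X)$ by simple functions $g_m$ and noting that $\norm{\Jei_{g_m} - \Jei_f}_\gamma$ can be controlled once one knows $\Jei_f \in \gamma(\Ell{2}(\Omega);X)$; however the conditional-expectation route has the advantage of producing a single explicit approximating sequence controlling both norms at once.
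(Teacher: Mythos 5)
Your proposal is correct and follows essentially the paper's own route: approximate $f$ by conditional expectations $\E(f\mid\Sigma_k)$ with respect to finite sub-$\sigma$-algebras (after a harmless reduction to a countably generated, finite-measure situation) and deduce $\norm{\Jei_{\E(f\mid\Sigma_k)} - \Jei_f}_\gamma \to 0$ from the ideal/approximation property of $\gamma(\Ell{2}(\Omega);X)$, which is exactly the paper's argument. Two cosmetic slips worth fixing: your multiplication operators $M_n$ converge strongly to multiplication by $\car_{\{f\neq 0\}}$ rather than to the identity (immaterial, since $\Jei_f \nach M_{\car_{\{f\neq 0\}}} = \Jei_f$), and the key identity should read $\Jei_{\E(f\mid\Sigma_k)} = \Jei_f \nach P_k$ --- post-composing with $P_k$ on the $X$-side does not type-check, and your own computation only establishes (and only needs) the inner $P_k$.
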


\begin{proof}
  Approximate $f$ in $\Ell{2}$ by $f_n := \Exp(f|\Sigma_n)$ where
  $\Sigma_n$ is a finite sub-$\sigma$-algebra of $\Sigma$. (Note that
  $f$ is essentially measurable with respect to a countably generated
  sub-$\sigma$-algebra of $\Sigma$.)  It follows from
  Theorem~\ref{gamma.t.ideal} that $\norm{\Jei_{f_n} - \Jei_f}_\gamma \to
  0$.
\end{proof}

For a general $f\in \gamma(\Omega;X)$ we still have the following
approximation method. 

\begin{lemma}\label{gamma.l.P2-approx}
  Let $f \in \gamma( \Omega;X)$ and let $(A_n)_n \subseteq \Sigma_f$
  with $\car_{A_n} \nearrow \car$ almost everywhere on $\{f \neq 0\}$.  Then $f
  \car_{A_n} \in \gamma_2(\Omega;X)$, $\norm{f\car_{A_n}}_\gamma \le
  \norm{f}_\gamma$ and $\norm{f - f\car_{A_n}}_\gamma \to 0$.
\end{lemma}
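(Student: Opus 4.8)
The plan is to exploit the fact that multiplication by $\car_{A_n}$ is, on the level of the operators $\Jei_f$, nothing but pre-composition with an orthogonal projection on $\Ell{2}(\Omega)$, so that the ideal property (Theorem~\ref{gamma.t.ideal}) and the approximation result (Theorem~\ref{gamma.t.approx}) do all the work. First I would check that $f\car_{A_n}$ is again weakly $\Ell{2}$ and that $\Jei_{f\car_{A_n}} = \Jei_f M_{A_n}$, where $M_{A_n}\in \Lin(\Ell{2}(\Omega))$ is the (self-adjoint, contractive) multiplication operator $h \mapsto \car_{A_n} h$. Indeed, for $h$ in the dense subspace $\De_f$ one has $\car_{A_n} h \in \De_f$ as well, and $\Jei_f(M_{A_n} h) = \int_\Omega \car_{A_n} h f\,\ud\mu = \Jei_{f\car_{A_n}}(h)$; this identity then persists to all of $\Ell{2}(\Omega)$ by density since both sides are bounded. (Here one uses that $A_n \in \Sigma_f$ precisely guarantees $\car_{A_n} f \in \Ell{2}(\Omega;X)\subseteq \Pe_2(\Omega;X)$, so $f\car_{A_n}$ lies in $\gamma_2(\Omega;X)$ once we know $\Jei_{f\car_{A_n}}$ is $\gamma$-radonifying.)

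Granting this, the first two assertions are immediate: since $f\in\gamma(\Omega;X)$ means $\Jei_f\in\gamma(\Ell{2}(\Omega);X)$, the ideal property gives $\Jei_{f\car_{A_n}} = \Jei_f M_{A_n} \in \gamma(\Ell{2}(\Omega);X)$ with
\[
 \norm{f\car_{A_n}}_\gamma = \norm{\Jei_f M_{A_n}}_\gamma \le \norm{\Jei_f}_\gamma\,\norm{M_{A_n}} \le \norm{f}_\gamma,
\]
because $\norm{M_{A_n}}\le 1$; combined with $f\car_{A_n}\in\Ell{2}(\Omega;X)$ this puts $f\car_{A_n}$ in $\gamma_2(\Omega;X)$.

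For the convergence $\norm{f - f\car_{A_n}}_\gamma \to 0$, I would write $\Jei_f - \Jei_{f\car_{A_n}} = \Jei_f(\Id - M_{A_n})$ and invoke Theorem~\ref{gamma.t.approx}\ref{item:gamma.t.approx.b}: the projections $\Id - M_{A_n}$ are self-adjoint (so $R_\alpha^* = R_\alpha$), uniformly bounded by $1$, and converge strongly to $0$ on $\Ell{2}(\Omega)$. The strong convergence is where the hypothesis $\car_{A_n}\nearrow \car$ a.e.\ on $\{f\neq 0\}$ enters: for $h\in\Ell{2}(\Omega)$ we need $\norm{(\Id - M_{A_n})h}_2 = \norm{\car_{\Omega\setminus A_n} h}_2 \to 0$, which would follow from dominated convergence if $\car_{A_n}\nearrow\car$ held $\mu$-a.e.\ on all of $\Omega$, not merely on $\{f\neq 0\}$. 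This is the one genuine subtlety, and it is resolved by observing that $\Jei_f$ annihilates functions supported on $\{f=0\}$: indeed for $h\in\De_f$ supported there, $\Jei_f(h) = \int hf\,\ud\mu = 0$, so $\Jei_f = \Jei_f P$ where $P$ is the projection $h\mapsto \car_{\{f\neq 0\}} h$. Hence $\Jei_f(\Id - M_{A_n}) = \Jei_f P(\Id - M_{A_n}) = \Jei_f (P - M_{A_n\cap\{f\neq 0\}})$, and now $P - M_{A_n\cap\{f\neq 0\}} = M_{\{f\neq 0\}\setminus A_n} \to 0$ strongly by dominated convergence, since $\car_{\{f\neq 0\}\setminus A_n}\searrow 0$ a.e.\ by assumption. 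Applying Theorem~\ref{gamma.t.approx}\ref{item:gamma.t.approx.b} to the net $(M_{\{f\neq 0\}\setminus A_n})_n$ then yields $\Jei_{f\car_{A_n}} \to \Jei_f$ in $\gamma(\Ell{2}(\Omega);X)$, i.e.\ $\norm{f - f\car_{A_n}}_\gamma\to 0$. The only real obstacle, then, is this bookkeeping on the null set $\{f=0\}$; everything else is a direct application of the ideal and approximation properties established earlier.
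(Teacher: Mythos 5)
Your proof is correct, and since the paper leaves this lemma without proof it is precisely the intended routine argument: identify $\Jei_{f\car_{A_n}}=\Jei_f M_{A_n}$, get membership in $\gamma_2(\Omega;X)$ and the contractive bound from the ideal property (Theorem~\ref{gamma.t.ideal}), and get $\norm{f-f\car_{A_n}}_\gamma\to 0$ from Theorem~\ref{gamma.t.approx}, part~\ref{item:gamma.t.approx.b}. Your extra bookkeeping step, replacing $\Id-M_{A_n}$ by $M_{\{f\neq 0\}\setminus A_n}$ via $\Jei_f=\Jei_f M_{\{f\neq 0\}}$ so that the hypothesis ``a.e.\ on $\{f\neq 0\}$'' suffices for the strong convergence, is exactly the right way to handle the only genuine subtlety.
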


Note that a sequence $(A_n)_n$ as considered in
the lemma exists by Lemma~\ref{P2.l.locbd}.

\begin{thm}\label{gamma.t.P2-approx}
For a $\mu$-measurable function  $f: \Omega \to X$ the following assertions
are equivalent:
\begin{aufzii}
\item $f\in \gamma(\Omega;X)$.
\item There is a $\norm{\cdot}_\gamma$-Cauchy 
sequence $(f_n)_n$ of $X$-valued step functions
with $f_n \to f$ almost everywhere.
\end{aufzii}
Moreover, $\norm{f_n - f}_\gamma \to 0$ for each such sequence as in
{\upshape (ii)}.
\end{thm}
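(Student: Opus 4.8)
The plan is to prove the two implications separately, using the approximation lemmas already established.

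For the implication (ii)$\Rightarrow$(i): suppose $(f_n)_n$ is a $\norm{\cdot}_\gamma$-Cauchy sequence of $X$-valued step functions with $f_n \to f$ almost everywhere. Since step functions lie in $\gamma_\infty(\Omega;X)$ (indeed in $\gamma_2(\Omega;X)$) and the sequence is $\norm{\cdot}_\gamma$-Cauchy, it is in particular bounded in $\gamma_\infty(\Omega;X)$. By $\gamma$-Fatou II (Lemma~\ref{gamma.l.fatou2}), $f \in \gamma_\infty(\Omega;X)$ and $\extend{\Jei}_{f_n} \to \extend{\Jei}_f$ strongly. It remains to upgrade ``$\gamma_\infty$'' to ``$\gamma$'' (i.e. $\gamma$-radonifying) and to show $\norm{f_n - f}_\gamma \to 0$. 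For the latter, the $\gamma$-norm is lower semicontinuous along strongly convergent bounded sequences: applying Lemma~\ref{gamma.l.fatou2} to the sequence $(f_n - f_m)_n$ (for fixed $m$), which converges a.e.\ to $f - f_m$, gives $\norm{f - f_m}_\gamma \le \liminf_n \norm{f_n - f_m}_\gamma$; by the Cauchy property this tends to $0$ as $m \to \infty$. Once we know $\norm{\extend{\Jei}_{f_m} - \extend{\Jei}_f}_\gamma \to 0$ and each $\extend{\Jei}_{f_m}$ is a finite-rank operator, we conclude $\extend{\Jei}_f \in \gamma(\Ell{2}(\Omega);X)$, i.e.\ $f \in \gamma(\Omega;X)$.

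For the implication (i)$\Rightarrow$(ii): let $f \in \gamma(\Omega;X)$. First use Lemma~\ref{gamma.l.P2-approx}: pick $(A_n)_n \subseteq \Sigma_f$ with $\car_{A_n} \nearrow \car$ a.e.\ on $\{f \neq 0\}$, so that $f\car_{A_n} \in \gamma_2(\Omega;X)$ and $\norm{f - f\car_{A_n}}_\gamma \to 0$ (and $f\car_{A_n} \to f$ a.e.). Then, for each fixed $n$, Lemma~\ref{gamma.l.gamma2} provides $X$-valued step functions approximating $f\car_{A_n}$ simultaneously in $\Ell{2}$-norm and $\gamma$-norm; passing to a subsequence we may also assume a.e.\ convergence. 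A diagonal argument then produces a single sequence of step functions $(g_k)_k$ with $g_k \to f$ almost everywhere and $\norm{g_k - f}_\gamma \to 0$; in particular $(g_k)_k$ is $\norm{\cdot}_\gamma$-Cauchy, which is (ii). The final ``moreover'' claim then follows because any other sequence $(f_n)_n$ as in (ii) satisfies, by the (ii)$\Rightarrow$(i) argument above, $\norm{f_n - f}_\gamma \to 0$.

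The main obstacle is the diagonal/a.e.-convergence bookkeeping in (i)$\Rightarrow$(ii): Lemma~\ref{gamma.l.gamma2} guarantees $\Ell{2}$- and $\gamma$-norm convergence but only $\Ell{2}$-convergence yields a.e.-convergence of a subsequence, so one must carefully interleave the two approximation steps and extract a single subsequence that converges a.e.\ to $f$ while remaining $\gamma$-close to $f$. This requires $f$ to be essentially supported on a countably generated sub-$\sigma$-algebra (already noted in the proof of Lemma~\ref{gamma.l.gamma2}) so that the step functions can be chosen coherently. Everything else is a routine application of the $\gamma$-Fatou lemmas and the lower semicontinuity of the $\gamma$-norm.
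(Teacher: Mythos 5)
Your overall route coincides with the paper's: (ii)$\Rightarrow$(i) via the $\gamma$-Fatou lemma, and (i)$\Rightarrow$(ii) via Lemma~\ref{gamma.l.P2-approx} followed by Lemma~\ref{gamma.l.gamma2}. The (ii)$\Rightarrow$(i) half is complete and correct; your variant of obtaining $\norm{f-f_m}_\gamma\to 0$ by applying Lemma~\ref{gamma.l.fatou2} to the differences $(f_n-f_m)_n$ and then using that $\gamma(\Ell{2}(\Omega);X)$ is the $\norm{\cdot}_\gamma$-closure of the finite rank operators works just as well as the paper's appeal to the completeness of $\gamma(\Ell{2}(\Omega);X)$.

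The gap is in (i)$\Rightarrow$(ii), exactly at the point you flag: ``a diagonal argument then produces a single sequence $(g_k)_k$ with $g_k\to f$ almost everywhere'' is not yet an argument, because almost-everywhere convergence is not induced by a norm or metric, so one cannot extract the required sequence by naively diagonalizing over the double family of approximants. What is needed (and what the paper does) is a quantitative selection: for each $n$, use the $\Ell{2}$-convergence (hence convergence in measure, via an Egoroff-type argument) to pick a \emph{single} step function $f_n$ with $\norm{f\car_{A_n}-f_n}_\gamma<1/n$, with $\{f_n\neq 0\}\subseteq A_n$, and with a set $B_n\subseteq A_n$ such that $\mu(A_n\ohne B_n)\le 2^{-n}$ and $\norm{f_n(\omega)-f(\omega)}_X\le 1/n$ for $\omega\in B_n$. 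Then $N:=\bigcap_{k}\bigcup_{n\ge k}A_n\ohne B_n$ is a null set by the summability of $\mu(A_n\ohne B_n)$, and for $x$ outside $N$ and outside the null set where $\car_{A_n}\not\nearrow\car_{\{f\neq 0\}}$ one checks $f_n(x)\to f(x)$, using $A_n\nearrow\{f\neq 0\}$ together with the support condition $\{f_n\neq0\}\subseteq A_n$ to handle points that never enter the sets $A_n$ (alternatively, one may control the deviation on all of $\Omega$ rather than only on $A_n$). The summable exceptional measures plus Borel--Cantelli, and the support (or global deviation) condition, are the two ingredients your sketch omits; the remark about a countably generated sub-$\sigma$-algebra is not what makes this selection work --- it enters only inside the proof of Lemma~\ref{gamma.l.gamma2} itself.
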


\begin{proof}
(ii)$\dann$(i):\ By the $\gamma$-Fatou Lemma~\ref{gamma.l.fatou2}, 
$f \in \gamma_\infty(\Omega;X)$ and $\Jei_{f_n} \to \Jei_f$ strongly. 
Since $\gamma(\Ell{2}(\Omega);X)$ is complete, there is
$T \in  \gamma(\Ell{2}(\Omega);X)$ such that $\norm{\Jei_{f_n} - T}_\gamma\to 0$.
This implies that $\Jei_f = T$, whence 
$f\in \gamma(\Omega;X)$ and $\norm{f_n -f}_\gamma\to 0$. 

\smallskip
\noindent
(i)$\dann$(ii):\ By Lemma~\ref{P2.l.locbd} and 
Lemma~\ref{gamma.l.P2-approx} we can find
$A_n \in \Sigma_f$, $\mu(A_n) < \infty$, $\norm{f - f\car_{A_n}}_\gamma\le 1/n$ 
and $\car_{A_n} \nearrow \car_{\{f\neq 0\}}$ outside a null set $M$, say. 
Now let $n \in \N$ be fixed. Then by Lemma~\ref{gamma.l.gamma2}
we can approximate $f\car_{A_n}$ in the norm $\norm{\cdot}_{\Ell{2}}
+ \norm{\cdot}_\gamma$ by a sequence of step functions. Without loss
of generality we may suppose that these step functions vanish on $A_n^c$. 
Passing to a subsequence we may suppose in addition that the convergence
is even pointwise almost everywhere. By a variant of Egoroff's theorem,
the convergence is almost uniform, i.e., there is a step function
$f_n$ such that $\{ f_n \neq 0\} \subseteq A_n$ and
$\norm{f\car_{A_n} - f_n}_\gamma < 1/n$, and there is  
a set $B_n \subseteq A_n$ with $\mu(A_n \ohne B_n)\le 2^{-n}$ and 
$\norm{f_n(\omega) - f(\omega)}_X\le 1/n$ for $\omega \in B_n$.

By construction $f_n$ is a step function and $\norm{f_n - f}_\gamma \le 2/n 
\to 0$. To show that $f_n\to f$ almost everywhere, we form the set
$N := \bigcap_{k \in \N} \bigcup_{n\ge k} A_n \ohne B_n$, which is a null set. 
Let  $x\notin N\cup M$. 
The there is $k \in \N$ such that $x \in B_n \cup A_n^c$ for all
$n \ge k$. But for large $n$ either $f(x) = f_n(x) = 0$ or we have $x \in A_n$, and hence
$x\in B_n$. But that means that $\norm{f_n(x) - f(x)}_X \le 1/n$ for large
$n \in \N$. 
\end{proof}

\subsection{The space $\gamma'(\Omega;X')$}

Again, let  $H = \Ell{2}(\Omega)$, $(\Omega, \Sigma, \mu)$ 
any measure space. We identify $H = H'$ via the duality
\eqref{gamma.eq.L2-dual}. We let
\[ 
\Pe_2'(\Omega;X') := \{ g : \Omega \to X' \suchthat 
 \dprod{x}{g(\cdot)} \in \Ell{2}(\Omega)\,\,\,\text{for every $x\in X$}\}.
\]
The closed graph theorem shows that if $g\in \Pe_2'(\Omega;X')$ then
there is $C\ge 0$ such that
\[ 
\left( \int_\Omega \abs{\dprod{x}{g(\omega)}}^2 \, \mu(\ud{\omega}) \right)^{\einhalb}
= \norm{  \dprod{x}{g(\cdot)} }_{\Ell{2}(\Omega)} 
\le C \norm{x} \qquad (x\in X).
\]
Hence, the mapping
\[ 
V_g: \Ell{2}(\Omega) \pfeil X',\qquad (V_gh)(x) := 
\int_\Omega h(\omega) \dprod{x}{g(\omega)} \, \mu(\ud{\omega})
\]
is a well defined bounded operator with norm
\[ 
\norm{V_g} = \sup_{\norm{h}_2 \le 1} \norm{V_gh}_{X'} = 
\sup_{\norm{x}\le 1} \norm{\dprod{x}{g(\cdot)}}_{\Ell{2}}.
=: \norm{g}_{\Pe'_2}
\] 
The following is a dual analogue of Lemma~\ref{P2.l.fatou}.

\begin{lemma}[$\Pe_2'$-Fatou]\label{gamma.l.P2'-fatou}
Let $(g_n)_{n\in \N}$ be a bounded sequence in $\Pe_2'(\Omega;X')$ with $\dprod{x}{g_n(\cdot)}  \to \dprod{x}{g(\cdot)}$ 
almost everywhere for every $x\in X$, then $f \in \Pe_2'(\Omega;X')$, 
$\norm{f_n}_{\Pe_2'} \le \liminf_{n \to \infty} \norm{f_n}_{\Pe_2'}$ and 
$V_{g_n} \to V_g$ in the weak$^*$ operator topology. 
\end{lemma}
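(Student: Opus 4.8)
The plan is to reduce the whole statement to scalar facts about $\Ell{2}(\Omega)$. First I would set $C := \sup_n \norm{g_n}_{\Pe_2'} < \infty$, fix $x\in X$, and consider the scalar functions $u_n := \dprod{x}{g_n(\cdot)} \in \Ell{2}(\Omega)$; by hypothesis they satisfy $\norm{u_n}_{\Ell{2}} \le \norm{g_n}_{\Pe_2'}\norm{x} \le C\norm{x}$ and converge almost everywhere to $u := \dprod{x}{g(\cdot)}$. Applying the classical Fatou lemma to $(\abs{u_n}^2)_n$ then gives $u\in \Ell{2}(\Omega)$ with $\norm{u}_{\Ell{2}} \le \liminf_n \norm{u_n}_{\Ell{2}}$. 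Since $x$ was arbitrary, this shows $g\in \Pe_2'(\Omega;X')$, so that $V_g\in \Lin(\Ell{2}(\Omega);X')$ is well defined. Moreover, for $\norm{x}\le 1$ we have $\norm{u_n}_{\Ell{2}}\le \norm{g_n}_{\Pe_2'}$, hence $\norm{\dprod{x}{g(\cdot)}}_{\Ell{2}} \le \liminf_n \norm{g_n}_{\Pe_2'}$, and taking the supremum over such $x$ yields $\norm{g}_{\Pe_2'}\le \liminf_n \norm{g_n}_{\Pe_2'}$.

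Next I would establish $V_{g_n}\to V_g$ in the weak$^*$ operator topology. Unwinding the definitions of $V_g$ and of this topology, the claim is that
\[
\int_\Omega h(\omega)\,\dprod{x}{g_n(\omega)}\,\mu(\ud{\omega}) \;\longrightarrow\; \int_\Omega h(\omega)\,\dprod{x}{g(\omega)}\,\mu(\ud{\omega})
\]
for every $h\in \Ell{2}(\Omega)$ and $x\in X$. In the notation above, and using the self-duality of $\Ell{2}(\Omega)$, this is exactly the assertion that $u_n\weakcon u$ weakly in $\Ell{2}(\Omega)$.

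So everything reduces to the standard fact that a norm-bounded sequence in $\Ell{2}(\Omega)$ converging almost everywhere converges weakly to its almost-everywhere limit; this is the only non-formal ingredient, and I expect it to be the main (if minor) obstacle. One way to see it: the $u_n$, and hence $u$, are all carried by a common $\sigma$-finite subset of $\Omega$, so one may assume $\mu$ $\sigma$-finite; then by weak sequential compactness of bounded sets every subsequence of $(u_n)_n$ has a further subsequence converging weakly in $\Ell{2}(\Omega)$, and a Banach--Saks/Mazur argument (a suitable convex combination converges in norm, hence along a subsequence almost everywhere) identifies the weak limit with $u$, whence the full sequence converges weakly to $u$. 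Alternatively one avoids this machinery by the Egoroff-type splitting $\int_\Omega h\,u_n = \int_E h\,u_n + \int_{\Omega\ohne E} h\,u_n$, choosing $E$ so that $\int_{\Omega\ohne E}\abs{h}^2$ is small (possible since $h\in\Ell{2}$), bounding the tail uniformly by Cauchy--Schwarz and the $\Ell{2}$-bound on the $u_n$, and treating the main term via the near-uniform convergence of $u_n$ to $u$ on $E$. Combining this with the first paragraph completes the proof.
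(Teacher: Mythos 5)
Your proof is correct and follows essentially the paper's route: the paper likewise gets $g\in \Pe_2'(\Omega;X')$ from the scalar Fatou lemma and then reduces the weak$^*$ operator convergence $V_{g_n}\to V_g$ to the fact that, for each fixed $x$, the $\Ell{2}$-bounded sequence $\dprod{x}{g_n(\cdot)}$ converging almost everywhere converges weakly in $\Ell{2}(\Omega)$ to $\dprod{x}{g(\cdot)}$, which it establishes by the same Egoroff-type splitting you describe (referring to the proof of its $\Pe_2$-Fatou lemma). Your shortcut of applying Fatou directly to $\abs{\dprod{x}{g_n(\cdot)}}^2$ and your alternative weak-compactness/Mazur identification of the weak limit are harmless minor variants.
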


\begin{proof}
For $x\in X$ and $h \in \Ell{2}(\Omega)$ the usual Fatou lemma states that
\begin{align*}
 \int_\Omega & \abs{h(t) \dprod{x}{g(t)}}\, \mu(\ud{t}) \le 
\liminf_{n \to \infty} \int_\Omega \abs{h(t) \dprod{x}{g_n(t)}}\, \mu(\ud{t})
\\ & \le   \liminf_{n \to \infty} \norm{h}_{\Ell{2}} \norm{x} \norm{g_n}_{\Pe_2'}.
\end{align*}
Hence $\dprod{x}{g(\cdot)} \in \Ell{2}(\Omega)$ for every $x\in X$, i.e.,  $g \in \Pe_2'(\Omega;X')$.
Similar to the proof of Lemma~\ref{P2.l.fatou} it follows that 
$\dprod{x}{g_n (\cdot)} \to \dprod{x}{g(\cdot)}$ weakly in $\Ell{2}$ for each $x\in X$. But this is just the same
as to say that $V_{g_n} \to V_{g}$ in the weak$^*$ operator topology.
\end{proof}

We define
\[
\gamma'(\Omega;X') := \{ g\in \Pe_2'(\Omega;X') \suchthat
V_g \in  \gamma'(\Ell{2}(\Omega);X')\}
\]
and write $\norm{g}_{\gamma'} := \norm{V_g}_{\gamma'}$.  The following
result, based on \cite[Corollary~5.5]{KalWei2004}, yields a convenient way to
use the trace duality.

\begin{thm}\label{gamma.t.gamma-hoelder}
  Let $f\in \gamma(\Omega;X)$ and $g\in \gamma'(\Omega;X')$. Then
$\dprod{f(\cdot)}{g(\cdot)} \in \Ell{1}(\Omega)$ and  
\[
\int_\Omega \big|\dprod{f(\cdot)}{g(\cdot)}_{X,X'}\big| \, \ud{\mu}
\le \norm{f}_{\gamma} \norm{g}_{\gamma'}.
\]
Moreover, 
\[
\dprod{\extend{\Jei}_f}{V_g} = \tr({V_g}'\extend{\Jei}_f)
= \int_\Omega \dprod{f(\cdot)}{g(\cdot)}_{X,X'} \, \ud{\mu}.
\]
\end{thm}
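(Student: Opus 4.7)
The plan is to establish the identity and the absolute estimate first for step functions, and then pass to the limit using Theorem~\ref{gamma.t.P2-approx}.

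I would first verify the identity $\tr(V_g' \Jei_f) = \int_\Omega \dprod{f}{g} \,\ud\mu$ directly for step functions. If $f = \sum_{j=1}^n \car_{A_j} \tensor x_j$ with pairwise disjoint $A_j \in \Sigma$ of finite measure, then under the identification of $\Ell{2}(\Omega)$ with its dual, $\Jei_f$ equals the finite-rank operator $\sum_j \konj{\car_{A_j}} \tensor x_j$. The trace formula~\eqref{gamma.eq.trace} combined with the definition of $V_g$ yields
\[
\tr(V_g' \Jei_f) = \sum_{j=1}^n \dprod{x_j}{V_g\car_{A_j}} = \sum_{j=1}^n \int_{A_j} \dprod{x_j}{g(\omega)} \,\mu(\ud{\omega}) = \int_\Omega \dprod{f}{g} \,\ud{\mu},
\]
and both quantities are finite since $\dprod{x_j}{g(\cdot)} \in \Ell{2}(\Omega)$ and $\mu(A_j) < \infty$.

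Next, I would bootstrap this to the absolute bound $\int |\dprod{f}{g}| \,\ud\mu \le \norm{f}_\gamma \norm{g}_{\gamma'}$ for step functions $f$ by a phase trick. Choose a measurable $\sigma:\Omega \to \C$ with $|\sigma|\le 1$ and $\sigma \dprod{f}{g} = |\dprod{f}{g}|$ pointwise. Since $\sigma f$ itself is typically not a step function, I approximate $\sigma$ by simple functions $\sigma_n$ with $|\sigma_n|\le 1$ and $\sigma_n \to \sigma$ almost everywhere. Each $\sigma_n f$ is again a step function and $\Jei_{\sigma_n f} = \Jei_f \nach M_{\sigma_n}$, where $M_{\sigma_n}$ denotes multiplication by $\sigma_n$ on $\Ell{2}(\Omega)$. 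Combining the identity from the previous paragraph with the definition of $\norm{g}_{\gamma'}$ and the ideal property (Theorem~\ref{gamma.t.ideal}) then gives
\[
\int_\Omega \dprod{\sigma_n f}{g} \,\ud\mu = \tr(V_g'\Jei_f M_{\sigma_n}) \le \norm{g}_{\gamma'} \norm{\Jei_f}_\gamma \norm{M_{\sigma_n}} \le \norm{g}_{\gamma'} \norm{f}_\gamma,
\]
and dominated convergence (with dominating function $|\dprod{f}{g}|$, which is integrable when $f$ is a step function) yields the claimed estimate for step $f$.

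For general $f \in \gamma(\Omega;X)$, Theorem~\ref{gamma.t.P2-approx} furnishes step functions $f_n \to f$ almost everywhere with $\norm{f_n - f}_\gamma \to 0$. Then $\dprod{f_n}{g} \to \dprod{f}{g}$ almost everywhere by continuity of $g(\omega)$, and Fatou's lemma combined with the step-function estimate yields
\[
\int_\Omega |\dprod{f}{g}| \,\ud\mu \le \liminf_n \int_\Omega |\dprod{f_n}{g}| \,\ud\mu \le \liminf_n \norm{f_n}_\gamma \norm{g}_{\gamma'} = \norm{f}_\gamma \norm{g}_{\gamma'},
\]
establishing both $\dprod{f}{g} \in \Ell{1}(\Omega)$ and the absolute estimate. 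The first equality in the ``moreover'' part is just the definition of the trace duality pairing (Theorem~\ref{gamma.t.dual}~\ref{item:gamma.t.dual.b}); the second transfers from step functions, since $\tr(V_g'\Jei_{f_n}) \to \tr(V_g'\Jei_f)$ by continuity of trace duality, while $\big|\int \dprod{f - f_n}{g}\,\ud\mu\big| \le \norm{f - f_n}_\gamma \norm{g}_{\gamma'} \to 0$ by the absolute estimate just proved. The one delicate step is the phase trick, where the natural pointwise phase $\sigma$ destroys the step-function structure and must be handled by the auxiliary simple-function approximation.
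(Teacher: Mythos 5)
Your proof is correct and follows essentially the same route as the paper: verify the trace identity on a dense class of elementary functions via \eqref{gamma.eq.trace}, obtain the absolute estimate by a phase function together with the definition of $\norm{\cdot}_{\gamma'}$ and the ideal property, and pass to the limit using Theorem~\ref{gamma.t.P2-approx}. The only difference is that the paper takes the dense class to be $\Ell{2}(\Omega)\tensor X$ rather than indicator step functions, so the phase $m\in\Ell{\infty}(\Omega)$ can be absorbed into $f$ directly and your auxiliary approximation of the phase by simple functions is not needed.
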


\begin{proof}
By Theorem~\ref{gamma.t.P2-approx}
it suffices to prove the claim for
$f \in \Ell{2}(\Omega) \tensor X$, say $f =
  \sum_{j=1}^n f_j \tensor x_j$.  Then, by \eqref{gamma.eq.trace},
\[
\tr({V_g}'\Jei_f) =  \sum_{j=1}^n \dprod{x_j}{V_gf_j}
= \sum_{j=1}^n \int_\Omega   \dprod{x_j}{g(\cdot)} f_j \, \ud{\mu} 
= \int_\Omega  \dprod{f(\cdot)}{g(\cdot)} \,  \ud{\mu}. 
\]
For the remaining statement, find $m \in \Ell{\infty}(\Omega)$ with
$\abs{m} \le 1$ and
\begin{align*}
 \int_\Omega \abs{\dprod{f}{g}} & \, \ud{\mu} 
= \int_\Omega m \dprod{f}{g} \, \ud{\mu}
 = \tr({V_g}' \Jei_{mf}) = \abs{\tr({V_g}' \Jei_{mf})} 
\\ & \le  \norm{g}_{\gamma'} \norm{mf}_\gamma 
\le \norm{g}_{\gamma'} \norm{f}_\gamma
\end{align*}
by what has been already shown and the ideal property. 
\end{proof}

\subsection{Banach lattices}\label{ss:banach-lattices}

In this section we derive an alternative description
of the $\gamma$-norms on Banach lattices. This will  
make the name ``square function'' plausible, and will help us
relating our abstract square functions to classical ones, see 
the Introduction and Section~\ref{s.exas} below.

Let $E$ be a {\em complex Banach lattice} (we refer to
\cite{LindenstraussTzafriri2,Meyer-Nieberg,Schaefer:banach-lattices}
for background, but actually we shall not need so much of it). If one adapts
the theory developed in \cite[pp.326-329]{DieJarTon1995} to the setting
of complex Banach lattices, one obtains the following.
Whenever $u_1, \dots, u_n \in E$ then
\begin{equation} \label{eq:defi-via-sup} 
\Big( {\sum}_{j=1}^n  \abs{u_j}^2 \Big)^\einhalb 
:= \sup \Big\{ \Big| {\sum}_{j=1}^n   \alpha_j u_j \Big|\suchthat
\alpha \in \ell_2^n, \norm{\alpha}_2\le   1\Big\}
\end{equation}
exists in $E$. The notation is inspired by the formula for scalars, and 
is coherent with usual pointwise notation in Banach function spaces such as
spaces $\Ell{p}(\Omega)$. That is to say, if $E = \Ell{p}(\Omega)$ for some
measure  space $(\Omega, \Sigma,\mu)$, $1\le p \le \infty$ and 
$u_1, \ldots,u_n \in E$, then 
\[ 
\Big( {\sum}_{j=1}^n \abs{u_j}^2 \Big)^\einhalb (\omega) = 
\Big( {\sum}_{j=1}^n \abs{u_j(\omega) }^2 \Big)^\einhalb 
\]
for $\mu$-almost every $\omega \in \Omega$. (This follows since in computing
the supremum in (\ref{eq:defi-via-sup}) one can restrict to a
countable subset.)

\medskip Now let $(\Omega, \Sigma, \mu)$ be any 
measure space, and let $f \in \Pe_2(\Omega; E)$. In complete analogy
to \eqref{eq:defi-via-sup} we shall write
\[ 
\Big( \int_\Omega \abs{f(\omega)}^2 \, \mu(\ud{\omega}) \Big)^\einhalb
:= \sup \Big\{ \Big|\int_\Omega g f \, \ud{\mu} \Big|\suchthat g \in
\Ell{2}(\Omega), \norm{g}_2\le 1 \Big\}
\] 
if this supremum exists in $E$. Our intention is to prove the
following.

\begin{thm}\label{acsf.t.blfc}
  Let $E$ be a Banach lattice of finite cotype, let $(\Omega, \Sigma,
  \mu)$ be a measure space, and let $f\in
  \Pe_2(\Omega;E)$.  Then the following assertions are equivalent:
\begin{aufzii}
\item $f\in \gamma(\Omega;E)$.
\item $\displaystyle \Big( \int_\Omega \abs{f(\omega)}^2 \,
  \mu(\ud{\omega}) \Big)^\einhalb$ exists in $E$.
\end{aufzii} 
In this case
\[ 
  \Bignorm{ \Big( \int_\Omega \abs{f(\omega)}^2 \, \mu(\ud{\omega})
    \Big)^\einhalb}_X \approx \norm{f}_{\gamma(\Omega;E)}.
\] 
\end{thm}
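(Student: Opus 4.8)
The plan is to reduce the general statement to the discrete case handled by the
Banach-lattice analogue of Theorem~\ref{gamma.t.gam-vs-rad}, using
Corollary~\ref{gamma.c.fourier-series} to replace the operator $\extend{\Jei}_f$
by a convergent sum over an orthonormal basis, and the $\gamma$-Fatou results
to pass to the limit. First I would fix an orthonormal basis $(e_k)_k$ of
$H = \Ell{2}(\Omega)$; if $\Omega$ is $\sigma$-finite this can be taken countable
(in general one reduces to the countably generated sub-$\sigma$-algebra on which
$f$ is essentially measurable, as in the proof of Lemma~\ref{gamma.l.gamma2}, so
WLOG $H$ is separable). For a finite set $F$ put
$u_k := \extend{\Jei}_f(e_k) = \int_\Omega e_k f\,\ud\mu \in E$ for $k \in F$.
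By Lemma~\ref{gamma.l.finran}, $\norm{\sum_{k\in F}\konj{e_k}\tensor u_k}_\gamma^2
= \Exp\norm{\sum_{k\in F}\gamma_k u_k}_E^2$, and by
Theorem~\ref{gamma.t.gam-vs-rad} together with \eqref{gamma.eq.r-to-gamma} and
Lemma~\ref{gamma.l.cotyp} (so that $E$ of finite cotype $q$ gives the two-sided
Gaussian/Rademacher comparison) this is comparable, up to constants depending
only on $q$ and $\ct_q(E)$, to $\Exp\norm{\sum_{k\in F} r_k u_k}_E^2$.

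The core lattice step is the classical identity (see
\cite[pp.326--329]{DieJarTon1995}, adapted to complex lattices as indicated
before \eqref{eq:defi-via-sup}): for any Banach lattice $E$ with finite cotype
$q$ one has
\[
  \Exp\Bignorm{ {\sum}_{k\in F} r_k u_k}_E^2
  \approx \Bignorm{\Big({\sum}_{k\in F}\abs{u_k}^2\Big)^{\einhalb}}_E^2 ,
\]
with constants depending only on $q, \ct_q(E)$. Combining the last two
displays, $\norm{\sum_{k\in F}\konj{e_k}\tensor u_k}_\gamma \approx
\norm{(\sum_{k\in F}\abs{u_k}^2)^{1/2}}_E$. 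Next I would identify the lattice
element $(\sum_{k\in F}\abs{u_k}^2)^{1/2}$ with a partial "square function" of
$f$: since $u_k = \int_\Omega e_k f\,\ud\mu$, writing $P_F$ for the
orthogonal projection onto $\spann\{e_k : k\in F\}$ one checks, by expanding the
supremum in \eqref{eq:defi-via-sup} over $\alpha \in \ell_2^F$ and using that
$\sum_{k\in F}\alpha_k e_k$ ranges over the unit ball of $\ran(P_F)$, that
$(\sum_{k\in F}\abs{u_k}^2)^{1/2}
= \sup\{|\int_\Omega g f\,\ud\mu| : g\in \ran(P_F),\ \norm{g}_2\le 1\}$;
this is monotone increasing in $F$ and its lattice supremum over all finite $F$
is exactly $(\int_\Omega \abs{f}^2\,\ud\mu)^{1/2}$ when the latter exists.

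For (ii)$\Rightarrow$(i): if $S:=(\int_\Omega\abs{f}^2\,\ud\mu)^{1/2}$ exists in
$E$, then each $\norm{\sum_{k\in F}\konj{e_k}\tensor u_k}_\gamma \le c\norm{S}_E$
uniformly in $F$, so the net of finite-rank operators $\sum_{k\in F}\konj{e_k}
\tensor u_k$ is $\gamma_\infty$-bounded and converges to $\extend{\Jei}_f$ in
the weak operator topology (indeed strongly, by
Theorem~\ref{gamma.t.approx}\ref{item:gamma.t.approx.b} applied to $P_F\to I$);
Lemma~\ref{gamma.l.fatou1} then gives $\extend{\Jei}_f\in\gamma(H;E)$ with
$\norm{f}_\gamma \le c\norm{S}_E$, hence $f\in\gamma(\Omega;E)$. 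Conversely, for
(i)$\Rightarrow$(ii): if $\extend{\Jei}_f\in\gamma(H;E)$ then by
Corollary~\ref{gamma.c.fourier-series} the sums $\sum_{k\in F}\konj{e_k}\tensor
u_k$ converge to $\extend{\Jei}_f$ in $\norm{\cdot}_\gamma$, so their norms are
bounded; by the comparison above the increasing net $(\sum_{k\in
F}\abs{u_k}^2)^{1/2}$ is norm-bounded in $E$, and since $E$ has finite cotype it
does not contain $\ce_0$, so an increasing norm-bounded net with countable cofinal
family has a supremum in $E$ (order-completeness of the band generated by a
fixed element, or the fact that finite cotype lattices are weakly sequentially
complete on order-bounded increasing sequences); that supremum is, by the
previous paragraph, $(\int_\Omega\abs{f}^2\,\ud\mu)^{1/2}$, giving (ii) together
with $\norm{(\int_\Omega\abs{f}^2\,\ud\mu)^{1/2}}_E \le c\norm{f}_\gamma$.

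\textbf{Main obstacle.} The analytic heart is the lattice comparison
$\Exp\norm{\sum r_k u_k}_E \approx \norm{(\sum\abs{u_k}^2)^{1/2}}_E$ and,
closely tied to it, the order-completeness argument needed to produce the
lattice supremum in the direction (i)$\Rightarrow$(ii): one must know that a
finite-cotype lattice is order-complete enough (no copy of $\ce_0$) for the
increasing norm-bounded net of partial square functions to converge in order,
and that its limit commutes with the supremum defining $(\int\abs{f}^2)^{1/2}$.
Handling the non-separable / non-$\sigma$-finite case cleanly — reducing to a
countably generated sub-$\sigma$-algebra and a separable sublattice carrying
$f$ — is a technical but routine point I would dispatch with a remark referring
back to the reductions used in Lemma~\ref{gamma.l.gamma2} and
Theorem~\ref{gamma.t.P2-approx}.
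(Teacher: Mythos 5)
Your proposal is correct and follows essentially the same route as the paper: reduce to separable $H=\Ell{2}(\Omega)$, expand $\extend{\Jei}_f$ along an orthonormal basis, identify the finite partial sums' $\gamma$-norms with lattice square functions via the discrete equivalence (your combination of \eqref{gamma.eq.r-to-gamma}, Theorem~\ref{gamma.t.gam-vs-rad} and the Rademacher lattice identity is exactly how the paper proves Theorem~\ref{acsf.t.blfc-discrete}), identify $\big(\sum_{k\in F}\abs{u_k}^2\big)^{\einhalb}$ with the supremum over the unit ball of $\ran(P_F)$, and pass to limits. The one genuine divergence is in (i)$\Rightarrow$(ii): the paper never invokes any structural lattice theorem there — it shows directly that the increasing partial square functions form a norm-Cauchy sequence, because $\abs{v_m-v_n}\le\big(\sum_{j=n+1}^m\abs{u_j}^2\big)^{\einhalb}$ is controlled by the $\gamma$-norm of the tail of the Fourier series (Corollary~\ref{gamma.c.fourier-series}), whence the norm limit is the supremum; you instead only extract norm-boundedness of the increasing sequence and then appeal to the KB-space property of finite-cotype lattices (no copy of $\co$, so bounded increasing sequences converge to their suprema). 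Both work; the paper's argument is self-contained relative to its own lemmas and gives the Cauchy property for free, while yours imports a standard fact whose justification you state somewhat inaccurately (it is not ``order-completeness of the band'' nor weak sequential completeness ``on order-bounded sequences'', but precisely the equivalence no-$\co$ $\Leftrightarrow$ norm-bounded increasing sequences are norm convergent). Two further small points to tighten: Lemma~\ref{gamma.l.fatou1} only yields $\extend{\Jei}_f\in\gamma_\infty(H;E)$, so in (ii)$\Rightarrow$(i) you must still invoke $\gamma_\infty(H;E)=\gamma(H;E)$ (no $\co$, finite cotype), as the paper does; and the identification of suprema should be stated both ways (the two families $\{\abs{\int_\Omega fg\,\ud\mu}:\norm{g}_2\le1\}$ and $\{(\sum_{k\in F}\abs{u_k}^2)^{\einhalb}:F\text{ finite}\}$ have the same upper bounds, so either supremum exists iff the other does — this is the paper's formula \eqref{eq:acsf.e.blfc.mondieu}), since in (i)$\Rightarrow$(ii) you use the direction your write-up did not assert.
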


\noindent The proof requires several steps, and is based on the
following deep theorem.

\begin{thm}\label{acsf.t.blfc-discrete}
  If $E$ is a Banach lattice of finite cotype, then
\[ 
   \Big( \Exp \Bignorm{ {\sum}_j \gamma_j \, u_j}_E^2     \Big)^\einhalb
\approx  \Bignorm{ \Big({\sum}_j \abs{u_j}^2\Big)^\einhalb }
\]
for all finite sequences $u_1, \ldots,u_n \in E$.
\end{thm}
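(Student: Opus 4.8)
The plan is to reduce the claimed equivalence to the well-known discrete Gaussian/square-function estimate for sequences in a Banach lattice of finite cotype, namely the classical result that
\[
\Bignorm{\Big({\sum}_{j=1}^n \abs{u_j}^2\Big)^\einhalb}_E
\;\approx\;
\Big(\Exp\Bignorm{{\sum}_{j=1}^n \gamma_j u_j}_E^2\Big)^\einhalb
\]
holds (with constants depending only on the cotype constant of $E$), together with the Gaussian--Rademacher comparison from Theorem~\ref{gamma.t.gam-vs-rad} and the inequality \eqref{gamma.eq.r-to-gamma} in the reverse direction. The point is that in a Banach lattice of finite cotype the left-hand ``lattice square function'' is comparable to the Gaussian average $\Exp\norm{\sum_j \gamma_j u_j}_E^2$: one direction uses Khintchine's inequality in $E$ (valid in any Banach lattice, since the square function dominates the Rademacher average up to a universal constant by a pointwise/Minkowski argument and the Kahane--Khintchine inequality), and the reverse direction is precisely where finite cotype enters, via Theorem~\ref{gamma.t.gam-vs-rad} applied together with the lattice Khintchine lower bound.

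More concretely, I would first recall or cite the purely lattice-theoretic fact that for any Banach lattice $E$ and any $u_1,\dots,u_n \in E$,
\[
\frac{1}{\sqrt{2}}\Bignorm{\Big({\sum}_j \abs{u_j}^2\Big)^\einhalb}_E
\;\le\;
\Big(\Exp\Bignorm{{\sum}_j r_j u_j}_E^2\Big)^\einhalb
\;\le\;
\sqrt{2}\,\Bignorm{\Big({\sum}_j \abs{u_j}^2\Big)^\einhalb}_E,
\]
which is the complex lattice version of \cite[Theorem~1.d.6 and its proof]{LindenstraussTzafriri2}; this is where the definition \eqref{eq:defi-via-sup} of the lattice square function is used, reducing everything to scalar Khintchine applied ``pointwise'' and then integrated. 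Next, I would combine this with the two-sided Gaussian--Rademacher comparison: \eqref{gamma.eq.r-to-gamma} gives $\Exp\norm{\sum_j r_j u_j}^2 \le (\pihalbe)\,\Exp\norm{\sum_j \gamma_j u_j}^2$, while Theorem~\ref{gamma.t.gam-vs-rad} gives the reverse bound $\Exp\norm{\sum_j \gamma_j u_j}^2 \le m_q^2\,\ct_q(E)^2\,\Exp\norm{\sum_j r_j u_j}^2$, where $q<\infty$ is a cotype exponent of $E$. Chaining these four inequalities yields
\[
\Big(\Exp\Bignorm{{\sum}_j \gamma_j u_j}_E^2\Big)^\einhalb
\;\approx\;
\Bignorm{\Big({\sum}_j \abs{u_j}^2\Big)^\einhalb}_E
\]
with a constant depending only on $q$ and $\ct_q(E)$, which is exactly the assertion.

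The main obstacle I anticipate is not the chaining argument (which is routine once the ingredients are in place) but rather justifying the lattice Khintchine inequality in the complex setting with the square function defined abstractly via \eqref{eq:defi-via-sup} rather than pointwise. In a concrete function space the equivalence with $(\Exp\norm{\sum r_j u_j}^2)^\einhalb$ is a matter of integrating the scalar Khintchine inequality over $\Omega$ and using Fubini; in an abstract Banach lattice one must invoke the representation/localisation theory of Banach lattices (reduction to a dense ideal which is a function space, or Krivine's calculus) to make sense of $\big(\sum_j |u_j|^2\big)^\einhalb$ and to transfer the scalar inequality. I would handle this by referring to the adaptation of \cite[pp.~326--329]{DieJarTon1995} to complex Banach lattices already alluded to in the text preceding \eqref{eq:defi-via-sup}, noting that the only nontrivial input there — Khintchine's inequality with the lattice-valued square function — carries over verbatim from the real to the complex case (treating real and imaginary parts separately costs only a universal factor), and that finite cotype is used solely through Theorem~\ref{gamma.t.gam-vs-rad}. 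A secondary, more technical point is bookkeeping the constants so that the final equivalence constant depends only on the cotype data of $E$ and universal Khintchine--Kahane constants; this is straightforward but should be stated explicitly since later results quantify over it.
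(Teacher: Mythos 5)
Your overall architecture is the paper's: compare Gaussians with Rademachers via \eqref{gamma.eq.r-to-gamma} and Theorem~\ref{gamma.t.gam-vs-rad}, and compare the Rademacher average with the lattice square function (the paper simply cites \cite[Theorem~16.18]{DieJarTon1995} for the latter and notes the real-to-complex extension). However, your key intermediate claim is false as stated, and this is a genuine gap rather than a bookkeeping issue. The two-sided inequality
\[
\tfrac{1}{\sqrt 2}\,\Bignorm{\big({\sum}_j\abs{u_j}^2\big)^\einhalb}_E
\;\le\;\Big(\Exp\Bignorm{{\sum}_j r_j u_j}_E^2\Big)^\einhalb
\;\le\;\sqrt2\,\Bignorm{\big({\sum}_j\abs{u_j}^2\big)^\einhalb}_E
\]
is \emph{not} valid in an arbitrary Banach lattice with universal constants. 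Only the left-hand bound is universal: pointwise Khintchine gives $\big(\sum_j\abs{u_j}^2\big)^\einhalb\le \sqrt2\,\Exp\big|\sum_j r_j u_j\big|$ in the lattice order, and the triangle inequality for the Bochner integral then dominates the square function by the Rademacher average (note this is the opposite direction from the one you call universal). The right-hand bound fails without finite cotype: in $E=\ell_\infty^{2^n}$ take $u_1,\dots,u_n$ to be the columns of the $2^n\times n$ sign matrix whose rows enumerate all sign patterns; then $\norm{(\sum_j\abs{u_j}^2)^\einhalb}_E=\sqrt n$ while $\norm{\sum_j r_j u_j}_E = n$ for every realisation (real Rademachers; the complex case is analogous up to a factor). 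The same example shows that the ``pointwise Khintchine plus Fubini/Minkowski'' reduction you propose cannot yield this direction, since it would require $\Exp\norm{f}_E\lesssim\norm{(\Exp\abs{f}^2)^\einhalb}_E$, which is precisely a $q$-concavity property; accordingly \cite[Theorem~1.d.6]{LindenstraussTzafriri2} is stated for $q$-concave lattices, with a constant depending on the concavity constant, not for all Banach lattices with constant $\sqrt2$.

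Consequently your assertion that ``finite cotype is used solely through Theorem~\ref{gamma.t.gam-vs-rad}'' is incorrect: finite cotype enters a second time, exactly in the estimate $\Exp\norm{\sum_j r_j u_j}_E^2\lesssim\norm{(\sum_j\abs{u_j}^2)^\einhalb}_E^2$, and Theorem~\ref{gamma.t.gam-vs-rad} (a Gaussian-versus-Rademacher statement) cannot substitute for it. The repair is short and reproduces the paper's proof: since $E$ has finite cotype, invoke \cite[Theorem~16.18]{DieJarTon1995} (equivalently, the $q$-concave case of \cite[Theorem~1.d.6]{LindenstraussTzafriri2}) for the two-sided Rademacher/square-function comparison with constants depending on the cotype data of $E$, extend from real to complex scalars by the routine real/imaginary-part argument, and then chain with \eqref{gamma.eq.r-to-gamma} and Theorem~\ref{gamma.t.gam-vs-rad}. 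The final equivalence constant then depends on $q$ and $\ct_q(E)$ (and universal Khintchine--Kahane constants), not on a universal $\sqrt2$.
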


\begin{proof}
  In \cite[16.18]{DieJarTon1995} one can find the analogous statement
  for (real) Rademachers and real Banach spaces. The extension to
  complex spaces is straightforward. The equivalence with Gaussians in
  place of Rademachers follows from Theorem~\ref{gamma.t.gam-vs-rad}.
\end{proof}

 We remark that in the case $E= \Ell{p}(\Omega)$ for $1 \le p < \infty$
 the proof of Theorem~\ref{acsf.t.blfc-discrete} is a straightforward
 application of the Khinchine--Kahane inequalities and Parseval's
 identity.
 
 \medskip

\noindent
{\em Proof of Theorem~\ref{acsf.t.blfc}}: 1) We restrict  to the case that
$H := \Ell{2}(\Omega)$ is separable, the proof in the  general case
being a straightforward adaptation.  Fix an orthonormal basis
$(e_n)_n$ of $H$ and  let $u_n := \int_\Omega f e_n$. 
If $g\in \Ell{2}(\Omega)$ then 
\[ g = {\sum}_n  \sprod{g}{e_n} e_n
\]
in $\Ell{2}(\Omega)$, whence
\[ \int_\Omega fg = {\sum}_n  \sprod{g}{e_n} \int_\Omega f e_n
= {\sum}_n  \sprod{g}{e_n} u_n
\]
in $E$. It follows that 
\begin{equation}\label{eq:acsf.e.blfc.mondieu}
\sup_{\norm{g}_2\le 1} \abs{\int_\Omega fg} 
= \sup_{\norm{\alpha}_2\le 1} \abs{ {\sum}_{j} \alpha_j u_j}
= \sup_{n\in \N }  \Big( {\sum}_{j=1}^n \abs{u_j}^2\Big)^\einhalb
\end{equation}
in the sense that one exists if and only if the other does.  

\smallskip
\noindent
2) By recourse on the definition it follows that
\[ \Big({\sum}_{j=1}^n \abs{v_j + w_j}^2\Big)^\einhalb 
\le \Big({\sum}_{j=1}^n \abs{v_j}^2\Big)^\einhalb 
+ \Big({\sum}_{j=1}^n \abs{w_j}^2\Big)^\einhalb 
\]
for any $v_1,\ldots,v_n, w_1, \ldots, w_n \in E$. From this it follows that
\[ \Big| \Big({\sum}_{j=1}^m \abs{u_j}^2\Big)^\einhalb 
-
\Big({\sum}_{j=1}^n \abs{u_j}^2\Big)^\einhalb \Big|
\le
\Big({\sum}_{j=n+1}^m \abs{u_j}^2\Big)^\einhalb 
\]
if $n < m$. Writing $v_n := \Big({\sum}_{j=1}^n \abs{u_j}^2\Big)^\einhalb$ we 
hence obtain
\begin{align*}
 \norm{v_m - v_n} & \le \Bignorm{
\Big({\sum}_{j=n+1}^m \abs{u_j}^2\Big)^\einhalb}
\approx \Big( \Exp \Bignorm{  {\sum}_{j=n+1}^m \gamma_j u_j}_E^2 \Big)^\einhalb
\\ & = \Bignorm{{\sum}_{j=n+1}^m \konj{e_j}\tensor u_j}_{\gamma(H;E)}.
\end{align*}
3) Now suppose that (i) holds, i.e., $\Jei_f \in \gamma(H;E)$.
Then $\Jei_f = \sum_j \konj{e_j}\tensor u_j$ in the norm of $\gamma(H;E)$.
By our considerations in 2) we conclude that $(v_n)_n$ is a Cauchy sequence and 
hence has a limit $v := \lim_{n\to \infty} v_n$ in $E$. It is
clear that $(v_n)_n$ is increasing, which implies that $v = \sup_n v_n$.  Then
by (\ref{eq:acsf.e.blfc.mondieu}) (ii) follows.

\smallskip
\noindent
4) Conversely, suppose that (ii) holds and let $F \subseteq \N$ be
any finite subset. Then 
\[ \Big({\sum}_{j\in F} \abs{u_j}^2\Big)^\einhalb
\le v := \Big({\sum}_{j=1}^\infty \abs{u_j}^2\Big)^\einhalb,
\]
which exists by hypothesis and 1). But then
\[ \Big( \Exp \Bignorm{  {\sum}_{j\in F} \gamma_j u_j}_E \Big)^\einhalb
\approx 
\Bignorm{\Big({\sum}_{j\in F} \abs{u_j}^2\Big)^\einhalb}
\le \norm{v}.
\]
It follows that $\Jei_f \in \gamma_\infty(H;E) = \gamma(H;E)$ since
$E$ has finite cotype.\qed

\medskip

Let us specialise $E = \Ell{p}(\Omega')$, $1 \le p < \infty$  for some 
measure space $(\Omega',\Sigma', \mu')$.

\begin{cor}
Let $(\Omega, \Sigma, \mu)$ and $(\Omega',\Sigma', \mu')$ be 
measure spaces, $p \in [1, \infty)$ and let $f: \Omega \times \Omega' \to \C$ be measurable. 
Then the following assertions are equivalent.
\begin{aufzii}
\item $(\omega \mapsto f(\omega,\cdot)) \in \gamma(\Omega; \Ell{p}(\Omega'))$
\item $\displaystyle
\Bignorm{\Big( \int_\Omega \abs{f(\omega,x)}^2 \, 
\mu(\ud{\omega})\Big)^\frac{1}{2}}_{\Ell{p}(\Omega',\mu'(\ud{x}))} < \infty.$  
\end{aufzii}
\end{cor}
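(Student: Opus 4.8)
The plan is to derive this Corollary directly from Theorem~\ref{acsf.t.blfc} applied to the Banach lattice $E := \Ell{p}(\Omega')$, using the identification of $E$-valued square functions with pointwise ($\mu'$-a.e.) scalar square functions provided in Section~\ref{ss:banach-lattices}. First I would set $g(\omega) := f(\omega, \cdot)$, viewed as a function $\Omega \to E$, and check that $g \in \Pe_2(\Omega; E)$ is automatic once (ii) holds (or, if one prefers a cleaner hypothesis, note that measurability of $f$ on the product together with $\sigma$-finiteness gives strong $\mu$-measurability of $g$, and weak-$\Ell2$-ness is part of what (ii) encodes); in any case, both (i) and (ii) of the Corollary are meant to include the implicit standing assumption that $g$ is a reasonable ($\mu$-measurable, weakly $\Ell2$) $E$-valued function, which is the setting of Theorem~\ref{acsf.t.blfc}. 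The key point is then simply that $\Ell{p}(\Omega')$ has finite cotype for $1 \le p < \infty$ (cotype $\max\{p,2\}$), so Theorem~\ref{acsf.t.blfc} applies.

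The main computation is to verify that
\[
\Big( \int_\Omega \abs{g(\omega)}^2 \, \mu(\ud{\omega})\Big)^\einhalb
\]
exists in $E = \Ell{p}(\Omega')$ if and only if the function $x \mapsto \bigl(\int_\Omega \abs{f(\omega,x)}^2\,\mu(\ud\omega)\bigr)^{1/2}$ lies in $\Ell{p}(\Omega')$, and that when it does, the two notions of norm coincide. This is exactly the Banach-function-space instance of the lattice supremum recalled in Section~\ref{ss:banach-lattices}: by the remark there, the supremum in the definition of $\bigl(\int_\Omega \abs{g}^2\,\mu\bigr)^{1/2}$ can be computed over a countable dense set of $h \in \Ell2(\Omega)$, and for each such $h$ the element $\int_\Omega h\,g\,\ud\mu \in \Ell p(\Omega')$ is represented pointwise $\mu'$-a.e.\ by $x \mapsto \int_\Omega h(\omega) f(\omega,x)\,\mu(\ud\omega)$; taking the pointwise supremum over $\|h\|_2 \le 1$ and invoking Cauchy--Schwarz (with equality along the maximizing $h$) yields the pointwise identity
\[
\Big( \int_\Omega \abs{g(\omega)}^2 \, \mu(\ud{\omega})\Big)^\einhalb(x)
= \Big( \int_\Omega \abs{f(\omega,x)}^2 \, \mu(\ud{\omega})\Big)^\einhalb
\]
for $\mu'$-almost every $x$. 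Hence existence of the left-hand side as an element of $\Ell p(\Omega')$ is equivalent to the finiteness of the $\Ell p(\Omega')$-norm of the right-hand side, which is condition (ii) of the Corollary; and the norms agree.

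Putting these together: condition (i) of the Corollary is condition (i) of Theorem~\ref{acsf.t.blfc} for $E = \Ell p(\Omega')$; condition (ii) of the Corollary is, via the pointwise identification above, condition (ii) of Theorem~\ref{acsf.t.blfc}; and Theorem~\ref{acsf.t.blfc} asserts their equivalence (together with the norm equivalence $\norm{g}_{\gamma(\Omega;E)} \approx \bignorm{(\int_\Omega \abs{g(\omega)}^2\,\mu(\ud\omega))^\einhalb}_E$, which here becomes the stated quantitative statement). I expect the only genuinely delicate point to be the measurability/Pettis bookkeeping needed to make the pointwise identification of the lattice supremum rigorous — in particular reducing the supremum over the unit ball of $\Ell2(\Omega)$ to a countable set and exchanging it with the $\mu'$-a.e.\ evaluation — but this is precisely the content of the parenthetical remarks in Section~\ref{ss:banach-lattices} and of the Pettis-integral discussion preceding Theorem~\ref{gamma.t.P2-approx}, so it can be invoked rather than re-proved.
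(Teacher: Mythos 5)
Your proposal is correct and is exactly the argument the paper intends: the corollary is stated as an immediate specialisation of Theorem~\ref{acsf.t.blfc} to the Banach lattice $E=\Ell{p}(\Omega')$ (which has finite cotype for $1\le p<\infty$), with the lattice expression $\bigl(\int_\Omega\abs{g}^2\,\ud\mu\bigr)^{\einhalb}$ identified $\mu'$-a.e.\ with the pointwise square function via the countable-supremum remark of Section~\ref{ss:banach-lattices}. The paper gives no further details, so your filling in of the pointwise identification and the Pettis/measurability bookkeeping matches the intended proof.
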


If $1 < p <\infty$ then the dual space $\Ell{p'}(\Omega)$ has nontrivial type, whence dual square functions
on $\Ell{p}$ coincide with square functions on $\Ell{p'}$.

\section{Abstract  square function  estimates}
\label{s.acsf}

Building on the theory of $\gamma$-radonifying operators developed in
the previous chapter, we now come to a central definition.

\begin{defi}\label{acsf.d.XYsf}
Let $X,Y$ be Banach spaces. Then an (abstract)  {\em  $(X,Y)$-square function}
is any operator 
\[ Q : \dom(Q) \to \gamma(H;Y), \quad \dom(Q) \subseteq X
\]
for some Hilbert space $H$. 
A {\em dual $(X,Y)$-square function} is any operator
\[ Q^d: \dom(Q^d) \to \gamma(H;Y)'\cong \gamma'(H';Y'),\quad \dom(Q^d) \subseteq X'
\]
for some Hilbert space $H$. 
\end{defi}

A {\em square function estimate} or a {\em quadratic estimate} for the 
 $(X,Y)$-square function $Q$ is any inequality of the form
\begin{equation}\label{acsf.eq.sfe-abstract}
 \norm{Qx}_\gamma \le C \norm{x}\qquad \text{for all $x\in \dom(Q)$}
\end{equation}
for some constant $C\ge 0$. If $Q$ is densely defined, such a square function
estimate holds true  if and only if $Q$ extends to a bounded operator
$Q: X\to \gamma(H;Y)$.  Note that a 
closed and densely defined square function 
satisfies a square function estimate if and only if it is fully defined.

Similarly, an estimate of the form
\[ \norm{Q^dx'}_{\gamma'}  \le C \norm{x'}\qquad (x'\in \dom(Q^d))
\] 
is called a {\em dual square function (quadratic) estimate}. 
The usual examples of dual square functions are not
densely, but only weakly$^*$-densely defined, and hence in general 
a dual square function estimate does not lead 
to a bounded operator $X'\to \gamma'(H';Y')$.

\medskip
The following is a standard way to arrive at $(X,Y)$-square functions.
Suppose that $A: \dom(A) \to \Lin(H;Y)$ is an  operator with 
$\dom(A) \subseteq X$. Then we can take its {\em part} in $\gamma(H;X)$
\[  A_\gamma: \dom(A_\gamma) \to \gamma(H;Y)
\]
with $\dom(A_\gamma) = \{ x\in \dom(A) \suchthat Ax \in \gamma(H;Y)\}$
and $A_\gamma x :=Ax$. It is easy to see that $A_\gamma$ is a closed
square function if $A$ is closed.  
(Obviously, a similar construction is possible to obtain dual square functions.)

\medskip
The square function
$Q: \dom(Q) \to \gamma(H;Y)$ is called {\emdf subordinate} to the square function 
$R: \dom(R) \to \gamma(K;Y)$, in symbols: $Q \subord R$, if 
$\dom(Q)\subseteq  \dom(R)$ and there is a bounded operator $T: H \to K$ such that
\[ Qx = Rx \nach T\quad \text{for all $x\in \dom(R)$}.
\]
The square functions are called {\emdf strongly equivalent}, in symbols
$Q\strongeq R$,  if $Q \subord R$ and  
$R \subord Q$. Note that if $Q \subord R$ then, by the ideal property, 
there is a constant $c\ge 0$ such that
\[ \norm{Qx}_\gamma \leq c \norm{Rx}_\gamma \quad \text{for all $x\in \dom(R)$}.
\]
Analogously, a dual square function $Q^d: \dom(Q^d) \to \gamma'(H';Y')$ is 
{\em subordinate} to a dual square function 
$Q^d: \dom(R^d) \to \gamma'(H';Y')$ if $\dom(Q^d)\subseteq  \dom(R^d)$ 
and there is a bounded operator $T: H'  \to K' $ such that
\[ Q^dx' = R^dx' \nach T\quad \text{for all $x' \in \dom(R^d)$}.
\]
It is evident that any (dual) square function subordinate to a bounded (dual) square function
is itself bounded. Subordination is a (trivial) way to generate new square function estimates from
known ones.

\medskip

In the following we shall describe how one can associate square
functions with a functional calculus in a natural way. To this end
we first have to review some basic functional calculus theory.

\subsection{A functional calculus round-up}

Let $\gebiet$ be a nonempty set, $\calF$ a unital algebra of
scalar-valued functions on $\gebiet$, $\calE \subseteq \calF$ a
subalgebra of $\calF$ and $\Phi: \calE \to \Lin(X)$ an algebra
homomorphism, where $X$ is a Banach space.  Then the triple
$(\calE,\calF,\Phi)$ is an {\em abstract functional calculus} in the
sense of \cite[Chapter 1]{HaaseFC}. The mapping $\Phi: \calE \to
\Lin(X)$ is called the {\em elementary} calculus. A function $f\in
\calF$ is {\em regularisable}, i.e., there is $e\in \calE$ (called a
{\em regulariser}) such that $ef \in \calE$ and $\Phi(e)$ is
injective.  In this case one can define
\[ \Phi(f) := \Phi(e)^{-1} \Phi(ef)
\]
with natural domain. This definition is independent of the regulariser
and consistent with the elementary calculus.  One can show
\cite[Section 1.2.1]{HaaseFC} that the set $\calF = \calF_r$ of
regularisable elements is a unital subalgebra of $\calF$, so we may
suppose without loss of generality that $\calF = \calF_r$ in the
following.

\medskip

\noindent
In our context the most interesting case is $\calF = \Ha^\infty(\gebiet)$, the
algebra of bounded holomorphic functions on  
an open set $\gebiet \subseteq \C$. In this case, if there is  $C\ge0$ such that
$\Phi(f) \in \Lin(X)$ and
\[ \norm{\Phi(f)} \le C \norm{f}_{\Ha^\infty} \qquad\text{for all $f\in \Ha^\infty(\gebiet)$},
\] 
then we speak of $\Phi$ as a {\em bounded $\Ha^\infty$-calculus} on $\gebiet$.

\begin{rem}\label{fcru.r.generator}
  If $\gebiet \subseteq \C$ is open, then by Liouville's theorem the
  algebra $\Ha^\infty(\Omega)$ is only interesting if $\leer \not=
  \gebiet \not= \C$. {\em We shall tacitly assume this when talking
    about $\Ha^\infty(\gebiet)$-functional calculus.}

  Now suppose that a functional calculus $\Phi: \calE \to \Lin(X)$ is
  given with $\calE \subseteq \Ha^\infty(\gebiet)$, and   suppose that 
  $\C\ohne \gebiet$ has nonempty interior $U$, say.  For
  each $\lambda \in U$ the function $r_\lambda(z):= (\lambda -
  z)^{-1}$ is holomorphic and bounded on $\gebiet$. If we suppose in
  addition that $R_\lambda := \Phi(r_\lambda) \in \Lin(X)$, then this
  yields a pseudo-resolvent on $U$. Hence by
  \cite[Prop.~A.2.4]{HaaseFC} there is unique operator $A$ with
  $R(\lambda,A) = R_\lambda$ for all $\lambda \in U$. (This operator
  is single-valued if and only if one/each $R_\lambda$ is injective.)
  It is common to call $\Phi$ a functional calculus {\em for} $A$ and
  write $f(A) := \Phi_A(f) := \Phi(f)$ for $f\in \calF$.
\end{rem}

We suppose that the reader is familiar with the functional calculus
for sectorial/strip type operators as developed in \cite{HaaseFC}.
For the convenience of the reader, we have included a brief
description of the construction in Appendix~\ref{app:fc}.

\subsection{Square functions associated with a  functional calculus}
\label{acsf.s.sffc}

In this section we shall associate square functions with a given
functional calculus.  As a motivating example we use the sectorial
calculus (see section~\ref{exas.s.sectorial} below).

\medskip Given a sectorial operator $A$ of angle $\omega_0$ on a
Banach space $X$ and a function $\psi \in
\Ha^\infty_0(\sector{\omega})$ with $\omega \in (\omega_0, \pi)$ one
considers --- for fixed $x\in X$ --- the vector-valued function
\[ (0, \infty) \pfeil X , \qquad t   \mapsto \psi(tA)x.
\] 
Following Kalton and Weis \cite{KalWei2004} one should interpret this function
as an operator
\[ T_\psi x: \Ell{2}^*(0, \infty)  \pfeil X
\]
via (Pettis) integration, cf.~Appendix~\ref{app.P2} and Section~\ref{exas.s.sectorial}.
Abbreviating $H := \Ell{2}^*(0, \infty)$ 
one  looks at estimates of the form 
\[  \norm{\psi(tA)x}_{\gamma( (0, \infty);X)} = \norm{T_\psi x}_{\gamma(H;X)}
\lesssim \norm{x},
\]
then called a square function estimate. For $x\in \dom(A)\cap \ran(A)$ one can
employ the definition of the functional calculus by Cauchy integrals
to obtain
\begin{align*}
 T_\psi x & = \int_0^\infty h(t)  \psi(tA)x \, \tfrac{\ud{t}}{t} = 
\int_0^\infty h(t) \frac{1}{2\pi \ui} \int_\Gamma \psi(tz)R(z,A)x\, \ud{z} \, \tfrac{\ud{t}}{t}
\\ & =
 \frac{1}{2\pi \ui} \int_\Gamma  \Big( 
\int_0^\infty h(t)  \psi(tz)  \, \tfrac{\ud{t}}{t} \Big)\,  R(z,A)x\, \ud{z}
\\ & =  \Big( 
\int_0^\infty h(t)  \psi(tz)  \, \tfrac{\ud{t}}{t} \Big)(A)x.
\end{align*}
The last step indicates an important {\em change in perspective}.  The
function of two variables $(t,z) \mapsto \psi(tz)$  may as well be
viewed as an $\Ell{2}^*(0, \infty)$-valued $\Ha^\infty$-function
\[ \Psi: \sector{\omega} \pfeil H, \qquad \Psi(z)(t) := \psi(tz).
\] 
Then 
\[  
z \mapsto \int_0^\infty h(t)  \psi(tz)  \, \tfrac{\ud{t}}{t} = 
\dprod{h}{\Psi(z)}
\]
is a scalar $\Ha^\infty$-function, into which $A$ can be inserted by
the functional calculus.  Finally, this operator can be applied to
$x\in \dom(A) \cap \ran(A)$. But then for fixed such $x$ this yields
an operator $H \to X$, and one can ask whether this operator is
$\gamma$-radonifying.  (In this special case it is, see
Section~\ref{exas.s.sectorial} below.)

\medskip
\noindent
Let us pass from concrete example to the general situation. 
We fix  a functional calculus $(\calE, \calF,\Phi)$ over a set 
$\gebiet$ as 
discussed in the previous section. Again we suppose $\calF = \calF_r$, i.e.,
every function in $\calF$ is regularisable.

For a Hilbert space
$H$ and a function $f: \gebiet \to H'$ we abbreviate
\[ 
\eprod{h}{f} : \gebiet \to \C,\qquad  (\eprod{h}{f})(z) 
:= \dprod{h}{f(z)}_{H, H'} \quad (z\in \gebiet,\, h \in H).
\]
Then we define
\[ 
   \calF(\gebiet;H')  := \{ f : \gebiet \to H' \suchthat \eprod{h}{f}\in \calF
\,\,\forall\, h\in H\}.
\]
We now extend the functional calculus $\Phi$ to $\calF(\gebiet;H')$ by setting
\begin{align*}
   \Phi(f) & : \dom(\Phi(f)) \to \Lin(H;X), \\
 \dom(\Phi(f)) & := \{ x\in X \suchthat x \in \dom(\Phi(\eprod{h}{f})) \text{ for all } h \in H \}\\ 
  [\Phi(f)x]\, h & := \Phi(\eprod{h}{f})x 
\end{align*}
This definition/notation is consistent with the original notation
under the identification $\calF(\gebiet;H') = \calF$ in the case
that  $H = \C$ is one-dimensional.

\medskip
In the next step we take the part of $\Phi(f)$ in $\gamma(H;X)$ to arrive
at the square function $\Phi_\gamma(f) : \dom(\Phi_\gamma(f))\to \gamma(H;X)$, 
\[ \Phi_\gamma(f)x := \Phi(f)x,\qquad \dom(\Phi_\gamma(f))
 = \{ x\in \dom(\Phi(f) \suchthat 
\Phi(f)x \in \gamma(H;X)\}.
\]
We call the square function $\Phi_\gamma(f)$ {\em bounded} if 
$\dom(\Phi_\gamma(f)) = X$ and 
\[ \Phi_\gamma(f): X \to \gamma(H;X)
\]
is a bounded operator. 
If $X$ does not  contain a copy of $\co$,
then $\gamma(H;X) = \gamma_\infty(H;X)$. Hence, for  
$f \in \calF(\gebiet; H')$ the associated square function
$\Phi_\gamma(f)$ 
is bounded  
if and only if
$\Phi(\eprod{h}{f}) \in \Lin(X)$ for all $h \in H$ and
there is a constant $c\ge 0$ such that
\[ \Exp\big\| {\sum}_{\alpha \in F} \gamma_\alpha \Phi( \eprod{e_\alpha}{f})x \big\|^2
\le c \norm{x}^2
\]
for all $x\in X$, a fixed orthonormal basis $(e_\alpha)_{\alpha\in I}$ of $H$
and all finite subsets $F \subseteq I$.

\medskip

In  the following lemma we collect some properties of 
the so-obtained square functions. Note that 
$\calF(\gebiet;H')$ is an $\calF$-module with respect
to  pointwise multiplication.

\begin{lemma}\label{acsf.l.properties}
In the situation just described, the following assertions hold
for each $f \in \calF(\gebiet;H')$:
\begin{aufzi}
\item The operators $\Phi(f)$ and $\Phi_\gamma(f)$ are closed.
\item If $g \in \calF(\gebiet;H')$ then
\[ \Phi_\gamma(f) + \Phi_\gamma(g) \subseteq \Phi_\gamma(f + g).
\]
\item If $g \in \calF$ then
\[ \Phi_\gamma(f)\Phi(g) \subseteq \Phi_\gamma( f\cdot g)
\]
with $\dom(\Phi_\gamma(f)\Phi(g)) = \dom(\Phi(g)) \cap 
\dom(\Phi_\gamma( f\cdot g))$.
\item If $g\in \calF$ then
\[ \Phi(g) \nach \Phi_\gamma(f) \subseteq \Phi_\gamma( f\cdot g)
\]
\item If $g\in \calF$ such that $\Phi(g) \in \Lin(X)$, then 
\[ \Phi(g) \nach \Phi_\gamma(f)   \subseteq 
\Phi_\gamma( f\cdot g) = \Phi_\gamma(f)\Phi(g) 
\]
In particular, $\dom(\Phi_\gamma(f))$ is invariant under $\Phi(g)$.
\end{aufzi}
\end{lemma}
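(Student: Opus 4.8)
The strategy is to verify each statement by unwinding the definitions, reducing everything to the module-type algebraic identities for the scalar functional calculus $\Phi$ applied to the functions $\eprod{h}{f}$, $h \in H$. The key observation throughout is that for a fixed orthonormal basis (or indeed any $h \in H$), $\Phi_\gamma(f)x$ is characterised by its action $[\Phi_\gamma(f)x]h = \Phi(\eprod{h}{f})x$, so membership in a domain and equality of operators can be tested coordinate-wise through $H$.

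\smallskip

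For (a): $\Phi(f)$ is closed because if $x_n \to x$ in $X$ and $\Phi(f)x_n \to S$ in $\Lin(H;X)$ (in fact the relevant convergence is in the graph sense), then for each fixed $h \in H$ we have $\Phi(\eprod{h}{f})x_n = [\Phi(f)x_n]h \to Sh$, and since the scalar operator $\Phi(\eprod{h}{f})$ is closed (standard property of the abstract calculus, \cite[Section 1.2.1]{HaaseFC}), it follows that $x \in \dom(\Phi(\eprod{h}{f}))$ and $\Phi(\eprod{h}{f})x = Sh$; as this holds for all $h$, $x \in \dom(\Phi(f))$ and $\Phi(f)x = S$. Then $\Phi_\gamma(f)$, being the part of $\Phi(f)$ in the closed subspace $\gamma(H;X) \subseteq \gamma_\infty(H;X)$ (and the latter is closed in the relevant topology by Lemma~\ref{gamma.l.fatou1}), is closed as well — this is the general ``part of a closed operator in a closed subspace'' principle already invoked in Definition~\ref{acsf.d.XYsf} and the surrounding text. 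For (b)--(e) the pattern is the same: test through $H$ and use the corresponding scalar identities. For (b), $x \in \dom(\Phi_\gamma(f)) \cap \dom(\Phi_\gamma(g))$ forces $x \in \dom(\Phi(\eprod{h}{f}+\eprod{h}{g})) \supseteq \dom(\Phi(\eprod{h}{f})) \cap \dom(\Phi(\eprod{h}{g}))$ with $\Phi(\eprod{h}{f+g})x = \Phi(\eprod{h}{f})x + \Phi(\eprod{h}{g})x$ for every $h$, and the right-hand side, as a function of $h$, is the sum of two $\gamma$-radonifying operators hence lies in $\gamma(H;X)$; thus $x \in \dom(\Phi_\gamma(f+g))$ with the stated value, giving the graph inclusion.

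\smallskip

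For (c), (d), (e): these rest on the scalar functional-calculus product rules $\Phi(fg) \supseteq \Phi(f)\Phi(g)$ (with the precise domain $\dom(\Phi(g)) \cap \dom(\Phi(fg))$ when $g \in \calF$), and, when $\Phi(g)$ is bounded, the commutation $\Phi(g)\Phi(f) \subseteq \Phi(fg) = \Phi(f)\Phi(g)$. Concretely for (c): if $x \in \dom(\Phi(g)) \cap \dom(\Phi_\gamma(f \cdot g))$, then for each $h$, $\eprod{h}{f \cdot g} = g \cdot \eprod{h}{f}$, so $\Phi(g)x \in \dom(\Phi(\eprod{h}{f}))$ with $\Phi(\eprod{h}{f})\Phi(g)x = \Phi(g \cdot \eprod{h}{f})x = [\Phi_\gamma(f \cdot g)x]h$; running $h$ over $H$ shows $\Phi(g)x \in \dom(\Phi_\gamma(f))$ and $[\Phi_\gamma(f)\Phi(g)]x = \Phi_\gamma(f \cdot g)x$, and the stated domain description follows by tracking the domains in the scalar product rule. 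Statement (d) is the ``other order'' product rule and is proved identically from $\Phi(g)\Phi(\eprod{h}{f}) \subseteq \Phi(g \cdot \eprod{h}{f})$, noting that $\Phi(g)$ applied to $\Phi_\gamma(f)x \in \gamma(H;X)$ again lands in $\gamma(H;X)$ by the ideal property (Theorem~\ref{gamma.t.ideal}). For (e), when $\Phi(g) \in \Lin(X)$ the domain condition $x \in \dom(\Phi(g))$ in (c) is vacuous, and combining (c) with (d) — the scalar identities now being genuine equalities on the relevant domains — yields the chain of inclusions and equalities, and in particular that $\Phi(g)$ maps $\dom(\Phi_\gamma(f))$ into itself.

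\smallskip

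The only mildly delicate point — the ``main obstacle'' — is bookkeeping the domains in (c) and the closedness argument in (a): one must be careful that ``$\Phi(f)x \in \gamma(H;X)$'' really is equivalent to the coordinate conditions plus a $\gamma$-membership of the assembled operator, and that the relevant limit operators stay $\gamma$-radonifying (not merely $\gamma$-summing). This is handled by the closedness of $\gamma(H;X)$ inside $\gamma_\infty(H;X)$ and the ideal property, both already available; no new ideas are needed beyond disciplined application of the scalar rules coordinate-by-coordinate.
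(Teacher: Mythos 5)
Your proposal is correct and follows essentially the route the paper intends: the paper leaves the proof to the reader, reducing everything to the scalar product/sum rules of the abstract calculus (\cite[Prop.~1.2.2]{HaaseFC}) applied coordinate-wise through $h\in H$, plus the fact that the part of a closed operator in the continuously and completely embedded space $\gamma(H;X)\subseteq\Lin(H;X)$ is closed, and the ideal property for part~e). One small caveat: in your treatment of d) the appeal to the ideal property is misplaced — for a general $g\in\calF$ the operator $\Phi(g)$ may be unbounded, and then $\Phi(g)\nach[\Phi_\gamma(f)x]$ being in $\gamma(H;X)$ is \emph{not} automatic but is part of the hypothesis built into the domain of the composition (this is exactly why the paper spells out the meaning of d) explicitly); the ideal property only yields $\dom(\Phi(g)\nach\Phi_\gamma(f))=\dom(\Phi_\gamma(f))$ in the situation of e), where $\Phi(g)\in\Lin(X)$. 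Your coordinate-wise argument for d) via $\Phi(g)\Phi(\eprod{h}{f})\subseteq\Phi(g\cdot\eprod{h}{f})$ is nevertheless the right one and goes through once that remark is dropped.
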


The assertion d) means: if $x\in \dom(\Phi_\gamma(f))$ 
and $\Phi(g) [\Phi_\gamma(f)x]
\in \gamma(H;X)$, then $x\in \dom(\Phi_\gamma(f\cdot g))$ and
$\Phi(g) [\Phi_\gamma(f)x] = \Phi_\gamma(f\cdot g)x$.

\begin{proof}
The proof is left to the reader. The assertions in b) and c) follow
more or less directly from the corresponding statements about
the functional calculus $(\calE,\calF,\Phi)$
\cite[Prop.~1.2.2]{HaaseFC}. Assertion d) is straightforward,
and e) is a consequence of c) and d). (Note that by the ideal property of
$\gamma(H;X)$, 
$\dom( \Phi(g) \nach \Phi_\gamma(f)) = \dom(\Phi_\gamma(f))$.
\end{proof}

From Lemma~\ref{acsf.l.properties}  we see  
that the mapping $f \mapsto \Phi_\gamma(f)$ behaves like a 
functional calculus, so we call it the {\em vectorial} 
$\calF$-calculus. In particular,   in the case that $\calF = \Ha^\infty(\gebiet)$
for some open subset $\gebiet \subseteq \C$, the map
\[ \Phi_\gamma: \Ha^\infty(\gebiet;H') \to \{ \text{$H$-square functions on $X$}\} 
\] 
is called a  {\em vectorial} $\Ha^\infty$-calculus on $\gebiet$. 
The vectorial $\Ha^\infty$-calculus is {\em bounded}
if $\Phi_\gamma(f)$ is a bounded square function for each $f\in \Ha^\infty(\gebiet;H')$
and there is a constant $C\ge 0$ such that
\[ \norm{\Phi_\gamma(f)x}_\gamma \le C \, \norm{f}_{\Ha^\infty(\gebiet)} \norm{x}
\quad (x\in X,\, f\in \Ha^\infty(\gebiet;H')).
\]
Clearly, if the vectorial $\Ha^\infty$-calculus is bounded, then
the underlying  scalar $\Ha^\infty$-calculus is bounded. 
We shall prove that, essentially, the converse holds for 
sectorial/strip type operators (Theorem~\ref{exas.t:bdd-vectorial-calc}).

\medskip

Suppose again that $\calF = \Ha^\infty(\gebiet)$ for some open subset
$\gebiet \subseteq \C$. We say that the {\em scalar convergence lemma}
holds if the following is true: whenever $(f_n)_n$ is  a sequence
in $\Ha^\infty(\gebiet)$ with $\sup_{n \in \N} \norm{f_n}_{\infty} < \infty$
and $f_n \to f$ pointwise on $\gebiet$, 
$\Phi(f_n) \in \Lin(X)$ for all $n \in \N$ and  
$\sup_{n \in \N} \norm{\Phi(f_n)} < \infty$, 
then
$\Phi(f) \in \Lin(X)$ and $\Phi(f_n) \to \Phi(f)$ strongly as $n \to \infty$.

The scalar convergence lemma holds for the functional 
calculus of a sectorial operator with dense domain and range and
for a densely defined operator of strip type, see \cite[Section 5.1]{HaaseFC}.

\begin{lemma}[Convergence lemma]\label{acsf.l.vcl} Let 
$(\calE, \Ha^\infty(\gebiet), \Phi)$ be a functional calculus on a
Banach space $X$ such that the scalar convergence lemma holds.
Suppose that $X$ does not contain a copy of $\co$.
Then the {\em vectorial convergence lemma} holds, i.e.: 
Let $(f_n)_n$ be  a sequence
in $\Ha^\infty(\gebiet; H')$ satisfying
\begin{aufziii}
\item $\sup_{n \in \N} \norm{f_n}_{\infty} < \infty$,
\item $f_n(z) \to f(z)$ weakly for all $z \in \gebiet$,
\item  $\Phi_\gamma(f_n) \in \Lin(X; \gamma(H;X))$ for all $n \in \N$ and  
\item $\sup_{n \in \N} \norm{\Phi_\gamma(f_n)}_{\Lin(X;\gamma(H;X))} < \infty$.
\end{aufziii}
Then $\Phi_\gamma(f) \in \Lin(X;\gamma(H;X))$ and 
$\Phi_\gamma(f_n)x \to \Phi_\gamma(f)x$ strongly in $\Lin(H;X)$ 
as $n \to \infty$, for each $x\in X$.
\end{lemma}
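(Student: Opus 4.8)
The plan is to reduce the vectorial convergence lemma to the scalar convergence lemma applied componentwise, then upgrade the resulting pointwise/strong convergence to the $\gamma$-norm bound by a $\gamma$-Fatou argument. First, fix $h \in H$. From hypothesis~(ii), $f_n(z)\to f(z)$ weakly in $H'$, so $(\eprod{h}{f_n})(z) = \dprod{h}{f_n(z)} \to \dprod{h}{f(z)} = (\eprod{h}{f})(z)$ pointwise on $\gebiet$, and hypothesis~(i) gives $\sup_n \norm{\eprod{h}{f_n}}_{\Ha^\infty} \le \norm{h}_H \sup_n \norm{f_n}_\infty < \infty$. To invoke the scalar convergence lemma I additionally need $\Phi(\eprod{h}{f_n}) \in \Lin(X)$ with a uniform bound; this follows from~(iii)--(iv), since $\Phi_\gamma(f_n) \in \Lin(X;\gamma(H;X))$ means $\Phi(f_n)x \in \gamma(H;X)$ for all $x$, whence $\Phi(\eprod{h}{f_n})x = [\Phi(f_n)x]\,h$ is well-defined with $\norm{\Phi(\eprod{h}{f_n})x} \le \norm{h}_H \norm{\Phi(f_n)x}_\gamma \le \norm{h}_H \norm{\Phi_\gamma(f_n)} \norm{x}$. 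The scalar convergence lemma then yields $\Phi(\eprod{h}{f}) \in \Lin(X)$ and $\Phi(\eprod{h}{f_n}) \to \Phi(\eprod{h}{f})$ strongly, for every $h \in H$. In particular $\dom(\Phi(f)) = X$, and for each $x\in X$ the operator $\Phi(f)x : H \to X$ is defined on all of $H$ and bounded (with norm $\le \sup_n \norm{\Phi_\gamma(f_n)}\,\norm{x}$), by the uniform boundedness principle applied to the pointwise limits.

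Next I fix $x \in X$ and show $\Phi(f)x \in \gamma(H;X)$ with the right norm estimate. Set $T_n := \Phi(f_n)x = \Phi_\gamma(f_n)x \in \gamma(H;X)$; by~(iv) the sequence $(T_n)_n$ is bounded in $\gamma(H;X) \subseteq \gamma_\infty(H;X)$, say $\norm{T_n}_\gamma \le C\norm{x}$ with $C := \sup_n \norm{\Phi_\gamma(f_n)}$. By the previous paragraph, for each $h \in H$ we have $T_n h = \Phi(\eprod{h}{f_n})x \to \Phi(\eprod{h}{f})x = [\Phi(f)x]\,h$ in the norm of $X$, so in particular $T_n \to \Phi(f)x$ in the weak operator topology. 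The $\gamma$-Fatou Lemma~\ref{gamma.l.fatou1} then gives $\Phi(f)x \in \gamma_\infty(H;X)$ with $\norm{\Phi(f)x}_\gamma \le \liminf_n \norm{T_n}_\gamma \le C\norm{x}$. Since $X$ does not contain a copy of $\co$, we have $\gamma_\infty(H;X) = \gamma(H;X)$ (by the Hoffmann--J{\o}rgensen--Kwapie\'n theorem quoted after Remark~\ref{gamma.r.rc2}), so $\Phi(f)x \in \gamma(H;X)$, i.e.\ $x \in \dom(\Phi_\gamma(f))$ and $\Phi_\gamma(f)x = \Phi(f)x$ with $\norm{\Phi_\gamma(f)x}_\gamma \le C\norm{x}$. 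As $x$ was arbitrary, $\Phi_\gamma(f) \in \Lin(X;\gamma(H;X))$ with $\norm{\Phi_\gamma(f)} \le C$.

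Finally, the strong convergence $\Phi_\gamma(f_n)x \to \Phi_\gamma(f)x$ in $\Lin(H;X)$ asserted in the conclusion is just the statement, already established above, that $[\Phi_\gamma(f_n)x]\,h = \Phi(\eprod{h}{f_n})x \to \Phi(\eprod{h}{f})x = [\Phi_\gamma(f)x]\,h$ in $X$ for every $h \in H$; note that "strongly in $\Lin(H;X)$" here means strong operator convergence, i.e.\ pointwise in $h$, which is exactly what the scalar convergence lemma delivers. (One does not expect, and does not need, norm convergence in $\gamma(H;X)$.)

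The main obstacle is a bookkeeping one rather than a deep one: one must be careful that the scalar convergence lemma is applied to each $\eprod{h}{f_n}$ with hypotheses verified \emph{uniformly in a way that passes through the $\gamma$-norm bound}, and that the resulting limit operators $\Phi(f)x$ are genuinely in $\Lin(H;X)$ (boundedness in $h$) before invoking $\gamma$-Fatou. The only genuinely non-elementary inputs are Lemma~\ref{gamma.l.fatou1} ($\gamma$-Fatou) and the identification $\gamma_\infty = \gamma$ for spaces not containing $\co$; both are available from the earlier sections, so no new ideas are required beyond assembling these pieces in the correct order.
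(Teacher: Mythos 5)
Your proof is correct and follows essentially the same route as the paper's: apply the scalar convergence lemma to each scalar function $\eprod{h}{f_n}$, verifying the uniform bound on $\Phi(\eprod{h}{f_n})$ through the $\gamma$-norm hypothesis, then deduce strong convergence $\Phi_\gamma(f_n)x \to \Phi(f)x$ in $\Lin(H;X)$ and conclude via the $\gamma$-Fatou Lemma~\ref{gamma.l.fatou1} together with $\gamma_\infty(H;X)=\gamma(H;X)$ (since $X$ contains no copy of $\co$). Your version merely makes explicit the quantitative bound $\norm{\Phi_\gamma(f)}\le \sup_n\norm{\Phi_\gamma(f_n)}$, which the paper leaves implicit.
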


\begin{proof}
Fix $h \in H$. Then $\sup_n \norm{\eprod{h}{f_n}}_\infty 
\le \norm{h} \sup_n\norm{f_m}_\infty < \infty$
and $\eprod{h}{f_n}\to \eprod{h}{f}$ pointwise on $\gebiet$.
Moreover, $\Phi(\eprod{h}{f_n}) \in \Lin(X)$ 
and 
\[ \norm{\Phi(\eprod{h}{f_n})x}_X = 
\norm{[\Phi_\gamma(f_n)x]h}_X
\le \norm{h} \norm{\Phi_\gamma(f_n)x}_{\Lin(H;X)}
\le  \norm{h} \norm{\Phi_\gamma(f_n)x}_{\gamma(H;X)}
\]
for all $n \in \N$. This yields 
$\sup_n \norm{\Phi(\eprod{h}{f_n})}_{\Lin} 
\le \norm{h} \sup_n \norm{\Phi_\gamma(f_n)}_{\Lin(X; \gamma(H;X)}$.  
By the scalar convergence lemma, $\Phi(\eprod{h}{f}) \in \Lin(X)$, and 
$\Phi(\eprod{h}{f_n}) \to \Phi(\eprod{h}{f})$ strongly on $X$.
That is, for every $x\in X$ is
$\Phi_\gamma(f_n)x \to \Phi(f)x$ strongly in  $\Lin(H;X)$. 
By the $\gamma$-Fatou Lemma ~\ref{gamma.l.fatou1}, 
$\Phi(f)x \in \gamma_\infty(H;X)$, and 
since $X$ does not contain a copy of $\co$, 
$\Phi(f)x \in \gamma(H;X)$ for each $x\in X$.
\end{proof}

\subsection{Dual square functions associated with a  functional calculus}
\label{acsf.s.dsffc}

Let again $(\calE, \calF, \Phi)$ be a proper  functional calculus
where $\calF$ is an algebra of functions defined on the set $\gebiet$.
As above, we suppose for simplicity that $\calF = \calF_r$. 

\medskip

\noindent
For a Hilbert
space $H$ and a function $f: \gebiet \to H$ we abbreviate
\[ \eprod{f}{h'} : \gebiet \to \C,\qquad  (\eprod{f}{h'})(z) 
:= \dprod{f(z)}{h'}_{H, H'} \quad (z\in \gebiet,\, h' \in H')
\]
and  define
\begin{equation}\label{acsf.F-Hwertig}
   \calF(\gebiet;H) 
:= \{ f : \gebiet \to H \suchthat \eprod{f}{h'}\in \calF
\,\,\forall\, h'\in H'\}.
\end{equation}
For  fixed $f\in \calF(\gebiet;H)$  we then define the operator 
\begin{align*}
   \Phi^d(f) & : \dom(\Phi^d(f)) \to \Lin(H';X')\\
 \dom(\Phi^d(f)) & := \{ x'\in X'\suchthat x'\in \dom(\Phi(\eprod{f}{h'})') \text{ for all } h'\in H' \}  \\                     
 [\Phi^d(f)x']\,h' & := \Phi(\eprod{f}{h'})'x'
\end{align*}
Then we pass to the associated  dual square function 
\begin{align*}
 \Phi_{\gamma'}(f) & : \dom(\Phi_{\gamma'}(f)) \to \gamma'(H';X'), \qquad
\Phi_{\gamma'}(f)x'  := \Phi^d(f)x'\\
 \dom(\Phi_{\gamma'}(f))
 & = \{ x'\in \dom(\Phi^d(f) \suchthat 
\Phi^d(f)x' \in \gamma(H';X')\} \subseteq X'.
\end{align*}
Of course, this is only meaningful if $\Phi(\eprod{f}{h'})'$ is single-valued,
i.e., if $\Phi(\eprod{f}{h'})$ is densely defined for each $h'\in H'$. 
We therefore make the following

\medskip
\noindent
{\bf Standing assumption:} {\em Whenever we speak of a dual square function
associated with a function $f\in \calF(\gebiet;H)$, we require 
that for each $h'\in H'$ the operator $\Phi(\eprod{f}{h'})$ is densely
defined.}

\medskip
\noindent
The following lemma is the analogue of Lemma~\ref{acsf.l.properties}.

\begin{lemma}\label{acsf.l.properties-dual}
In the situation just described, the following assertions hold
for $f \in \calF(\gebiet;H)$:
\begin{aufzi}
\item The operator $\Phi_{\gamma'}(f)$ is  weak$^*$-to-weak$^*$ closed.
\item If $g \in \calF$ such that $\Phi(g) \in \Lin(X)$ 
and if $x'\in \dom(\Phi_{\gamma'}(f\cdot g))$, 
then $\Phi(g)'x'\in \dom(\Phi_{\gamma'}(f))$
and 
\[ \Phi_{\gamma'}(f)\Phi(g)'x'= \Phi_{\gamma'}(f\cdot g)x'.
\] 
\end{aufzi}
\end{lemma}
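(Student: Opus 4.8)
The plan is to reduce both assertions to the corresponding (well-known) facts about the \emph{scalar} operators $\Phi(\eprod{f}{h'})$ and their adjoints, exploiting the description of the weak$^*$ topology on $\gamma'(H';X')\cong\gamma(H;X)'$ provided by Theorem~\ref{gamma.t.dual}. The standing assumption guarantees that each $\Phi(\eprod{f}{h'})$ (and each $\Phi(\eprod{f\cdot g}{h'})$) is densely defined, so all the adjoints occurring below are genuine single-valued operators.

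For assertion (a) I would first unravel the target topology. By Theorem~\ref{gamma.t.dual}\ref{item:gamma.t.dual.b} we have $\gamma'(H';X')=\gamma(H;X)'$ via trace duality, and pairing against a rank-one operator $\konj h\tensor x$ (with $h\in H$, $x\in X$) gives, by \eqref{gamma.eq.trace}, $\dprod{\konj h\tensor x}{V}=\dprod{x}{V\konj h}_{X,X'}$. Since $h\mapsto\konj h$ is a bijection of $H$ onto $H'$, a net $V_\alpha\to V$ in the weak$^*$ topology of $\gamma'(H';X')$ therefore satisfies $V_\alpha h'\to Vh'$ weak$^*$ in $X'$ for every $h'\in H'$. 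Next I would use the standard fact that the adjoint $\Phi(\eprod{f}{h'})'$ of the densely defined operator $\Phi(\eprod{f}{h'})$ is weak$^*$-to-weak$^*$ closed. Now suppose $x'_\alpha\to x'$ weak$^*$ in $X'$ with $x'_\alpha\in\dom(\Phi_{\gamma'}(f))$ and $\Phi_{\gamma'}(f)x'_\alpha\to V$ weak$^*$ in $\gamma'(H';X')$. Then for each fixed $h'\in H'$
\[
\Phi(\eprod{f}{h'})'x'_\alpha=[\Phi^d(f)x'_\alpha]\,h'=[\Phi_{\gamma'}(f)x'_\alpha]\,h'\longrightarrow Vh'
\]
weak$^*$ in $X'$, so weak$^*$-closedness of $\Phi(\eprod{f}{h'})'$ yields $x'\in\dom(\Phi(\eprod{f}{h'})')$ with $\Phi(\eprod{f}{h'})'x'=Vh'$. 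As $h'$ was arbitrary, $x'\in\dom(\Phi^d(f))$ and $\Phi^d(f)x'=V\in\gamma'(H';X')$; hence $x'\in\dom(\Phi_{\gamma'}(f))$ and $\Phi_{\gamma'}(f)x'=V$, which is exactly closedness of the graph.

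For assertion (b) I would first note that $f\cdot g\in\calF(\gebiet;H)$, since $\eprod{f\cdot g}{h'}=g\cdot\eprod{f}{h'}\in\calF$ for every $h'$, $\calF$ being an algebra. Because $\Phi(g)\in\Lin(X)$, the composition rule \cite[Prop.~1.2.2]{HaaseFC} for the scalar calculus gives $\Phi(g)\,\Phi(\eprod{f}{h'})\subseteq\Phi(g\cdot\eprod{f}{h'})=\Phi(\eprod{f\cdot g}{h'})$ with $\dom(\Phi(g)\Phi(\eprod{f}{h'}))=\dom(\Phi(\eprod{f}{h'}))$. Taking adjoints, and using that for bounded $B$ and densely defined $A$ one has $(BA)'=A'B'$ (with $\dom((BA)')=\{x':B'x'\in\dom(A')\}$), the inclusion reverses into $\Phi(\eprod{f\cdot g}{h'})'\subseteq\Phi(\eprod{f}{h'})'\,\Phi(g)'$. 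Now take $x'\in\dom(\Phi_{\gamma'}(f\cdot g))$. For every $h'\in H'$ we have $x'\in\dom(\Phi(\eprod{f\cdot g}{h'})')$, hence $\Phi(g)'x'\in\dom(\Phi(\eprod{f}{h'})')$ and
\[
\Phi(\eprod{f}{h'})'\,\Phi(g)'x'=\Phi(\eprod{f\cdot g}{h'})'x'=[\Phi_{\gamma'}(f\cdot g)x']\,h'.
\]
Varying $h'$ shows $\Phi(g)'x'\in\dom(\Phi^d(f))$ with $\Phi^d(f)(\Phi(g)'x')=\Phi^d(f\cdot g)x'=\Phi_{\gamma'}(f\cdot g)x'\in\gamma'(H';X')$; therefore $\Phi(g)'x'\in\dom(\Phi_{\gamma'}(f))$ and $\Phi_{\gamma'}(f)\Phi(g)'x'=\Phi_{\gamma'}(f\cdot g)x'$, as claimed.

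The only non-formal ingredient in (a) is the reduction of weak$^*$ convergence in $\gamma'(H';X')$ to pointwise weak$^*$ convergence of the evaluated operators, obtained from the rank-one pairing of Theorem~\ref{gamma.t.dual}. The point I expect to require the most care is the domain bookkeeping in (b): checking that the standing assumption makes $\Phi(\eprod{f}{h'})'$ and $\Phi(\eprod{f\cdot g}{h'})'$ well-defined, that $\dom(\Phi(g)\Phi(\eprod{f}{h'}))=\dom(\Phi(\eprod{f}{h'}))$ so that the identity $(\Phi(g)\Phi(\eprod{f}{h'}))'=\Phi(\eprod{f}{h'})'\Phi(g)'$ holds with the precise domains, and that taking adjoints indeed reverses the graph inclusion $\Phi(g)\Phi(\eprod{f}{h'})\subseteq\Phi(\eprod{f\cdot g}{h'})$. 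None of these requires any geometric hypothesis on $X$.
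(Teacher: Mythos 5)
Your proof of (b) is exactly the paper's argument: the paper also passes to $\Phi(\eprod{f\cdot g}{h'})' \subseteq \bigl(\Phi(g)\Phi(\eprod{f}{h'})\bigr)' = \Phi(\eprod{f}{h'})'\Phi(g)'$ via the composition rule and the adjoint identity for a bounded left factor, and then concludes just as you spell out; your domain bookkeeping (density of $\dom(\Phi(\eprod{f\cdot g}{h'}))$ from the standing assumption, reversal of graph inclusions under adjoints) is correct. Part (a) is left to the reader in the paper, and your argument — testing weak$^*$ convergence in $\gamma'(H';X')\cong\gamma(H;X)'$ against rank-one operators via \eqref{gamma.eq.trace} and invoking the weak$^*$-to-weak$^*$ closedness of each adjoint $\Phi(\eprod{f}{h'})'$ — is a correct and natural way to fill it in.
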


\begin{proof}
a) is again left to the reader. For the proof of b) we fix $h'\in H'$ and 
note first that since  $\Phi(g)$ is bounded we have
\[ \Phi( \eprod{(f\cdot g)}{h'})' = \Phi( (\eprod{f}{h'}) g)'
\subseteq   \big( \Phi(g) \Phi(\eprod{f}{h})\big)'
= \Phi(\eprod{f}{h'})'\Phi(g)'
\]
by \cite[A.4.2 and 1.2.2]{HaaseFC}. The claim  now follows easily.
\end{proof}

The following theorem yields a useful  characterisation
of ``dual square function estimates''.

\begin{thm}\label{acsf.t.dsf}
Let $(e_\alpha)_{\alpha \in I}$ be a fixed orthonormal basis of $H$.
The following assertions are equivalent for $f \in \calF(\gebiet; H)$: 
\begin{aufzii}
\item
$\Phi_{\gamma'}(f)$ is a bounded operator 
$\Phi_{\gamma'}(f): X'\to \gamma'(H';X')$.
\item 
The assignment
\[ T( h'\tensor x) := \Phi(\eprod{f}{h'})x,\qquad 
h'\in H', \, x \in \dom(\Phi(\eprod{f}{h'}))
\]
extends to a bounded operator $T: \gamma(H;X) \to X$. 

\item There is a constant $c\ge 0$ such that
\[ \big\| {\sum}_{\alpha \in F} \Phi( \sprod{f}{e_\alpha} )x_\alpha\big\|_X^2
\le c\,\,  \Exp\big\| {\sum}_{\alpha \in F} \gamma_\alpha x_\alpha\big\|^2
\]
for all finite subsets $F \subseteq I$ and $x_\alpha \in 
\dom(\Phi( \sprod{f}{e_\alpha}))$ for $\alpha \in F$. 

\end{aufzii}
In this case $T = \Phi_{\gamma'}(f)'\restrict_{\gamma(H;X)}$ 
is the pre-adjoint of $\Phi_{\gamma'}(f)$ (under the identification
$ \gamma'(H';X') \cong \gamma(H;X)'$), and $c= \norm{T} = \norm{\Phi_{\gamma'}(f)}$ can be chosen in
{\upshape (iii)}.

\medskip
\noindent
Furthermore, if $g \in \calF$ is such that 
$\Phi(g) \in \Lin(X)$, then
\begin{equation}\label{acsf.e.dsf-inv}
 \Phi_{\gamma'}(f)'( \Phi(g)\nach S) = \Phi(g) \big( \Phi_{\gamma'}(f)'S \big)
\quad \text{for all $S\in \gamma(H;X)$}.
 \end{equation} 
\end{thm}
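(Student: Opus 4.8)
The plan is to prove the three equivalences along the cycle (ii)~$\Rightarrow$~(i)~$\Rightarrow$~(iii)~$\Rightarrow$~(ii), using three ingredients: the trace duality $\gamma(H;X)'\cong\gamma'(H';X')$ of Theorem~\ref{gamma.t.dual}; the rank-one pairing $\dprod{h'\tensor x}{V}=\dprod{x}{Vh'}$, which is the case $U=h'\tensor x$ of \eqref{gamma.eq.trace}; and the identity $\norm{\sum_{\alpha\in F}\konj{e_\alpha}\tensor x_\alpha}_\gamma^2=\Exp\norm{\sum_{\alpha\in F}\gamma_\alpha x_\alpha}^2$ from Lemma~\ref{gamma.l.finran}. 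One first observes that $\eprod{f}{\konj{e_\alpha}}=\sprod{f}{e_\alpha}$, so the operators in (iii) are exactly the $\Phi(\eprod{f}{\konj{e_\alpha}})$, and that by the standing assumption each $\Phi(\eprod{f}{h'})$ is closed and densely defined.

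For (ii)~$\Rightarrow$~(i) I would pass from the bounded $T$ of (ii) to $T':X'\to\gamma(H;X)'\cong\gamma'(H';X')$. The rank-one pairing gives $\bigl([T'x']h'\bigr)(x)=\dprod{T(h'\tensor x)}{x'}=\dprod{\Phi(\eprod{f}{h'})x}{x'}$ for $x$ in the domain of $\Phi(\eprod{f}{h'})$; since $x\mapsto T(h'\tensor x)$ is a bounded operator on $X$ extending $\Phi(\eprod{f}{h'})$, its adjoint is everywhere defined, so $\dom(\Phi(\eprod{f}{h'})')=X'$ for every $h'$, whence $\dom(\Phi^d(f))=X'$ and $\Phi^d(f)=T'$, i.e.\ $\Phi_{\gamma'}(f)=T'$ as a map $X'\to\gamma'(H';X')$; thus $\Phi_{\gamma'}(f)$ is bounded, $T$ is its pre-adjoint, and $\norm{T}=\norm{\Phi_{\gamma'}(f)}$. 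Conversely, (i)~$\Rightarrow$~(ii): $\Phi_{\gamma'}(f)$ is bounded and, by Lemma~\ref{acsf.l.properties-dual}, weak$^*$-to-weak$^*$ closed; hence it is weak$^*$-to-weak$^*$ continuous, since on a ball of $X'$ its graph is the intersection of a weak$^*$-closed set with a weak$^*$-compact set, hence weak$^*$-compact, and a linear map with compact graph over a compact domain is continuous. So $\Phi_{\gamma'}(f)=T'$ for some $T\in\Lin(\gamma(H;X);X)$, and the same rank-one computation identifies $T(h'\tensor x)=\Phi(\eprod{f}{h'})x$, which is (ii).

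For (i)~$\Rightarrow$~(iii): for finite $F\subseteq I$ and $x_\alpha\in\dom(\Phi(\sprod{f}{e_\alpha}))$ put $U:=\sum_{\alpha\in F}\konj{e_\alpha}\tensor x_\alpha$; pairing with $x'$ and using $[\Phi^d(f)x']\konj{e_\alpha}=\Phi(\sprod{f}{e_\alpha})'x'$ yields $\dprod{\sum_{\alpha\in F}\Phi(\sprod{f}{e_\alpha})x_\alpha}{x'}=\dprod{U}{\Phi_{\gamma'}(f)x'}$, so $\norm{\sum_{\alpha\in F}\Phi(\sprod{f}{e_\alpha})x_\alpha}\le\norm{\Phi_{\gamma'}(f)}\norm{U}_\gamma$, which with Lemma~\ref{gamma.l.finran} is (iii) with $c=\norm{\Phi_{\gamma'}(f)}^2$. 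For (iii)~$\Rightarrow$~(ii): by Lemma~\ref{gamma.l.finran} the estimate (iii) says precisely that $\sum_{\alpha\in F}\konj{e_\alpha}\tensor x_\alpha\mapsto\sum_{\alpha\in F}\Phi(\sprod{f}{e_\alpha})x_\alpha$ is a well-defined, $c^{1/2}$-bounded map on the dense span $D_0$ of the rank-one operators $\konj{e_\alpha}\tensor x$, $x\in\dom(\Phi(\sprod{f}{e_\alpha}))$, hence extends to a bounded $T:\gamma(H;X)\to X$. Then each $x\mapsto T(\konj{e_\alpha}\tensor x)$ is a bounded operator agreeing with the closed, densely defined operator $\Phi(\sprod{f}{e_\alpha})$ on its domain, forcing $\Phi(\sprod{f}{e_\alpha})\in\Lin(X)$; for a general $h'=\konj{g}$, Corollary~\ref{gamma.c.fourier-series} gives $\konj{g}\tensor x=\lim_F\konj{g_F}\tensor x$ in $\gamma(H;X)$ ($g_F$ the orthogonal projection of $g$ onto $\spann\{e_\alpha:\alpha\in F\}$), so the operators $\Phi(\sprod{f}{g_F})$ (finite linear combinations of the $\Phi(\sprod{f}{e_\alpha})$, equal to $T(\konj{g_F}\tensor\,\cdot\,)$ and uniformly bounded) converge strongly to $T(\konj{g}\tensor\,\cdot\,)$, and identifying this strong limit with $\Phi(\sprod{f}{g})=\Phi(\eprod{f}{h'})$ is an instance of the scalar convergence property of the calculus (available in the cases of interest, e.g.\ $\calF=\Ha^\infty(\gebiet)$). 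This yields $T(h'\tensor x)=\Phi(\eprod{f}{h'})x$, hence (ii). This last identification — recovering $\Phi(\eprod{f}{h'})$ for $h'$ outside the span of finitely many basis vectors from the purely ``discrete'' data of (iii), where the lack of continuity of $\Phi$ really bites — is the step I expect to be the main obstacle, and is why it pays to isolate the convergence-free equivalence (ii)~$\Leftrightarrow$~(i) first.

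Finally, for \eqref{acsf.e.dsf-inv} with $T=\Phi_{\gamma'}(f)'\restrict_{\gamma(H;X)}$: the maps $S\mapsto T(\Phi(g)\nach S)$ and $S\mapsto\Phi(g)(TS)$ are bounded on $\gamma(H;X)$ (ideal property and boundedness of $T$), so it suffices to check equality on the dense span of the rank-one operators $S=h'\tensor x$ with $x\in\dom(\Phi(\eprod{f}{h'}))$. There $\Phi(g)\nach S=h'\tensor\Phi(g)x$, and since $\Phi(g)\in\Lin(X)$ maps $\dom(\Phi(\eprod{f}{h'}))$ into itself and commutes with $\Phi(\eprod{f}{h'})$ (the standard commutation property of the abstract calculus, cf.\ \cite[1.2.2, A.4.2]{HaaseFC} and the proof of Lemma~\ref{acsf.l.properties-dual}), one obtains $T(h'\tensor\Phi(g)x)=\Phi(\eprod{f}{h'})\Phi(g)x=\Phi(g)\Phi(\eprod{f}{h'})x=\Phi(g)\bigl(T(h'\tensor x)\bigr)$, and the identity follows by continuity.
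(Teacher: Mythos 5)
Most of your cycle is the paper's own argument: your (ii)~$\Rightarrow$~(i) is exactly the paper's (identify $\Phi_{\gamma'}(f)$ with $T'$ through the rank-one trace pairing \eqref{gamma.eq.trace}), your (i)~$\Rightarrow$~(iii) is the paper's computation combined with Lemma~\ref{gamma.l.finran}, and your proof of \eqref{acsf.e.dsf-inv} coincides with the paper's (check on $S=h'\tensor x$, use the commutation of $\Phi(g)$ with $\Phi(\eprod{f}{h'})$, conclude by density and the ideal property). Your extra, redundant route (i)~$\Rightarrow$~(ii) via weak$^*$-compactness of the graph is dispensable and, as written, incomplete: the compactness argument only yields weak$^*$-to-weak$^*$ continuity of $\Phi_{\gamma'}(f)$ on balls of $X'$, and to conclude that it is an adjoint you still need a Krein--\v{S}mulian type argument. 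The paper avoids all of this: it simply restricts the bounded adjoint $\Phi_{\gamma'}(f)'\colon \gamma(H;X)''\to X''$ to $\gamma(H;X)$ and shows by the rank-one computation that on the dense span of the $h'\tensor x$, $x\in\dom(\Phi(\eprod{f}{h'}))$, its values are $\Phi(\eprod{f}{h'})x\in X$; since $X$ is closed in $X''$, this already gives (ii).

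The genuine problem is your (iii)~$\Rightarrow$~(ii) leg. The theorem is stated for an arbitrary abstract functional calculus $(\calE,\calF,\Phi)$, subject only to the standing assumption that each $\Phi(\eprod{f}{h'})$ is densely defined; no convergence lemma is part of the hypotheses. Your identification of the strong limit of $\Phi(\sprod{f}{g_F})=T(\konj{g_F}\tensor\cdot)$ with $\Phi(\eprod{f}{h'})$ for a general $h'=\konj{g}$ invokes exactly such a convergence property (``available in the cases of interest, e.g.\ $\calF=\Ha^\infty(\gebiet)$''), so what you prove is a conditional version of (iii)~$\Rightarrow$~(ii), valid e.g.\ for the $\Ha^\infty$-calculi of Chapter~\ref{s.exas}, but not the statement in the generality in which it is formulated. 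For comparison, the paper never runs this limiting argument at all: it proves (i)~$\Rightarrow$~(ii) first, so that the formula $T(h'\tensor x)=\Phi(\eprod{f}{h'})x$ for \emph{all} $h'$ comes from the boundedness of $\Phi_{\gamma'}(f)$ via trace duality, and then settles (ii)~$\Leftrightarrow$~(iii) with the single identity $T\bigl({\sum}_{\alpha\in F}\konj{e_\alpha}\tensor x_\alpha\bigr)={\sum}_{\alpha\in F}\Phi(\sprod{f}{e_\alpha})x_\alpha$ on the span generated by the fixed basis. You have correctly located the one point that this terse step passes over silently — recovering the defining formula for $h'$ outside the span of finitely many $e_\alpha$ — but repairing it by importing the scalar convergence lemma changes the hypotheses of the theorem rather than closing the gap within them; as a proof of the theorem as stated, the proposal is therefore incomplete at exactly this step.
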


\begin{proof}
(i)$\dann$(ii): 
By hypothesis, $\Phi_{\gamma'}(f)': \gamma(H;X)'' \to X''$ is  bounded.  
Fix $x'\in X'$, $h'\in H'$ and 
$x \in \dom(\Phi(\eprod{f}{h'}))$. Then 
\begin{align*}
\dprod{ \Phi_{\gamma'}(f)'( h' \tensor x)}{x'}_{X'',X'}
& = \dprod{h' \tensor x}{ \Phi_{\gamma'}(f) x'}
= \tr\big( (\Phi_{\gamma'}(f)x')' (h' \tensor x) \big)
\\ & = \dprod{ x}{ [\Phi_{\gamma'}(f)x']\,h'}
 = \dprod{ x}{ \Phi( \eprod{f}{h'})'x'}
= \dprod{\Phi( \eprod{f}{h'})x}{x'}.
\end{align*}
Consequently, 
$\Phi_{\gamma'}(f)'( h' \tensor x) = \Phi( \eprod{f}{h'})x = T(h'\tensor x) \in X$.
Since  $\dom( \Phi( \eprod{f}{h'}))$ is dense in $X$, 
the linear span of  such elements 
$h'\tensor x$ 
is dense in $\gamma(H;X)$. The claim follows.

\smallskip
\noindent
(ii)$\Iff$(iii): This follows since 
$T( \sum_{\alpha \in F} \konj{e_\alpha} \tensor x_\alpha) =
\sum_{\alpha \in F} \Phi( \sprod{f}{e_\alpha})x_\alpha$.

\smallskip
\noindent
(ii)$\dann$(i): It suffices to show that
$\Phi_{\gamma'}(f) = T': X'\to \gamma(H;X)'\cong \gamma'(H';X')$.
Fix $x'\in X'$. Then 
\[ \dprod{x}{(T'x')(h')}_{X, X'} = \dprod{h' \tensor x}{T' x'}_{\gamma, \gamma'}  = \dprod{T(h'\tensor x)}{x'}_{X, X'} = \dprod{\Phi( \eprod{f}{h'} )x}{x'}
\]
for all $h'\in H'$ and $x \in \dom( \Phi(\eprod{f}{h'}))$. Hence 
$x' \in \dom( \Phi(\eprod{f}{h'})')$  and
\[
[\Phi_{\gamma'}(f)x']\,h' = \Phi(\eprod{f}{h'})'x' = (T'x')h'\qquad \text{for all $h'\in H'$}.
\]
That is, $\Phi_{\gamma'}(f) = T'$.

\smallskip
\noindent
For the remaining statement let again $h'\in H'$ and $x \in 
\dom(\Phi(\eprod{f}{h'}))$. Then,  with $S := h'\tensor x$,
\[  \Phi(g)( T(S)) = \Phi(g)\Phi(\eprod{f}{h'})x
= \Phi(\eprod{f}{h'})\Phi(g)x = T( h'\tensor \Phi(g)x)
= T( \Phi(g)\nach S).
\]
Since the linear span of such operators $S$ is a dense subset 
of $\gamma(H;X)$, the claim
follows from the ideal property of $\gamma(H;X)$.
\end{proof}

\subsection{Square functions over $\Ell{2}$-spaces}
\label{acsf.s.sfL2}

Up to now we worked with a general Hilbert space $H$.  If one is in
the special situation $H = \Ell{2}(\Omega)= H'$ for some measure space
$(\Omega, \ud{t})$, it is natural to consider functions of {\em two
  variables} $f = f(t,z)$ in the construction of square functions.

To proceed further we shall suppose in addition that $\calF =
\Ha^\infty(\gebiet)$ for some nonempty open set $\gebiet \subseteq \C$
with $\gebiet \neq \C$, and that $\Phi= \Phi_A$ is a functional
calculus for the (possibly multivalued) operator $A$,
cf.~Remark~\ref{fcru.r.generator}.  (Note that for any Hilbert space
$H$, the space $\calF(\strip{\omega};H)$ derived from the space $\calF
= \Ha^{\infty}(\strip{\omega})$ by \eqref{acsf.F-Hwertig} above,
coincides with the space of $H$-valued bounded holomorphic functions.)

\begin{lemma}\label{acsf.l.f(t,z)}
Let $\gebiet \subseteq \C$ be an open subset of the complex plane, 
let $f: \Omega \times \gebiet \to \C$ be measurable and suppose in addition that 
\begin{aufziii}
\item $f(t,\cdot) \in \Ha^\infty(\gebiet)$ for almost all $t\in \Omega$ and
\item $\sup_{z \in \gebiet} \int_\Omega \abs{f(t,z)}^2 \, \ud{t} < \infty$.
\end{aufziii} 
Then $(z \mapsto f(\cdot,z)) \in \Ha^\infty(\gebiet; \Ell{2}(\Omega))$. 
\end{lemma}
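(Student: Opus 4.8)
The plan is to show that the map $F : \gebiet \to \Ell{2}(\Omega)$, $F(z) := f(\cdot,z)$, is well-defined, locally bounded, and \emph{weakly holomorphic}, and then to invoke the standard fact that weak holomorphy into a Banach space implies norm holomorphy (e.g.\ via Dunford's theorem, or directly by a Cauchy-integral argument using local boundedness). Condition (ii) gives at once that $F(z) \in \Ell{2}(\Omega)$ for every $z$ with $\sup_{z} \norm{F(z)}_{2}^2 = \sup_z \int_\Omega \abs{f(t,z)}^2\,\ud{t} =: M < \infty$, so in fact $F$ is \emph{bounded}, not merely locally bounded, and it remains only to verify holomorphy; then $F \in \Ha^\infty(\gebiet;\Ell{2}(\Omega))$ follows, and by the remark preceding the lemma this is the same as saying $\eprod{F}{h'} \in \Ha^\infty(\gebiet)$ for all $h' \in \Ell{2}(\Omega)' = \Ell{2}(\Omega)$.

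First I would fix $h \in \Ell{2}(\Omega)$ and consider the scalar function $g_h(z) := \int_\Omega f(t,z)\, \overline{h(t)}\,\ud{t} = \eprod{F(z)}{h}_{\Ell{2}}$. By Cauchy--Schwarz and (ii), $\abs{g_h(z)} \le M^{1/2}\norm{h}_2$, so $g_h$ is bounded on $\gebiet$; it remains to see that $g_h$ is holomorphic. For this I would take an arbitrary closed triangle (or small circle) $\Delta \subseteq \gebiet$ and, using Morera's theorem, compute $\oint_{\rand\Delta} g_h(z)\,\ud{z} = \oint_{\rand\Delta} \int_\Omega f(t,z)\,\overline{h(t)}\,\ud{t}\,\ud{z}$. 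The integrand $(t,z) \mapsto f(t,z)\overline{h(t)}$ is jointly measurable (or can be made so after discarding a null set in $t$, using (i) and measurability of $f$), and it is dominated on $\Omega \times \rand\Delta$ by $C_\Delta \abs{h(t)} \cdot \bigl(\sup_{z\in\rand\Delta}\int_\Omega\abs{f(t,z)}^2\ud t\bigr)^{1/2}$-type bounds; more carefully, $\int_{\rand\Delta}\int_\Omega \abs{f(t,z)}\,\abs{h(t)}\,\ud{t}\,\abs{\ud{z}} \le \ell(\rand\Delta)\, M^{1/2}\norm{h}_2 < \infty$ by Cauchy--Schwarz and (ii), so Fubini applies. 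Exchanging the order of integration and using that $z \mapsto f(t,z)$ is holomorphic on $\gebiet$ for a.e.\ $t$ (condition (i)), hence $\oint_{\rand\Delta} f(t,z)\,\ud{z} = 0$ by Cauchy's theorem, we get $\oint_{\rand\Delta} g_h(z)\,\ud{z} = 0$. Since $g_h$ is continuous (again by dominated convergence, using (ii) to dominate and (i) for pointwise continuity of $z \mapsto f(t,z)$), Morera's theorem yields that $g_h$ is holomorphic on $\gebiet$.

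Having established that $z \mapsto \eprod{F(z)}{h}$ is holomorphic and bounded for every $h \in \Ell{2}(\Omega)$, the function $F$ is weakly holomorphic and norm-bounded; by the theorem on weak versus strong holomorphy (valid for any Banach space, in particular $\Ell{2}(\Omega)$), $F$ is holomorphic as an $\Ell{2}(\Omega)$-valued map, and $\norm{F}_{\Ha^\infty(\gebiet;\Ell{2}(\Omega))} = \sup_z \norm{F(z)}_2 = M^{1/2} < \infty$. By the parenthetical remark just before the statement, $\Ha^\infty(\gebiet;\Ell{2}(\Omega))$ coincides with the space $\calF(\gebiet;\Ell{2}(\Omega))$ obtained from $\calF = \Ha^\infty(\gebiet)$ via \eqref{acsf.F-Hwertig}, which is exactly the claim.

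The only real subtlety — the ``main obstacle'' — is the measurability/Fubini bookkeeping: one must ensure that $(t,z)\mapsto f(t,z)$ is (jointly) measurable on $\Omega\times\gebiet$ so that the exchange of integrals is legitimate. If joint measurability is not assumed outright, one recovers it from separate measurability plus separate continuity in $z$ (a standard Carath\'eodory-function argument), which is available here since $f(t,\cdot)$ is holomorphic, hence continuous, for a.e.\ $t$. Everything else is a routine application of Cauchy--Schwarz together with hypothesis (ii) to produce the dominating functions needed for Fubini and for dominated convergence.
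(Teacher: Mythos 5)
Your overall strategy is the same as the paper's: reduce everything to showing that for each $h\in\Ell{2}(\Omega)$ the scalar pairing $g_h(z)=\int_\Omega f(t,z)\,\overline{h(t)}\,\ud{t}$ is bounded and holomorphic, with boundedness coming from Cauchy--Schwarz and (ii). The paper gets holomorphy by writing, for a ball $B$ with $\cls{B}\subseteq\gebiet$, $f(t,a)=\frac{1}{2\pi\ui}\int_{\rand B}\frac{f(t,z)}{z-a}\,\ud{z}$ for a.e.\ $t$, interchanging with Fubini to obtain $g_h(a)=\frac{1}{2\pi\ui}\int_{\rand B}\frac{g_h(z)}{z-a}\,\ud{z}$, and invoking the standard fact (Rudin, Theorem 10.7) that a Cauchy-type integral of an $\Ell{1}$ boundary function is holomorphic; you instead use Morera plus Fubini. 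These are close cousins, but note that your route needs an ingredient the paper's does not: Morera requires \emph{continuity} of $g_h$, whereas holomorphy of the Cauchy-type integral comes for free once $g_h$ is known to be bounded on $\rand B$.

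This is where your argument has a genuine (though reparable) gap: you justify continuity of $g_h$ "by dominated convergence, using (ii) to dominate", but (ii) only gives a bound on $\int_\Omega\abs{f(t,z)}^2\,\ud{t}$ uniform in $z$; it does not supply a pointwise-in-$t$ dominating function for the family $t\mapsto f(t,z)\overline{h(t)}$, $z$ near $z_0$. Condition (i) gives $\sup_{z}\abs{f(t,z)}<\infty$ for a.e.\ $t$, but this sup need not be integrable against $\abs{h}$, so the dominated convergence theorem does not apply as stated. The fix is easy: the family is uniformly integrable, since for any measurable $A$ one has $\int_A\abs{f(t,z)}\abs{h(t)}\,\ud{t}\le M^{1/2}\bigl(\int_A\abs{h}^2\,\ud{t}\bigr)^{1/2}$ by Cauchy--Schwarz and (ii), so Vitali's convergence theorem yields $g_h(z_n)\to g_h(z_0)$ along $z_n\to z_0$; alternatively, switch to the paper's Cauchy-formula representation, which delivers continuity and holomorphy simultaneously. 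Two further remarks: joint measurability of $f$ is part of the hypothesis, so your Carath\'eodory detour is unnecessary; and the final appeal to Dunford's weak-implies-strong holomorphy theorem is harmless but not needed, since the paper's $\Ha^\infty(\gebiet;\Ell{2}(\Omega))$ is defined exactly by the weak condition $\eprod{F}{h}\in\Ha^\infty(\gebiet)$ for all $h$, which is what you have already established at that point.
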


\begin{proof}
Let $g\in \Ell{2}(\Omega)$. It remains to show that the function 
$F(z) := \int_\Omega g(t)f(t,z)\, \ud{t}$ is
holomorphic. To this end, let $B$ be any open ball such that $\cls{B} \subseteq \gebiet$.
Then $f(a,t) = \frac{1}{2\pi \ui} \int_{\rand B} \frac{f(t,z)\ud{z}}{z- a}$ 
for $a\in B$, for almost all $t\in \Omega$, by 
the Cauchy formula. Fubini's theorem yields
\[    F(a) := \int_\Omega g(t)f(t,a)\, \ud{t} = \frac{1}{2\pi \ui} \int_{\rand B} \frac{F(z)\ud{z}}{z- a}
\]
for all $a\in B$. By a standard result in complex function theory \cite[Theorem~10.7]{Rudin:RC},
$F$ is holomorphic.
\end{proof}

For $f$ as in the lemma we have
\[ [\Phi(f)x]\,h = \Big( \int_\Omega h(t)f(t,z)\, \ud{t}\Big)(A)x
\]
if $x\in \dom(\Phi(f))$ and $h \in H= \Ell{2}(\Omega)$. As in the example
of sectorial operators and ``dilation type'' square functions 
discussed at the beginning of this section, one  has 
\[ [\Phi(f)x]\,h = \Big( \int_\Omega h(t)f(t,z)\, \ud{t}\Big)(A)x 
= \int_\Omega h(t) f(t,A)x\, \ud{t}
\]
in many situations at least for vectors $x$ from a large subspace of $X$.
We therefore use the symbol $f(\cdot,A)x$ or $f(t,A)x$ as a 
convenient alternative notation --- as a {\em fa\c{c}on de parler} ---
for the operator $\Phi(f)x$. So, whenever expressions of the form
\[ \norm{f(t,A)x}_\gamma
\]
appear, it is {\em not}  implied that ``$f(t,A)x$'' has to make sense
literally (i.e.,
$x \in \dom(f(t,A))$ for almost all $t\in \Omega$ and 
$\Phi(f)x= \Jei_{f(t,A)x}$) but just as a suggestive notation. 
It is actually one of the advantages of our approach to square functions that
one does not have to worry about the vector-valued integration  too much.

\section{Square function estimates: New from old}\label{s.sfno}

In this chapter we discuss certain general principles how to 
generate new (dual) square function estimates from known ones.
A fairly trivial instance of such a principle is given by subordination.

\subsection{Subordination}\label{sfno.s.subord}

Subordination for abstract square functions has been defined in the beginning
of Chapter~\ref{s.acsf}.
Here we consider a special instance for the case of square functions
associated with a functional calculus $(\calE,\calF,\Phi)$ over a set $\gebiet$.

\begin{thm}\label{sfno.t:subord}
Let $K$ be another Hilbert space and  $T: K \to H$  a bounded linear operator.
\begin{aufzi}
\item If  $g \in \calF(\gebiet;H')$
then $T'\nach g \in \calF(\gebiet;K')$, 
$\dom(\Phi_\gamma(T' \nach g)) \subseteq \dom(\Phi_\gamma(g))$ and 
\[ \Phi_\gamma(T'\nach g)x = \Phi_\gamma(g)x \nach T
\qquad \text{for all $x\in \dom(\Phi_\gamma(f))$.}
\]
In particular, $\Phi_\gamma(T'\nach g)\subord \Phi_\gamma(g)$.\\

\item If  $f \in \calF(\gebiet;K)$
then $T\nach f \in \calF(\gebiet;H)$, 
$\dom(\Phi_{\gamma'}(T \nach f)) \subseteq \dom(\Phi_{\gamma'}(f))$ and 
\[ \Phi_{\gamma'}(T\nach f)x' = \Phi_{\gamma'}(f)x' \nach T'
\qquad \text{for all $x' \in \dom(\Phi_\gamma(f))$. }
\]
In particular, $\Phi_{\gamma'}(T\nach f)\subord \Phi_{\gamma'}(f)$.
\end{aufzi}
\end{thm}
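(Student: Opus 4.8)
The plan is to treat the two parts symmetrically; part (b) is the ``dual'' version of part (a) and will follow by the same bookkeeping on the other side of the pairing, so I would write (a) in full and then indicate the transposition for (b). For part (a): first I would check that $T'\nach g$ lands in $\calF(\gebiet;K')$. Given $k\in K$, we have $\eprod{k}{T'\nach g}(z)=\dprod{k}{T'g(z)}_{K,K'}=\dprod{Tk}{g(z)}_{H,H'}=\eprod{Tk}{g}(z)$, and since $g\in\calF(\gebiet;H')$ the function $\eprod{Tk}{g}$ lies in $\calF$; hence $\eprod{k}{T'\nach g}\in\calF$ for every $k\in K$, which is exactly membership in $\calF(\gebiet;K')$. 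The same identity $\eprod{k}{T'\nach g}=\eprod{Tk}{g}$ is the one computational fact that drives everything else.

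Next I would unwind the domains. By definition $x\in\dom(\Phi(T'\nach g))$ iff $x\in\dom(\Phi(\eprod{k}{T'\nach g}))=\dom(\Phi(\eprod{Tk}{g}))$ for all $k\in K$. Since $\{Tk : k\in K\}\subseteq H$, if $x\in\dom(\Phi(g))$ (i.e.\ $x\in\dom(\Phi(\eprod{h}{g}))$ for \emph{all} $h\in H$) then certainly $x\in\dom(\Phi(T'\nach g))$; thus $\dom(\Phi(g))\subseteq\dom(\Phi(T'\nach g))$. For such $x$ and any $k\in K$,
\[
 [\Phi(T'\nach g)x]\,k=\Phi(\eprod{k}{T'\nach g})x=\Phi(\eprod{Tk}{g})x=[\Phi(g)x]\,(Tk)=\bigl([\Phi(g)x]\nach T\bigr)k,
\]
so $\Phi(T'\nach g)x=\Phi(g)x\nach T$ as operators $K\to X$. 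Now if in addition $x\in\dom(\Phi_\gamma(g))$, i.e.\ $\Phi(g)x\in\gamma(H;X)$, then by the ideal property (Theorem~\ref{gamma.t.ideal}) $\Phi(g)x\nach T\in\gamma(K;X)$, whence $x\in\dom(\Phi_\gamma(T'\nach g))$ and $\Phi_\gamma(T'\nach g)x=\Phi_\gamma(g)x\nach T$. This establishes the stated domain inclusion $\dom(\Phi_\gamma(T'\nach g))\subseteq\dom(\Phi_\gamma(g))$ --- wait, one should be careful about the \emph{direction}: the theorem claims $\dom(\Phi_\gamma(T'\nach g))\subseteq\dom(\Phi_\gamma(g))$, so I actually need the reverse deduction. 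For that I would argue: if $x\in\dom(\Phi_\gamma(T'\nach g))$, then in particular $\Phi(\eprod{h}{g})x$ makes sense for every $h$ in the \emph{closed range} of $T$, and one must still reach all $h\in H$. This is the one point where some care is needed, and the honest fix is that the claimed inclusion as stated should be read in the direction actually provable, or the operator $T$ is assumed to have dense range; I would follow the paper's convention here and state the inclusion in whichever direction the construction yields, namely that $x\in\dom(\Phi_\gamma(g))\Rightarrow x\in\dom(\Phi_\gamma(T'\nach g))$ together with the identity above, which immediately gives the subordination relation $\Phi_\gamma(T'\nach g)\subord\Phi_\gamma(g)$ in the sense of Chapter~\ref{s.acsf} (domain of the subordinate function contains at least $\dom(\Phi_\gamma(g))$ and the factorisation holds there).

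Finally, part (b): for $f\in\calF(\gebiet;K)$ and $h'\in H'$ one has $\eprod{T\nach f}{h'}(z)=\dprod{Tf(z)}{h'}_{H,H'}=\dprod{f(z)}{T'h'}_{K,K'}=\eprod{f}{T'h'}(z)$, so $\eprod{T\nach f}{h'}=\eprod{f}{T'h'}\in\calF$, giving $T\nach f\in\calF(\gebiet;H)$; note the standing assumption on dense definedness transfers since $\eprod{T\nach f}{h'}=\eprod{f}{T'h'}$. Then for $x'\in\dom(\Phi^d(f))$ and $h'\in H'$,
\[
 [\Phi^d(T\nach f)x']\,h'=\Phi(\eprod{T\nach f}{h'})'x'=\Phi(\eprod{f}{T'h'})'x'=[\Phi^d(f)x']\,(T'h')=\bigl([\Phi^d(f)x']\nach T'\bigr)h',
\]
so $\Phi^d(T\nach f)x'=\Phi^d(f)x'\nach T'$, and by the dual ideal property (Corollary~\ref{gamma.c.dual-ideal}) $\Phi^d(f)x'\nach T'\in\gamma'(H';X')$ whenever $\Phi^d(f)x'\in\gamma'(K';X')$, giving $\Phi_{\gamma'}(T\nach f)x'=\Phi_{\gamma'}(f)x'\nach T'$ and hence $\Phi_{\gamma'}(T\nach f)\subord\Phi_{\gamma'}(f)$. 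The only genuine obstacle is the domain-inclusion direction discussed above; everything else is the mechanical substitution $\eprod{k}{T'\nach g}=\eprod{Tk}{g}$ (resp.\ $\eprod{T\nach f}{h'}=\eprod{f}{T'h'}$) combined with the ideal property, so I would not belabour those computations.
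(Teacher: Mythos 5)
Your argument is correct, and it is precisely the ``easy exercise'' the paper intends: the whole content is the identity $\eprod{k}{(T'\nach g)}=\eprod{Tk}{g}$ (resp.\ $\eprod{(T\nach f)}{h'}=\eprod{f}{(T'h')}$) combined with the ideal property of $\gamma$-spaces (Theorem~\ref{gamma.t.ideal}, resp.\ Corollary~\ref{gamma.c.dual-ideal}); the paper offers no further detail. Your hesitation about the direction of the domain inclusion is well founded, and your resolution is the right one: what the computation yields is $\dom(\Phi_\gamma(g))\subseteq\dom(\Phi_\gamma(T'\nach g))$ together with $\Phi_\gamma(T'\nach g)x=\Phi_\gamma(g)x\nach T$ for all $x\in\dom(\Phi_\gamma(g))$, and this is exactly what the subordination relation is used for later --- for instance in the proof of Lemma~\ref{nsfo.l.pushing-through}, where full definedness of $\Phi_\gamma(\tilde g)$ is transferred to the subordinate square function, and in the estimate $\norm{\Phi_\gamma(T'\nach g)x}_\gamma\le\norm{T}\,\norm{\Phi_\gamma(g)x}_\gamma$ on $\dom(\Phi_\gamma(g))$. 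The inclusion as printed cannot hold literally: for $T=0$ one has $T'\nach g=0$, hence $\dom(\Phi_\gamma(T'\nach g))=X$, which is not contained in $\dom(\Phi_\gamma(g))$ unless the latter is everywhere defined; so this is a slip in the statement (mirrored in the printed definition of $\subord$), not a gap you need to close. One small caveat on your parenthetical alternative: assuming $T$ has dense range would not rescue the printed inclusion either, since membership of $x$ in $\dom(\Phi(\eprod{h}{g}))$ for $h$ ranging over a dense subspace does not pass to limits in $h$; surjectivity of $T$ would suffice, because then $\ran(T)=H$ and a bounded right inverse of $T$ recovers $\Phi(g)x$ from $\Phi(g)x\nach T$ via the ideal property. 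With that reading understood, your proof of both parts, including the dual bookkeeping in (b), is complete.
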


\begin{proof}
This is an easy exercise.
\end{proof}

We shall abbreviate $\Phi_\gamma(f) \subord \Phi_\gamma(g)$ and
$\Phi_{\gamma}(f) \strongeq \Phi_{\gamma}(g)$ simply by
\[ f \subord g  \quad \text{and}\quad f \strongeq g,
\]
respectively, whenever it is convenient. The same  abbreviation is
used in the case of dual square functions.
For applications of the subordination principle see 
Chapter~\ref{s.exas} below.

\subsection{Tensor products (and property $(\alpha)$)}
\label{sfno.s.tensor}

Again we work with a functional calculus $(\calE,\calF, \Phi)$ on
a Banach space $X$, $\calF$ being an algebra of functions defined on 
a set $\gebiet$. Let  $H,K$ be Hilbert spaces and 
$f \in \calF(\gebiet;H')$ and $g \in \calF(\gebiet;K')$. Then one can
consider the function 
\[ f\tensor g: \gebiet \pfeil H'\tensor K'\,\,\, \subseteq\,\,\, (H\tensor K)'
\qquad (f\tensor g)(z) := f(z) \tensor g(z),
\]
and we suppose in addition that $(f \tensor g) \in \calF(\gebiet; (H\tensor K)')$.
(This is the case, e.g., if $\calF = \Ha^\infty(\gebiet)$, and $\gebiet$ some
open subset of $\C$.) Even more, suppose that the associated
square functions 
\[  \Phi_\gamma(f): X \pfeil \gamma(H;X)\quad \text{and}\quad
\Phi_\gamma(g): X \pfeil \gamma(K;X)
\]
are bounded. It is then
natural to ask 
whether or under which conditions the square function
$\Phi_\gamma(f\tensor g)$ is bounded as well. 
By the ideal property, composition with $\Phi_\gamma(g)$ yields
a bounded operator 
\[ \Phi_\gamma(g)^\tensor : \gamma(H;X) \to \gamma(H; \gamma(K;X)),\qquad
\Phi_\gamma(g)^\tensor T :=   \Phi_\gamma(g) \nach T
\]
(the ``tensor extension''). Hence
\[ \Phi_\gamma(g)^\tensor \nach  \Phi_\gamma(f): X \pfeil
\gamma(H; \gamma(K;X))
\]
is bounded.  With $x\in X$, $h\in H$ and $k \in K$ we can compute
\begin{align*}
\Big[ & \big[\Phi_\gamma(g)^\tensor( \Phi_\gamma(f)x )\big]h \Big]k = 
\Big[ \big[\Phi_\gamma(g)\nach  (\Phi_\gamma(f)x) \big]h \Big]k = 
\Big[ \Phi_\gamma(g)\big( [\Phi_\gamma(f)x]h \big)\Big]k 
\\ & =
\Big[ \Phi_\gamma(g) \big(\Phi(\eprod{h}{f})x \big) \Big]k 
= \Phi( \eprod{k}{g}) \Phi(\eprod{h}{f})x 
= \Phi((\eprod{h}{f}) (\eprod{k}{g}))x 
\\ & = \Phi( \eprod{(h\tensor k)}{(f\tensor g)})x
= \big[\Phi_\gamma(f\tensor g)x \big](h\tensor k).
\end{align*}
Thus our question can be answered positively if the natural mapping
\[ h' \tensor (k' \tensor x) \tpfeil (h' \tensor k') \tensor x
\]
induces a bounded operator $\gamma( H ; \gamma(K;X)) \to \gamma(H
\tensor K;X)$.  This is the case if and only if the Banach space $X$
has Pisier's ``property $(\alpha)$'', see \cite[Definition
2.1]{Pisier:lust} for the original definition employing Rademacher
sums, and \cite[Chapter 13]{Nee2010} for the stated equivalence.
Every Hilbert space has property $(\alpha)$ and each space
$\Ell{p}(\Omega;X)$ with $1\le p < \infty$ inherits this property from
$X$ \cite[Chapter 13]{Nee2010}.
Let us summarise our considerations in the following lemma.

\begin{lemma}\label{sfno.l.tensor}
  Let $H, K$ be two Hilbert spaces and $X$ be a Banach space with
  property $(\alpha)$. Suppose further that  the square functions
\[
  \Phi_{\gamma}(f): \; X \pfeil \gamma(H; X)
      \quad\text{ and }\quad
  \Phi_{\gamma}(g): \; X \pfeil \gamma(K; X)
\]
are bounded. Then the tensor square function
\[
   \Phi_{\gamma}(f \otimes g): \; X \pfeil \gamma(H{\otimes}K; X)
\]
is bounded, too.
\end{lemma}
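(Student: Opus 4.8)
The plan is to reorganise the computation displayed just before the statement into three moves, the last of which is where property~$(\alpha)$ enters. First I would appeal to the ideal property (Theorem~\ref{gamma.t.ideal}): since $\Phi_\gamma(g): X \to \gamma(K;X)$ is bounded, post-composition with it furnishes a bounded ``tensor extension'' $\Phi_\gamma(g)^\tensor: \gamma(H;X) \to \gamma(H;\gamma(K;X))$, $T \mapsto \Phi_\gamma(g)\nach T$, of norm at most $\norm{\Phi_\gamma(g)}$; composing with the bounded operator $\Phi_\gamma(f): X \to \gamma(H;X)$ I would obtain a bounded operator $B_0 := \Phi_\gamma(g)^\tensor \nach \Phi_\gamma(f): X \to \gamma(H;\gamma(K;X))$.

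Next I would invoke property~$(\alpha)$ in the form recalled above (see \cite[Chapter~13]{Nee2010}): for a space $X$ with property~$(\alpha)$ the canonical algebraic map $\konj{h}\tensor(\konj{k}\tensor x) \mapsto \konj{(h\tensor k)}\tensor x$ extends to a bounded operator $J: \gamma(H;\gamma(K;X)) \to \gamma(H\tensor K;X)$ satisfying $[JS](h\tensor k) = [Sh]k$ for $h\in H$, $k\in K$. Then $B := J\nach B_0: X\to\gamma(H\tensor K;X)$ would be bounded, with $\norm{B} \le \norm{J}\,\norm{\Phi_\gamma(f)}\,\norm{\Phi_\gamma(g)}$.

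The remaining task would be to identify $B$ with $\Phi_\gamma(f\tensor g)$. Here I would first note that, because $\Phi_\gamma(f)$ and $\Phi_\gamma(g)$ are bounded, all the scalar operators $\Phi(\eprod{h}{f})$ ($h\in H$) and $\Phi(\eprod{k}{g})$ ($k\in K$) lie in $\Lin(X)$; hence for $x\in X$, $h\in H$, $k\in K$, unwinding the definitions exactly as in the chain of equalities preceding the statement yields
\[ [Bx](h\tensor k) = \Phi(\eprod{k}{g})\,\Phi(\eprod{h}{f})\,x = \Phi\bigl((\eprod{h}{f})(\eprod{k}{g})\bigr)x = \Phi\bigl(\eprod{(h\tensor k)}{f\tensor g}\bigr)x, \]
the middle equality being multiplicativity of the scalar calculus \cite[Prop.~1.2.2]{HaaseFC}, which here carries no domain restriction. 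By linearity this gives $[Bx]w = \Phi(\eprod{w}{f\tensor g})x$ for every $w$ in the algebraic tensor product of $H$ and $K$. Combining this with $Bx\in\gamma(H\tensor K;X)$ and the closedness of $\Phi_\gamma(f\tensor g)$ (Lemma~\ref{acsf.l.properties}, applied with $H\tensor K$ in place of $H$) I would conclude that $x\in\dom(\Phi_\gamma(f\tensor g))$ and $\Phi_\gamma(f\tensor g)x = Bx$, whence $\Phi_\gamma(f\tensor g)=B$ is bounded.

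I expect the only genuinely delicate point to be this last domain bookkeeping. The pointwise identity is transparent for $w$ a finite sum of simple tensors, but $\dom(\Phi_\gamma(f\tensor g))$ demands that $x\in\dom(\Phi(\eprod{w}{f\tensor g}))$ for \emph{every} $w$ in the completed Hilbert-space tensor product $H\tensor K$. To pass from the algebraic to the completed tensor product I would use that $w\mapsto\eprod{w}{f\tensor g}$ turns norm-convergent sequences into uniformly convergent sequences of symbols (since $f\tensor g$ is bounded) and then lean on closedness: the partially defined operator $w\mapsto\Phi(\eprod{w}{f\tensor g})x$ coincides on the algebraic tensor product with the restriction of the $\gamma$-radonifying operator $Bx$, and closedness of $\Phi_\gamma(f\tensor g)$ then forces $x$ into its domain. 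Everything else is a direct reading of the preamble to the statement.
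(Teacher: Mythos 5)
Your proposal is correct and follows essentially the same route as the paper: the paper's proof is precisely the computation in the preamble — the ideal property gives the bounded tensor extension $\Phi_\gamma(g)^\tensor$, property $(\alpha)$ supplies the bounded map $\gamma(H;\gamma(K;X))\to\gamma(H\tensor K;X)$, and the displayed chain of equalities identifies the composite with $\Phi_\gamma(f\tensor g)$. Your closing discussion of passing from the algebraic to the completed tensor product is extra care the paper does not spell out (it treats the identification as immediate once the scalar operators $\Phi(\eprod{h}{f})$, $\Phi(\eprod{k}{g})$ are bounded), so it supplements rather than diverges from the paper's argument.
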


\subsection{Lower square function estimates I}\label{acsf.s.lsfe}

A {\emdf lower square function estimate} is 
an estimate of the form
\[ \norm{x}_X \leq C\, \norm{\Phi_\gamma(g)x}_\gamma \qquad (x\in \dom(\Phi_\gamma(g))).
\]
In certain situations one can combine a lower square function
estimate, a usual square function estimate and a subordination
to  show the boundedness of an operator $\Phi(f)$.

\begin{lemma}\label{nsfo.l.pushing-through}
Let $H,K$ be Hilbert spaces and let $g \in \calF(\gebiet;K')$ and 
$\tilde{g} \in \calF(\gebiet;H')$.
Suppose that the square function $\Phi_\gamma(\tilde{g}): X \to \gamma(K;X)$ is bounded
and that one has a lower square function estimate 
\[ \norm{x}_X \leq C\, \norm{\Phi_\gamma(g)x}_\gamma \qquad (x\in \dom(\Phi_\gamma(g)))
\]
for $\Phi_\gamma(g)$.
Suppose further that the scalar-valued function
$f\in \calF$ is such that there is $T_f \in \Lin(H;K)$ with 
\[ f \cdot g = T_f' \nach \tilde{g}.
\]
Then 
\[ \norm{\Phi(f)x} \leq C \norm{T_f} \norm{\Phi_\gamma(\tilde{g})}\, \norm{x} 
\qquad(x\in \dom(\Phi(f))).
\] 
\end{lemma}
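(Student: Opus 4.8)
The plan is to apply the lower square function estimate not to $x$ but to the vector $\Phi(f)x$, after showing that $\Phi_\gamma(g)$ can be applied to $\Phi(f)x$ and that the result is, via the factorisation $f\cdot g = T_f'\nach\tilde g$, nothing but $\Phi_\gamma(\tilde g)x$ composed with $T_f$.

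First I would fix $x\in\dom(\Phi(f))$ and prove that $\Phi(f)x\in\dom(\Phi_\gamma(g))$ together with the identity $\Phi_\gamma(g)(\Phi(f)x) = \Phi_\gamma(\tilde g)x\nach T_f$. The composition rule for the vectorial calculus, Lemma~\ref{acsf.l.properties}, gives $\Phi_\gamma(g)\Phi(f)\subseteq\Phi_\gamma(f\cdot g)$ with domain $\dom(\Phi(f))\cap\dom(\Phi_\gamma(f\cdot g))$; and since $f\cdot g = T_f'\nach\tilde g$, subordination (Theorem~\ref{sfno.t:subord}) identifies $\Phi_\gamma(f\cdot g) = \Phi_\gamma(T_f'\nach\tilde g)$ with the map $x'\mapsto\Phi_\gamma(\tilde g)x'\nach T_f$. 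Because $\Phi_\gamma(\tilde g)$ is bounded, $\Phi_\gamma(\tilde g)x'\in\gamma(H;X)$ for every $x'\in X$, hence $\Phi_\gamma(\tilde g)x'\nach T_f\in\gamma(K;X)$ by the ideal property, Theorem~\ref{gamma.t.ideal}; thus $\Phi_\gamma(f\cdot g)$ is everywhere defined, so in particular $x\in\dom(\Phi_\gamma(f\cdot g))$ and the inclusion above is an equality at $x$, yielding both $\Phi(f)x\in\dom(\Phi_\gamma(g))$ and $\Phi_\gamma(g)(\Phi(f)x) = \Phi_\gamma(\tilde g)x\nach T_f$. (If one prefers not to rely on the precise domain statement of the subordination theorem, the same conclusion follows scalarwise: for $h\in K$ the adjoint relation gives $\eprod{h}{f\cdot g} = \eprod{T_f h}{\tilde g}$, so the scalar composition rule \cite[Prop.~1.2.2]{HaaseFC} yields $\Phi(\eprod{h}{g})(\Phi(f)x) = \Phi(\eprod{T_f h}{\tilde g})x = [\Phi_\gamma(\tilde g)x](T_f h)$, using that $\dom(\Phi(\eprod{T_f h}{\tilde g})) = X$ since $\Phi_\gamma(\tilde g)$, hence $\Phi(\tilde g)$, is everywhere defined.)

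Then it remains to combine the hypotheses. The lower square function estimate applied to $\Phi(f)x\in\dom(\Phi_\gamma(g))$ gives $\norm{\Phi(f)x}_X\le C\norm{\Phi_\gamma(g)(\Phi(f)x)}_\gamma = C\norm{\Phi_\gamma(\tilde g)x\nach T_f}_\gamma$, and one further application of the ideal property bounds the last quantity by $C\norm{T_f}\,\norm{\Phi_\gamma(\tilde g)x}_\gamma\le C\norm{T_f}\,\norm{\Phi_\gamma(\tilde g)}\,\norm{x}$, which is the asserted inequality. The only genuine obstacle is verifying that $\Phi(f)x$ lies in $\dom(\Phi_\gamma(g))$ and not merely in $\dom(\Phi(g))$; this is exactly where the boundedness of $\Phi_\gamma(\tilde g)$ together with the ideal property of $\gamma$-radonifying operators is used, namely to guarantee that $\Phi_\gamma(\tilde g)x\nach T_f$ is truly $\gamma$-radonifying. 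Everything else is a routine manipulation of the calculus rules established earlier.
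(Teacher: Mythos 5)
Your proof is correct and follows essentially the same route as the paper's: apply Lemma~\ref{acsf.l.properties}.c) to get $\Phi_\gamma(g)\Phi(f)\subseteq\Phi_\gamma(f\cdot g)$, identify $\Phi_\gamma(f\cdot g)$ via subordination and the ideal property with the fully defined operator $x\mapsto\Phi_\gamma(\tilde g)x\nach T_f$, conclude $\Phi(f)x\in\dom(\Phi_\gamma(g))$, and then chain the lower estimate with the ideal property. Your explicit justification that $\Phi_\gamma(\tilde g)x\nach T_f$ is genuinely $\gamma$-radonifying is exactly the point the paper compresses into ``the rightmost operator is fully defined''.
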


\begin{proof}
By Lemma~\ref{acsf.l.properties}.c), 
\[ \Phi_\gamma(g)\Phi(f) \subseteq \Phi_\gamma(f\cdot g) 
= \Phi_\gamma(T_f \nach \tilde{g}) = [\Phi_\gamma(\tilde{g}) \cdot]\nach T_f
\]
Since the rightmost operator is fully defined, if $x\in \dom(\Phi(f))$ then 
$\Phi(f)x\in \dom(\Phi_\gamma(g))$  (still by Lemma~\ref{acsf.l.properties}.c))
and hence
\begin{align*}
 \norm{\Phi(f)x}_X & \le C \norm{\Phi_\gamma(g)\Phi(f)x}_\gamma
= C \norm{\Phi_\gamma(\tilde{g})x \nach T_f}_\gamma
\le C \norm{T_f} \norm{\Phi_\gamma(\tilde{g})x}_\gamma
\\ & \le C \norm{T_f} \norm{\Phi_\gamma(\tilde{g})} \, \norm{x}
\end{align*}
as claimed.
\end{proof}

Lemma~\ref{nsfo.l.pushing-through} is an abstract version of the
``pushing the operator through the square function''-technique used by
Kalton and Weis in \cite{KalWei2004} (see also \cite[Theorem
10.9]{KunWei2004}) to show that a norm equivalence
\[ \norm{R(\pm \ui \omega + \cdot , A)x}_{\gamma(\Ell{2}(\R);X)}
\sim \norm{x}_X
\]
for a strip type operator $A$ implies the boundedness of the
$\Ha^\infty$-calculus on a strip, see Section~\ref{ss:sing-int-repres}
below for details.

\subsection{Lower square function estimates II}\label{acsf.s.lsfe2}

We now present some methods to {\em establish} lower square function
estimates.  These, however, require slightly stronger assumptions
about the underlying functional calculus.  Indeed, we shall work with
a functional calculus $(\calE, \Ha^\infty(\gebiet), \Phi)$ admitting a
function $e \in \calE$ with the following properties:
\begin{aufziii}\label{lsfe.requirements}
\item $ef \in \calE$ for all 
$f \in \Ha^\infty(\gebiet)$;
\item if $(f_n)_{n \in \N}$ is a sequence in $\Ha^\infty(\gebiet)$
with $\sup_{n \in \N} \norm{f_n}_{\Ha^\infty}< \infty$ and $f_n \to f$ pointwise,
then $\Phi(ef_n) \to \Phi(ef)$ weakly on $X$;
\item $\Phi(e)$ is injective. 
\end{aufziii}
The standard functional calculi for  strip type
and sectorial operators are of this kind, see  Lemma~\ref{acsf.l.Qf} 
below. Note that 1) and 3) just tell that the function $e$ is a
``universal regulariser'' for $\Ha^\infty(\gebiet)$.

\begin{rem}
The following considerations are motivated by McIntosh's approximation formula
\[ x =  \int_0^\infty \vphi(tA)\psi(tA)x \, \tfrac{\ud{t}}{t}
\]
for $x\in \cls{\dom}(A) \cap \cls{\ran}(A)$, sectorial operators $A$ and 
appropriate functions $\vphi, \psi$, see \cite{McI1986}
and \cite[Sec.~5.2]{HaaseFC}.
\end{rem}

Let
$f \in \Ha^\infty(\gebiet;H)$,  
$g \in \Ha^\infty(\gebiet;H')$ and 
\[ (\eprod{f}{g})(z) :=  \dprod{f(z)}{g(z)}_{H, H'} \qquad (z\in \gebiet).
\]
(This notation is consistent with the notation $\eprod{h'}{f}$ and 
$\eprod{g}{h}$ introduced in Sections~\ref{acsf.s.sffc} and~\ref{acsf.s.dsffc}.)
Then $\eprod{f}{g} \in \Ha^\infty(\gebiet)$ and we expect
the formula
\begin{equation}
 \dprod{\Phi(\eprod{f}{g})x}{x'}_{X,X'} = 
\dprod{\Phi_{\gamma}(g)x}{\Phi_{\gamma'}(f)x'}_{\gamma,\gamma'}
\end{equation}
to hold. The following result gives some conditions.

\begin{lemma}\label{acsf.l.lsfe-basic}
In the described situation, if $e\in \calE$ 
has the properties {\upshape 1)} and {\upshape 2)} above, and if 
$x = \Phi(e)y$ for some
$y \in \dom(\Phi_\gamma(g))$, then 
\[
 \dprod{\Phi(\eprod{f}{g})x}{x'} = \dprod{\Phi_{\gamma}(g)x}{\Phi_{\gamma'}(f)x'}
\]
for all $x'\in \dom(\Phi_{\gamma'}(f))$.
\end{lemma}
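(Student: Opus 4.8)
The plan is to reduce the desired identity to its scalar counterpart, namely the definition of $\Phi_{\gamma}$ and $\Phi_{\gamma'}$ in terms of the underlying elementary calculus, and then to exploit the hypothesis $x = \Phi(e)y$ to push everything through a convergence argument. First I would fix an orthonormal basis $(e_\alpha)_{\alpha\in I}$ of $H$. By Corollary~\ref{gamma.c.fourier-series}, since $y \in \dom(\Phi_\gamma(g))$ we have $\Phi_\gamma(g)x = \sum_\alpha \konj{e_\alpha} \tensor \Phi(\eprod{e_\alpha}{g})x$ in the norm of $\gamma(H;X)$, and by Theorem~\ref{gamma.t.dual}\,\ref{item:gamma.t.dual.d} the trace-duality pairing expands as
\[
\dprod{\Phi_\gamma(g)x}{\Phi_{\gamma'}(f)x'}_{\gamma,\gamma'}
= \sum_\alpha \dprod{\Phi(\eprod{e_\alpha}{g})x}{[\Phi_{\gamma'}(f)x']\,\konj{e_\alpha}}_{X,X'}
= \sum_\alpha \dprod{\Phi(\eprod{e_\alpha}{g})x}{\Phi(\eprod{f}{\konj{e_\alpha}})'x'}_{X,X'},
\]
using the definition of $\Phi^d(f)$. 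Each summand equals $\dprod{\Phi(\eprod{f}{\konj{e_\alpha}})\Phi(\eprod{e_\alpha}{g})x}{x'}$, provided $x$ lies in the appropriate domain, and here is where the regulariser $e$ enters: writing $x = \Phi(e)y$ makes $\Phi(e_\alpha^* g)x = \Phi(e \cdot \eprod{e_\alpha}{g})y$, a product of elementary-calculus images, so the composition rule of the scalar calculus \cite[Prop.~1.2.2]{HaaseFC} applies cleanly.

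The next step is to recognise $\sum_\alpha \eprod{f}{\konj{e_\alpha}}\cdot\eprod{e_\alpha}{g}$ as a pointwise-convergent series whose sum is $\eprod{f}{g}$: indeed for fixed $z\in\gebiet$,
\[
\sum_\alpha \dprod{f(z)}{\konj{e_\alpha}}_{H}\,\dprod{e_\alpha}{g(z)}_{H,H'}
= \sum_\alpha \sprod{f(z)}{e_\alpha}_H\,\dprod{e_\alpha}{g(z)}
= \dprod{f(z)}{g(z)}_{H,H'} = (\eprod{f}{g})(z),
\]
by Parseval. Multiplying by the regulariser $e$ and truncating to a finite set $F\subseteq I$, the functions $e\cdot \sum_{\alpha\in F}\eprod{f}{\konj{e_\alpha}}\eprod{e_\alpha}{g}$ form a uniformly bounded (in $\Ha^\infty(\gebiet)$) sequence converging pointwise to $e\cdot\eprod{f}{g}$; the uniform bound follows from Cauchy--Schwarz, $\norm{f}_{\Ha^\infty(\gebiet;H)}\norm{g}_{\Ha^\infty(\gebiet;H')}$ dominating the partial sums. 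Property~2) of $e$ then gives $\Phi\bigl(e\sum_{\alpha\in F}\eprod{f}{\konj{e_\alpha}}\eprod{e_\alpha}{g}\bigr)y \to \Phi(e\cdot\eprod{f}{g})y = \Phi(e)\Phi(\eprod{f}{g})y = \Phi(\eprod{f}{g})\Phi(e)y = \Phi(\eprod{f}{g})x$ weakly in $X$, where the middle equalities use $y\in\dom(\Phi(\eprod{f}{g}))$ (which itself needs a brief justification, or can be arranged by the definition of $\dom(\Phi_\gamma(g))$ together with $e$ being a universal regulariser). Pairing against $x'$ turns weak convergence into convergence of scalars, so $\sum_{\alpha\in F}\dprod{\Phi(\eprod{f}{\konj{e_\alpha}})\Phi(\eprod{e_\alpha}{g})x}{x'} \to \dprod{\Phi(\eprod{f}{g})x}{x'}$, which together with the first display yields the claim.

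The main obstacle I anticipate is the careful bookkeeping of domains and of the adjoint manipulations: one must ensure that $x' \in \dom(\Phi_{\gamma'}(f))$ really gives $[\Phi_{\gamma'}(f)x']\konj{e_\alpha} = \Phi(\eprod{f}{\konj{e_\alpha}})'x'$ with the adjoint single-valued (guaranteed by the standing assumption that $\Phi(\eprod{f}{h'})$ is densely defined), and that the identity $\dprod{\Phi(\eprod{f}{\konj{e_\alpha}})'x'}{\,\cdot\,}$ unfolds to $\dprod{\,\cdot\,}{\Phi(\eprod{f}{\konj{e_\alpha}})x'}$ only on the correct subspace — which is exactly why the hypothesis $x=\Phi(e)y$ is imposed, since then $\Phi(\eprod{e_\alpha}{g})x$ lands in $\ran(\Phi(e))$, keeping us inside domains where the scalar functional-calculus composition laws of \cite[Ch.~1]{HaaseFC} are valid. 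Everything else is a routine assembly of Corollary~\ref{gamma.c.fourier-series}, Theorem~\ref{gamma.t.dual}, Parseval, and property~2) of the regulariser.
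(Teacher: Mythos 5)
Your proposal is correct and follows essentially the same route as the paper's own proof: expand $\dprod{\Phi_\gamma(g)x}{\Phi_{\gamma'}(f)x'}$ along an orthonormal basis via trace duality (Theorem~\ref{gamma.t.dual}), use $x=\Phi(e)y$ to rewrite each summand as $\dprod{\Phi\bigl(e\,(\eprod{f}{\konj{e_\alpha}})\,(\eprod{e_\alpha}{g})\bigr)y}{x'}$ through the elementary calculus, and pass to the limit using Parseval, the uniform bound on the partial sums, and property 2) of $e$. The only step to streamline is your conclusion: you need not assume $y\in\dom(\Phi(\eprod{f}{g}))$ (which does not obviously follow from the hypotheses); since $e\cdot(\eprod{f}{g})\in\calE$ by property 1) and $\Phi(e)\in\Lin(X)$, the composition rule $\Phi(\eprod{f}{g})\Phi(e)\subseteq\Phi\bigl(e\cdot(\eprod{f}{g})\bigr)$ has domain $\dom(\Phi(e))\cap\dom\bigl(\Phi(e\cdot(\eprod{f}{g}))\bigr)=X$, so $\Phi\bigl(e\cdot(\eprod{f}{g})\bigr)y=\Phi(\eprod{f}{g})\Phi(e)y=\Phi(\eprod{f}{g})x$ directly, exactly as in the paper.
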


\begin{proof}
Let us first note that, under the given conditions, 
$x \in \dom(\Phi_\gamma(g))$.
Indeed, this follows directly from 
Lemma~\ref{acsf.l.properties}.d). 

\smallskip
\noindent
For the proof of the claim we let $(e_\alpha)_{\alpha \in I}$ be an 
orthonormal basis of $H$ and denote 
\[ g_\alpha(z) := \dprod{e_\alpha}{g(z)}= \big(e_\alpha | \konj{g(z)}\big),
\quad 
f_\alpha(z) := \dprod{f(z)}{\konj{e_\alpha}} = 
\sprod{f(z)}{e_\alpha}
\]
for $\alpha \in I$. Then by general Hilbert space theory
\[ (\eprod{f}{g})(z) = {\sum}_{\alpha} f_\alpha(z) \cdot g_\alpha(z)
\]
for each $z\in \gebiet$, and the partial sums are uniformly bounded.
(Note that
the sum is actually only over countably many $\alpha$ since 
$\{ \konj{g(z)}, f(z) \suchthat z\in \gebiet\}$ is separable.)
Hence, for  $x'\in \dom(\Phi_{\gamma'}(f))$  we can compute
\begin{align*}
& \dprod{\Phi_\gamma(g)x}{\Phi_{\gamma'}(f)x'}_{\gamma, \gamma'} 
= 
\sum_{\alpha} \dprod{\Phi(g_\alpha)x}{ \Phi(f_\alpha)'x'}_{X,X'} 
\\ & \qquad =
\sum_\alpha \dprod{ \Phi(e)\Phi(g_\alpha)y}{\Phi(f_\alpha)'x'}_{X,X'} 
=
\sum_\alpha \dprod{ \Phi(f_\alpha)\Phi(e)\Phi(g_\alpha)y}{x'}_{X,X'}
\\ & \qquad =  
\sum_\alpha \dprod{ \Phi(e f_\alpha g_\alpha)y}{x'}_{X'X'}
=
\dprod{ \Phi(e (\eprod{f}{g}))y}{x'}_{X,X'}  = \dprod{\Phi(\eprod{f}{g})x}{x'}_{X,X'}.
\end{align*}
Here we used c) of Theorem~\ref{gamma.t.dual} and property 
2) of the function $e$.
\end{proof}

\begin{thm}\label{acsf.t.lsfe}
Let $f \in \Ha^\infty(\gebiet;H)$ and  $g\in \Ha^\infty(\gebiet;H')$ and
suppose that there is $e\in \calE$ satisfying {\upshape 1)--3)} above.
If $\Phi_{\gamma'}(f)$ is a bounded operator, then
\[ \Phi_{\gamma'}(f)'\Phi_\gamma(g) \subseteq \Phi(\eprod{f}{g}).
\]
In other words,
$\dom(\Phi_\gamma(g)) \subseteq \dom( \Phi(\eprod{f}{g}))$ and 
\[ 
 \dprod{\Phi(\eprod{f}{g})x}{x'} = 
\dprod{\Phi_{\gamma}(g)x}{\Phi_{\gamma'}(f)x'} \qquad \text{for all 
$x\in \dom(\Phi_\gamma(g) )$ and all $x'\in X'$}.
\]
In particular, one has the lower estimate
\[ \norm{\Phi(\eprod{f}{g})x}_X\lesssim \norm{\Phi_\gamma(g)x}_{\gamma}
\quad \text{for all $x\in \dom(\Phi_\gamma(g))$}.
\] 
\end{thm}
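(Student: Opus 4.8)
The plan is to exploit the computation already performed in Lemma~\ref{acsf.l.lsfe-basic} together with a density/closedness argument. Lemma~\ref{acsf.l.lsfe-basic} establishes the desired pairing identity $\dprod{\Phi(\eprod{f}{g})x}{x'} = \dprod{\Phi_\gamma(g)x}{\Phi_{\gamma'}(f)x'}$ for all $x$ of the special form $x = \Phi(e)y$ with $y \in \dom(\Phi_\gamma(g))$, and all $x' \in \dom(\Phi_{\gamma'}(f))$. The first step is to upgrade the right-hand side of this identity: since $\Phi_{\gamma'}(f)$ is now assumed to be a \emph{bounded} operator $X' \to \gamma'(H';X')$, Theorem~\ref{acsf.t.dsf} tells us that its pre-adjoint $T := \Phi_{\gamma'}(f)'\restrict_{\gamma(H;X)}$ is a bounded operator $\gamma(H;X) \to X$, with $\dprod{Tx}{x'}_{X,X'} = \dprod{x}{\Phi_{\gamma'}(f)x'}$ for all $x' \in X'$ and all $S = \Phi_\gamma(g)x \in \gamma(H;X)$. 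Hence, for $x = \Phi(e)y$ as above, Lemma~\ref{acsf.l.lsfe-basic} rewrites as
\[
 \dprod{\Phi(\eprod{f}{g})x}{x'} = \dprod{T(\Phi_\gamma(g)x)}{x'} \qquad (x'\in X').
\]
Since this holds for \emph{all} $x' \in X'$, we deduce the operator-level identity $\Phi(\eprod{f}{g})x = T(\Phi_\gamma(g)x)$ for every $x$ of the form $\Phi(e)y$ with $y \in \dom(\Phi_\gamma(g))$.

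The second step is to remove the restriction $x = \Phi(e)y$. Let $x \in \dom(\Phi_\gamma(g))$ be arbitrary. The function $e$ is a universal regulariser for $\Ha^\infty(\gebiet)$ (properties 1) and 3)), and the standard functional calculus machinery provides an approximating sequence: one chooses $e_n \in \calE$ (e.g.\ $e_n = n\,R(n,A)$-type regularisers, or powers of $e$) with $\Phi(e_n) \in \Lin(X)$ uniformly bounded, $\Phi(e_n) \to \Id$ strongly, and $\Phi(e_n)$ mapping into $\ran(\Phi(e))$. Put $x_n := \Phi(e_n)x$. By Lemma~\ref{acsf.l.properties}.e), $\dom(\Phi_\gamma(g))$ is invariant under $\Phi(e_n)$ and $\Phi_\gamma(g)x_n = \Phi(e_n)\,\Phi_\gamma(g)x$; by Theorem~\ref{gamma.t.approx}.\ref{item:gamma.t.approx.a} this converges to $\Phi_\gamma(g)x$ in $\gamma(H;X)$. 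Also $x_n \to x$ in $X$. Since each $x_n$ lies in $\ran(\Phi(e))$ (being $\Phi(e)\Phi(e')x$ for suitable $e'$), the identity from the first step gives $\Phi(\eprod{f}{g})x_n = T(\Phi_\gamma(g)x_n)$ for each $n$. Letting $n \to \infty$: the right-hand side converges to $T(\Phi_\gamma(g)x)$ by boundedness of $T$, while $x_n \to x$ and $\Phi(\eprod{f}{g})x_n = T(\Phi_\gamma(g)x_n) \to T(\Phi_\gamma(g)x)$; since $\Phi(\eprod{f}{g})$ is a closed operator (Lemma~\ref{acsf.l.properties}.a)), we conclude $x \in \dom(\Phi(\eprod{f}{g}))$ and $\Phi(\eprod{f}{g})x = T(\Phi_\gamma(g)x)$. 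This establishes the inclusion $\Phi_{\gamma'}(f)'\Phi_\gamma(g) \subseteq \Phi(\eprod{f}{g})$, and unwinding $T = \Phi_{\gamma'}(f)'$ via its defining duality gives the pairing formula $\dprod{\Phi(\eprod{f}{g})x}{x'} = \dprod{\Phi_\gamma(g)x}{\Phi_{\gamma'}(f)x'}$ for all $x \in \dom(\Phi_\gamma(g))$, $x' \in X'$.

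The final estimate is immediate: $\norm{\Phi(\eprod{f}{g})x}_X = \norm{T(\Phi_\gamma(g)x)}_X \le \norm{T}\,\norm{\Phi_\gamma(g)x}_\gamma = \norm{\Phi_{\gamma'}(f)}\,\norm{\Phi_\gamma(g)x}_\gamma$.

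The main obstacle I anticipate is the approximation step, specifically verifying that the regularising sequence $(x_n)$ can be taken with $x_n \in \ran(\Phi(e))$, $x_n \to x$, \emph{and} $\Phi_\gamma(g)x_n \to \Phi_\gamma(g)x$ in the $\gamma$-norm simultaneously. The convergence $\Phi_\gamma(g)x_n \to \Phi_\gamma(g)x$ in $\gamma(H;X)$ is exactly what Theorem~\ref{gamma.t.approx}.\ref{item:gamma.t.approx.a} delivers, provided we have written $\Phi_\gamma(g)x_n = L_n \, \Phi_\gamma(g)x$ for a uniformly bounded net $L_n \to \Id$ strongly on $X$ — this is Lemma~\ref{acsf.l.properties}.e) applied with $g$ there equal to the regulariser $e_n$, which requires $\Phi(e_n) \in \Lin(X)$. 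The existence of such regularisers with $\Phi(e_n)$ bounded and strongly convergent to the identity is part of the standard theory for sectorial and strip-type calculi (and more generally is guaranteed once property 2) is available, since $e$ itself already satisfies the relevant conditions); one should spell out that $e$ can be replaced by $e^n/\norm{e}_\infty^{n}$ or an analogous family, or simply invoke the approximation results of \cite[Section~5.1]{HaaseFC}. Everything else is a routine diagram chase through the already-established trace-duality and ideal-property lemmas.
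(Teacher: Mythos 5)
Your Step 1 and the final estimate are fine, and the trace-duality ingredients (Theorem~\ref{acsf.t.dsf}, the pre-adjoint $T$, identity \eqref{acsf.e.dsf-inv}) are the right ones; the gap is in Step 2, the passage from $x\in\ran(\Phi(e))$ to a general $x\in\dom(\Phi_\gamma(g))$. The theorem is stated for an abstract calculus $(\calE,\Ha^\infty(\gebiet),\Phi)$ whose only assumptions on $e$ are 1)--3), and these do \emph{not} provide a sequence $e_n\in\calE$ with $\Phi(e_n)$ uniformly bounded, $\Phi(e_n)\to\Id$ strongly, and $e_n$ factoring through $e$. Your parenthetical claim that property 2) guarantees this is unfounded: 2) only gives weak convergence of $\Phi(ef_n)$ along bounded pointwise convergent sequences $(f_n)$, which produces no approximate identity at all. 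Worse, nothing in 1)--3) forces $\ran(\Phi(e))$ to be dense in $X$; if it is not, then your approximants $x_n=\Phi(e)\Phi(e_n')x$ all lie in $\ran(\Phi(e))$, so no such net of operators can converge strongly to the identity, and the scheme cannot be repaired by ``standard regularisers'' (concretely, for strip/sectorial calculi of operators without dense domain the ranges of all elementary operators sit inside $\cls{\dom}(A)$). Since the theorem is true without any such extra hypothesis, your argument as written proves a strictly weaker statement.

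The paper avoids approximation entirely by using the very definition of the regularised calculus. For an arbitrary $x\in\dom(\Phi_\gamma(g))$, apply Lemma~\ref{acsf.l.lsfe-basic} to the element $\Phi(e)x$ (i.e.\ with the given $x$ in the role of the ``$y$'' of that lemma): for every $x'\in X'$,
\[
\dprod{\Phi(e\,(\eprod{f}{g}))x}{x'}
= \dprod{\Phi_{\gamma}(g)\Phi(e)x}{\Phi_{\gamma'}(f)x'}
= \dprod{\Phi(e)\nach [\Phi_{\gamma}(g)x]}{\Phi_{\gamma'}(f)x'}
= \dprod{\Phi(e)\,\Phi_{\gamma'}(f)'[\Phi_{\gamma}(g)x]}{x'},
\]
using Lemma~\ref{acsf.l.properties}.e) and \eqref{acsf.e.dsf-inv}. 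Hence $\Phi(e\,(\eprod{f}{g}))x=\Phi(e)y$ with $y:=\Phi_{\gamma'}(f)'[\Phi_{\gamma}(g)x]$, and since $e$ regularises $\eprod{f}{g}$ and $\Phi(e)$ is injective (properties 1) and 3)), this is exactly the statement that $x\in\dom(\Phi(\eprod{f}{g}))$ and $\Phi(\eprod{f}{g})x=y$. No density, no closedness, no limiting argument is needed; the pairing formula and the lower estimate then follow as in your last lines.
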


\begin{proof}
We let $y := \Phi_{\gamma'}(f)'[\Phi_{\gamma}(g)x] \in X$ by 
Theorem~\ref{acsf.t.dsf}.
Take $e \in \calE$ satisfying 1), 2) and 3) above. Then by 
Lemma~\ref{acsf.l.lsfe-basic},
for each $x'\in X'$ we have
\begin{align*}
 \dprod{\Phi(e (\eprod{f}{g}))x}{x'} & = 
 \dprod{\Phi(\eprod{f}{g})\Phi(e)x}{x'}  = 
\dprod{\Phi_{\gamma}(g)\Phi(e)x}{\Phi_{\gamma'}(f)x'} 
\\ & = 
\dprod{ \Phi(e)\nach [\Phi_{\gamma}(g)x]}{\Phi_{\gamma'}(f)x'}
 = 
 \dprod{ \Phi_{\gamma'}(f)'\big( \Phi(e)\nach [\Phi_{\gamma}(g)x])}{x'}
\\ & =
\dprod{\Phi(e) (\Phi_{\gamma'}(f)'[\Phi_{\gamma}(g)x])}{x'}
= \dprod{\Phi(e)y}{x'}
\end{align*}
where we used Lemma~\ref{acsf.l.properties}.e) and 
\eqref{acsf.e.dsf-inv}. By construction of the functional calculus, 
$x\in \dom(\Phi(\eprod{f}{g}))$ and $\Phi(\eprod{f}{g})x = y$. 
The remaining assertions follow easily. 
\end{proof}

\begin{cor}\label{acsf.c.normeq}
Let $f \in \Ha^\infty(\gebiet;H)$ and  $g\in \Ha^\infty(\gebiet;H')$
such that $\Phi_\gamma(g)$ and $\Phi_{\gamma'}(f)$ are bounded operators.
Then $\Phi(\eprod{f}{g})$ is a bounded operator and
\[ 
 \dprod{\Phi(\eprod{f}{g})x}{x'} = 
\dprod{\Phi_{\gamma}(g)x}{\Phi_{\gamma'}(f)x'} \qquad \text{for all $x\in X$ and 
$x'\in X'$}.
\]
In particular, if $\eprod{f}{g} = \car$ then one has the  norm equivalence
\[ \norm{x}_X \simeq \norm{\Phi_\gamma(g)x}_{\gamma}
\quad \text{for all $x\in X$}.
\] 
\end{cor}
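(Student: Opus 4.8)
The plan is to read this corollary off from Theorem~\ref{acsf.t.lsfe} together with the extra hypothesis that $\Phi_\gamma(g)$ is bounded. First I would apply Theorem~\ref{acsf.t.lsfe} to the pair $(f,g)$: since $\Phi_{\gamma'}(f)$ is a bounded operator (and a universal regulariser $e\in\calE$ satisfying the conditions 1)--3) is available, as in the standing hypotheses of that theorem), one obtains $\dom(\Phi_\gamma(g)) \subseteq \dom(\Phi(\eprod{f}{g}))$, the identity
\[ \dprod{\Phi(\eprod{f}{g})x}{x'} = \dprod{\Phi_\gamma(g)x}{\Phi_{\gamma'}(f)x'}_{\gamma,\gamma'} \qquad \bigl(x\in\dom(\Phi_\gamma(g)),\ x'\in X'\bigr), \]
and the lower estimate $\norm{\Phi(\eprod{f}{g})x}_X \lesssim \norm{\Phi_\gamma(g)x}_\gamma$ on $\dom(\Phi_\gamma(g))$, with constant $\norm{\Phi_{\gamma'}(f)}$.

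Next I would feed in the additional assumption that $\Phi_\gamma(g)$ is bounded, hence everywhere defined, so $\dom(\Phi_\gamma(g)) = X$. By the inclusion of domains from the previous step this gives $\dom(\Phi(\eprod{f}{g})) = X$, and the lower estimate turns into $\norm{\Phi(\eprod{f}{g})x}_X \lesssim \norm{\Phi_\gamma(g)x}_\gamma \le \norm{\Phi_\gamma(g)}\,\norm{x}_X$; thus $\Phi(\eprod{f}{g}) \in \Lin(X)$, and the displayed identity holds for all $x\in X$ and $x'\in X'$. This settles the first assertion. For the addendum I would specialise to $\eprod{f}{g} = \car$: since $\calF$ is unital and $\calF = \calF_r$, the constant function $\car$ is regularisable and $\Phi(\car) = \Id_X$, so the identity becomes $\dprod{x}{x'} = \dprod{\Phi_\gamma(g)x}{\Phi_{\gamma'}(f)x'}_{\gamma,\gamma'}$ for all $x\in X$, $x'\in X'$. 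Taking the supremum over $\norm{x'}_{X'}\le 1$ and using that, by Theorem~\ref{gamma.t.dual}\,\ref{item:gamma.t.dual.b}, trace duality is an isometric identification $\gamma(H;X)'\cong\gamma'(H';X')$ (so that $\abs{\dprod{S}{V}_{\gamma,\gamma'}}\le\norm{S}_\gamma\norm{V}_{\gamma'}$), one obtains $\norm{x}_X \le \norm{\Phi_{\gamma'}(f)}\,\norm{\Phi_\gamma(g)x}_\gamma$. Combined with $\norm{\Phi_\gamma(g)x}_\gamma \le \norm{\Phi_\gamma(g)}\,\norm{x}_X$ this is the claimed equivalence $\norm{x}_X \simeq \norm{\Phi_\gamma(g)x}_\gamma$.

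I do not expect a genuine obstacle: all the substance already sits in Theorem~\ref{acsf.t.lsfe}, and the corollary merely records that boundedness of $\Phi_\gamma(g)$ promotes the domain $\dom(\Phi_\gamma(g))$ to all of $X$ and upgrades the lower estimate to honest boundedness of $\Phi(\eprod{f}{g})$. The one place worth a careful line is the passage from the bilinear identity to the two-sided norm estimate, which relies precisely on the isometric trace duality $\gamma(H;X)'\cong\gamma'(H';X')$ recorded earlier; together with the trivial but necessary remark that $\Phi(\car)=\Id_X$ under the running assumption $\calF=\calF_r$.
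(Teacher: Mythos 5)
Your proposal is correct and follows exactly the route the paper intends: Corollary~\ref{acsf.c.normeq} is stated as an immediate consequence of Theorem~\ref{acsf.t.lsfe}, and your argument (boundedness of $\Phi_\gamma(g)$ forces $\dom(\Phi_\gamma(g))=X$, the lower estimate then gives $\Phi(\eprod{f}{g})\in\Lin(X)$, and for $\eprod{f}{g}=\car$ one uses $\Phi(\car)=\Id_X$ together with the trace-duality bound $\abs{\dprod{S}{V}_{\gamma,\gamma'}}\le\norm{S}_\gamma\norm{V}_{\gamma'}$ to get the reverse inequality) is precisely this deduction. No gaps.
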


The problem whether to a given function $f \in
 \Ha^\infty(\gebiet; H)$ there exists a function $g$ with 
$\eprod{f}{g}= \car$  is known as the {\em Corona problem}.  
For separable Hilbert
spaces and bounded holomorphic functions on the disc such functions
$g$ always exist provided $\inf_{z\in \bbD} \norm{f(z)}_H >0$, see
Tolokonnikov \cite{Tol1981} and Uchiyama \cite{Uch1980} and also
\cite[Appendix 3]{Nikolski:shift}. By a conformal mapping this
result extends to strips or sectors immediately.

\begin{cor}
In addition to the standing assumptions of this section, suppose that
$\gebiet$ is a simply connected domain in $\C$ and  that
$\Phi_{\gamma'}(f)$ is a bounded operator for all $f \in
\Ha^\infty(\gebiet; H)$. Then there is a constant $C\ge 0$ with the following
property: whenever  $g \in \Ha^\infty(\gebiet; H')$ is such
that $\delta := \inf_{z\in \gebiet} \norm{g(z)}_{H'} >0$, one has 
\[
   \norm{x} \leq C \, c(\delta)
\norm{  \Phi_{\gamma}(g)\, x }_\gamma \qquad \text{ for all } x\in  \dom( \Phi_{\gamma}(g) )
\]
with $c(\delta)\le \delta^{-2}\ln(1+\tfrac1\delta)^{\sfrac32}$.
\end{cor}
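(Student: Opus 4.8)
The plan is to derive the estimate from Theorem~\ref{acsf.t.lsfe} by solving a Corona problem for $g$. Suppose for the moment that two things are at our disposal: (a) a function $f\in\Ha^\infty(\gebiet;H)$ with $\eprod{f}{g}=\car$ and $\norm{f}_{\Ha^\infty(\gebiet;H)}\le c(\delta)$, where $c(\delta)\le\delta^{-2}\ln(1+\tfrac1\delta)^{3/2}$; and (b) a uniform bound $\norm{\Phi_{\gamma'}(u)}_{\gamma'}\le C\,\norm{u}_{\Ha^\infty(\gebiet;H)}$ for all $u$. By the standing assumptions of this section there is $e\in\calE$ with properties 1), 2), 3), and $e$ regularises $\car$ with $\Phi(e)$ injective, so $\Phi(\car)=\Id_X$. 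Hence Theorem~\ref{acsf.t.lsfe} applied to the pair $(f,g)$ gives $\dom(\Phi_\gamma(g))\subseteq\dom(\Phi(\eprod{f}{g}))=X$ and, for $x\in\dom(\Phi_\gamma(g))$,
\[
 x=\Phi(\car)x=\Phi(\eprod{f}{g})x=\Phi_{\gamma'}(f)'\,[\Phi_\gamma(g)x],
\]
so that $\norm{x}\le\norm{\Phi_{\gamma'}(f)}_{\gamma'}\,\norm{\Phi_\gamma(g)x}_\gamma\le C\,c(\delta)\,\norm{\Phi_\gamma(g)x}_\gamma$, which is the claim. It remains to supply (a) and (b).

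For (b) I would first note that the hypothesis, specialised to $H=\C$, says $\Phi(u)\in\Lin(X)$ for every $u\in\Ha^\infty(\gebiet)$; combined with the scalar convergence lemma (in force here) and the standard Baire/closed-graph argument for the scalar $\Ha^\infty$-calculus (cf.\ \cite[Ch.~5]{HaaseFC}) this yields a constant $C_0$ with $\norm{\Phi(u)}\le C_0\norm{u}_{\Ha^\infty}$. Then I would run a closed-graph argument for the linear map $u\mapsto\Phi_{\gamma'}(u)$ from $\Ha^\infty(\gebiet;H)$ into $\Lin(X';\gamma'(H';X'))$: if $u_n\to0$ uniformly and $\Phi_{\gamma'}(u_n)\to V$ in operator norm, then for $h'\in H'$ and $x'\in X'$ one has $\norm{[\Phi_{\gamma'}(u_n)x']h'}=\norm{\Phi(\eprod{u_n}{h'})'x'}\le C_0\norm{h'}\norm{u_n}_\infty\norm{x'}\to0$, so $\Phi_{\gamma'}(u_n)x'\to0$ strongly; since $\norm{\cdot}_{\Lin(H';X')}\le\norm{\cdot}_{\gamma'}$, also $\Phi_{\gamma'}(u_n)x'\to Vx'$ strongly, whence $V=0$. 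The closed graph theorem delivers $C$.

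For (a) I would use the vector-valued Corona theorem. As $\gebiet$ is simply connected and $\gebiet\neq\C$, fix a conformal bijection $\varphi\colon\gebiet\to\bbD$ and set $G:=g\circ\varphi^{-1}\in\Ha^\infty(\bbD;H')$, so that $\inf_{\zeta\in\bbD}\norm{G(\zeta)}_{H'}=\delta$; after normalising I may assume $\sup_\zeta\norm{G(\zeta)}_{H'}\le1$, hence $\delta\le1$. Replacing $H$ by the closed span of the (separable) range of $g$ and identifying this separable space with its dual through an orthonormal basis as in \eqref{gamma.eq.L2-dual}, finding $F\in\Ha^\infty(\bbD;H)$ with $\eprod{F(\zeta)}{G(\zeta)}\equiv1$ is exactly the Corona problem on the disc; the theorems of Tolokonnikov and Uchiyama (\cite{Tol1981,Uch1980}; see also \cite[Appendix~3]{Nikolski:shift}) provide such an $F$ with $\norm{F}_\infty\le c(\delta)$ and the stated bound on $c(\delta)$. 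Then $f:=F\circ\varphi$ is bounded and weakly holomorphic, hence lies in $\Ha^\infty(\gebiet;H)$, and satisfies $\eprod{f}{g}=\car$ and $\norm{f}_{\Ha^\infty(\gebiet;H)}\le c(\delta)$.

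The main obstacle I anticipate is step (b): upgrading the \emph{pointwise} hypothesis ``$\Phi_{\gamma'}(f)$ bounded for every $f$'' to the \emph{norm} bound $\norm{\Phi_{\gamma'}(f)}_{\gamma'}\le C\norm{f}_\infty$ is what makes the final constant quantitative, and it requires first passing through the scalar convergence/closed-graph result. The Corona theorem is used as a black box; the only care needed there is the bookkeeping of the conjugate-linear identification $H\cong H'$, the reduction to a separable Hilbert space, and the conformal transport from $\gebiet$ to $\bbD$ (the normalisation of $\norm{g}_{\Ha^\infty}$ contributing only a harmless factor to the constant).
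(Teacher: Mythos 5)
Your argument is correct and takes essentially the same route as the paper's own proof: a closed-graph argument yields the uniform bound $\norm{\Phi_{\gamma'}(f)}_{\gamma'}\lesssim \norm{f}_{\Ha^\infty(\gebiet;H)}$, the Tolokonnikov--Uchiyama corona theorem (transported from $\bbD$ to $\gebiet$ by a Riemann map, with the same normalisation of $\norm{g}_\infty$ that the paper tacitly makes) produces $f$ with $\eprod{f}{g}=\car$ and $\norm{f}_\infty\le c(\delta)$ up to a constant, and Theorem~\ref{acsf.t.lsfe} together with $\Phi(\car)=\Id$ gives the lower estimate with $C$ the product of the two constants. The additional details you supply (reduction to a separable $H$, the scalar uniform bound used to verify closedness of the graph) merely flesh out steps the paper invokes without proof.
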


\begin{proof}
The closed graph theorem yields a constant $C_1$ with $\norm{\Phi_{\gamma'}(f)} 
\le C_1 \norm{f}_\infty$ for all $f\in \Ha^\infty(\gebiet;H)$. 
And the Tolokonnikov--Uchiyama lemma yields for given $g$ a function
$f\in \Ha^\infty(\gebiet)$ with $\eprod{f}{g}= \car$ and 
$\norm{f}_\infty \le C_2  \delta^{-2}\ln(1+\tfrac1\delta)^{\sfrac32}$.
Now the claim follows from Theorem~\ref{acsf.t.lsfe} with $C = C_1 C_2$.
\end{proof}

\subsection{Integral representations}\label{acsf.s.nsfo}

In this section we describe a method of how to obtain new square function estimates
from known ones via integral representations. We build on the previous results and 
hence work   with a functional calculus $(\calE, \Ha^\infty(\gebiet), \Phi)$
on a Banach space $X$ under  the same hypotheses
as before in Section~\ref{acsf.s.lsfe}, i.e., we require the existence of 
a function $e \in \calE$ satisfying 1)--3) on page
\pageref{lsfe.requirements}. As always, $H$ is an arbitrary Hilbert space.
The following is the  main
result.

\begin{thm}\label{acsf.t.nsfo}
Let $(\Omega, \ud{t})$ be a measure space, define $K := \Ell{2}(\Omega)$, and let  
$f,g \in \Ha^\infty(\gebiet;K)$ and $m \in \Ell{\infty}(\Omega;H')$
such that 
\begin{aufziii}
\item $\Phi_\gamma(g) : X \to \gamma(K;X)$ is bounded and 
\item $\Phi_{\gamma'}(f): X'\to \gamma'(K;X')$ is bounded.
\end{aufziii}
Consider the function $u\in \Ha^\infty(\gebiet;H')$ defined by  
\begin{equation}\label{acsf.e.nsfo}
 u(z) := \int_\Omega m(t) \cdot f(t,z)\, g(t,z)\, \ud{t}\,\,  \in\,\,  H' \qquad (z\in \gebiet).
\end{equation}
If $H$ has finite dimension or $X$ has finite cotype,  then 
the operator $\Phi_\gamma(u): X \to \gamma(H;X)$ is bounded, too, with 
\begin{equation}
 \norm{\Phi_\gamma(u)}
\le c\, \norm{m}_{\Ell{\infty}(\Omega;H')} \, 
\norm{\Phi_{\gamma'}(f)}
\norm{\Phi_\gamma(g)},
\end{equation}
where $c$ depends on $\dim(H)$ or the cotype (constant) of $X$, respectively.
\end{thm}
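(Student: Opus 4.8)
The plan is to realise $\Phi_\gamma(u)$ as $\Phi_\gamma(g)$ followed by an auxiliary operator into $\gamma(H;X)$, and to prove that this auxiliary operator is $\gamma$-radonifying by invoking the Kalton--Weis factorisation theorem (Theorem~\ref{gamma.t.kw-ess}) --- the ``deep result'' entering here. Throughout one uses the canonical identification $K = K' = \Ell{2}(\Omega)$; the point requiring care is that under it the multiplication operators below are self-transpose, and that $\eprod{M_h f}{g}$ then coincides with $\eprod{h}{u}$.

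\textbf{Step 1.} For $h\in H$ let $M_h$ denote multiplication by $\eprod{h}{m} = \dprod{h}{m(\cdot)}_{H,H'}\in\Ell{\infty}(\Omega)$, a bounded operator on $K$ with $\norm{M_h}_{\Lin(K)}\le\norm{h}_H\norm{m}_{\Ell{\infty}(\Omega;H')}$. A direct check gives $M_h f\in\Ha^\infty(\gebiet;K)$ with $\eprod{M_h f}{g}=\eprod{h}{u}$, and by the subordination principle (Theorem~\ref{sfno.t:subord}(b)) $\Phi_{\gamma'}(M_h f)x' = (\Phi_{\gamma'}(f)x')\circ M_h$ for all $x'\in X'$, so that $\Phi_{\gamma'}(M_h f)$ is bounded with norm $\le\norm{h}_H\norm{m}_\infty\norm{\Phi_{\gamma'}(f)}$. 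Writing $T_f := \Phi_{\gamma'}(f)'\restrict_{\gamma(K;X)}\colon\gamma(K;X)\to X$ for the pre-adjoint supplied by Theorem~\ref{acsf.t.dsf} (so $\norm{T_f}=\norm{\Phi_{\gamma'}(f)}$), the description of the pre-adjoint in that theorem, together with $\eprod{M_h f}{h'}=\eprod{f}{M_h h'}$, shows $\Phi_{\gamma'}(M_h f)'\restrict_{\gamma(K;X)}[S] = T_f[S\circ M_h]$ for every $S\in\gamma(K;X)$.

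\textbf{Step 2.} Apply Theorem~\ref{acsf.t.lsfe} to the pair $(M_h f, g)$; this is legitimate by the hypotheses of the theorem and the standing assumption 1)--3) on the regulariser $e$. Since $\dom(\Phi_\gamma(g))=X$ and $\eprod{M_h f}{g}=\eprod{h}{u}$, we obtain, for every $x\in X$ and every $h\in H$, that $x\in\dom(\Phi(\eprod{h}{u}))$ (hence $x\in\dom(\Phi(u))$) and, combining with Step~1,
\[ \bigl[\Phi(u)x\bigr]h \;=\; \Phi(\eprod{h}{u})x \;=\; T_f\bigl[\Phi_\gamma(g)x\circ M_h\bigr]. \]
Fix $x$ and put $S := \Phi_\gamma(g)x\in\gamma(K;X)$. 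Since $h\mapsto M_h$ is linear, $\mathcal{N}_S\colon H\to\gamma(K;X)$, $\mathcal{N}_S(h):=S\circ M_h$, is a well-defined linear map with $\norm{\mathcal{N}_S(h)}_\gamma\le\norm{m}_\infty\norm{S}_\gamma\norm{h}_H$ by the ideal property, and the displayed identity says exactly that $\Phi(u)x = T_f\circ\mathcal{N}_{S}$ as an operator $H\to X$.

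\textbf{Step 3 and conclusion.} It remains to show $\mathcal{N}_S\in\gamma(H;\gamma(K;X))$ with $\norm{\mathcal{N}_S}_\gamma\le c\,\norm{m}_\infty\norm{S}_\gamma$. Indeed $\mathcal{N}_S$ factors as $H\to\Ell{\infty}(\Omega)\to\gamma(K;X)$, where the first map is $h\mapsto\eprod{h}{m}$ (bounded by $\norm{m}_\infty$) and the second is $\phi\mapsto S\circ M_\phi$ (bounded by $\norm{S}_\gamma$, by the ideal property). As $\Ell{\infty}(\Omega)$ is isometrically $\Ce(\mathcal{K})$ for a compact Hausdorff space $\mathcal{K}$, and as by Lemma~\ref{gamma.l.cotyp} the space $\gamma(K;X)$ has finite cotype whenever $X$ does, Theorem~\ref{gamma.t.kw-ess} yields $\mathcal{N}_S\in\gamma(H;\gamma(K;X))$ with the required bound, $c$ depending only on the cotype data of $X$. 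If instead $\dim H<\infty$, then $\mathcal{N}_S$ is of finite rank and hence automatically $\gamma$-radonifying, with $\norm{\mathcal{N}_S}_\gamma\le c_{\dim H}\norm{m}_\infty\norm{S}_\gamma$. In either case, applying the ideal property (Theorem~\ref{gamma.t.ideal}) to $\Phi(u)x = T_f\circ\mathcal{N}_{S}$ shows $\Phi(u)x\in\gamma(H;X)$, so $x\in\dom(\Phi_\gamma(u))$, and
\[ \norm{\Phi_\gamma(u)x}_\gamma \le \norm{T_f}\,\norm{\mathcal{N}_{S}}_\gamma \le c\,\norm{m}_\infty\,\norm{\Phi_{\gamma'}(f)}\,\norm{\Phi_\gamma(g)x}_\gamma \le c\,\norm{m}_\infty\,\norm{\Phi_{\gamma'}(f)}\,\norm{\Phi_\gamma(g)}\,\norm{x}. \]
The only genuinely nontrivial step is Step~3, i.e.\ recognising that $\mathcal{N}_S$ factors through an $\Ell{\infty}\cong\Ce(\mathcal{K})$ space so that the Kalton--Weis theorem applies (the finite-dimensional case being the trivial remark that operators out of a finite-dimensional Hilbert space are always $\gamma$-radonifying); everything else is bookkeeping with the subordination principle and the trace duality / lower-estimate results of Theorems~\ref{acsf.t.dsf} and~\ref{acsf.t.lsfe}, the one subtlety being the consistent use of the identification $K=K'$.
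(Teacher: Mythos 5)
Your argument is correct and is essentially the paper's own proof: both realise $[\Phi(u)x]h$ as the pre-adjoint of the dual square function applied to $\Phi_\gamma(g)x \circ M_h$ (via Theorems~\ref{acsf.t.dsf} and~\ref{acsf.t.lsfe}) and then obtain $\gamma$-radonification of the map $h \mapsto \Phi_\gamma(g)x \circ M_h$ from its factorisation through $\Ell{\infty}(\Omega)$, using Lemma~\ref{gamma.l.cotyp} and the Kalton--Weis factorisation Theorem~\ref{gamma.t.kw-ess}. The only (cosmetic) deviation is that you attach the multiplier $\eprod{h}{m}$ to $f$ and invoke subordination, whereas the paper attaches it to $g$, forming $g_h=(\eprod{h}{m})\,g$, and applies Theorem~\ref{acsf.t.lsfe} directly to the pair $(f,g_h)$.
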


\begin{proof}
We define the bounded linear mapping
\[ S: H \to \Ell{\infty}(\Omega) \hookrightarrow \Lin(K),\qquad 
Sh := \eprod[H]{h}{m}
\]
and form
\[ g_h:= (\eprod[H]{h}{m}) \, g = (Sh)g\in  \Ha^\infty(\gebiet;K)
\]
for $h \in H$. Then, by definition, 
\[ (\eprod[H]{h}{u})(z) = \int_\Omega f(z) (\eprod[H]{h}{m}) g(z)
= \int_\Omega f(z) g_h(z) = (\eprod[K]{f}{g_h})(z) 
\qquad (z\in \gebiet).
\]
For $k \in K = \Ell{2}(\Omega)$,
\[ \eprod[K]{g_h}{k} = \eprod[K]{ ((Sh)\, g)}{k}
= \eprod[K]{g}{((Sh)\,k)}.
\]
Hence 
\[ \Phi(g_h)x = (\Phi_\gamma(g)x) \nach (Sh) \in \gamma(K;X) \qquad \text{for each $x\in X$},
\]
by the ideal property. The operator
\[ T: H \to \gamma(K;X), \qquad h \mapsto Th := \Phi(g_h)x
\]
factors through $\Ell{\infty}(\Omega)$, and $\gamma(K;X)$ has finite
cotype (Lemma~\ref{gamma.l.cotyp}). 
Hence, by Theorem~\ref{gamma.t.kw-ess}, $T \in \gamma(H; \gamma(K;X))$. 
By Theorem~\ref{acsf.t.dsf} and hypothesis 2), 
$\Phi_{\gamma'}(f)': \gamma(K;X)\to X$
is bounded, and another application of the ideal property yields
that $h \mapsto \Phi_{\gamma'}(f)' Th = \Phi_{\gamma'}(f)' \Phi_\gamma(g_h)x$ is in $\gamma(H;X)$.
But 
\[ \Phi_{\gamma'}(f)' \Phi_\gamma(g_h) = \Phi(\eprod[K]{f}{g_h}) =
\Phi( \eprod[H]{h}{u}) 
\]
by Theorem~\ref{acsf.t.lsfe}. It follows that $\Phi_\gamma(u)$ is bounded. 
To prove the norm estimate we trace back all these steps:
\begin{align*}
\norm{\Phi_\gamma(u)x}_\gamma & = 
\norm{h \mapsto \Phi_{\gamma'}(f)' \Phi_\gamma(g_h)x}_\gamma
\\ & \le \norm{\Phi_{\gamma'}(f)} \norm{h \mapsto \Phi_{\gamma}(g_h)x}_{\gamma(H;\gamma(K;X))}
\end{align*}
and
\begin{align*}
\norm{h \mapsto\Phi_{\gamma}(g_h)x}_{\gamma(H;\gamma(K;X))} 
& = \norm{T}_{\gamma(H;\gamma(K;X))} = \norm{h \mapsto \Phi_\gamma(g)x\nach Sh}_{\gamma(H;\gamma(K;X))}
\\  \le c \norm{\Phi_\gamma(g)x}_{\gamma(K;X)} & \norm{S}_{H \to \Ell{\infty}(\Omega)}
\le c \norm{\Phi_\gamma(g)} \norm{x}  \norm{m}_{\Ell{\infty}(\Omega;H')}.
\end{align*}
Here $c = c(q, \ct_q(X))\ge 0$ is the constant coming from the
application of the factorisation Theorem~\ref{gamma.t.kw-ess}. (Note
that by the proof of Lemma~\ref{gamma.l.cotyp}, $\ct_q(\gamma(K;X))$
depends on $q, \ct_q(X)$ and some universal constants.)
\end{proof}

Suppose that $\Phi$ is a functional calculus for the operator $A$, 
$H= \Ell{2}(\Omega')$, and 
\[ u(s,z)  = \int_\Omega m(s,t) f(t,z) g(t,z)\, \ud{t}.
\]
Then  Theorem~\ref{acsf.t.nsfo} says the following:
if the square and dual square functions
associated with $g(\cdot ,A)$ and $f(\cdot,A)$, respectively, are
bounded, then also the square function associated with $u(\cdot,A)$ 
is bounded. (Note our convention from Section~\ref{acsf.s.sfL2}.)
For $H =\C$ this theorem is the main tool to infer bounded
$\Ha^\infty$-calculus from square and dual square function estimates.
Examples are given in Chapter~\ref{s:appl} below.

\begin{rem}
  We do not know of a proper dual analogue of
  Theorem~\ref{acsf.t.nsfo}. However, under certain conditions one can
  use it to obtain bounded square functions for the dual functional
  calculus and  by the inclusion $\gamma(H';X') \subseteq \gamma'(H';X')$
  this yields a bounded dual square function for the original
  calculus.
\end{rem}

\subsection{Square function estimates from \ELLONE-frame-boundedness}
\label{lob.s.l1-sfe}

Whereas the main result of the previous section, Theorem~\ref{acsf.t.nsfo},
can be used to infer a bounded $\Ha^\infty$-calculus from bounded
(dual) square functions, in the present section we study the converse.
Our results are based on a  certain boundedness 
concept for subsets of a Hilbert space,
the so called \ELLONE-frame-boundedness. 

\medskip
\noindent
Let $H$ be a complex Hilbert space. 
The {\em \ELLONE-frame-bound} of a subset $M
\subseteq H$ is defined as
\begin{equation} \label{eq:ell-eins-bdd-first}
 \abs{M}_1 := \inf \norm{L}\sup_{x \in M} {\sum}_{\alpha\in I} \abs{\dprod{Rx}{e_\alpha}},
\end{equation}
where the infimum is taken over all pairs $(L,R)$ 
of bounded linear operators
\[  R: H \to \ell_2(I), \quad L: \ell_{2}(I)\to H,\quad LR = \Id_H,
\]
with $I$ being any (sufficiently large) index set. 
And $M$ is called \emph{\ELLONE-frame-bounded} if $\abs{M}_1 < \infty$.

\medskip
This notion is, to the best of our knowledge, new and ---
presumably --- interesting in its own right. However, it plays only an auxiliary role here,
so we decided to postpone a thorough  treatment to 
 Appendix~\ref{app:l1-bdd}.

\begin{thm}\label{lob.t.l1-sfe}
Let $\gebiet \subseteq \C$ be an open set and let 
$\Phi: \Ha^\infty(\gebiet) \to \Lin(X)$ be a bounded algebra homomorphism,
where $X$ is a Banach space.  Furthermore, let $f\in \Ha^\infty(\gebiet;H)$ and 
$g\in \Ha^\infty(\gebiet;H')$.
Then the following assertions hold.
\begin{aufzi}
\item 
If $g$ has \ELLONE-frame-bounded image in $H'$
and $X$ has cotype $q < \infty$, then  $\Phi_\gamma(g) \in \Lin(X; \gamma(H;X))$ with 
\[   \norm{\Phi_\gamma(g)x}_\gamma \leq   
2 \ct_q(X) m_q\, \norm{\Phi} \, \abs{g(\gebiet)}_1 \cdot \norm{x}_X
\qquad (x\in X),
\]
where 
$\ct_q(X)$ is the cotype-$q$
constant of $X$ and $m_q$ is  the $q$-th absolute moment of the
normal distribution.

\item If $f$ 
has \ELLONE-frame-bounded image in $H$, then 
$\Phi_{\gamma'}(f) \in \Lin(X'; \gamma'(H';X'))$ with 
\[   \norm{\Phi_{\gamma'}(f) x'}_{\gamma'} \leq
\sqrt{\tfrac{\pi}{2}}\, \norm{\Phi} \, \abs{f(\gebiet)}_1 \cdot \norm{x'}_{X'}
\qquad (x' \in X').
\]
\end{aufzi}
\end{thm}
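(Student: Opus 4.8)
The plan is to exploit the one thing \ELLONE-frame-boundedness buys us: after choosing an almost optimal factorisation, the target function decomposes into \emph{absolutely} summable scalar ``coordinate functions'' that survive arbitrary unimodular reweightings in the sup-norm. Concretely, fix $\veps>0$ and a pair $(L,R)$ of bounded operators with $LR=\Id$ (on $H'$ in part~a), on $H$ in part~b)) such that $\norm{L}\cdot\sup_z\norm{R(\cdot)(z)}_{\ell_1}<\abs{\cdot(\gebiet)}_1+\veps$. Writing the $\ell_2$-coordinates of $Rg(z)$ (resp.\ $Rf(z)$) as scalar functions $c_\alpha(z)$ (resp.\ $d_\beta(z)$), one checks as in Lemma~\ref{acsf.l.f(t,z)} that these lie in $\Ha^\infty(\gebiet)$ and that $C:=\sup_z\sum_\alpha\abs{c_\alpha(z)}<\infty$. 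The crucial observation is that for \emph{any} scalars $(\theta_\alpha)$ with $\abs{\theta_\alpha}\le1$ the series $\sum_\alpha\theta_\alpha c_\alpha$ converges in $\Ha^\infty(\gebiet)$ with norm $\le C$, hence $\Phi(\sum_\alpha\theta_\alpha c_\alpha)\in\Lin(X)$ has norm $\le\norm{\Phi}\,C$; applied to a complex Rademacher family $(r_\alpha)$ this yields the \emph{deterministic} bound $\norm{\Phi(\sum_\alpha r_\alpha c_\alpha)}\le 2\norm{\Phi}\,C$ (valid for every realisation).

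For part~a): expand $g(z)=\sum_\alpha c_\alpha(z)\,Le_\alpha$ in $H'$, so that $\eprod{h}{g}=\sum_\alpha\dprod{h}{Le_\alpha}\,c_\alpha$ in $\Ha^\infty(\gebiet)$ for each $h\in H$. Consequently, for every $x\in X$, the operator $\Phi_\gamma(g)x\colon h\mapsto\sum_\alpha\dprod{h}{Le_\alpha}\,\Phi(c_\alpha)x$ factors as $W\circ R_0$, where $R_0\colon H\to\ell_2(I)$, $(R_0h)_\alpha=\dprod{h}{Le_\alpha}$, $\norm{R_0}\le\norm{L}$, and $W\colon\ell_2(I)\to X$, $We_\alpha=\Phi(c_\alpha)x$, $\norm{W}\le\norm{\Phi}\,C\,\norm{x}$. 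By the ideal property (Theorem~\ref{gamma.t.ideal}) it then suffices to show $W\in\gamma_\infty(\ell_2(I);X)$ with a good bound on $\norm{W}_\gamma$; finite cotype upgrades $\gamma_\infty$ to $\gamma$ since such a space contains no copy of $\co$, and then $\Phi_\gamma(g)x=WR_0\in\gamma(H;X)$, so $\dom(\Phi_\gamma(g))=X$. By the contraction principle (Theorem~\ref{gamma.t.contr}) one reduces the computation of $\norm{W}_{\gamma_\infty}$ to estimating $\Exp\norm{\sum_{\alpha\in F}\gamma_\alpha\Phi(c_\alpha)x}^2$ over finite $F\subseteq I$; Theorem~\ref{gamma.t.gam-vs-rad} passes to Rademachers, and the deterministic bound of the first paragraph gives $\Exp\norm{\sum_{\alpha\in F}r_\alpha\Phi(c_\alpha)x}^2=\Exp\norm{\Phi(\sum_{\alpha\in F}r_\alpha c_\alpha)x}^2\le(2\norm{\Phi}\,C\,\norm{x})^2$. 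Hence $\norm{\Phi_\gamma(g)x}_\gamma\le\norm{W}_\gamma\norm{R_0}\le 2m_q\,\ct_q(X)\,\norm{\Phi}\,(\norm{L}\,C)\,\norm{x}$, and letting $\veps\to0$ yields the asserted inequality. It is essential to reduce to coordinate orthonormal systems rather than arbitrary $n$-element ones here, since it is the $\ell_1$-bound $\sum_\alpha\abs{c_\alpha(z)}\le C$ that makes the Rademacher sum deterministically bounded; Cauchy--Schwarz over a general orthonormal system would only give a bound of order $C\sqrt n$.

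For part~b): by Theorem~\ref{acsf.t.dsf} it suffices to verify condition (iii) there with constant $c=\sqrt{\pi/2}\,\norm{\Phi}\,\abs{f(\gebiet)}_1$, i.e.\ $\norm{\sum_{\alpha\in F}\Phi(\sprod{f}{e_\alpha})x_\alpha}\le c\,(\Exp\norm{\sum_{\alpha\in F}\gamma_\alpha x_\alpha}^2)^{1/2}$ for finite $F$ and arbitrary $x_\alpha\in X$. Expanding $f(z)=\sum_\beta d_\beta(z)\,L\epsilon_\beta$ gives $\sprod{f}{e_\alpha}=\sum_\beta\lambda_{\beta\alpha}d_\beta$ in $\Ha^\infty(\gebiet)$, with $\lambda_{\beta\alpha}$ the coordinates of $L^*e_\alpha$, $(\lambda_{\beta\alpha})_\beta\in\ell_2$, and the matrix $(\lambda_{\beta\alpha})_{\beta,\,\alpha\in F}$ of norm $\le\norm{L}$ as an operator $\ell_2(F)\to\ell_2$; a routine rearrangement (legitimate since $F$ is finite and the vectors $y_\beta:=\sum_{\alpha\in F}\lambda_{\beta\alpha}x_\alpha$ lie in one finite-dimensional subspace) gives $\sum_{\alpha\in F}\Phi(\sprod{f}{e_\alpha})x_\alpha=\sum_\beta\Phi(d_\beta)y_\beta$. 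Now the heart of the matter: the unbiasing identity $\Phi(d_\beta)=\tfrac12\Exp_r[\,\konj{r_\beta}\,\Phi(\sum_{\beta'}r_{\beta'}d_{\beta'})\,]$ for complex Rademachers yields $\sum_\beta\Phi(d_\beta)y_\beta=\tfrac12\Exp_r[\,\Phi(\sum_\beta r_\beta d_\beta)\,(\sum_\beta\konj{r_\beta}y_\beta)\,]$, and with the deterministic bound $\norm{\Phi(\sum_\beta r_\beta d_\beta)}\le 2\norm{\Phi}\,C$ this gives $\norm{\sum_\beta\Phi(d_\beta)y_\beta}\le\norm{\Phi}\,C\,(\Exp_r\norm{\sum_\beta r_\beta y_\beta}^2)^{1/2}$. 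Finally, pass from Rademachers to Gaussians via \eqref{gamma.eq.r-to-gamma} (a factor $\sqrt{\pi/2}$) and apply the Gaussian contraction principle (Theorem~\ref{gamma.t.contr}) with the matrix $(\lambda_{\beta\alpha})$ to obtain $(\Exp_r\norm{\sum_\beta r_\beta y_\beta}^2)^{1/2}\le\sqrt{\pi/2}\,\norm{L}\,(\Exp_\gamma\norm{\sum_{\alpha\in F}\gamma_\alpha x_\alpha}^2)^{1/2}$. Combining and letting $\veps\to0$ gives precisely condition (iii) with $c=\sqrt{\pi/2}\,\norm{\Phi}\,\abs{f(\gebiet)}_1$, and Theorem~\ref{acsf.t.dsf} then delivers $\Phi_{\gamma'}(f)\in\Lin(X';\gamma'(H';X'))$ with $\norm{\Phi_{\gamma'}(f)}\le\sqrt{\pi/2}\,\norm{\Phi}\,\abs{f(\gebiet)}_1$.

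The main obstacle is not any single deep estimate but organising the randomisation correctly: Gaussian sums $\sum_\alpha\gamma_\alpha c_\alpha$ are \emph{not} deterministically bounded in $\Ha^\infty(\gebiet)$ --- one can only control $\Exp\norm{\cdot}_\infty$ by roughly $C\sqrt{\log\abs F}$, which is useless --- so everything must be routed through Rademachers, for which the $\ell_1$-control makes the sup-norm bound deterministic. This is why part~a) must invoke Theorem~\ref{gamma.t.gam-vs-rad} (and hence needs the cotype hypothesis), whereas part~b) gets away with the one-sided comparison \eqref{gamma.eq.r-to-gamma} plus the unbiasing identity. The remaining bookkeeping --- absolute or unconditional convergence of the various series in $H'$, $\Ha^\infty(\gebiet)$, $X$ and $\Lin(X)$, together with the expansions $g=L(Rg)$ and $f=L(Rf)$ --- is routine but must be tracked with some care, since the index sets and the sums over $\beta$ need not be finite.
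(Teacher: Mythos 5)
Your part a) is correct and is essentially the paper's own argument: after fixing a near-optimal pair $(L,R)$, both proofs reduce, via the ideal property, to Gaussian sums over the coordinate basis, pass to Rademachers by Theorem~\ref{gamma.t.gam-vs-rad} (this is where cotype enters), pull the \emph{finite} Rademacher sum inside $\Phi$ and use the deterministic sup-norm bound coming from the uniform $\ell_1$-control, and finally use that a finite-cotype space contains no copy of $\co$ to upgrade $\gamma_\infty$ to $\gamma$; your factorisation $\Phi_\gamma(g)x = W\circ R_0$ is the paper's composition $\Phi(g)x\circ R'\circ L'$ written out in coordinates.

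In part b) your packaging differs from the paper (you verify condition (iii) of Theorem~\ref{acsf.t.dsf} with respect to an orthonormal basis of $H$ and transfer to the coordinate functions $d_\beta$ through the matrix of $L$, using the Gaussian contraction principle at the end, whereas the paper estimates $\norm{\Phi^d(f)x'\circ R'}_{\gamma'}$ directly by trace duality against finite rank $U\in \ell_2(I)\tensor X$ and only then inserts $\norm{L}$ via the $\gamma'$-ideal property, so that \emph{all} sums stay finite). Your version has a genuine gap exactly at what you call the heart of the matter: the unbiasing identity $\Phi(d_\beta)=\tfrac12\Exp\big[\konj{r_\beta}\,\Phi\big(\sum_{\beta'} r_{\beta'}d_{\beta'}\big)\big]$ puts an \emph{infinite} series inside $\Phi$. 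That series converges only pointwise on $\gebiet$ (absolutely, with a uniform bound), not in the norm of $\Ha^\infty(\gebiet)$: the bound $\sup_z\sum_\beta\abs{d_\beta(z)}\le C$ does not make the tails $\sup_z\sum_{\beta\notin G}\abs{d_\beta(z)}$ vanish --- for the frame coordinates of $\psi(\cdot+z)$ in Example~\ref{acsf.exa.shift-sf} the coordinate mass moves with $\re z$, so they do not. Since $\Phi$ is only a bounded homomorphism (no convergence lemma is available at this level of generality), you cannot conclude $\Phi\big(\sum_{\beta'}r_{\beta'}d_{\beta'}\big)=\sum_{\beta'}r_{\beta'}\Phi(d_{\beta'})$ nor interchange $\Exp$ with $\Phi$, so the evaluation of the expectation that isolates $\Phi(d_\beta)$ is unjustified; this is precisely the convergence question you filed under ``routine bookkeeping'', and there it fails. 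The repair is short and restores the paper's finite setting: truncate the $\beta$-sum to a finite set $G$, where the unbiasing identity is a trivial finite computation and the deterministic bound $\norm{\Phi(\sum_{\beta\in G}r_\beta d_\beta)}\le \sqrt2\,\norm{\Phi}\,C$ applies; run your chain (Rademacher-to-Gaussian via \eqref{gamma.eq.r-to-gamma}, then Theorem~\ref{gamma.t.contr} with the truncated matrix, whose norm is still $\le\norm{L}$) to get a bound independent of $G$; then let $G$ increase, using the operator-norm convergence $\sum_{\beta\in G}\lambda_{\beta\alpha}\Phi(d_\beta)\to\Phi(\sprod{f}{e_\alpha})$ (Cauchy--Schwarz on the tails together with $\sup_z\norm{Rf(z)}_{\ell_2}<\infty$), which you in any case need for your rearrangement step. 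With this modification the constant comes out at most $\sqrt{\pi/2}\,\norm{\Phi}\,\abs{f(\gebiet)}_1$, as claimed.
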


\begin{proof}
We fix an index set $I$ and bounded operators $R: H'\to \ell_2(I)$, $L: \ell_2(I) \to H'$ with 
$LR = \Id_{H'}$. Let $(r_\alpha)_{\alpha \in I}$ be
independent complex Rademacher variables. Then by Theorem~\ref{gamma.t.gam-vs-rad} 
for $F\subseteq I$ finite we have 
\begin{align*}
 \Exp \, \Bignorm{ {\sum}_{\alpha \in F} &  \gamma_\alpha
[\Phi(g)x]R'e_\alpha}_X^2  
 \le \; \ct_q(X)^2 m_q^2 \, \Exp\, \Bignorm{ {\sum}_{\alpha \in F} r_\alpha 
\Phi(\eprod{R'e_\alpha}{g})x}_X^2  
\\ = &\;    \ct_q(X)^2\,  m_q^2 \,  \Exp \, 
\Big\|  \Phi\Big( {\sum}_{\alpha \in  F} r_\alpha (\eprod{R' e_\alpha}{g})\Big)x  
\Big\|_X^2 \\
\le &\; \big( \ct_q(X) m_q \norm{\Phi} \,  \norm{x}\big)^2 \; 
\Exp\, \sup_{z\in \gebiet} \Bigl| {\sum}_{\alpha\in F} r_\alpha\dprod{R' e_\alpha}{g(z)} \Bigr|^2\\
\le &\; \big(2 \ct_q(X) m_q \norm{\Phi} \,  \norm{x}\big)^2\;  
\Big(\sup_{z\in \gebiet} {\sum}_{\alpha \in I} \bigl| \dprod{e_\alpha}{R g(z)} \bigr| \Big)^2.
\end{align*}
Consequently, by the ideal property,
\begin{align*}
\norm{\Phi(g)x}_\gamma
&  = \norm{\Phi(g)x\nach R'L'}_\gamma \le   
 \norm{L}\;  \norm{\Phi(g)x \nach R'}_\gamma   \\
&\le \;2 \ct_q(X) m_q \norm{\Phi} \,  \norm{x}\;  
\norm{L} \sup_{z\in \gebiet} {\sum}_{\alpha \in I} \bigl| \dprod{e_\alpha}{Rg(z)} 
\bigr|.
\end{align*}
Taking the infimum over all pairs $(L,R)$  by \eqref{eq:ell-eins-bdd-first} 
we obtain
$\Phi(g)x \in \gamma_\infty(H;X)$ and 
\[ \norm{\Phi(g)x}_\gamma
\le \;2 \ct_q(X) m_q \norm{\Phi} \, \abs{g(\gebiet)}_1 \,   \norm{x}.
\]
Finally, since $X$ has finite cotype it cannot contain a copy of $\co$ and 
hence by the Hoffmann-J\o{}rgensen--Kwapie\'n theorems,
$\Phi(g)x \in \gamma(H;X)$.

\medskip
\noindent
For the proof of b) let us abbreviate $V:= \Phi^d(f)x': H' \to X'$.
We fix again an index set $I$ and operators $R: H \to \ell_2(I)$, $L: \ell_2(I) \to H$
with $LR = \Id_H$, and a family  $(r_\alpha)_{\alpha \in I}$ of
independent (complex) Rademachers.
 
Let $F \subseteq I$ be finite, 
$(x_\alpha)_{\alpha\in F} \subseteq X$ and $U := 
\sum_{\alpha\in F} e_\alpha \tensor x_\alpha \in \ell_2(I)\tensor X$.  Then
\begin{align*}
\abs{\tr( (VR')'U)} & \stackrel{(1)}{=}
\abs{{\sum}_{\alpha\in F} \dprod{x_\alpha}{VR'e_\alpha}}
= 
\abs{{\sum}_{\alpha\in F} \dprod{x_\alpha}{ \Phi(\eprod{f}{R'e_\alpha})'x' }}
\\ & 
=
\abs{{\sum}_{\alpha\in F} \dprod{\Phi(\eprod{f}{R'e_\alpha})x_\alpha}{x'}}
\le \norm{x'} \norm{ {\sum}_{\alpha \in F} \Phi(\eprod{R f}{e_\alpha})x_\alpha }
\\ &  \stackrel{(2)}{\le}
\tfrac{1}{2} \norm{x'}
\norm{ \Exp\,  {\sum}_{\alpha\in F}
 \konj{r_\alpha} \Phi(\eprod{Rf}{e_\alpha})  {\sum}_{\beta \in F} r_\beta x_\beta }
\\ & \le
\tfrac{1}{2} \norm{x'}\,
\Exp\,  \norm{ \Phi \Big({\sum}_{\alpha\in F} \konj{r_\alpha} 
(\eprod{Rf}{e_\alpha})\Big)}_{\Lin(X)}
\norm{  {\sum}_{\beta\in F} r_\beta x_\beta }_X 
\\ & \stackrel{(3)}{\le}
\tfrac{1}{\sqrt{2}} \norm{\Phi}\,  \norm{x'}\,
 \Big(\sup_{z\in \gebiet} {\sum}_{\alpha\in F} \abs{\dprod{Rf(z)}{e_\alpha}}\Big) \, 
\Exp\,  \norm{  {\sum}_{\beta\in F} r_\beta x_\beta }_X 
\\ & \stackrel{(4)}{\le} 
\tfrac{\sqrt{\pi}}{2} \norm{\Phi}\,  \norm{x'}\,
 \Big(\sup_{z\in \gebiet} {\sum}_{\alpha\in F} \abs{\dprod{Rf(z)}{e_\alpha}}\Big) \, 
\Exp\,  \norm{  {\sum}_{\beta\in F} \gamma_\beta x_\beta }_X 
\\ &  \stackrel{(5)}{\le}
\sqrt{\tfrac{\pi}{4}} \norm{\Phi}\,  \norm{x'}\,
 \Big(\sup_{z\in \gebiet} {\sum}_{\alpha} \abs{\dprod{Rf(z)}{e_\alpha}}\Big) \, 
\norm{U}_{\gamma}. 
\end{align*}
[(1) holds by \eqref{gamma.eq.trace}; (2) holds since
$\Exp\;( \konj{r_\alpha}r_\beta) = 2 \delta_{\alpha \beta}$; (3)
holds since $\abs{r_\alpha} = \sqrt{2}$; (4) is an application of
\eqref{gamma.eq.r-to-gamma} with q=1; and (5) follows from 
the standard inequality $\Exp \abs{g} \le \big(\Exp \abs{g}^2\big)^{1/2}$.]
Consequently, we obtain
\[ \norm{VR'}_{\gamma'} \le \sqrt{\tfrac{\pi}{4}} \norm{\Phi}\,  \norm{x'}\,
 \Big(\sup_{z\in \gebiet} {\sum}_{\alpha} \abs{\dprod{Rf(z)}{e_\alpha}}\Big)
\]
and hence, by the ideal property (Corollary~\ref{gamma.c.dual-ideal}), 
\[ \norm{V}_{\gamma'} = \norm{VR'L'}_{\gamma'} \le 
\norm{L'} \norm{VR'}_{\gamma'} 
\le \sqrt{\tfrac{\pi}{4}} \norm{\Phi}\,  \norm{x'}\,
\norm{L} \,  \Big(\sup_{z\in \gebiet} {\sum}_{\alpha} \abs{\dprod{Rf(z)}{e_\alpha}}\Big).
\]
Taking the infimum over all pairs $(L,R)$ concludes the proof of  b).
\end{proof}

\begin{rem}\label{lob.r.l1-sfe}
  It is notable that part b) of the theorem holds without any
  geometric assumption on the Banach space. On the other hand, the
  appearance of finite cotype in the formulation of a) is natural as
  one needs to estimate a Gaussian sum in terms of a Rademacher sum.
  This raises the question why we do not work with Rademacher sums
  exclusively right from the start. The reason is that
  ``$R$-radonifying'' operators do not have as nice properties as the
  $\gamma$-radonifying ones, in particular the right ideal property
  fails.  Since this property was also needed in the proof of a),
  nothing would be gained by working with ``$R$-radonifying''
  operators.
\end{rem}

\section{Examples}\label{s.exas}

In this chapter we discuss several applications of the constructions and results of the 
previous chapters. Our main protagonist is the functional calculus
 for strip type operators as sketched in Appendix~\ref{app:fc}. Via the 
$\exp/\log$-correspondence these results all have sectorial versions.

\subsection{Strip type operators}\label{exas.s.sf-strip}

Suppose that $A$ is an operator of strip type $\omega_0\ge 0$ on a
Banach space $X$, and let $\omega > \omega_0$.  Then we can consider
the functional calculus $(\elementary(\strip{\omega}),
\Ha^\infty(\strip{\omega}), \Phi)$ for $A$ as defined in
Appendix~\ref{app:fc}.

\begin{lemma}\label{acsf.l.Qf}
  In the described situation the following assertions hold:
\begin{aufzi}
\item\label{acsf.l.item-Qf} Let $g \in \Ha^\infty(\strip{\omega};H')$
  and let $\Phi_\gamma(g)$ be the associated square function
\[ \Phi_\gamma(g): \dom(\Phi_\gamma(g)) \to \gamma(H;X), \quad 
\Phi_\gamma(g)x = \Big(h \mapsto (\eprod{h}{g})(A)x\Big).  
\]
Then $\dom(\Phi_\gamma(g))$ contains $\ran(e(A))$ for each $e\in
\elementary(\strip{\omega})$.

\item\label{acsf.l.item-Qdf} 
Suppose that $A$ is densely defined. 
Then each operator $f(A)$, $f\in \Ha^\infty(\strip{\omega})$, is densely defined
and dual square functions are well defined.

\smallskip
\noindent
Let $f \in \Ha^\infty(\strip{\omega};H)$ and let 
$\Phi_{\gamma'}(f)$ be the associated square function
\[ \Phi_{\gamma'}(f): \dom\Phi_{\gamma'}(f) \to \gamma(H;X), \quad \Phi_{\gamma'}(f)x' = \Big(h' \mapsto (\eprod{f}{h'})(A)'x'\Big)  
\]
Then $\dom(\Phi_{\gamma'}(f))$ 
contains $\ran(e(A)')$ for each $e\in \elementary(\strip{\omega})$.

\end{aufzi}
\end{lemma}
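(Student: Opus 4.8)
The plan is to handle (a) and (b) by one and the same device. Applying the scalar ``regulariser'' $e$ to the symbol turns the operators $(\eprod{h}{g})(A)$, resp.\ $(\eprod{f}{h'})(A)'$, into absolutely convergent contour integrals over the boundary $\Gamma:=\rand\strip{\omega'}$ of a substrip with $\omega_0<\omega'<\omega$; such integrals are of exactly the form to which Lemma~\ref{gamma.l.nuc} applies, and hence produce $\gamma$-radonifying operators. No geometric hypothesis on $X$ will be needed.

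For (a) I would fix $e\in\elementary(\strip{\omega})$, $g\in\Ha^\infty(\strip{\omega};H')$ and $h\in H$. Since $\eprod{h}{g}\in\Ha^\infty(\strip{\omega})$ and $\elementary(\strip{\omega})$ is stable under multiplication by bounded holomorphic functions (the defining horizontal decay persists; see Appendix~\ref{app:fc}), the product $e\cdot(\eprod{h}{g})$ is again elementary. The product rule of the functional calculus \cite[Prop.~1.2.2]{HaaseFC} (already invoked for Lemma~\ref{acsf.l.properties}) then gives $(\eprod{h}{g})(A)\,e(A)=(e\cdot\eprod{h}{g})(A)\in\Lin(X)$, so $\ran(e(A))\subseteq\dom((\eprod{h}{g})(A))$ for every $h$, i.e.\ $\ran(e(A))\subseteq\dom(\Phi(g))$. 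Next I would check that $\Phi(g)x\in\gamma(H;X)$ for $x=e(A)y$. The elementary calculus gives $[\Phi(g)x]\,h=(e\cdot\eprod{h}{g})(A)y=\tfrac{1}{2\pi\ui}\int_\Gamma e(z)\,\dprod{h}{g(z)}\,R(z,A)y\,\ud{z}$, and this integral converges absolutely because $\sup_{z\in\Gamma}\norm{R(z,A)}<\infty$ and $\int_\Gamma\abs{e(z)}\,\abs{\ud{z}}<\infty$. Writing $g(z)=\konj{w(z)}$ via the conjugate-linear isometry $\konj{\cdot}:H\to H'$ (so that $w$ is norm-continuous, $\Ha^\infty(\strip{\omega};H')$ consisting of bounded norm-holomorphic functions, see Section~\ref{acsf.s.sfL2}, and $\norm{w(z)}_H=\norm{g(z)}_{H'}\le\norm{g}_\infty$) and parametrising $\Gamma$ by two copies of $\R$ with arc length $\mu$, one rewrites this as $[\Phi(g)x]\,h=\int\sprod{h}{F(s)}_H\,G(s)\,\mu(\ud{s})$ where $F:\R\to H$ and $G:\R\to X$ are norm-continuous and $\int\norm{F(s)}_H\norm{G(s)}_X\,\mu(\ud{s})\lesssim\norm{g}_\infty\norm{y}\int_\Gamma\abs{e(z)}\,\abs{\ud{z}}<\infty$. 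By Lemma~\ref{gamma.l.nuc} the operator $h\mapsto[\Phi(g)x]h$ equals $\int_\R\konj{F}\tensor G\,\mu(\ud{s})\in\gamma(H;X)$; hence $x\in\dom(\Phi_\gamma(g))$, which proves (a).

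For (b): first, that every $f(A)$, $f\in\Ha^\infty(\strip{\omega})$, is densely defined when $A$ is densely defined belongs to the standard strip-type calculus — its domain contains the range of a fixed universal regulariser, and that range is dense (\cite[Section 5.1]{HaaseFC}); this is precisely the Standing Assumption of Section~\ref{acsf.s.dsffc}, so dual square functions are well defined. For the domain claim, fix $e\in\elementary(\strip{\omega})$, $f\in\Ha^\infty(\strip{\omega};H)$, $h'\in H'$; as above the product rule yields $e(A)\,(\eprod{f}{h'})(A)\subseteq(e\cdot\eprod{f}{h'})(A)$ with $\dom(e(A)\,(\eprod{f}{h'})(A))=\dom((\eprod{f}{h'})(A))$. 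Transposing this identity on the dense domain $\dom((\eprod{f}{h'})(A))$ and using $(e\cdot\eprod{f}{h'})(A)\in\Lin(X)$, I would deduce $\ran(e(A)')\subseteq\dom((\eprod{f}{h'})(A)')$ with $(\eprod{f}{h'})(A)'\,e(A)'=(e\cdot\eprod{f}{h'})(A)'$ there; hence for $x'=e(A)'z'$ one has $[\Phi^d(f)x']\,h'=(e\cdot\eprod{f}{h'})(A)'z'$, so $\ran(e(A)')\subseteq\dom(\Phi^d(f))$. Now $(e\cdot\eprod{f}{h'})(A)$ is the absolutely convergent integral $\tfrac{1}{2\pi\ui}\int_\Gamma e(z)\,\dprod{f(z)}{h'}\,R(z,A)\,\ud{z}$, and passing to the adjoint is legitimate since $\norm{R(z,A)'}=\norm{R(z,A)}$. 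Writing $h'=\konj{v}$, the identities of Section~\ref{ss.nuk-op} give $\dprod{f(z)}{h'}_{H,H'}=\sprod{f(z)}{v}_H=\sprod{h'}{\konj{f(z)}}_{H'}$, so $[\Phi^d(f)x']\,h'=\int\sprod{h'}{\widetilde F(s)}_{H'}\,\widetilde G(s)\,\mu(\ud{s})$ with $\widetilde F:\R\to H'$ and $\widetilde G:\R\to X'$ norm-continuous (norm-continuity of $z\mapsto R(z,A)'z'$ coming from that of $z\mapsto R(z,A)$) and $\int\norm{\widetilde F}_{H'}\norm{\widetilde G}_{X'}\,\mu(\ud{s})\lesssim\norm{f}_\infty\norm{z'}\int_\Gamma\abs{e(z)}\,\abs{\ud{z}}<\infty$. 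Lemma~\ref{gamma.l.nuc} applied with $H',X'$ in place of $H,X$ now puts $\Phi^d(f)x'$ into $\gamma(H';X')\subseteq\gamma'(H';X')$, the inclusion being Theorem~\ref{gamma.t.dual}\,\ref{item:gamma.t.dual.c}; hence $x'\in\dom(\Phi_{\gamma'}(f))$, which proves (b).

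I expect the main obstacle to be organisational rather than conceptual. The three points requiring care are: (i) that after multiplying by $e$ the scalar symbols are genuinely elementary, so that the Cauchy-integral formula and the product rule apply — this rests on the ideal-like stability of $\elementary(\strip{\omega})$; (ii) in the dual case, transposing the product identity and the operator-valued integral correctly, which needs $\dom(A)$ dense (so that the relevant adjoints are single-valued) together with norm-continuity of the dual resolvent; and (iii) writing the contour integral in the precise ``$\int\konj{F}\tensor G$'' shape demanded by Lemma~\ref{gamma.l.nuc}, for which one uses norm-holomorphy of the $H$- and $H'$-valued symbols and the conjugate-linear identification of $H$ with $H'$.
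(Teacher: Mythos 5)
Your proposal is correct and follows essentially the same route as the paper: multiply the scalar symbol by the elementary regulariser $e$, represent $((\eprod{h}{g})e)(A)x$ (resp.\ its adjoint) as an absolutely convergent Cauchy integral over $\rand\strip{\omega'}$, and recognise the resulting operator as a nuclear integral of the form handled by Lemma~\ref{gamma.l.nuc}, hence $\gamma$-radonifying; the dual case is treated by the same computation after transposition, exactly as the paper's ``the proof of b) is similar''. Your write-up merely supplies the details (stability of $\elementary(\strip{\omega})$ under multiplication by $\Ha^\infty$, the adjoint bookkeeping, and the inclusion $\gamma(H';X')\subseteq\gamma'(H';X')$) that the paper leaves implicit.
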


\begin{proof}
\ref{acsf.l.item-Qf}  
Let $e\in \elementary(\strip{\omega})$, let $x\in X$ and $h \in H$. Then
\begin{align*}
 [\Phi(f)e(A)x]\,h  & = (\eprod{h}{f})(A)e(A)x = ((\eprod{h}{f}) e)(A)x  
\\ &  = \frac{1}{2\pi \ui} \int_{\rand\strip{\omega'}} \dprod{h}{f(z)} e(z) R(z,A)x \, \ud{z}
\end{align*} 
with $\omega' \in (\omega_0, \omega)$.  
This shows that $e(A)x \in \dom(\Phi(f))$ and  that
\[  \Phi(f)e(A)x = 
\frac{1}{2\pi \ui} \int_{\rand\strip{\omega'}} f(z) \tensor e(z) R(z,A)x \, \ud{z}
\]
is nuclear, whence in $\gamma(H;X)$ (Lemma~\ref{gamma.l.nuc}). 
The proof of~\ref{acsf.l.item-Qdf} is similar.
\end{proof}

As a consequence we  obtain that this functional calculus satisfies
the requirements of  Section~\ref{acsf.s.lsfe}. Indeed, every 
$e\in \calE(\strip{\omega})$ satisfies 1) and 2) from page
\pageref{lsfe.requirements}. 

\medskip
\noindent
In the following, we shall discuss several
 instances of square functions for strip type operators.
We recall our standing assumption that whenever we speak of dual square functions
the dual calculus is supposed to be well defined, cf.~Section~\ref{acsf.s.dsffc}.
We begin with  the square functions ``of shift type''.

\begin{exa}[Shift type square functions]\label{acsf.exa.shift-sf}
Let $\omega' > \omega$ and $\psi \in \elementary(\strip{\omega'})$,
and define $g(t,z) := \psi(t + z)$ for $t\in \R$ and $z\in \strip{\omega}$.
Then $g: \R \times \strip{\omega} \to \C$ satisfies the hypotheses of Lemma~\ref{acsf.l.f(t,z)}. 
This gives rise to the
(dual) square function 
\begin{align*}
[\Phi_\gamma(g)x]\,h & = \Big(\int_\R h(t) \psi(t+z) \, \ud{t}\Big)(A)x &  (h \in \Ell{2}(\R),\, 
x\in \dom(\Phi_\gamma(g))\\
[\Phi_{\gamma'}(g)x']\,h & = \Big(\int_\R h(t) \psi(t+z) \, \ud{t}\Big)(A)'x' & (h \in \Ell{2}(\R),\,
x'\in \dom(\Phi_{\gamma'}(g)). 
\end{align*}
For $x\in \ran(e(A))$, $e\in \calE(\strip{\omega})$,  Lemma~\ref{acsf.l.Qf} and a simple Fubini argument show that $\Phi_\gamma(g)x$ is integration against the vector-valued function
$t\mapsto \psi(t{+}A)x$. 
If $X$ is itself a Hilbert space, one has 
\[ \norm{\Phi_\gamma(f)x}_\gamma = \Big(\int_\R \norm{\psi(t{+}A)x}^2\, \ud{t} \Big)^\einhalb,
\]
and hence a square function estimate takes the form 
$\int_\R \norm{\psi(t{+} A)x}^2\, \ud{t} \le C \norm{x}^2$. Similar remarks hold true for the dual square function.

\smallskip

\noindent
{\bf Claim:}\ {\em The function $g$ considered as a mapping 
$\strip{\omega} \to \Ell{2}(\R)$ has \ELLONE-frame-bounded range.} 

\begin{proof}
We first note that
\[ \sup_{z \in \strip{\omega}} \int_\R \abs{\psi(t+z)}\, \ud{t} < \infty
\]
by Lemma~\ref{fcru.l.elem-prop}.\ref{fcru.l.elem-prop-a}. Moreover,
$\psi', \psi'' \in \calE(\strip{\alpha})$ for each $\alpha \in
(\omega, \omega')$ by
Lemma~\ref{fcru.l.elem-prop}.\ref{fcru.l.elem-prop-d}. As before, this
implies that
\[ \sup_{z \in \strip{\omega}} \norm{\psi(\cdot + z)}_{\We_1^2(\R)} =
\sup_{z \in \strip{\omega}} \int_\R \abs{\psi(t+z)} + \abs{\psi'(t+z)}
+ \abs{\psi''(t+z)}\, \ud{t} < \infty.
\]
Now by Lemma~\ref{lob.l.W12} the claim is proved.
\end{proof}
\end{exa}

\begin{cor}\label{cor:shift-type-sqf}
Suppose that the $\Ha^\infty(\strip{\omega})$-calculus for $A$ is bounded
and $\psi \in \calE(\strip{\omega'})$ for some $\omega' > \omega$. 
Then the dual square function associated with $\psi(t{+}z)$ is bounded.
If $X$ has finite cotype, then also the 
square function associated with $\psi(t{+}z)$ is bounded.
\end{cor}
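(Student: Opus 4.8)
The plan is to read off the corollary from the abstract transfer principle Theorem~\ref{lob.t.l1-sfe}, feeding into it the \ELLONE-frame-boundedness already established in the Claim of Example~\ref{acsf.exa.shift-sf}. First I would observe that the hypothesis that the $\Ha^\infty(\strip{\omega})$-calculus for $A$ is bounded says precisely that $\Phi\colon\Ha^\infty(\strip{\omega})\to\Lin(X)$ is a bounded algebra homomorphism, so the standing assumptions of Theorem~\ref{lob.t.l1-sfe} are met with $\gebiet=\strip{\omega}$. Next, since $\psi\in\calE(\strip{\omega'})$ with $\omega'>\omega$, Lemma~\ref{fcru.l.elem-prop} supplies the integrability and derivative bounds needed to invoke Lemma~\ref{acsf.l.f(t,z)}; hence $g$, given by $g(t,z):=\psi(t+z)$, yields an element $(z\mapsto g(\cdot,z))$ of $\Ha^\infty(\strip{\omega};\Ell{2}(\R))$. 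Throughout I put $H:=\Ell{2}(\R)$ and use the canonical identification $H=H'$ (the $\Ell{2}$-self-duality), so that $g$ may be regarded simultaneously as an $H$-valued and as an $H'$-valued bounded holomorphic function on the strip; under this identification $g(\strip{\omega})$, and hence its \ELLONE-frame-bound $\abs{g(\strip{\omega})}_1$, is one and the same object in either reading.

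For the first assertion I would apply part~b) of Theorem~\ref{lob.t.l1-sfe} with $f:=g$ viewed in $\Ha^\infty(\strip{\omega};H)$. The Claim of Example~\ref{acsf.exa.shift-sf} is exactly the statement that $g(\strip{\omega})$ is \ELLONE-frame-bounded in $H$, so the theorem gives $\Phi_{\gamma'}(g)\in\Lin(X';\gamma'(H';X'))$ with $\norm{\Phi_{\gamma'}(g)}\le\sqrt{\pi/2}\,\norm{\Phi}\,\abs{g(\strip{\omega})}_1$. This is the boundedness of the dual square function associated with $\psi(t+z)$, and — consistently with Remark~\ref{lob.r.l1-sfe} — no geometric assumption on $X$ is used.

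For the second assertion, assume in addition that $X$ has finite cotype, say cotype $q<\infty$. Then I would apply part~a) of Theorem~\ref{lob.t.l1-sfe} with $g$ now viewed in $\Ha^\infty(\strip{\omega};H')$; its image in $H'$ is \ELLONE-frame-bounded by the same Claim, so the theorem yields $\Phi_\gamma(g)\in\Lin(X;\gamma(H;X))$ with $\norm{\Phi_\gamma(g)}\le 2\,\ct_q(X)\,m_q\,\norm{\Phi}\,\abs{g(\strip{\omega})}_1$, i.e.\ the square function associated with $\psi(t+z)$ is bounded.

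In effect all the analytic work is already done: the genuine content sits in the \ELLONE-frame-boundedness estimate of Example~\ref{acsf.exa.shift-sf} (through the $\We_1^2(\R)$-bounds and Lemma~\ref{lob.l.W12}) and in the general Theorem~\ref{lob.t.l1-sfe}. The only point that demands attention — but is not a real obstacle — is the bookkeeping between $H$ and $H'$ and the fact that $g$ genuinely takes values in $\Ha^\infty$ of the strip rather than being merely a pointwise-defined family; both are settled by the identifications above and by Lemma~\ref{acsf.l.f(t,z)}.
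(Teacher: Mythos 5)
Your proof is correct and follows exactly the route the paper takes: combine the \ELLONE-frame-boundedness claim from Example~\ref{acsf.exa.shift-sf} with parts a) and b) of Theorem~\ref{lob.t.l1-sfe}, part b) requiring no geometric assumption and part a) using finite cotype. The extra care you take with the $H$ versus $H'$ identification is just the paper's standing convention for $H=\Ell{2}(\Omega)$ made explicit, so nothing is added or missing.
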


\begin{proof}
Simply combine Theorem~\ref{lob.t.l1-sfe} with Example~\ref{acsf.exa.shift-sf}.
\end{proof}

The Fourier transform is an isomorphism on $\Ell{2}(\R)$. 
Hence (dual) square functions related by the Fourier transform 
are strongly equivalent. 

\begin{exa}[Weighted group orbits]\label{acsf.exa.weighted-group-orbits}
Let as before $\omega' > \omega$ and $\psi \in \elementary(\strip{\omega'})$.
Taking the inverse Fourier transform with respect to the variable $t$ 
in the $\Ell{2}(\R)$-valued function $\psi(t {+} z)$ yields 
the function 
\[ {\psi}^\vee(s) \ue^{-\ui sz} = \Fourier_t^{-1}( \psi(t + z))(s).
\]
Hence, the (dual) square functions associated with $\psi(t + z)$ and 
$\psi^\vee(s)\ue^{-\ui s z}$ are strongly equivalent. In particular, 
\[ \text{if}\quad
\psi(z) = \frac{ \mfrac{\pi}{\omega}}{\cosh( (\mfrac{\pi}{2\omega}) z)}
\quad\text{then}\quad
\psi^\vee(s) = \frac{1}{\cosh{\omega s}}
\]
(see Remark~\ref{appl.r.poisson} below), whence
\[ \frac{ \mfrac{\pi}{\omega}}{\cosh( (\mfrac{\pi}{2\omega}) (t{+} z))} 
\quad \strongeq\quad
\frac{\ue^{-\ui s z}}{\cosh(\omega s)}.
\]
Hence , by the results of Example~\ref{acsf.exa.shift-sf}
the latter function also has \ELLONE-frame-bounded range in $\Ell{2}(\R)$.
\end{exa}

Employing the subordination principle repeatedly one obtains
\begin{align*}
\frac{ \mfrac{\pi}{\omega}}{\cosh( (\mfrac{\pi}{2\omega})( t{+}z))} &\strongeq
\frac{\ue^{-\ui s z}}{\cosh(\omega s)}  \strongeq
\ue^{-\omega \abs{s}} \ue^{-\ui sz}
\\ & \strongeq  \big(\car_{\R_+}(s) \ue^{-\omega s} \ue^{-isz}, \,
\car_{\R_+}(s) \ue^{-\omega s} \ue^{isz}\big)
\strongeq
\big(\pm \ui \omega + t - z \big)^{-1}. 
\end{align*}
Therefore we may, informally, write
\[ \| \cosh(\omega s)^{-1} \ue^{-\ui sA} x\|_\gamma \sim
\norm{R(\pm \ui \omega + t, A)x}_\gamma.
\]
Such square functions were considered in \cite[Theorem~6.2]{KalWei2004},
see Section~\ref{ss:sing-int-repres} below.

\subsection{Sectorial operators}\label{exas.s.sectorial}
The results of the previous section have their natural analogues
for sectorial operators via the $\exp/\log$-correspondence.
Of course, one has to use the Hilbert space $\Ell{2}^*(0, \infty)$
and the ``shift type'' square functions become ``dilation type''
square functions of the form $\psi(tz)$. The analogue of the Fourier transform
is the Mellin transform, and the ``weighted group orbits'' square functions
are of the form $\psi(s) z^{-\ui s}$, i.e., the group of imaginary powers
emerges here.

\subsection{Ritt operators}\label{exas.s.Ritt}
A bounded operator on a Banach space is a {\em Ritt} operator if 
\[
{\sum}_{k \ge 1} k \norm{T^{k-1}(\Id - T)} < \infty.
\] 
The semigroup $\{ T^n \suchthat n \ge 0\}$ is the discrete analogue of
an analytic semigroup, see \cite{LeMerdy:sqf-Ritt}. The spectrum of a
Ritt operator is contained in a {\em Stolz domain} and one has a
natural functional calculus there, see
\cite{ArhancetLeMerdy:Ritt,KaltonPortal:Ritt,LancienLeMerdy:Ritt,LeMerdy:sqf-Ritt}. In
the recent article \cite{LeMerdy:sqf-Ritt}, LeMerdy considers square
functions associated with the $\ell_{2}$-valued $\Ha^\infty$-mappings
\[ f_m(k,z) := k^{m - \einhalb} z^{k-1}(1-z)^m \qquad (k \in \N)
\]
which are the discrete analogues of the $\Ell{2}^*(0, \infty)$-valued mappings
\[ g_m(t,z) := (tz)^m\ue^{-tz}
\]
of dilation type. To some extent, the theory of bounded $\Ha^\infty$-calculus
and square function estimates on Stolz domains is equivalent to the strip or the sector case, by conformal equivalence of the underlying complex domains.

\section{Applications}\label{s:appl}

In this chapter we present several applications of the 
integral representation Theorem~\ref{acsf.t.nsfo}.  
In each case one starts from very specific bounded square and dual square functions
and concludes the boundedness of an $\Ha^\infty$-calculus or even, in the case
that the Banach space has finite cotype, the boundedness
of a vectorial $\Ha^\infty$-calculus. However, one usually has to pay a price in the form
that the domain set for the holomorphic functions represented by the integral formula
has to be larger than the domain set used for the square functions.

\subsection{Cauchy--Gau\ss\  representation}\label{ss:cauchy-gauss-repres}
Our first instance uses the variant of the usual Cauchy integral formula
with an additional Gaussian factor.

\medskip
\noindent
Let $0 <  \omega< \omega'$, and let $\Gamma := \rand \strip{\omega}$
with arc length (=Lebesgue) measure
Then it is simple complex analysis to show that
\[ u(z) = \frac{1}{2\pi \ui} \int_\Gamma u(w) \tfrac{ \ue^{-(w-z)^2}}{w-z}\, \ud{w} \qquad 
(\abs{\im z} < \omega)
\]
whenever $u \in \Ha^\infty(\strip{\omega'};H)$, cf. Formula~\eqref{eq:gauss-cauchy-formel}.
To interpret it in the light of Theorem~\ref{acsf.t.nsfo}
we define
\[ m(w) := u(w),\,\, f(w,z) := \tfrac{\ue^{-\frac{1}{2}(w-z)^2}}{w-z},\,\,
g(w,z) := \ue^{-\frac{1}{2}(w-z)^2}\qquad (w\in \Gamma, \, z\in \strip{\omega})
\]
and $K := \Ell{2}(\Gamma)$. Then 
$f,g \in \Ha^\infty(\strip{\alpha};K)$ for each $\alpha \in (0, \omega)$.
Consequently, if for an operator $A$ of strip type $\omega_0< \omega$ 
on a Banach space $X$
the square and dual square functions
associated with $f$ and $g$, respectively, are bounded, then 
$A$ has a bounded $\Ha^\infty(\strip{\omega'})$-calculus. 
And if $X$ has finite cotype, then 
$A$ has a bounded {\em vectorial} $\Ha^\infty(\strip{\omega'})$-calculus.

Actually, one can say more here. Theorem~\ref{acsf.t.nsfo} yields
a constant $C\ge 0$ such that
\[ \norm{f(A)}_{\gamma} \le C \norm{f}_{\Ha^\infty(\strip{\omega})}
\quad \text{for all}\quad f\in {\bigcup}_{\omega'> \omega} 
\Ha^\infty(\strip{\omega'})
\]
If the operator $A$ is densely defined, then by the scalar/vectorial
convergence lemma one obtains a bounded (vectorial) 
$\Ha^\infty(\strip{\omega})$-calculus.

\medskip
Combining these results with Theorem~\ref{lob.t.l1-sfe}, or rather
with Corollary~\ref{cor:shift-type-sqf}, we arrive at the following
central result.

\begin{thm}\label{exas.t:bdd-vectorial-calc}
Let $\alpha > 0$ and let $\Phi: \Ha^\infty(\strip{\alpha})\to \Lin(X)$
be a bounded $\Ha^\infty$-calculus over the strip $\strip{\alpha}$ 
on a Banach space $X$ of finite cotype. Further, let
$\beta > \alpha$ and $H$ be an arbitrary Hilbert space.
Then, for each $u\in \Ha^\infty(\strip{\beta};H')$ the square function
$\Phi_\gamma(u): X \to \gamma(H;X)$ is a bounded operator and 
there is a constant $C\ge 0$ such that
\[  \norm{\Phi_\gamma(u)x}_{\gamma}
\le C  \norm{u}_{\Ha^\infty(\strip{\beta})}\, \norm{x}_X \quad
\text{for all}\quad u\in \Ha^\infty(\strip{\beta};H'),\, x\in X.
\]
\end{thm}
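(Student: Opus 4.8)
The plan is to feed the Cauchy--Gau\ss\ representation formula into the integral representation Theorem~\ref{acsf.t.nsfo}, exactly along the lines of the discussion preceding the statement. Fix $\omega$ with $\alpha < \omega < \beta$, put $\Gamma := \rand\strip{\omega}$ (with arc-length measure), $K := \Ell{2}(\Gamma)$, and recall that every $u\in\Ha^\infty(\strip{\beta};H')$ satisfies
\[
 u(z) = \frac{1}{2\pi\ui}\int_\Gamma u(w)\,\frac{\ue^{-(w-z)^2}}{w-z}\,\ud{w}\qquad(\abs{\im z}<\omega),
\]
which is meaningful because $\Gamma\subseteq\strip{\beta}$. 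Put $m(w):=\tfrac{1}{2\pi\ui}u(w)\in H'$, $f(w,z):=\tfrac{\ue^{-\frac12(w-z)^2}}{w-z}$ and $g(w,z):=\ue^{-\frac12(w-z)^2}$, so that $fg$ is the kernel above and $u(z)=\int_\Gamma m(w)\,f(w,z)\,g(w,z)\,\ud{w}$ on $\strip{\alpha}$. Since $\abs{w-z}\ge\omega-\alpha>0$ for $w\in\Gamma$ and $z\in\strip{\alpha}$, one verifies (using Lemma~\ref{acsf.l.f(t,z)} and the Gaussian decay of the kernel) that $f,g\in\Ha^\infty(\strip{\alpha};K)$, while $m\in\Ell{\infty}(\Gamma;H')$ with $\norm{m}_{\Ell{\infty}(\Gamma;H')}\le\tfrac{1}{2\pi}\norm{u}_{\Ha^\infty(\strip{\beta})}$. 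This is precisely the data to which Theorem~\ref{acsf.t.nsfo} applies, with $\Omega=\Gamma$ and $\gebiet=\strip{\alpha}$; here one also uses that, the strip type of $A$ being $<\alpha$, the scalar $\Ha^\infty(\strip{\alpha})$-calculus extends consistently to $\Ha^\infty(\strip{\beta})$, so that the operator $\Phi_\gamma(u)$ appearing in the statement coincides with $\Phi_\gamma(u\restrict_{\strip{\alpha}})$.

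The heart of the argument is to check the two hypotheses of Theorem~\ref{acsf.t.nsfo}: that $\Phi_\gamma(g)\colon X\to\gamma(K;X)$ and $\Phi_{\gamma'}(f)\colon X'\to\gamma'(K;X')$ are bounded. Splitting $\Gamma$ into its two boundary lines $\Gamma_\pm=\{\im w=\pm\omega\}$ gives $K=\Ell{2}(\Gamma_+)\oplus_2\Ell{2}(\Gamma_-)$, and the resulting direct-sum decomposition of $\gamma(K;X)$ and $\gamma'(K;X')$ reduces the claim to each line separately. Parametrising $\Gamma_+$ by $w=t+\ui\omega$ one has $g(w,z)=\chi(z-t)$ and $f(w,z)=\phi(z-t)$ with $\chi(\zeta):=\ue^{-\frac12(\zeta-\ui\omega)^2}$ and $\phi(\zeta):=\ue^{-\frac12(\zeta-\ui\omega)^2}/(\ui\omega-\zeta)$; after the isometric reparametrisation $t\mapsto-t$ these are shift-type symbols $\chi(t+z)$ and $\phi(t+z)$ in the sense of Example~\ref{acsf.exa.shift-sf}. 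Now $\chi$ is a shifted Gaussian, hence lies in $\calE(\strip{\omega'})$ for every $\omega'$, whereas $\phi$ has its only pole at $\ui\omega$ and thus lies in $\calE(\strip{\omega'})$ for every $\omega'\in(\alpha,\omega)$ (and symmetrically on $\Gamma_-$). As a bounded $\Ha^\infty(\strip{\alpha})$-calculus is, a fortiori, a bounded $\Ha^\infty(\strip{\omega'})$-calculus for $\omega'>\alpha$ (because $\norm{\cdot}_{\Ha^\infty(\strip{\alpha})}\le\norm{\cdot}_{\Ha^\infty(\strip{\omega'})}$), Corollary~\ref{cor:shift-type-sqf} applies: the dual square function of $\phi(t+z)$ is bounded, and, $X$ having finite cotype, the square function of $\chi(t+z)$ is bounded; the same holds on $\Gamma_-$. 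Summing over the two lines yields the boundedness of $\Phi_\gamma(g)$ and $\Phi_{\gamma'}(f)$, with norms controlled --- tracing through Theorem~\ref{lob.t.l1-sfe}, Example~\ref{acsf.exa.shift-sf} and Corollary~\ref{cor:shift-type-sqf} --- by $\norm{\Phi}$, the cotype constant of $X$ and $\omega$ alone.

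With both hypotheses in hand, the finite-cotype branch of Theorem~\ref{acsf.t.nsfo} yields that $\Phi_\gamma(u)\colon X\to\gamma(H;X)$ is bounded with
\[
 \norm{\Phi_\gamma(u)}\le c\,\norm{m}_{\Ell{\infty}(\Gamma;H')}\,\norm{\Phi_{\gamma'}(f)}\,\norm{\Phi_\gamma(g)}\le\frac{c}{2\pi}\,\norm{\Phi_{\gamma'}(f)}\,\norm{\Phi_\gamma(g)}\,\norm{u}_{\Ha^\infty(\strip{\beta})},
\]
where $c=c(q,\ct_q(X))$. Since $\omega$ was fixed once and for all in terms of $\alpha$ and $\beta$, and the factors $\norm{\Phi_{\gamma'}(f)}$, $\norm{\Phi_\gamma(g)}$ depend only on $\norm{\Phi}$, the cotype of $X$ and $\omega$ --- not on $u$ or on $H$ --- the displayed estimate holds with a single constant $C:=\tfrac{c}{2\pi}\norm{\Phi_{\gamma'}(f)}\norm{\Phi_\gamma(g)}$, which is the assertion. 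The one real obstacle is the middle step: identifying the contour functions $f$ and $g$ with genuine shift-type square functions so that the already available Corollary~\ref{cor:shift-type-sqf} can be invoked, and verifying that the constants produced there are independent of the represented function $u$. The remaining points --- $f,g\in\Ha^\infty(\strip{\alpha};K)$, consistency of the scalar calculus on nested strips, and the direct-sum decomposition of $\gamma(K;X)$ --- are routine.
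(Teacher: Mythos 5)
Your proposal is correct and takes essentially the same route as the paper: the Cauchy--Gau\ss\ representation is fed into Theorem~\ref{acsf.t.nsfo}, with the required square and dual square function estimates for the kernel factors obtained from \ELLONE-frame-boundedness via Theorem~\ref{lob.t.l1-sfe}. The only cosmetic difference is that you verify these estimates by splitting $\Gamma$ into its two boundary lines and invoking the packaged Corollary~\ref{cor:shift-type-sqf}, whereas the paper applies the $\We_1^2$-estimate of Example~\ref{acsf.exa.shift-sf} and Theorem~\ref{lob.t.l1-sfe} directly on the whole contour.
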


\begin{proof}
Fix $\omega \in (\alpha, \beta)$. Then, as in Example~\ref{acsf.exa.shift-sf},
\[
\sup_{z\in \strip{\alpha}} \norm{g(z)}_{\We_1^2(\Gamma)} + \norm{f(z)}_{\We_1^2(\Gamma)} < \infty
\]
and hence $g,f: \strip{\alpha} \to K$ have \ELLONE-frame-bounded range.
By Theorem~\ref{lob.t.l1-sfe}, the associated square and dual square functions
are bounded. As explained above, the claim now follows from
 Theorem~\ref{acsf.t.nsfo}.
\end{proof}

Clearly, Theorem~\ref{exas.t:bdd-vectorial-calc} has a 
straightforward analogue for sectorial operators.
Note that the vectorial calculus in 
Theorem~\ref{exas.t:bdd-vectorial-calc} ``lives'' on a slightly larger strip. 
Consequently, in the  sectorial version one needs to enlarge the sector.

\begin{rem}\label{bdd-vect-calc.r.lemerdy}
  While we were working on the present manuscript, Christian \LeMerdy\
  independently found the equivalent
  result of Theorem~\ref{exas.t:bdd-vectorial-calc}
  for sectorial operators \cite[Theorem~6.3]{LeMerdy:sqf-Ritt}. (His
  ``quadratic'' $\Ha^\infty$-calculus is essentially what we call a ``bounded vectorial'' 
  $\Ha^\infty$-calculus.)
\LeMerdy's proof, which rests implicitly on an 
\ELLONE-frame-boundedness argument, 
 is  based on the Franks--McIntosh decomposition, to be treated below
in Section~\ref{ss:McIntosh--Franks-repres}.   
\end{rem}

\subsection{Poisson representation}\label{ss:poisson-repres}
Our next example uses a variant of the Poisson formula for the strip.

\begin{lemma}\label{appl.l.poisson}
Let $0 < \omega < \omega'$ and $u \in \Ha^\infty(\strip{\omega'};H)$. Then 
\begin{equation}\label{appl.eq.poisson}
 u(z) = \frac{1}{2\pi} \int_{\R} 
\frac{\sfrac{\pi}{2\omega}}{\cosh( \sfrac{\pi}{2\omega}
(z+s))} \big( u(\ui \omega -s) + u( -\ui \omega -s)\big) \, \ud{s}
\end{equation}
whenever $\abs{\im z} < \omega$. 
\end{lemma}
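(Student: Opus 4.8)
The plan is to recognise the right-hand side of \eqref{appl.eq.poisson} as a contour integral over $\rand\strip{\omega}$ and to evaluate it by the residue theorem. Write $a:=\sfrac{\pi}{2\omega}$, so that $a\omega=\sfrac{\pi}{2}$. Since $\im(z+s)=\im z\in(-\omega,\omega)$, the kernel $s\mapsto a/\cosh(a(z+s))$ is continuous and decays like $\ue^{-a\abs{s}}$ as $\abs{s}\to\infty$; hence the integral in \eqref{appl.eq.poisson} converges absolutely, locally uniformly in $z$. As both sides are $H$-valued and linear in $u$, it suffices by Hahn--Banach to prove the formula for scalar $u\in\Ha^\infty(\strip{\omega'})$.

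Next I would make the substitution $w:=\ui\omega-s$ in the first summand and $w:=-\ui\omega-s$ in the second. Then $w$ runs over the upper, resp.\ lower, boundary line of $\strip{\omega}$, one has $\ud s=-\ud w$, and $z+s=z\pm\ui\omega-w$ with the sign matching $\im w=\pm\omega$. The elementary identities $\cosh(\zeta\pm\ui\sfrac{\pi}{2})=\pm\ui\sinh\zeta$ give $\cosh\!\big(a(z\pm\ui\omega-w)\big)=\pm\ui\sinh\!\big(a(z-w)\big)$; keeping track also of the orientation (the positively oriented boundary $\rand\strip{\omega}$ runs along $\im w=-\omega$ from left to right and along $\im w=+\omega$ from right to left) one finds that both summands acquire the kernel $\ui a/\sinh(a(z-w))$, so that the right-hand side of \eqref{appl.eq.poisson} equals $\dfrac{\ui a}{2\pi}\oint_{\rand\strip{\omega}}\dfrac{u(w)}{\sinh(a(z-w))}\,\ud w$.

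It remains to compute this contour integral. The zeros of $w\mapsto\sinh(a(z-w))$ are precisely the points of $z+2\ui\omega\,\Z$, and since $\abs{\im z}<\omega$ the only one in the closed strip $\cls{\strip{\omega}}$ is $w=z$; it is simple, with $\frac{\ud}{\ud w}\sinh(a(z-w))\big|_{w=z}=-a$. As $\omega'>\omega$, the function $u$ is holomorphic on $\cls{\strip{\omega}}$, so $w\mapsto u(w)/\sinh(a(z-w))$ has there a single simple pole, at $w=z$, with residue $-u(z)/a$. To apply the residue theorem on the unbounded strip I would truncate, integrating over the rectangle $\{\abs{\re w}\le N,\ \abs{\im w}\le\omega\}$ and letting $N\to\infty$: the two vertical sides contribute nothing, since there $\abs{\sinh(a(z-w))}\gtrsim\ue^{aN}$ while $\abs{u}\le\norm{u}_{\Ha^\infty}$, and the horizontal sides converge to the respective line integrals because the integrand decays like $\ue^{-a\abs{\re w}}$. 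Thus $\oint_{\rand\strip{\omega}} u(w)/\sinh(a(z-w))\,\ud w=2\pi\ui\,(-u(z)/a)$, and combining with the previous paragraph gives $\frac{\ui a}{2\pi}\cdot\big(-\frac{2\pi\ui}{a}\big)u(z)=-\ui^{2}u(z)=u(z)$, as claimed.

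The computation is essentially routine; the only steps that need care are the bookkeeping of the orientations and of the factors $\ui$ in the $\cosh\leftrightarrow\sinh$ conversion, and the truncation argument that legitimises the residue theorem on the unbounded contour (and, a posteriori, the reduction to scalar $u$).
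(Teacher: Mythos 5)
Your argument is correct and is essentially the paper's proof run in reverse: the paper establishes the same contour identity $u(z)=\tfrac{1}{2\pi\ui}\int_{\rand\strip{\omega}}\tfrac{a\,u(w)}{\sinh(a(w-z))}\,\ud{w}$ (with $a=\sfrac{\pi}{2\omega}$) by applying Cauchy's integral formula to the auxiliary function $w\mapsto\tfrac{a(z-w)}{\sinh(a(w-z))}\,u(w)$, whose singularity at $w=z$ is removable, and then parametrises $\rand\strip{\omega}$ using $\sinh(\zeta\pm\ui\pi/2)=\pm\ui\cosh\zeta$ — precisely the identity you invoke in the opposite direction. Your residue-theorem evaluation with the rectangle truncation is an equivalent justification of that contour identity, and your bookkeeping of orientations, the factors $\ui$, and the simple pole at $w=z$ is accurate.
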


\begin{proof}
Fix $0 < \alpha \le  \sfrac{\pi}{2 \omega}$,  then for $\abs{\im z} < \omega$ 
the function  
\[ f(w) = \frac{\alpha(z-w)}{ \sinh( \alpha( w-z))}\, u(w)
\]
is analytic in a strip larger than $\strip{\omega}$. (Note that $w= z$ is
a removable singularity.) Hence, by Cauchy's integral formula. 
\[ u(z) =  f(z) = \frac{1}{2\pi \ui} \int_\Gamma  
\frac{\alpha }{ \sinh( \alpha( w-z))}\, u(w) \, \ud{w}
\]
where $\Gamma := \rand\strip{\omega}$ with the natural orientation.
Now write out the parametrisation, specialise $\alpha = \sfrac{\pi}{2\omega}$
 and use that 
$\sinh( a \pm \ui \sfrac{\pi}{2}) = \pm \ui \cosh(a)$.
\end{proof}

\begin{rem}\label{appl.r.poisson}
Specialising $u(z) = \ue^{\ui z x}$ and $z= 0$ 
in \eqref{appl.eq.poisson} 
one obtains again the formula
\[ \frac{1}{2\pi} \int_\R 
\frac{\sfrac{\pi}{2\omega}}{\cosh( \sfrac{\pi}{2\omega}
s)} \ue^{\ui s t} \, \ud{s} = \frac{1}{\cosh(\omega t)} \qquad (t\in \R)
\]
used in Example~\ref{acsf.exa.weighted-group-orbits}.
\end{rem}

In order to apply Theorem~\ref{acsf.t.nsfo}, we need to factorise the
integral kernel in \eqref{appl.eq.poisson}. A possibility is
\begin{equation}\label{poisson.eq.product}
 \frac{\sfrac{\pi}{2\omega}}{\cosh( \sfrac{\pi}{2\omega} z)} = 
  \Big[ \frac{\alpha}{\omega} \frac{\cosh(\sfrac{\pi}{2\alpha} z) }{\cosh( \sfrac{\pi}{2\omega} z)} \Big] 
  \cdot \frac{\sfrac{\pi}{2\omega}}{\cosh( \sfrac{\pi}{2\alpha} z)} = f(z) \cdot g(z),
 \end{equation} 
for $\alpha > \omega$. With $m_u(s) := \big( u(\ui \omega -s) + u( -\ui \omega -s)\big)$,
Formula \eqref{appl.eq.poisson} then becomes
\begin{equation}\label{poisson.eq.rep}
 u(z) = \frac{1}{2\pi} \int_\R m_u(s) f(z-s) g(z-s)\, \ud{s}. 
\end{equation}
This is an instance of \eqref{acsf.e.nsfo}, whence Theorem~\ref{acsf.t.nsfo} can be applied. 
The function $f$ still looks a little unwieldy, but turns out to be strongly equivalent
to $g$, since
\begin{align*}
   &   \frac{\alpha}{\omega} \frac{\cosh(\sfrac{\pi}{2\alpha} (z{+}s)) }{\cosh( \sfrac{\pi}{2\omega} (z{+}s))}  
\strongeq \frac{2\alpha}{\pi} \cos\Big(\frac{\pi \omega}{2\alpha}\Big)   
\frac{\cosh(\omega t)}{\cos(\sfrac{\pi\omega}{\alpha}) + \cosh(2\omega t)}\,   
\ue^{-\ui t z}  \strongeq \frac{\ue^{-\ui t z}}{ \cosh(\omega t)}.
\end{align*}
Here, the first equivalence comes from taking the inverse Fourier transform, and the second holds
by multiplying by $\Ell{\infty}$-functions.

\begin{thm}\label{appl.t.poisson}
Let $A$ be a densely defined operator of strip type $\omega_0\ge 0$ 
on a Banach space $X$ (of finite cotype).
Let $\omega > \omega_0$ and suppose that the 
square and dual square functions associated with
the weighted group orbit $\ue^{-it z}/\cosh(\omega t)$ are bounded, i.e.,
\[  \Big\|\frac{\ue^{-\ui t A}x}{\cosh(\omega t)} \Big\|_\gamma \lesssim \norm{x}\quad \text{and}\quad
 \Big\|\frac{\ue^{-\ui t A'}x'}{\cosh(\omega t)}\Big\|_{\gamma'} \lesssim \norm{x'}.
\]
Then $A$ has a bounded (vectorial) $\Ha^\infty$-calculus on $\strip{\omega}$.
\end{thm}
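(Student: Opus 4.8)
The plan is to apply Theorem~\ref{acsf.t.nsfo} directly to the Poisson representation \eqref{poisson.eq.rep}, using the hypothesised (dual) square function estimates for the weighted group orbit. First I would fix $\omega_0 < \omega$ and choose $\alpha > \omega$; with $K := \Ell{2}(\R)$, set $m_u(s) := u(\ui\omega - s) + u(-\ui\omega - s)$ and $f(z) := \tfrac{\alpha}{\omega}\tfrac{\cosh(\sfrac{\pi}{2\alpha} z)}{\cosh(\sfrac{\pi}{2\omega} z)}$, $g(z) := \tfrac{\sfrac{\pi}{2\omega}}{\cosh(\sfrac{\pi}{2\alpha} z)}$, viewed as elements of $\Ha^\infty(\strip{\beta};K)$ via the map $z \mapsto (s \mapsto f(z-s))$ etc., for suitable $\beta \in (\omega_0,\omega)$ (using Lemma~\ref{acsf.l.f(t,z)} to check these are indeed $H$-valued holomorphic functions on a strip slightly larger than $\strip{\omega_0}$). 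One checks $m_u \in \Ell{\infty}(\R;H')$ with $\norm{m_u}_\infty \le 2\norm{u}_{\Ha^\infty(\strip{\omega'})}$, and that \eqref{poisson.eq.rep} expresses the function $u$ evaluated via the functional calculus in exactly the form \eqref{acsf.e.nsfo} of Theorem~\ref{acsf.t.nsfo}.

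Next I would verify hypotheses 1) and 2) of Theorem~\ref{acsf.t.nsfo}, i.e. the boundedness of $\Phi_\gamma(g)$ and $\Phi_{\gamma'}(f)$. By the strong equivalences displayed just before the theorem statement, both $f(z-s)$ and $g(z-s)$ (as $K$-valued functions) are strongly equivalent to the weighted group orbit $\ue^{-\ui t z}/\cosh(\omega t)$: $g$ via the inverse Fourier transform (which is an isomorphism on $\Ell{2}(\R)$, so strong equivalence is preserved) and then $f$ by multiplying by bounded $\Ell{\infty}$-functions. Hence by the subordination principle (Theorem~\ref{sfno.t:subord}, or its abbreviated form $f \subord g$) and the ideal property, the assumed boundedness of $\norm{\ue^{-\ui tA}x/\cosh(\omega t)}_\gamma \lesssim \norm{x}$ and its dual counterpart transfer to give boundedness of $\Phi_\gamma(g)$ and $\Phi_{\gamma'}(f)$. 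Strictly speaking I should take $\alpha$ close enough to $\omega$ so that the square functions for $f,g$ live on a strip containing $\sigma(A)$; since $A$ has strip type $\omega_0 < \omega$, shrinking the strip for the square functions to some $\strip{\beta}$ with $\omega_0 < \beta < \omega$ is harmless, as $f, g \in \Ha^\infty(\strip{\beta};K)$ there.

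Then Theorem~\ref{acsf.t.nsfo} applies: since either $H = \C$ (trivial cotype) or $X$ has finite cotype, it yields that $\Phi_\gamma(u): X \to \gamma(H;X)$ is bounded with $\norm{\Phi_\gamma(u)} \le c\,\norm{m_u}_{\Ell{\infty}(\R;H')}\,\norm{\Phi_{\gamma'}(f)}\,\norm{\Phi_\gamma(g)} \le C\,\norm{u}_{\Ha^\infty(\strip{\omega'})}$, uniformly in $\omega' > \omega$, first for $u \in \bigcup_{\omega' > \omega}\Ha^\infty(\strip{\omega'})$. For the scalar case ($H = \C$) this already gives a uniform bound $\norm{u(A)} \le C\norm{u}_{\Ha^\infty(\strip{\omega})}$ on $\bigcup_{\omega'>\omega}\Ha^\infty(\strip{\omega'})$, and since $A$ is densely defined the scalar convergence lemma \cite[Section~5.1]{HaaseFC} promotes this to a bounded $\Ha^\infty(\strip{\omega})$-calculus. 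In the finite-cotype case one invokes the vectorial convergence lemma (Lemma~\ref{acsf.l.vcl}, noting $X$ has finite cotype hence contains no copy of $\co$) to extend from $\bigcup_{\omega'>\omega}\Ha^\infty(\strip{\omega'};H')$ to all of $\Ha^\infty(\strip{\omega};H')$, giving the bounded vectorial calculus.

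The main obstacle I anticipate is not any single deep step — the heavy lifting is all in Theorem~\ref{acsf.t.nsfo} and the convergence lemmas — but rather the bookkeeping of strip parameters: one must juggle $\omega_0 < \beta < \omega < \alpha < \omega'$ so that simultaneously (a) $f,g$ are holomorphic and $K$-valued on $\strip{\beta} \supsetneq \strip{\omega_0}$, (b) the strong equivalences to the group orbit hold with constants independent of the relevant parameter, and (c) the resulting bound on $\Phi_\gamma(u)$ is uniform over $\omega' > \omega$ so that the limiting argument closes on the strip $\strip{\omega}$ itself rather than a larger one. A minor secondary point is justifying that the Poisson formula \eqref{appl.eq.poisson}, valid for $u \in \Ha^\infty(\strip{\omega'};H)$, combined with the functional calculus, genuinely produces $\Phi_\gamma(u)$ in the precise sense of \eqref{acsf.e.nsfo} — this is the same Fubini-type manipulation already indicated in Section~\ref{acsf.s.sfL2} and poses no real difficulty, but should be mentioned.
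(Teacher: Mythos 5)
Your proposal is correct and follows essentially the same route as the paper: the Poisson representation \eqref{poisson.eq.rep} with the factorisation \eqref{poisson.eq.product}, transfer of the hypothesised estimates to $f$ and $g$ via Fourier transform and $\Ell{\infty}$-multipliers, then Theorem~\ref{acsf.t.nsfo} followed by the scalar/vectorial convergence lemma. The only nitpick is the attribution of the equivalences: $g(z-s)$ is (after Fourier transform) the orbit $\ue^{-\ui t z}/\cosh(\alpha t)$ and hence only \emph{subordinate} to the $\omega$-orbit, while $f$ is the one strongly equivalent to it after multiplying by $\Ell{\infty}$-functions --- but subordination is all that is needed to transfer boundedness, so this does not affect the argument.
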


\begin{proof}
We apply the preceding remarks to obtain 
\[ \norm{\Phi_\gamma(f)}_{\gamma} \lesssim \norm{f}_{\Ha^\infty(\strip{\omega})}
\]
for $f\in \bigcup_{\omega' > \omega} \Ha^\infty(\strip{\omega'})$. 
The remaining step to a full vectorial $\Ha^\infty(\strip{\omega})$-calculus
is made via the vectorial convergence lemma (Lemma~\ref{acsf.l.vcl}).
\end{proof}

\begin{rem}\label{poisson.r.transference}
The factorisation~\ref{poisson.eq.product} 
has been used in \cite{Haa2009} to prove the
transference principle for groups. A close inspection reveals that Formula \eqref{poisson.eq.rep}
is --- after taking a Fourier transform --- just the transference identity in disguise.  
Using the arguments in the proof of \cite[Theorem~3.2]{Haa2009} leads to 
an alternative proof of Lemma~\ref{appl.l.poisson}, see the following section.
\end{rem}

\subsection{CDMcY-representation}\label{appl.s.cdmcy}

A variant of the Poisson type representation in the previous section was used
by Cowling, Doust, McIntosh and Yagi in their influential paper \cite{CowDouMcIYag1996}. To motivate
it we first sketch an

\medskip
\noindent
{\em Alternative proof of Lemma~\ref{appl.l.poisson}:}\ 
Suppose first that $f = \fourier{g}$
is the Fourier transform of a function $g$ on $\R$ with 
$\int_\R \cosh(\omega t)\abs{g(t)}\, \ud{t} < \infty$.  We abbreviate $g_\omega(t) := \cosh(\omega t) g(t)$. 
Then
\begin{align*}
  f(z) & = \int_\R \ue^{-\ui t z} g(t)\, \ud{t}
= \int_\R \frac{\ue^{-\ui t z}}{\cosh(\omega t)}  \cosh(\omega t)g(t)\, \ud{t}
= \int_\R \frac{\ue^{-\ui t z}}{\cosh(\omega t)}  g_\omega(t)\, \ud{t}
\\ & = \int_\R \frac{\sfrac{\pi}{\omega}}{ \cosh(\sfrac{\pi}{2\omega} (z + s))}
   \Fourier^{-1}(g_\omega)(s)\, \ud{s}
\end{align*} 
and
\begin{align*}
\Fourier^{-1}(g_\omega)(s) & = \frac{1}{2\pi} \int_\R g(t) \cosh(\omega t) \ue^{\ui t s}\, \ud{t}
= \frac{1}{4\pi} \int_\R g(t) \big( \ue^{-\ui( (\ui \omega -s)  t} + 
\ue^{ -\ui( -\ui \omega - s)t}  \big) \, \ud{s}
\\ & = \frac{1}{4\pi} \big( f(\ui \omega - s) + f( -\ui \omega -s) \big). 
\end{align*} 
Hence, \eqref{appl.eq.poisson} is valid for such functions $f$, and the general
case is proved by approximation.\qed

\medskip
\noindent
The idea behind the CDMcY-representation is to sneak in an additional factor in the previous argument and compute
formally\footnote{In order to keep our own presentation consistent, we deviate inessentially from   
\cite{CowDouMcIYag1996} in that we use inverse Fourier transforms in place of Fourier transforms,
and work on strips in place of sectors.}
\begin{align*}
  f(z) & = \int_\R \ue^{-\ui t z} g(t)\, \ud{t}
= \int_\R \psi^\vee(t) \ue^{-\ui t z} \frac{g_\omega(t)}{\psi^\vee(t)\cosh(\omega t)}  \, \ud{t}
\\ & = \int_\R \psi(z {+} s)\, \Fourier^{-1}\Big[\frac{g_\omega(t)}{\psi^\vee(t)\cosh(\omega t)}\Big](s)\, \ud{s} 
\\ & =   
\int_\R \psi(z {+} s)\, \Big[\Fourier^{-1}\Big(\frac{1}{\psi^\vee(t)\cosh(\omega t)}\Big) 
\ast  \Fourier^{-1}(g_\omega)\Big](s)\, \ud{s}.
\end{align*} 
To make this work, the authors require that 
\begin{equation}\label{cdmcy.eq.req1}
 \frac{1}{\psi^\vee(t)\cosh(\nu t)} \in \Ell{\infty}(\R) \quad \text{for some $\nu <  \omega$.} 
\end{equation}
In order to  obtain an $\Ell{\infty}$-bound on 
\[ m_f(t) := \Fourier^{-1}\Big(\frac{1}{\psi^\vee(t)\cosh(\omega t)}\Big) 
\ast  \Fourier^{-1}(g_\omega)
\]
in terms of the $\Ha^\infty$-norm of $f$ it remains to ensure that the first factor in the convolution
is in $\Ell{1}(\R)$. Hence, by the well known  Carlson--Bernstein criterion and under the hypothesis \eqref{cdmcy.eq.req1},
it suffices to have
\[   \frac{ (\psi^\vee)'}{\psi^\vee} \frac{\cosh(\nu t)}{\cosh(\omega t)} \in \Ell{2}(\R).
\]
Under the additional assumption (made in \cite{CowDouMcIYag1996})  that 
$\psi(z) = \vphi(\ue^z)$, and $\vphi \in \Ha^\infty_0$ on a sector, this is the case, see \cite[p.~67]{CowDouMcIYag1996}.

\begin{rem}
The authors of  \cite{CowDouMcIYag1996} used this representation to infer bounded $\Ha^\infty$-calculus
from ``weak quadratic estimates'' of the form
\[  \int_\R \abs{\dprod{\psi(t + A)x}{x'} } \ud{t} \lesssim \norm{x} \norm{x'}. 
\]
This notion is not covered so far in  our approach (which avoids computing with $X$-valued functions). 
However, when it comes to square function estimates, it is not clear whether there is really
a surplus compared with Theorem~\ref{appl.t.poisson}. The reason is that requirement \eqref{cdmcy.eq.req1}
implies that 
\[ \frac{\ue^{-it z}}{\cosh(\nu t)}\,\, \subord \,\, \ue^{-\ui tz}\psi^\vee(t) \,\,\strongeq \,\,\psi(z {+} s)
\]
and hence the boundedness of the shift-type square function associated with $\psi$ 
implies the boundedness of the ``weighted group orbit''-square functions
considered in Theorem~\ref{appl.t.poisson}. (Even more, 
the CDMcY-choice of $\psi$ implies
also that $\psi^\vee/\cosh(\omega' \cdot) \in \Ell{\infty}(\R)$ 
for some $\omega'$ and 
hence square function estimates for $\psi$ are basically equivalent with
square function estimates for weighted group orbits.)
\end{rem}

\subsection{Laplace (transform) representation}\label{ss:Laplace-repres}

In this section we work with a sectorial operator $A$ of angle $\theta < \sfrac{\pi}{2}$, i.e.,
 $-A$ generates a (sectorially) bounded holomorphic semigroup $(e^{-tA})_{t> 0}$. 
Ubiquitous  square functions in this context are dilation type square functions $\psi(tz)$ with $H= \Ell{2}^*(0, \infty)$,
in particular for the choice $\psi= \psi_\alpha$, where
\[ \psi_\alpha(z) = z^\alpha \ue^{-z} \qquad (\alpha > 0)
\]
and $z$ is from a sufficiently large sector. Aiming at an application of Theorem~\ref{acsf.t.nsfo} we look for
a representation
\begin{align*}
 u(z) & = \int_0^\infty m_u(t) \psi_\alpha(tz)\psi_\beta(tz) \, \tfrac{\ud{t}}{t} 
= z^{\alpha + \beta} \int_0^\infty m_u(t)t^{\alpha + \beta - 1}  \ue^{-2tz} \,\ud{t}
\\ & =
\frac{1}{2^{\alpha + \beta}}
z^{\alpha + \beta} \int_0^\infty m_u(t/2)t^{\alpha + \beta - 1}  \ue^{-tz} \,\ud{t}
\end{align*}
with $m_u \in \Ell{\infty}(0, \infty)$. This means that 
$\tfrac{1}{2^{\alpha + \beta}}m_u(t/2)t^{\alpha + \beta - 1}$ is the inverse Laplace 

\noindent \begin{minipage}{0.6\linewidth}
transform of $u(z)/z^{\alpha + \beta}$. 
Now let us suppose that $u \in \Ha^\infty(\sector{\omega'})$ for some $\omega'> \sfrac{\pi}{2}$. 
Then one can use the complex inversion formula to compute
\[ \frac{m_u(t/2)}{2^{\alpha + \beta}} t^{\alpha + \beta - 1} =
\frac{1}{2\upi \ui} \int_{\Gamma_{\omega, t}} \frac{u(z)}{z^{\alpha + \beta}} \ue^{tz} \, \ud{z}
\]
 Here, $\sfrac{\pi}{2} < \omega <\omega'$ and   
the contour $\Gamma_{\omega,t}$ is the boundary of the region
$\sector{\omega} \ohne \{ \abs{z}\le t\}$.  
Hence, with a change of variable, 
\begin{align*}
 m(t/2) = & \; \frac{2^{\alpha + \beta}}{2\upi \ui} 
\int_{\Gamma_{\omega, t}} \frac{u(z) t^{\alpha + \beta-1}}{z^{\alpha + \beta}} \ue^{tz} \,\ud{z} \\
= & \; \frac{2^{\alpha + \beta}}{2\upi \ui} 
\int_{\Gamma_{\omega, 1}}  \frac{u(z/t)}{z^{\alpha + \beta}} \ue^{z} \,\ud{z},
\end{align*}
\end{minipage}
\begin{minipage}{0.6\linewidth}
\noindent \resizebox{.57\linewidth}{!}{
\begin{tikzpicture}
\draw[fill=gray!20]    decorate [decoration={random steps,segment length=2mm}] { (-2.5,3.75)-- (2,3.75)-- (2, -3.75)-- (-2.5,-3.75) }
                          [rounded corners=0.1ex]  -- (-2.5,-3.75)-- (0,0) -- cycle ;
\path(1, 2) node {{\Large $\sector{\omega'}$}} ;
\draw (0,0)--(-2.5, 3.75); 
\draw (0,0)--(-2.5,-3.75);
\draw[thick,dashed, ->] (-2,0)--(3,0);
\draw[thick,dashed, ->] (0,-4)--(0,4);
\draw[ thick,->] (-2, 4) -- (-1,2);
\draw[ thick,-] (-1,2)--(-.5, 1); 
\draw[ thick,->] (-.5,-1)--(-1,-2);
\draw[ thick,-] (-1,-2)--(-2,-4); 
\draw[ thick,domain=-116:116] plot ({sqrt(1.25)*cos(\x)}, {sqrt(1.25)*sin(\x)});
\path(-0.8,-3) node{{\Large $\Gamma_{\omega,t}$}} ;
\draw[dotted, thick,domain=0:sqrt(1.25)] plot ({0.86603*\x}, {.5*\x});
\path(.5, .5) node {{\Large $\varepsilon$}} ;
\end{tikzpicture}}
\end{minipage}

\noindent and this yields an estimate
\begin{align*}
  \norm{m_u}_{\Ell{\infty}(0, \infty)}  \lesssim \Big(\int_{\Gamma_{\omega,t}} \frac{e^{\re z}}{\abs{z}^{\alpha + \beta}} \abs{\ud{z}} \Big)
\,\,\norm{u}_{\Ha^\infty(\sector{\omega})}.
  \end{align*}
Combining these consideration with 
Theorem~\ref{acsf.t.nsfo} we obtain the following result.

\begin{thm}\label{appl.t.poisson-HG-sqf}
Let $A$ be a sectorial operator, with dense domain and range, 
of angle $\theta < \sfrac{\pi}{2}$ on a Banach space $X$ (of finite cotype).
Let $\alpha ,\, \beta> 0$ and suppose that the square function associated with 
$ \vphi_\alpha(tz)=  (zt)^\alpha \ue^{-tz}$ and the dual square function associated with 
$ \vphi_\beta(tz) = (zt)^\beta \ue^{-tz}$ are bounded operators. 
Then $A$ has a bounded (vectorial) $\Ha^\infty$-calculus on each sector 
$\sector{\omega'}$ with $\omega' > \sfrac{\pi}{2}$.
\end{thm}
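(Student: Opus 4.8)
The plan is to recognise the formal computation preceding the statement as an exact instance of the integral representation Theorem~\ref{acsf.t.nsfo}, and then to upgrade the resulting estimate on an enlarged sector to a genuine bounded (vectorial) $\Ha^\infty$-calculus via the convergence lemma. First I would fix $\omega' > \sfrac\pi2$, choose $\sfrac\pi2 < \omega < \omega'$, set $H$ an arbitrary Hilbert space, $\Omega := (0,\infty)$ with the measure $\sfrac{\ud t}t$, and $K := \Ell2^*(0,\infty)$. For a given $u \in \Ha^\infty(\sector{\omega'};H')$ I would justify the representation
\[
 u(z) = \int_0^\infty m_u(t)\, \vphi_\alpha(tz)\, \vphi_\beta(tz)\, \tfrac{\ud t}t
\]
with $m_u \in \Ell\infty((0,\infty);H')$ and $\norm{m_u}_{\Ell\infty(\Omega;H')} \lesssim C_{\omega,\alpha,\beta} \norm{u}_{\Ha^\infty(\sector\omega)}$, where $C_{\omega,\alpha,\beta} = \int_{\Gamma_{\omega,t}} \abs{z}^{-(\alpha+\beta)} \ue^{\re z}\, \abs{\ud z}$ is finite since $\alpha+\beta>0$ and the contour runs into the left half-plane. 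This requires checking that the complex inversion formula applies: that $z \mapsto u(z)/z^{\alpha+\beta}$ decays along $\Gamma_{\omega,t}$ fast enough (exponentially in the left part, polynomially near the arc) for the Laplace inversion to converge, and that shifting the contour from a vertical line into $\Gamma_{\omega,t}$ is legitimate because $u$ is holomorphic and bounded on the larger sector $\sector{\omega'}$. The $H'$-valued case reduces to the scalar one by pairing with $h\in H$, exactly as in the proof of Lemma~\ref{acsf.l.f(t,z)}.

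Next I would invoke Theorem~\ref{acsf.t.nsfo} with $f(t,z) := \vphi_\beta(tz)$, $g(t,z) := \vphi_\alpha(tz)$ (note both are in $\Ha^\infty(\sector{\omega''};K)$ for $\omega'' < \pi/2$ by the elementary decay estimate $\abs{(tz)^\gamma \ue^{-tz}} \lesssim_\gamma 1$ uniformly for $z$ in a fixed subsector of the right half-plane), and $m := m_u \in \Ell\infty(\Omega;H')$. The hypotheses (1) $\Phi_\gamma(g) \colon X \to \gamma(K;X)$ bounded and (2) $\Phi_{\gamma'}(f)\colon X' \to \gamma'(K;X')$ bounded are precisely the assumed square function estimate for $\vphi_\alpha(tz)$ and dual square function estimate for $\vphi_\beta(tz)$; and since $X$ has finite cotype the conclusion of Theorem~\ref{acsf.t.nsfo} gives $\Phi_\gamma(u)\colon X \to \gamma(H;X)$ bounded with
\[
 \norm{\Phi_\gamma(u)} \le c\, \norm{m_u}_{\Ell\infty(\Omega;H')}\, \norm{\Phi_{\gamma'}(f)}\, \norm{\Phi_\gamma(g)}
 \lesssim C_{\omega,\alpha,\beta}\, \norm{u}_{\Ha^\infty(\sector\omega)},
\]
where $c$ depends only on $q$ and $\ct_q(X)$. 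Taking $H = \C$ this in particular bounds $\norm{u(A)} \lesssim \norm{u}_{\Ha^\infty(\sector\omega)}$ for every $u \in \bigcup_{\omega'>\omega}\Ha^\infty(\sector{\omega'})$; I would record both the scalar and the $H'$-valued versions simultaneously.

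To pass from ``$\Ha^\infty$ on every strictly larger sector'' to ``$\Ha^\infty$ on $\sector{\omega'}$ itself'', I would apply the (vectorial) convergence lemma, Lemma~\ref{acsf.l.vcl}: given $u \in \Ha^\infty(\sector{\omega'};H')$, approximate it by the dilated functions $u_n(z) := u((1-\tfrac1n)z)$, which lie in $\Ha^\infty(\sector{\omega''};H')$ for $\omega'' = \omega'/(1-\tfrac1n) > \omega'$, satisfy $\sup_n \norm{u_n}_\infty \le \norm{u}_\infty$, converge pointwise (hence weakly) to $u$ on $\sector{\omega'}$, and — by the previous paragraph applied on a sector between $\omega'$ and $\omega''$ — have $\norm{\Phi_\gamma(u_n)} \lesssim \norm{u}_{\Ha^\infty(\sector{\omega'})}$ uniformly in $n$; finite cotype ensures $X$ does not contain $\co$, so Lemma~\ref{acsf.l.vcl} applies and yields $\Phi_\gamma(u) \in \Lin(X;\gamma(H;X))$ with the same bound. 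The scalar convergence lemma needed here holds for a sectorial operator with dense domain and range, which is exactly our standing hypothesis. The main obstacle I anticipate is the contour-shifting and convergence bookkeeping in establishing the Laplace representation with the uniform $\Ell\infty$-bound on $m_u$ — in particular handling the arc part of $\Gamma_{\omega,t}$ and making the $H'$-valued Bochner/Pettis integral rigorous — whereas the application of Theorem~\ref{acsf.t.nsfo} and the convergence-lemma step are essentially formal once that representation is in hand.
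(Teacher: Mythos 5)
Your first two steps are exactly the paper's argument: represent $u(z)/z^{\alpha+\beta}$ as a Laplace transform via the complex inversion formula along the keyhole contour $\Gamma_{\omega,t}=\rand\bigl(\sector{\omega}\ohne\{\abs{z}\le t\}\bigr)$ with $\sfrac{\pi}{2}<\omega<\omega'$, rescale to get $\norm{m_u}_{\Ell{\infty}}\lesssim\norm{u}_{\Ha^\infty(\sector{\omega})}$, and then feed $g=\vphi_\alpha(tz)$, $f=\vphi_\beta(tz)$, $m=m_u$ into Theorem~\ref{acsf.t.nsfo}. That is the whole proof in the paper, and your version of it is fine (including the observation that $\vphi_\alpha(t\cdot),\vphi_\beta(t\cdot)$ are bounded holomorphic only on subsectors of angle $<\sfrac{\pi}{2}$, so the calculus underlying Theorem~\ref{acsf.t.nsfo} is taken on such a subsector, while the representation and the $\Ell{\infty}$-bound use the angle $\omega>\sfrac{\pi}{2}$).

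Your third step, however, is both unnecessary and incorrect as written. It is unnecessary because for a \emph{fixed} $\omega'>\sfrac{\pi}{2}$ you may choose $\omega\in(\sfrac{\pi}{2},\omega')$, and then the representation with $\norm{m_u}_\infty\lesssim\norm{u}_{\Ha^\infty(\sector{\omega})}\le\norm{u}_{\Ha^\infty(\sector{\omega'})}$ holds for \emph{every} $u\in\Ha^\infty(\sector{\omega'};H')$ (the contour lies in $\cls{\sector{\omega}}\ohne\{0\}\subseteq\sector{\omega'}$), so Theorem~\ref{acsf.t.nsfo} already yields the bounded vectorial calculus on $\sector{\omega'}$ itself; there is no gap between ``strictly larger sectors'' and $\sector{\omega'}$ to be bridged. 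It is incorrect because sectors are cones: $u_n(z)=u\bigl((1-\tfrac1n)z\bigr)$ is holomorphic on exactly $\sector{\omega'}$ again, not on $\sector{\omega'/(1-\sfrac1n)}$ --- the dilation trick enlarges strips, not sectors (the sectorial analogue would be composition with fractional powers, $u_n(z)=u(z^{1-\sfrac1n})$). A convergence-lemma argument of the kind you describe is genuinely needed only if one wants to reach the borderline angle $\omega'=\sfrac{\pi}{2}$, which the paper treats in the remark following the theorem under the extra assumption $\alpha+\beta>1$ (so that the contour may be taken with $\omega=\sfrac{\pi}{2}$); it is not part of the statement you were asked to prove. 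So: delete step three (or replace the dilations by fractional powers if you insist on keeping an approximation argument), and your proof coincides with the paper's.
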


\begin{rem}
If $\alpha + \beta > 1$, then one can choose $\omega  = \sfrac{\pi}{2}$ in the complex inversion formula. 
Hence one obtains an estimate $\norm{m_u}_{\Ell{\infty}} \lesssim \norm{u}_{\Ha^\infty(\sector{\sfrac{\pi}{2}})}$ and then, by the convergence lemma, a
bounded $\Ha^\infty(\sector{\sfrac{\pi}{2}})$-calculus.
\end{rem}

It is an intriguing question under which conditions one can actually push the ``$\Ha^\infty$-angle'' 
(that is, the angle $\omega$ such that $A$ has a bounded (vectorial) $\Ha(\strip{\omega})$-calculus)
down below $\sfrac{\pi}{2}$.
To the best of
our knowledge, this requires using the concept of $R$-boundedness and
the multiplier theorem for $\gamma$-spaces. Recently
\cite{LeMerdy:sharp-equivalence}, Christian \LeMerdy\ has shown that if
$X$ has Pisier's property $(\alpha)$, then boundedness of the (dual)
square function associated with $\vphi_{\sfrac{1}{2}}(tz)=
(tz)^{\sfrac{1}{2}} \ue^{-tz}$ already suffices. Apart from a result
by Kalton and Weis involving $R$-boundedness, \LeMerdy\ needed to
``improve the exponent'', i.e., to pass from $\vphi_{\sfrac{1}{2}}$ to
$\vphi_{1}$ and even to $\vphi_{\sfrac{3}{2}}$. His clever argument,
carried out for $X$ being an $\Ell{p}$-space, can be covered by our
abstract theory.

\begin{lemma}[\LeMerdy] 
Suppose that $X$ is a Banach space with  Pisier's property $(\alpha)$, and let 
$A$ be a sectorial operator of angle $\theta < \sfrac{\pi}{2}$, with sectorial functional calculus $\Phi$.
Suppose that for given $\alpha, \beta > 0$ the square functions $\Phi_\gamma(\vphi_\alpha)$ and 
$\Phi_{\gamma}(\vphi_\beta)$ are bounded operators. Then $\Phi_\gamma(\vphi_{\alpha + \beta})$ is bounded, too.
\end{lemma}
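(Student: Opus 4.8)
The plan is to manufacture $\vphi_{\alpha+\beta}$ out of the two given (primal) square function estimates by forming a \emph{tensor product square function} --- which is exactly where property $(\alpha)$ enters --- and then to exhibit $\vphi_{\alpha+\beta}$ as subordinate to $\vphi_\alpha\tensor\vphi_\beta$ through a single, explicitly chosen bounded operator between the underlying Hilbert spaces.

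First I would fix $\omega$ with $\theta < \omega < \sfrac{\pi}{2}$ and set $H=K=\Ell{2}^*(0,\infty)$. For $\delta>0$ the map $\vphi_\delta: z\mapsto(t\mapsto(tz)^\delta\ue^{-tz})$ satisfies $\int_0^\infty\abs{(tz)^\delta\ue^{-tz}}^2\,\tfrac{\ud t}{t}=\Gamma(2\delta)\big(\abs{z}/(2\re z)\big)^{2\delta}$, which is bounded on $\sector{\omega}$ because $\abs{z}\le(\cos\omega)^{-1}\re z$ there; by Lemma~\ref{acsf.l.f(t,z)} this gives $\vphi_\delta\in\Ha^\infty(\sector{\omega};\Ell{2}^*(0,\infty))$, and we identify $\Ell{2}^*(0,\infty)$ with its dual via the pairing \eqref{gamma.eq.L2-dual}. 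By hypothesis $\Phi_\gamma(\vphi_\alpha):X\to\gamma(H;X)$ and $\Phi_\gamma(\vphi_\beta):X\to\gamma(K;X)$ are bounded, so Lemma~\ref{sfno.l.tensor} (using property $(\alpha)$) yields that $\Phi_\gamma(\vphi_\alpha\tensor\vphi_\beta):X\to\gamma(H\tensor K;X)$ is bounded, where under the canonical identification $H\tensor K=\Ell{2}\big((0,\infty)^2;\tfrac{\ud s}{s}\tfrac{\ud t}{t}\big)$ one has $(\vphi_\alpha\tensor\vphi_\beta)(z)=\big((s,t)\mapsto(sz)^\alpha(tz)^\beta\ue^{-(s+t)z}\big)$.

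Next I would introduce the operator $T:H\to H\tensor K$,
\[
 (Th)(s,t):=\frac{1}{\mathrm{B}(2\alpha,2\beta)}\,h(s+t)\,\Big(\frac{s}{s+t}\Big)^{\!\alpha}\Big(\frac{t}{s+t}\Big)^{\!\beta}.
\]
The change of variables $(s,t)=(u\sigma,u(1-\sigma))$, $(u,\sigma)\in(0,\infty)\times(0,1)$, is a measure preserving bijection carrying $\tfrac{\ud u}{u}\,\tfrac{\ud\sigma}{\sigma(1-\sigma)}$ onto $\tfrac{\ud s}{s}\tfrac{\ud t}{t}$; with it one computes $\norm{Th}_{H\tensor K}^2=\mathrm{B}(2\alpha,2\beta)^{-1}\norm{h}_H^2$, the point being that $\sigma\mapsto\sigma^\alpha(1-\sigma)^\beta$ lies in $\Ell{2}\big((0,1);\tfrac{\ud\sigma}{\sigma(1-\sigma)}\big)$ precisely because $\alpha,\beta>0$. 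So $T$ is bounded. The same substitution turns $(sz)^\alpha(tz)^\beta\ue^{-(s+t)z}$ into $(uz)^{\alpha+\beta}\sigma^\alpha(1-\sigma)^\beta\ue^{-uz}$, whence for all $h\in H$ and $z\in\sector{\omega}$
\[
 \dprod{Th}{(\vphi_\alpha\tensor\vphi_\beta)(z)}=\Big(\int_0^\infty h(u)(uz)^{\alpha+\beta}\ue^{-uz}\,\tfrac{\ud u}{u}\Big)\frac{1}{\mathrm{B}(2\alpha,2\beta)}\int_0^1\sigma^{2\alpha-1}(1-\sigma)^{2\beta-1}\,\ud\sigma=\dprod{h}{\vphi_{\alpha+\beta}(z)},
\]
that is, $T'\nach(\vphi_\alpha\tensor\vphi_\beta)=\vphi_{\alpha+\beta}$. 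By Theorem~\ref{sfno.t:subord}\,a) this gives $\Phi_\gamma(\vphi_{\alpha+\beta})\subord\Phi_\gamma(\vphi_\alpha\tensor\vphi_\beta)$, and since the latter is a bounded square function, so is $\Phi_\gamma(\vphi_{\alpha+\beta})$, with $\norm{\Phi_\gamma(\vphi_{\alpha+\beta})x}_\gamma\le\norm{T}\,\norm{\Phi_\gamma(\vphi_\alpha\tensor\vphi_\beta)}\,\norm{x}$ by the ideal property.

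The only genuinely non-routine ingredient is the choice of $T$: the naive candidate, which averages $h$ over the antidiagonals $\{s+t=u\}$ (a kernel carried by a hypersurface), is \emph{not} bounded on $\Ell{2}$; LeMerdy's device, recast here, is to redistribute the mass with the Beta density $\sigma^\alpha(1-\sigma)^\beta$, which lands in the correctly weighted $\Ell{2}$-space exactly when $\alpha,\beta>0$ and whose normalisation is cancelled by the very Beta integral produced by the substitution. The remaining points --- holomorphy and uniform boundedness of the $\Ell{2}^*(0,\infty)$-valued maps, that $\vphi_\alpha\tensor\vphi_\beta\in\Ha^\infty(\sector{\omega};(H\tensor K)')$ (automatic for $\Ha^\infty$ over a sector, see Section~\ref{sfno.s.tensor}), and the bookkeeping between $H$ and $H'$ under the bilinear $\Ell{2}$-pairings --- are routine.
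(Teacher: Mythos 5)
Your proof is correct and follows essentially the same route as the paper: boundedness of the tensor square function $\Phi_\gamma(\vphi_\alpha\tensor\vphi_\beta)$ via Lemma~\ref{sfno.l.tensor} (property $(\alpha)$), followed by subordination through an explicit bounded operator $H\to H\tensor K$ whose adjoint collapses $\vphi_\alpha\tensor\vphi_\beta$ onto $\vphi_{\alpha+\beta}$ via the Beta integral. The only (cosmetic) difference is that the paper first passes to Lebesgue $\Ell{2}(\R_+)$ and uses the fixed isometry $(Tf)(s,t)=(s+t)^{-\einhalb}f(s+t)$, with the Beta constant emerging from the adjoint (convolution) computation, whereas you stay in $\Ell{2}^*(0,\infty)$ and build the Beta density into the operator itself.
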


\begin{proof}
  The proof relies on the tensor product square function and
  subordination.  We abbreviate $H = \Ell{2}(\R_+)$.  Since $X$ has
  property $(\alpha)$, Lemma~\ref{sfno.l.tensor} shows that the
  function
\[ 
   (\vphi_\alpha \tensor \vphi_\beta)(s,t,z) = s^{\alpha} t^{\beta} z^{\alpha + \beta} \ue^{-(t+s)z}
\]
yields a bounded square function on $\Ell{2}^*(0, \infty) \otimes
\Ell{2}^*(0, \infty)$.
Equivalently, the function
\[
  (f_\alpha \tensor f_\beta)(s,t,z) = s^{\alpha-\einhalb} t^{\beta-\einhalb} z^{\alpha + \beta} \ue^{-(t+s)z}
\]
yields a bounded square function  on $H \otimes H$,
 where we have put $f_\alpha(t, z) := t^{\alpha-\einhalb}
z^\alpha \ue^{-tz}$.  

\noindent Next, observe that $T: H \to H \otimes H$
defined by $(T f) (s, t) = (t{+}s)^{-\einhalb} f(t{+}s)$ is
isometric. Indeed,
\[
\int_0^\infty \int_0^\infty \tfrac{ |f(t{+}s)|^2 }{t+s}\,\ud{t}\ud{s}
= 
\int_0^\infty \int_s^\infty \tfrac{ |f(t)|^2 }{t}\,\ud{t}\ud{s}
=
\int_0^\infty  |f(t)|^2  \Bigl( \tfrac{1}{t} \!\int_0^t \,\ud{s}\Bigr)\ud{t}.
\]
Therefore, $T^* T = \text{Id}_{H}$.  As a consequence, $\Phi_\gamma(
f) \in \Lin(X; \gamma(H; X))$ if and only if $\Phi_\gamma( T\circ f)
\in \Lin(X; \gamma(H{\otimes}H; X))$. Now,
\begin{align*}
  T^*(f_\alpha \otimes f_\beta )(t,s, z) 
& =\; \tfrac1{\sqrt{t}} \int_0^t  f_\alpha(t-s, z) f_\beta(s, z)\,ds \\
& =\;  c_{\alpha, \beta} \; t^{\alpha+\beta-\einhalb} z^{\alpha + \beta}  \ue^{-tz},
\end{align*}
and this  concludes the proof.
\end{proof}

\begin{rem}
Passing from $\Ell{2}^*(0, \infty)$ to $\Ell{2}(\R_+)$ and then to 
$\Ell{2}(\R)$ via the  Fourier transform, one has
\[ 
\vphi_{\sfrac{1}{2}}(tz) = (tz)^{\sfrac{1}{2}}\ue^{-tz} \quad \text{on 
$\Ell{2}^*(0, \infty)$} \quad
\strongeq \quad 
\frac{z^{\sfrac{1}{2}}}{z + \ui s}
\quad \text{on  $\Ell{2}(\R)$}
\]
These  square functions --- in the form $A^\einhalb R(\ui s, A)x$ and 
$(A')^\einhalb R(\ui s, A')x'$ ---
 were considered by Kalton and Weis in \cite[Theorem~7.2]{KalWei2004}.
\end{rem}

\subsection{Franks--McIntosh representation}\label{ss:McIntosh--Franks-repres}

In \cite{FraMcI1998} Franks and McIntosh prove the following result: 
Given  $\theta\in (0,
\pi)$ there exist sequences $(f_n)_n, (g_n)_n$ in
$\Ha^\infty(\sector{\theta})$ such that
\begin{aufzi}
\item\label{item:FrMcI:a} $\sup_{z\in \sector{\theta}} \sum_n
  |f_n(z)|+|g_n(z)| \le C$, 
  \item\label{item:FrMcI:b} Any $\phi \in  \Ha^\infty(\sector{\theta}; X)$
    decomposes as $\phi(z) = \sum_n a_n f_n(z) g_n(z)$ with
    coefficients $a_n \in X$ satisfying $\norm{a_n}\lesssim \norm{\phi}_{\infty}$.
\end{aufzi}
The decomposition~\ref{item:FrMcI:b} is an instance of our
representation formula \eqref{acsf.e.nsfo} for $K = \ell_2$. 
Condition~\ref{item:FrMcI:a} tells --- in our terminology ---
that the $\ell_2$-valued $\Ha^\infty$-functions 
$F(z) =(f_n(z))_n$ and $G(z) = (g_n(z))_n$ have \ELLONE-frame-bounded range.

\medskip
In \cite{LeMerdy:sqf-Ritt} \LeMerdy\ employs this representation to prove that
on a space $X$ of finite cotype each 
sectorial operator with a bounded $\Ha^\infty$-calculus
on a sector has bounded vectorial (``quadratic'') $\Ha^\infty$-calculus
on each larger sector, i.e., the sectorial equivalent to 
our Theorem~\ref{exas.t:bdd-vectorial-calc}, cf.~Remark~\ref{bdd-vect-calc.r.lemerdy}.

\subsection{Singular Cauchy representation}\label{ss:sing-int-repres}

All the results in this chapter so far were applications of
Theorem~\ref{acsf.t.nsfo}, that is, they infer a bounded (vectorial)
$\Ha^\infty$-calculus from bounded square and dual square
functions. In the present section, however, we shall treat an
application of Lemma~\ref{nsfo.l.pushing-through}. That is, we want to
infer bounded $\Ha^\infty$-calculus from upper and lower square
function estimates. We discuss an example due to Kalton and Weis
\cite{KalWei2004}, see also \cite[Theorem~10.9]{KunWei2004}.

\medskip
\noindent
Let $A$ be a densely defined operator of strip type $\omega_0 \ge 0$ on a Banach space $X$.
We fix $\omega > \omega_0$ and let 
$\Gamma_\omega = \rand\strip{\omega} = (\ui \omega + \R) \cup (-\ui \omega + \R)$ 
with arc length measure,  let $H := \Ell{2}(\Gamma_\omega)$ and consider the $H$-valued function
$g(\lambda, z) := \frac{1}{\lambda - z}$. Under the canonical isomorphism $H \cong \Ell{2}(\R) \oplus \Ell{2}(\R)$,
the function $g$ is strongly equivalent with the {\em pair} $(\pm\ui \omega +s - z)^{-1}$ of shift-type
square functions, which --- as demonstrated in Section~\ref{acsf.exa.shift-sf} --- is again strongly equivalent
with the weighted group orbit square function associated with 
$\ue^{-\ui sz}/\cosh(\omega s)$. Our aim is to prove the following remarkable result
of Kalton and Weis \cite[Theorem~6.2]{KalWei2004}.

\begin{thm}\label{cor:von-Resolventen-sqf-zu-Hinfty-calc}
Let $A$ be a densely defined 
operator of  strip type $\omega_0$ and let $\omega>\omega_0$.
Suppose that
  \[
\norm{R(\lambda,A)x}_{\gamma(\Ell{2}(\Gamma_\omega);X)} 
\quad\sim\quad
  \norm{x} \qquad (x \in X).
\]
Then $A$ has a bounded
$\Ha^\infty(\strip{\omega})$-calculus.
\end{thm}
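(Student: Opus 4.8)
The plan is to apply Lemma~\ref{nsfo.l.pushing-through} with the roles of $g$ and $\tilde g$ both played by the resolvent-type square function, thereby ``pushing an $\Ha^\infty$-function through the square function''. Concretely, set $H = K = \Ell{2}(\Gamma_\omega)$ and let $g(\lambda,z) = \tilde g(\lambda, z) = (\lambda - z)^{-1}$, so that $\Phi_\gamma(g) = \Phi_\gamma(\tilde g)$ is the square function $x \mapsto \bigl(h \mapsto \int_{\Gamma_\omega} h(\lambda) R(\lambda,A)x\, \ud\lambda\bigr)$. The hypothesis gives both a square function estimate $\norm{\Phi_\gamma(g)x}_\gamma \lesssim \norm{x}$ and a lower square function estimate $\norm{x} \lesssim \norm{\Phi_\gamma(g)x}_\gamma$ for this one square function simultaneously. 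To invoke Lemma~\ref{nsfo.l.pushing-through} for a given $f \in \Ha^\infty(\strip{\omega})$ I need a bounded operator $T_f \in \Lin(H;K)$ with $f \cdot g = T_f' \nach \tilde g$, i.e.
\[
 \frac{f(z)}{\lambda - z} = (T_f' [(\mu-z)^{-1}]_\mu)(\lambda) \qquad (z \in \strip{\omega},\ \lambda \in \Gamma_\omega).
\]

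The natural candidate for $T_f$ (or rather its adjoint acting on functions of the contour variable) is the singular-integral operator built from the Cauchy kernel on $\Gamma_\omega$. The first real step is therefore a purely complex-analytic one: show that for $f \in \Ha^\infty(\strip{\omega})$ (with, say, a little decay, to be removed later by an approximation argument) one has the Cauchy-type formula
\[
 \frac{f(z)}{\lambda - z}
 = \frac{1}{2\pi\ui}\int_{\Gamma_\omega} \frac{f(\mu)}{(\mu - z)(\lambda - \mu)}\,\ud\mu
\]
for $z$ in a slightly smaller strip and $\lambda \in \Gamma_\omega$, using that $z$ lies inside $\strip{\omega}$ while $\lambda$ lies on its boundary, plus a residue/contour-deformation computation (the $\mu = z$ pole contributes $f(z)/(\lambda-z)$, the boundary integral over the far sides vanishes by decay). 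This identity exhibits $f \cdot g$ as $T_f' \nach \tilde g$ with $T_f'$ the bounded singular integral operator on $\Ell{2}(\Gamma_\omega)$ whose kernel is $\frac{1}{2\pi\ui}\frac{f(\mu)}{\lambda - \mu}$ — essentially $f(\mu)$ times the Cauchy (Hilbert-transform-type) kernel, hence bounded on $\Ell{2}(\Gamma_\omega)$ with $\norm{T_f} \lesssim \norm{f}_{\Ha^\infty(\strip{\omega})}$ by the $\Ell{2}$-boundedness of the Hilbert transform (or by a direct Schur-test type estimate exploiting the geometry of the two parallel lines making up $\Gamma_\omega$). With $T_f$ in hand, Lemma~\ref{nsfo.l.pushing-through} yields
\[
 \norm{f(A)x} \lesssim \norm{T_f}\,\norm{\Phi_\gamma(\tilde g)}\,\norm{x} \lesssim \norm{f}_{\Ha^\infty(\strip{\omega})}\,\norm{x}
\]
for all $x \in \dom(f(A))$, and in particular $f(A)$ is bounded with the stated norm bound.

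Finally I would remove the auxiliary decay assumption on $f$: given general $f \in \Ha^\infty(\strip{\omega})$ approximate it by $f_n(z) := f(z)\,e_n(z)$ for a suitable bounded sequence of regularising elementary functions $e_n \in \calE(\strip{\omega})$ with $e_n \to 1$ pointwise and $\norm{e_n}_{\Ha^\infty}$ bounded (for instance $e_n(z) = (1 + \ui z/n)^{-1}(1 - \ui z/n)^{-1}$ or the standard Widder-type regularisers). The previous paragraph applies to each $f_n$, and since $A$ is densely defined the scalar convergence lemma (valid for strip type operators, \cite[Section 5.1]{HaaseFC}) gives $f(A) \in \Lin(X)$ with $\norm{f(A)} \le \liminf_n \norm{f_n(A)} \lesssim \norm{f}_{\Ha^\infty(\strip{\omega})}$, i.e.\ the bounded $\Ha^\infty(\strip{\omega})$-calculus. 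The main obstacle I anticipate is the complex-analytic bookkeeping in the first step: one must justify the contour manipulation carefully (decay at infinity on the horizontal ends of $\Gamma_\omega$, the principal-value nature of the integral once $\lambda$ lies on the contour and not strictly outside it) and then identify the resulting operator precisely enough to read off its $\Ell{2}(\Gamma_\omega)$-boundedness — this is exactly the ``singular Cauchy'' part that gives the section its name, and it is where one pays for the strictness of the inequality $\omega > \omega_0$ being unavailable (the square function estimate is on the \emph{same} strip $\strip{\omega}$, so there is no room to shrink).
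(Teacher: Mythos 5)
Your overall strategy is exactly the paper's: run the resolvent square function through Lemma~\ref{nsfo.l.pushing-through} with $g=\tilde g=(\lambda-z)^{-1}$ on $\Gamma_\omega$ and realise $f\cdot g=T_f'\circ g$ via a singular Cauchy integral. However, two steps as you state them do not hold. First, the displayed identity is false: because the pole $\mu=\lambda$ lies \emph{on} the contour, the principal value contributes only half of its residue (Plemelj jump), so the correct formula is
\[
\frac{f(z)}{\lambda-z}\;=\;\frac{f(\lambda)}{2(\lambda-z)}\;+\;\frac{1}{2\pi\ui}\,\mathrm{p.v.}\!\!\int_{\Gamma_\omega}\frac{f(w)}{(\lambda-w)(w-z)}\,\ud{w},
\]
and accordingly $T_f$ must contain the extra summand $h\mapsto\tfrac12 f(\lambda)h(\lambda)$ in addition to the singular integral term. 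This correction is harmless for the norm bound ($T_f$ is still multiplication by $f$ combined with the $\Ell{2}$-bounded Cauchy transform, so $\norm{T_f}\lesssim\norm{f}_{\Ha^\infty}$), but the identity you propose to prove is not the true one.

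More seriously, the contour computation -- evaluating $f$ at points $\lambda\in\Gamma_\omega$, the half-residue at $w=\lambda$, the deformation around the $\varepsilon$-bump -- requires $f$ to be holomorphic in a neighbourhood of $\Gamma_\omega$, i.e.\ $f\in\Ha^\infty(\strip{\omega'})$ for some $\omega'>\omega$. For $f$ only in $\Ha^\infty(\strip{\omega})$, even with decay, the boundary values $f(\lambda)$ and the Plemelj argument are not available as stated, and your proposed repair, multiplying by regularisers $e_n\in\calE(\strip{\omega})$, does not fix this: $fe_n$ still lives only on the open strip $\strip{\omega}$ and is no more holomorphic across $\Gamma_\omega$ than $f$ itself. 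The paper resolves the issue precisely where you claim ``there is no room to shrink'': the square-function hypothesis sits on the fixed contour $\Gamma_\omega$, the singular Cauchy identity is proved for $f\in\Ha^\infty(\strip{\omega'})$ with $\omega'>\omega$ (and with $z$ restricted to a smaller strip $\strip{\alpha}$, $\omega_0<\alpha<\omega$), giving $\norm{f(A)}\le C\norm{f}_{\Ha^\infty(\strip{\omega})}$ with $C$ independent of $\omega'$; only then does the scalar convergence lemma -- which you invoke anyway, just for a different purpose -- bring the estimate down to all of $\Ha^\infty(\strip{\omega})$ (approximate $f$ for instance by $f_n(z)=f(s_nz)$ with $s_n\uparrow1$). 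With these two corrections your argument coincides with the paper's proof.
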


\begin{proof}
Let $0\le \omega_0 < \alpha < \omega < \omega'$ and let $f \in
\Ha^\infty(\strip{\omega'})$ and $\lambda \in \Gamma_\omega$.  Let
$\Gamma_{\varepsilon, \lambda}  = 
\partial (\strip{\omega}\cup B(\lambda, \varepsilon))$, oriented positively.  

\noindent \resizebox{\linewidth}{!}{
\begin{tikzpicture} 
\draw[fill=gray!20] decorate [decoration={random steps,segment length=2mm}] { (-6,1)--(-6,-1) }
                             [rounded corners=0.1ex]  -- (0,-1) -- (0,1) -- (-6,1) --cycle ;
\draw[fill=gray!20] decorate [decoration={random steps,segment length=2mm}] { (6,1)--(6,-1) }
                             [rounded corners=0.1ex]  -- (0,-1) -- (0,1) -- (6,1) --cycle ;
\path(4.5, .4) node {{\Large $\strip{\alpha} $}} ;
\draw[thick,dashed,->] (-7,0)--(7,0);
\draw[gray!20, thick, -] (0,-1)--(0,1); 
\draw[thick,dashed, ->] (0,-2)--(0,2.5);
\draw[thick, ->] (-5,-1.5)--(-1, -1.5); 
\draw[thick, -] (-1,-1.5)--(5, -1.5);   
\draw[thick, -] (-5, 1.5)--(-0.5,1.5);  
\draw[thick, <-] (-0.5,1.5)--(1,1.5);   
\draw[thick, -] (3, 1.5)--(5, 1.5);   
\draw[thick, domain=0:180] plot ({2+cos(\x)}, {1.5 + sin(\x)});
\path(-2.3, 1.8) node {{\Large $\Gamma_{\varepsilon, \lambda}$}} ;
\draw [ultra thick] (2,1.5) circle [radius=0.02];
\draw [ultra thick] (2,1.5) circle [radius=0.01];
\path(2.2, 1.3) node {{\Large $\lambda$}} ;
\draw[dotted, thick,domain=0:1] plot ({2-0.86603*\x}, {1.5+.5*\x});
\path(1.7, 1.9) node {{\Large $\varepsilon$}} ;
\draw [ultra thick] (-1.2, .7) circle [radius=0.02];
\draw [ultra thick] (-1.2, .7) circle [radius=0.01];
\path(-1, .7) node {{\Large $z$}} ;
\end{tikzpicture}
}

Then for $z\in \strip{\alpha}$
\begin{align*}
\frac{f(w)}{(w-z) (\lambda - z)} = \frac{f(w)}{(w- \lambda) (\lambda - z)}
- \frac{f(w)}{(w - \lambda ) (w - z)}.
\end{align*}
Integrating this with respect to $w$ over $\{ w\in \Gamma_{\veps, \lambda}, \abs{w}
\le r\}$ and letting $r \to \infty$ yields
\[ \frac{ f(z)}{ \lambda - z} = \frac{f(\lambda)}{\lambda - z}
-  \frac{1}{2\upi \ui}
\int_{\Gamma_{\veps, \lambda}}  \frac{f(w)}{(w - \lambda )(w - z)}\, \ud{w}.
\]
By the fractional Cauchy theorem,
the limit as $\veps \to 0$ of the integral over the half circle 
avoiding $\lambda \in \Gamma_\omega$ at distance $\veps$ is 
\[  \frac{1}{2\upi\ui} \cdot (\ui \pi) \frac{f(\lambda)}{\lambda - z}
= \frac{1}{2} \frac{f(\lambda)}{\lambda - z}.
\]
Hence, as $\veps \to 0$ we obtain
\begin{align*}
\frac{ f(z)}{ \lambda - z}
& = \frac{f(\lambda)}{\lambda - z} - 
 \frac{1}{2\upi \ui}\text{p.v.}\!\!\int_{\Gamma_\omega}   
\frac{f(z)}{(w - \lambda )(w - z)}\, \ud{w} -  
\frac{ f(\lambda)}{ 2(\lambda - z) }
\\ & = \frac{f(\lambda)}{2(\lambda - z)} + 
\frac{1}{2\upi \ui}  \text{p.v.}\!\!\int_{\Gamma_\omega}   
\frac{f(w)}{(\lambda - w)(w - z)}\, \ud{w}
\end{align*}
Let $T_f: \Ell{2}(\Gamma_\omega) \to \Ell{2}(\Gamma_\omega)$ 
be defined by 
\[ (T_fh)(\lambda) := \frac{f(\lambda)}{2} h(\lambda) 
+ \frac{1}{2\upi \ui}  \text{p.v.}\!\!\int_{\Gamma_\omega}   
\frac{f(w)}{(\lambda - w)}\,h(w) \ud{w} \qquad (\lambda \in \Gamma_\omega).
\]
Note that $T_f$ is bounded by a constant times 
$\norm{f}_{\Ha^\infty(\strip{\omega})}$:
the first summand is simply multiplication by $\tfrac{1}{2} f$
and the second is multiplication with $f$ composed with 
convolution with $1/w$.  

\noindent
Now, by the computations above, we have  
\[   f(z) g(\lambda, z) =  \big(T_f( g(\cdot, z)\big))(\lambda) \qquad (z\in \strip{\alpha},
\lambda \in \Gamma_\omega).
\]
Viewing $g$ as a function  in $\Ha^\infty(\strip{\alpha}; 
\Ell{2}(\Gamma_\omega))$ we hence have
\[ f \cdot g = T_f \nach g
\]
as in the hypotheses of Lemma~\ref{nsfo.l.pushing-through}.
(Note that we as usual identify $\Ell{2}(\Gamma_\omega) = \Ell{2}(\Gamma_\omega)'$ here.) We hence obtain a constant $C\ge 0$ independent of $\omega' > \omega$
such that
\[ \norm{f(A)} \le C \norm{f}_{\Ha^\infty(\strip{\omega})} 
\quad \text{for all}\, f\in  \Ha^\infty(\strip{\omega'}).
\]
The claim now follows from the scalar convergence lemma
\cite[Section 5.1]{HaaseFC}. 
\end{proof}

\appendix

\section{The contraction principle for Gaussian sums}
\label{app.contr}

The aim of this section is to give a complete and concise proof of the
following fundamental result. We work over the scalar field $\K \in
\{\R, \C\}$.

\begin{thm}[Contraction Principle]\label{gamma.t.contr}
  Let $\gamma_1, \gamma_2, \dots$ be independent scalar standard
  Gaussians on some probability space, let $X$ be a Banach space,
  $x_1, \dots, x_m \in X$ and let $A = (a_{kj})_{kj}$ be a scalar
  $n\times m$-matrix. Then
\[
\Exp\norm{ {\sum}_{k=1}^n {\sum}_{j=1}^m \gamma_k a_{kj}x_j}_X^2
\le \norm{A}^2 \Exp\norm{ {\sum}_{j=1}^m \gamma_j x_j}_X^2,
\]
where the matrix $A$ is considered as an operator $A: \ell_2^m \to \ell_2^n$.
\end{thm}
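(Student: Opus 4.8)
The plan is to reduce the matrix inequality to the special case of a diagonal matrix by means of the singular value decomposition, and to treat the diagonal case by the classical one-dimensional contraction/comparison principle for Gaussians applied conditionally. First I would write $A = V D W$ with $W : \ell_2^m \to \ell_2^m$ and $V : \ell_2^n \to \ell_2^n$ unitary (or orthogonal, in the real case) and $D$ the $n \times m$ matrix carrying the singular values $s_1, \dots, s_r$ of $A$ on its diagonal, where $r = \min\{m,n\}$; note $\norm{A} = \max_i s_i =: s$. Since $(\gamma_1, \dots, \gamma_m)$ and $(\gamma_1, \dots, \gamma_n)$ are rotation-invariant in distribution (this is the crucial Gaussian-specific fact: the law of a standard Gaussian vector is invariant under orthogonal, resp.\ unitary, transformations), applying $W$ to the input vectors and $V^{-1}$ to the output index set does not change either side of the desired inequality. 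Concretely, $\Exp\norm{\sum_k \sum_j \gamma_k a_{kj} x_j}^2$ equals $\Exp\norm{\sum_i \gamma_i s_i x_i'}^2$ where the $x_i'$ are suitable linear combinations of the $x_j$ coming from $W$, and simultaneously $\Exp\norm{\sum_j \gamma_j x_j}^2 = \Exp\norm{\sum_i \gamma_i x_i'}^2 + (\text{extra nonnegative terms if }m>r)$; one has to bookkeep the dimensions carefully so the claim becomes simply
\[
\Exp\Bignorm{ {\sum}_{i=1}^r \gamma_i s_i x_i'}^2 \le s^2 \, \Exp\Bignorm{ {\sum}_{i=1}^r \gamma_i x_i'}^2 + s^2(\text{nonneg.}),
\]
which would follow from the scalar contraction principle with $\abs{s_i/s} \le 1$.

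The remaining diagonal step is the classical statement that if $\abs{t_i} \le 1$ then $\Exp\norm{\sum_i \gamma_i t_i y_i}^2 \le \Exp\norm{\sum_i \gamma_i y_i}^2$ for any vectors $y_i$ in a Banach space. I would prove this by first reducing to $t_i \in [0,1]$ real (in the complex case, write $t_i = \abs{t_i} u_i$ with $\abs{u_i}=1$ and absorb the phase $u_i$ into $y_i$, using rotation invariance of each individual complex Gaussian $\gamma_i$), and then by the standard "Gaussian replacement" argument: it suffices to change one coordinate at a time, so one compares $\Exp\norm{t y_1 \gamma_1 + Z}^2$ with $\Exp\norm{y_1 \gamma_1 + Z}^2$ where $Z$ is independent of $\gamma_1$; the map $t \mapsto \Exp\norm{t y_1 \gamma_1 + Z}^2$ is convex in $t$ and even, hence maximised on $[-1,1]$ at the endpoints. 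Alternatively, and perhaps cleanly, one can use that a sub-convex-combination of the Gaussian can be written as $t\gamma_1 \stackrel{d}{=} $ the conditional expectation of $\gamma_1$ given a coarser $\sigma$-algebra scaled appropriately — but the convexity argument is the most elementary and self-contained, requiring only Jensen's inequality and the symmetry of the Gaussian law.

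The main obstacle I anticipate is the dimension bookkeeping in the SVD reduction: when $m \ne n$ the matrices $V, W$ act on spaces of different sizes, $D$ is rectangular, and one must make sure that after the orthogonal changes of variable the "missing" indices contribute nonnegative terms to the right-hand side and can simply be dropped — this uses that $\Exp\norm{\sum_{i} \gamma_i y_i}^2$ is nondecreasing under adjoining extra summands, which is itself an instance of the contraction principle (with a $0/1$ diagonal) and can be proved first, or folded into the same induction. A secondary technical point is justifying the convexity of $t \mapsto \Exp\norm{ty + Z}^2$: this is immediate since $t \mapsto \norm{ty+z}$ is convex for each fixed $z$, $t\mapsto\norm{ty+z}^2$ is convex as the square of a nonnegative convex function, and expectation preserves convexity — all routine. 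With those two lemmas in hand the theorem follows by combining the SVD reduction with the one-coordinate-at-a-time comparison.
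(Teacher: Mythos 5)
Your proposal is correct, but it takes a genuinely different route from the paper. The paper first pads $A$ with zero rows or columns to reduce to a square matrix, then proves a small algebraic lemma: every contraction on $\K^d$ is a convex combination of at most $d$ isometries (polar decomposition, spectral theorem to reduce to a diagonal contraction, the diagonal written as a convex combination of coordinate projections, and each projection written as $\tfrac12 \Id + \tfrac12(2P-\Id)$ with $2P-\Id$ unitary); convexity of $\norm{\cdot}^2$ and Jensen then reduce the claim to $A$ unitary, which is handled by rotation invariance of the standard Gaussian vector. You instead factor $A=VDW$ by the singular value decomposition, use the same unitary invariance of the Gaussian vector to strip off $V$ and $W$ on the two sides, and settle the remaining diagonal case by the classical one-coordinate-at-a-time argument: $t\mapsto \Exp\norm{t\gamma_1 y_1 + Z}^2$ is convex and even, hence maximised at $t=\pm1$ on $[-1,1]$, with phases absorbed by rotation invariance of each complex Gaussian and the $m\neq n$ bookkeeping handled by the $0/1$-diagonal (monotonicity) instance of the same lemma — and you correctly flag that the "extra nonnegative terms" heuristic must be replaced by exactly this monotonicity statement, since $\Exp\norm{\cdot}^2$ does not split additively in a general Banach space. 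Both proofs use unitary invariance of the Gaussian vector as the sole probabilistic ingredient and both use convexity, but they deploy it differently: the paper's convexity acts in the matrix variable via the convex-combination-of-isometries lemma (a tidy fact of independent interest that avoids any coordinate induction), while yours acts in a scalar variable, which is more elementary and self-contained and handles rectangular matrices directly without padding.
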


The proof proceeds in three steps. In the first step one reduces the
problem to the case that $n = m$. If $m > n$ one just extends $A$ to
an $m \times m$-matrix by adding $0$-rows. If $m < n$ one extends $A$
to an $n \times n$-matrix by adding $0$-columns, and defines $x_j :=
0$ for $m < j \le n$.

Now, if $m=n$ we may suppose by scaling that $A$ is a
contraction. Then the following lemma reduces the claim to $A$ being
an isometry.

\begin{lemma}
  Every contraction on the Euclidean space $\K^d$ is a convex combination of
  at most $d$ isometries.
\end{lemma}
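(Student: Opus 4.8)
The statement is a classical fact about the singular value decomposition combined with a Carathéodory-type argument, and I would structure the proof in two movements: first a reduction to unitaries on the complex side (or orthogonal matrices in the real case), then an extreme-point argument. So let $C$ be a contraction on $\K^d$, $\norm{C} \le 1$. Using the singular value decomposition, write $C = V \Sigma W^*$ with $V, W$ unitary and $\Sigma = \diag(\sigma_1, \dots, \sigma_d)$, $0 \le \sigma_j \le 1$. Since the isometries of $\K^d$ form a group that is invariant under left and right multiplication by unitaries, it suffices to show that the \emph{diagonal} contraction $\Sigma$ is a convex combination of at most $d$ isometries: if $\Sigma = \sum_{i} \lambda_i U_i$ with $U_i$ unitary then $C = \sum_i \lambda_i (V U_i W^*)$ with $V U_i W^*$ unitary.

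\textbf{Reducing to the diagonal case.} Now I would handle $\Sigma$ itself. Each entry $\sigma_j \in [0,1]$ can be written as $\sigma_j = \cos\theta_j$ for some $\theta_j \in [0, \pihalbe]$. The natural idea in the complex case is the single identity $\cos\theta = \einhalb(\ue^{\ui\theta} + \ue^{-\ui\theta})$, which would realise $\Sigma$ as a convex combination of only \emph{two} unitaries, namely the diagonal unitaries with entries $\ue^{\pm\ui\theta_j}$ — so in fact over $\C$ one gets something stronger than claimed. In the real case $\ue^{\pm\ui\theta_j}$ is not available, so instead I would proceed inductively on the number of diagonal entries that are not equal to $\pm 1$. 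If all $\sigma_j \in \{0\}$ after sign normalisation... more carefully: if $\sigma_j \in \{1\}$ for all $j$ then $\Sigma$ is already orthogonal and we are done with one term. Otherwise pick an index $j_0$ with $\sigma_{j_0} \in [0,1)$; the key observation is that $\Sigma$ lies on a line segment inside the convex set of contractions whose endpoints are obtained by replacing $\sigma_{j_0}$ with $+1$ and with $-1$ respectively (in the real case, thinking of a single $2\times 2$ rotation block or a reflection), each endpoint having strictly more $\pm 1$ entries; apply the inductive hypothesis to each endpoint and combine. Tracking the count of terms, one sees the total number of isometries needed is at most $d$, since each diagonal entry contributes at most a doubling once and the final all-$\pm1$ matrices are single isometries; a clean way to get the bound exactly $d$ is to set up the induction so that a contraction that is diagonal with $k$ entries in $(-1,1)$ is a convex combination of at most $k+1$... but then one must be slightly careful to land at $d$ rather than $d+1$. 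I would instead invoke the Carathéodory/Krein–Milman route below, which gives the sharp bound more transparently.

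\textbf{The extreme-point argument for the sharp bound.} The cleanest route to exactly $d$ isometries: the set $\calB$ of contractions on $\K^d$ is a compact convex subset of the $(d^2$- or $2d^2$-)dimensional real vector space of matrices, and its extreme points are precisely the isometries (this is a standard fact — for a contraction $C$ that is not an isometry, there is a unit vector $v$ with $\norm{Cv} < 1$, and one perturbs $C$ in the direction of a rank-one operator supported appropriately to exhibit $C$ as a midpoint). However the dimension count from a naive application of Carathéodory gives more than $d$ terms, so the sharp constant genuinely uses the SVD structure: after reducing to diagonal $\Sigma$ as above, $\Sigma$ lies in the convex hull of the $2^d$ sign-diagonal orthogonal matrices $\diag(\pm 1, \dots, \pm 1)$ — indeed $\sigma_j = \einhalb(1+\sigma_j)\cdot 1 + \einhalb(1-\sigma_j)\cdot(-1)$ shows $\Sigma = \sum_{\veps \in \{\pm1\}^d} \bigl(\prod_j \tfrac{1+\veps_j\sigma_j}{2}\bigr)\diag(\veps)$ — and this is a convex combination living in a $d$-dimensional affine flat (the flat of diagonal matrices with fixed... no: just the $d$-cube). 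By Carathéodory applied \emph{within} the $d$-dimensional space of diagonal matrices, $\Sigma$ is a convex combination of at most $d+1$ vertices; to shave off one we note $\Sigma$ lies on the boundary of the cube unless all $\sigma_j \in (-1,1)$, and a boundary point of a $d$-polytope needs only $d$ vertices — and if some $\sigma_j = \pm 1$ we are in a genuinely $(d-1)$-dimensional face, so an easy induction on $d$ closes it. \textbf{The main obstacle} I expect is precisely this bookkeeping to land on the bound $d$ rather than $d+1$: the existence of \emph{some} finite convex decomposition into isometries is immediate from SVD plus $\cos\theta = \einhalb(\ue^{\ui\theta}+\ue^{-\ui\theta})$ or its real analogue, but squeezing the constant to $d$ requires the Carathéodory-on-a-face argument above, and one must check that left/right multiplication by the fixed unitaries $V, W^*$ does not inflate the count (it does not, since it is an affine bijection on $\calB$). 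With that in hand the lemma follows, and — as the paper goes on to use — it reduces the Contraction Principle to the case where $A$ is an isometry, where the Gaussian rotation-invariance finishes the job.
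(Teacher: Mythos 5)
Your SVD reduction and your complex case are fine (indeed $\cos\theta=\einhalb(\ue^{\ui\theta}+\ue^{-\ui\theta})$ gives two unitaries, stronger than claimed), and the identity $\Sigma=\sum_{\veps\in\{\pm1\}^d}\bigl(\prod_j\tfrac{1+\veps_j\sigma_j}{2}\bigr)\diag(\veps)$ correctly exhibits a diagonal contraction as a convex combination of $2^d$ isometries --- which, incidentally, is all that the contraction principle actually uses. The genuine gap is exactly the one you flag and then leave open: to reach the stated bound $d$, your Carath\'eodory argument needs $\Sigma$ to lie on the \emph{boundary} of the cube $[-1,1]^d$, i.e.\ it needs some $\sigma_j=1$, i.e.\ $\norm{C}=1$; if all singular values are $<1$ you only get $d+1$, the inductive sketch in your middle paragraph is abandoned, and no repair is given. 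Moreover this is not mere bookkeeping: for $d=1$ the contraction $C=0$ (or $\tfrac12$) on $\R$ is not a convex combination of a single isometry $\pm1$, so the bound $d$ is simply false without a norm-one normalisation. The paper quietly makes that normalisation (``we may suppose $\norm{A}=1$'', legitimate in the application, where one scales before invoking the lemma), and under the same normalisation your argument does close: $\sigma_{\max}=1$ places $\Sigma$ on a facet of the cube, so $d$ of the sign-diagonal isometries suffice. You should state this normalisation explicitly instead of leaving the interior case dangling.

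For comparison, the paper's route is different and more hands-on: polar decomposition $A=U\abs{A}$ (a single unitary factor, absorbed at the end), the spectral theorem to reduce to $\diag(\lambda_1,\dots,\lambda_d)$ with $0\le\lambda_1\le\dots\le\lambda_d=1$, then the Abel-summation identity $\diag(\lambda)=\sum_{j=1}^d(\lambda_j-\lambda_{j-1})P_j$, which is a convex combination of coordinate projections precisely because $\lambda_d=1$, and finally $P=\einhalb\Id+\einhalb(2P-\Id)$ with the reflection $2P-\Id$ an isometry; since $\Id$ occurs in each such representation and $2P_d-\Id=\Id$, collecting terms yields at most $d$ isometries by explicit construction, with no Carath\'eodory or extreme-point machinery. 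Your cube-plus-Carath\'eodory approach buys a cleaner complex case (two unitaries) and a transparent convex-geometric picture, at the price of the dimension count you could not quite close; the paper's telescoping trick delivers the count $d$ directly, though, like yours, it really proves the statement only for norm-one contractions.
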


\begin{proof}
  This is well known, see but the proof is given here 
  for the convenience of the reader.  We may suppose that $\norm{A} = 1$.  By
  polar decomposition $A = U \abs{A}$ where $\abs{A} = (A^* A)^{\einhalb}$,
  and $U$ is isometric. Hence we may assume that $A=A^*$ is positive semi definite.
  By the spectral theorem we may even further reduce the problem to
  $A$ being a diagonal matrix with entries $1= \lambda_d \ge \dots \ge
  \lambda_1 \ge 0$. (Note that $1$ has to be an eigenvalue since
  $\norm{A} =1$.)  Now we set $\lambda _0 = 0$ and write
\[
\diag(\lambda_1, \dots ,\lambda_d)
= \Sum[d]{j=1} (\lambda_j - \lambda_{j-1}) P_j
\]
where $P_j(x_1, \dots, x_d) := (x_1, \dots, x_j, 0\dots, 0)$ is
projection onto the first $j$ coordinates. (So $P_d = I$.)  This is
convex combination of projections.  But for any orthogonal projection
$P$ on a Hilbert space,
\[
P = \frac{1}{2} \Id + \frac{1}{2} (2P- \Id)
\]
is a representation as a convex combination of unitaries, since
$(2P-I)^*(2P-I) = (2P-I)^2 = 4P^2 - 4P + I = I$. Since in the
representation above always the identity $\Id$ is used, we can collect
terms and arrive at a convex combination of at most $d$ terms.
\end{proof}

Finally, we are reduced to the case that $n= m$ and $A$ is an
orthogonal/unitary matrix. Then by the rotation invariance of the
$n$-dimensional, resp. $2n$-dimensional, standard Gaussian measure
\cite[p.239]{DieJarTon1995},
\begin{equation*}
\Exp\norm{ {\sum}_{k=1}^n {\sum}_{j=1}^n \gamma_k a_{kj} x_j }^2  
= \Exp\norm{  {\sum}_{k=j}^n  \Big({\sum}_{k=1}^n a_{kj} \gamma_k\Big) x_j }^2
= \Exp\norm{  {\sum}_{j=1}^n  \gamma_j x_j }^2.
\end{equation*}
This concludes the proof of Theorem~\ref{gamma.t.contr}.

\section{Weakly square integrable functions and Pettis integrals}\label{app.P2}

Let $(\Omega,\Sigma,\mu)$ be a measure space.  Recall that a function
$f: \Omega \to X$ is {\em $\mu$-measurable} if there is a sequence
$(f_n)_n$ of {\em step functions} (finite linear combinations of
functions $\car_A \tensor x$ with $\mu(A) < \infty$ and $x\in X$) such
that $f_n \to f$ pointwise almost everywhere. Each $\mu$-measurable
function is essentially separably valued and vanishes outside a
$\sigma$-finite set.
 
\smallskip

For a $\mu$-measurable function $f: \Omega \to X$ we let
\[
\Sigma_f := \{ A\in \Sigma \suchthat \car_A f \in \Ell{2}(\Omega;X)\}.
\]
Then $\Sigma_f$ is closed under taking finite unions and measurable
subsets.  The following result shows that $\Sigma_f$ is quite rich.

\begin{lemma}\label{P2.l.locbd}
  Let $f: \Omega \to X$ be $\mu$-measurable. Then the following
  assertions hold:
\begin{aufzi}
\item\label{item:P2.l.locbd.a} Let $A\in \Sigma$ with $\mu(A) <
  \infty$ and $\epsilon > 0$. Then there is a subset $A_\epsilon
  \subseteq A$ with $\mu(A_\epsilon)\le \epsilon$ and such that $f$ is
  bounded on $A \ohne A_\epsilon$.  In particular, $A\ohne A_\epsilon
  \in \Sigma_f$.
\item\label{item:P2.l.locbd.b} There is a sequence $A_n \in \Sigma_f$ 
 such that $\mu(A_n) < \infty$  and 
 $\car_{A_n} \nearrow \car_{\{f\neq 0\}}$ almost everywhere.
\item\label{item:P2.l.locbd.c} $\spann\{ \car_A \suchthat \mu(A) <
  \infty,\,\, A \in \Sigma_f\}$ is dense in $\Ell{2}(\Omega)$.
\end{aufzi}
\end{lemma}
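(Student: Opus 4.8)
The plan is to treat the three parts in order; once a) is established, parts b) and c) follow by routine bookkeeping. For a) I would use that $\mu$-measurability of $f$ makes $\omega\mapsto\norm{f(\omega)}_X$ a measurable scalar function (this also shows $\{f\neq0\}=\{\norm{f}_X>0\}\in\Sigma$). Setting $B_n:=\{\omega\in A\suchthat\norm{f(\omega)}_X>n\}$ gives a decreasing sequence of measurable sets with empty intersection, since $f$ is everywhere finite-valued; as $\mu(A)<\infty$, continuity from above yields $\mu(B_n)\to0$. Choosing $n$ with $\mu(B_n)\le\epsilon$ and putting $A_\epsilon:=B_n$ makes $f$ bounded (by $n$) on $A\ohne A_\epsilon$, and since $\mu(A\ohne A_\epsilon)\le\mu(A)<\infty$ this forces $\car_{A\ohne A_\epsilon}f\in\Ell{2}(\Omega;X)$, i.e.\ $A\ohne A_\epsilon\in\Sigma_f$.

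For b) I would exploit that a $\mu$-measurable function vanishes outside a $\sigma$-finite set, so one can write $\{f\neq0\}\subseteq\bigcup_nC_n$ with $C_n$ increasing and $\mu(C_n)<\infty$. Applying a) to $C_n$ with $\epsilon=2^{-n}$ produces $D_n\subseteq C_n$ with $\mu(D_n)\le2^{-n}$ and $E_n:=C_n\ohne D_n\in\Sigma_f$. By Borel--Cantelli the set $N:=\bigcap_m\bigcup_{n\ge m}D_n$ is $\mu$-null, and by construction $\bigcup_nE_n\supseteq(\bigcup_nC_n)\ohne N\supseteq\{f\neq0\}\ohne N$. Then $A_n:=\{f\neq0\}\cap(E_1\moder\dots\moder E_n)$ is a measurable subset of the element $E_1\moder\dots\moder E_n$ of $\Sigma_f$, hence itself lies in $\Sigma_f$; it has finite measure, the $A_n$ increase, and $\bigcup_nA_n=\{f\neq0\}\cap\bigcup_nE_n$ differs from $\{f\neq0\}$ only within $N$, so $\car_{A_n}\nearrow\car_{\{f\neq0\}}$ almost everywhere. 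This is the step needing the most care: one must absorb the exceptional null set and check that the sequence is genuinely monotone with the correct union — otherwise there is no real obstacle here.

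For c) I would recall the classical fact that $\spann\{\car_A\suchthat\mu(A)<\infty\}$ is dense in $\Ell{2}(\Omega)$ (approximate an arbitrary $\Ell{2}$-function by simple functions dominated by it and apply dominated convergence), and then approximate each such generator from inside $\Sigma_f$: given $A$ with $\mu(A)<\infty$ and $\delta>0$, part a) furnishes $A_\delta\subseteq A$ with $\mu(A_\delta)\le\delta^2$ and $A\ohne A_\delta\in\Sigma_f$ of finite measure, whence $\norm{\car_A-\car_{A\ohne A_\delta}}_{\Ell{2}}=\mu(A_\delta)^{1/2}\le\delta$. Since $\car_{A\ohne A_\delta}$ lies in the span over $\Sigma_f$, density follows. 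Throughout, the only ingredients are continuity from above, $\sigma$-finiteness of the support, and Borel--Cantelli, so the argument is elementary.
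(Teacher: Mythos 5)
Your proposal is correct and follows essentially the same route as the paper: part a) via the exhaustion $A\cap\{\norm{f}\le n\}\nearrow A$ (equivalently your decreasing sets $B_n$ and continuity from above), part b) from a) together with the $\sigma$-finiteness of $\{f\neq0\}$, and part c) from a) plus the classical density of step functions in $\Ell{2}$. The paper only sketches these steps in three lines; your Borel--Cantelli bookkeeping in b) and the $\delta^2$-approximation in c) are just careful fillings-in of the same argument.
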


\begin{proof}
\ref{item:P2.l.locbd.a} 
\ Simply note that $A \cap \{ \omega
  \suchthat \norm{f(\omega)} \le n\} \nearrow A$ as $n \to \infty$.

\noindent
\ref{item:P2.l.locbd.b}\ This follows from a) and the fact that
$\{f\not= 0\}$ is $\sigma$-finite.

\noindent
~\ref{item:P2.l.locbd.c}\ This follows from a) and the fact that the
step functions are dense in $\Ell{2}$.
\end{proof}

We let 
\[
   \De_f := \{ \car_A g \suchthat g \in \Ell{2}(\Omega), A\in \Sigma_f\}.
\]
Then $\De_f$ is an ideal of $\Ell{2}(\Omega)$, i.e., a linear subspace
of $\Ell{2}(\Omega)$ with $h\in \De_f$ whenever $\abs{h}\le \abs{k}$
and $k \in \De_f$.  Moreover, $\car_A \in \De_f$ for each $A\in
\Sigma_f$ with finite measure and hence $\De_f$ is dense in
$\Ell{2}(\Omega)$.  Now we define the operator
\[
\Jei_f : \De_f \pfeil X \qquad \Jei_f(h) := \int_\Omega hf\, \ud{\mu}.
\]
The following lemma shows that the mapping $f \mapsto \Jei_f$ is
essentially one-to-one.

\begin{lemma}\label{P2.l.injective}
  Let $f: \Omega \to X$ be $\mu$-measurable such that $\Jei_f = 0$. Then
  $f = 0$ $\mu$-al\-most everywhere.
\end{lemma}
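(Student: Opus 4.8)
The plan is to show that the finite-dimensional continuous linear functionals $x' \in X'$ applied to $\Jei_f$ suffice to recover $f$ almost everywhere, and then argue pointwise. First I would fix $x' \in X'$ and observe that $\Jei_f(h) = 0$ for all $h \in \De_f$ implies, upon pairing with $x'$, that $\int_\Omega h(t)\,\dprod{f(t)}{x'}\,\mu(\ud t) = 0$ for all $h \in \De_f$. Since $\De_f$ is dense in $\Ell{2}(\Omega)$ and, more to the point, contains $\car_A$ for every $A \in \Sigma_f$ of finite measure, this forces $\dprod{f(\cdot)}{x'}$ to vanish $\mu$-almost everywhere on each such $A$. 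By Lemma~\ref{P2.l.locbd}\ref{item:P2.l.locbd.b} there is a sequence $A_n \in \Sigma_f$ with $\car_{A_n} \nearrow \car_{\{f\neq 0\}}$ almost everywhere; hence $\dprod{f(\cdot)}{x'} = 0$ $\mu$-almost everywhere on $\{f \neq 0\}$, and trivially on $\{f = 0\}$, so $\dprod{f(\cdot)}{x'} = 0$ $\mu$-a.e.\ on $\Omega$ for each fixed $x' \in X'$.

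The remaining step is to pass from ``for each $x'$, $\dprod{f(\cdot)}{x'} = 0$ a.e.'' to ``$f = 0$ a.e.'' This is where the essential separability of $\mu$-measurable functions is used: $f$ is essentially separably valued, so there is a $\mu$-null set $N_0$ such that $f(\Omega \ohne N_0)$ is contained in a separable closed subspace $X_0 \subseteq X$. By the Hahn--Banach theorem there is a countable set $\{x'_k\}_{k \in \N} \subseteq X'$ that is norming for $X_0$, i.e.\ $\norm{y} = \sup_k \abs{\dprod{y}{x'_k}}$ for all $y \in X_0$. For each $k$ the set $N_k := \{ t \in \Omega \suchthat \dprod{f(t)}{x'_k} \neq 0\}$ is $\mu$-null by the previous paragraph. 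Then $N := N_0 \cup \bigcup_k N_k$ is $\mu$-null, and for $t \notin N$ we have $f(t) \in X_0$ and $\dprod{f(t)}{x'_k} = 0$ for all $k$, whence $\norm{f(t)} = \sup_k \abs{\dprod{f(t)}{x'_k}} = 0$. Thus $f = 0$ $\mu$-almost everywhere.

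I do not expect a genuine obstacle here; the only point requiring care is the correct handling of $\sigma$-finiteness and the measurability bookkeeping, which is exactly what Lemma~\ref{P2.l.locbd} was designed to absorb. In particular one should note that $\{f \neq 0\}$ need only be $\sigma$-finite (not of finite measure) for the argument via the sets $A_n$ to work, and that the exhausting sequence $(A_n)_n$ can be chosen inside $\Sigma_f$ precisely because $f$ is bounded off a small set on any finite-measure piece. The separability of the range, guaranteed by $\mu$-measurability, is the feature that makes the countable norming family available and thereby reduces the vector-valued conclusion to the scalar statements already obtained.
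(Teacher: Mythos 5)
Your proposal is correct and follows essentially the paper's own argument: test $\Jei_f$ against functions $\car_A g$ with $A\in\Sigma_f$, $g\in\Ell{2}(\Omega)$ to get $\dprod{f(\cdot)}{x'}=0$ a.e., and then use that $f$ is essentially separably valued (via a countable norming family) to upgrade the scalar conclusion to $f=0$ a.e. The only difference is cosmetic ordering — you exhaust $\{f\neq 0\}$ by the sets $A_n$ of Lemma~\ref{P2.l.locbd} before invoking separability, while the paper concludes $\car_A f=0$ a.e.\ for each fixed $A\in\Sigma_f$ first and then uses $\sigma$-finiteness; both are fine.
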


\begin{proof}
  Let $x' \in X'$ and $A \in \Sigma_f$.  Then $\int_A (x'\nach f) g =
  0$ for all $g\in \Ell{2}(\Omega;X)$, i.e., $x'\nach \car_A f = 0$
  almost everywhere. Since $f$ is essentially separably-valued, it
  follows that $\car_A f= 0$ almost everywhere. But every set of
  finite measure differs from a set from $\Sigma_f$ by as little as we
  like, and hence $f = 0$ almost everywhere on each set of finite
  measure. Since the set $\{f \neq 0\}$ is $\sigma$-finite, 
the claim follows.
\end{proof}

In certain cases the operator $\Jei_f$ extends to a bounded operator
$\Ell{2}(\Omega) \to X$, which we denote also by $\extend{\Jei_f}$.

\begin{rem}\label{P2.r.L2}
  If $f\in \Ell{2}(\Omega;X)$ then $\De_f = \Ell{2}(\Omega)$, $\Jei_f$ is
  bounded, and $\norm{\Jei_f} \le \norm{f}_2$.  
  However, we stress that for a general $\mu$-measurable $f: \Omega \to X$ 
the space $\De_f$ need not be equal to all of $\Ell{2}(\Omega)$
even if $\Jei_f$ is bounded. 
\end{rem}

In the following we characterise those functions $f$ such that
$\Jei_f$ is bounded.

\begin{thm}\label{P2.t.P2}
  Let $f: \Omega \to X$ be $\mu$-measurable, and let $(A_n)_{n \in \N}
  \subseteq \Sigma_f$ with $\car_{A_n} \nearrow \car$ a.e. on the set
$\{ f\neq 0\}$.  Then the
  following assertions are equivalent:
\begin{aufzii}
\item\label{item:P2.t.P2.i} $\Jei_f$ is bounded;
\item\label{item:P2.t.P2.ii}  $f$ is 
``weakly $\Ell{2}$'', i.e.,  
$x'\nach f \in \Ell{2}(\Omega)$ for each $x' \in X'$;
\item\label{item:P2.t.P2.iii} $\sup_{n \in \N} \norm{\Jei_{f\car_{A_n}}} < \infty$.
\end{aufzii}  
If  {\upshape~\ref{item:P2.t.P2.i}--\ref{item:P2.t.P2.iii}} hold, then
\begin{equation}\label{P2.eq.P2-norm} 
\sup_{n \in \N} \norm{\Jei_{f\car_{A_n}}} =
\norm{\Jei_f} = \sup_{\norm{x'}\le 1} \norm{x'\nach f}_2 =: \norm{f}_{\Pe_2}
\end{equation}
and $\Jei_{f\car_{A_n}} \to \extend{\Jei}_f$ strongly.
\end{thm}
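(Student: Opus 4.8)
The plan is to prove the equivalence \ref{item:P2.t.P2.i}$\Leftrightarrow$\ref{item:P2.t.P2.ii}$\Leftrightarrow$\ref{item:P2.t.P2.iii} cyclically, together with the norm identities and the strong convergence. The key observation throughout is that $\Jei_{f\car_{A_n}}$ is always bounded (since $f\car_{A_n}\in\Ell{2}(\Omega;X)$ by definition of $\Sigma_f$, using Remark~\ref{P2.r.L2}), so the $A_n$-truncations give a canonical approximating sequence on which everything can be tested.

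\textbf{\ref{item:P2.t.P2.i}$\Rightarrow$\ref{item:P2.t.P2.ii}.} If $\Jei_f$ is bounded, then for $x'\in X'$ and $h\in\De_f$ we have $\int_\Omega h\,(x'\nach f)\,\ud\mu = \dprod{\Jei_f(h)}{x'}$, so $|\int_\Omega h\,(x'\nach f)\,\ud\mu|\le\norm{\Jei_f}\norm{x'}\norm{h}_2$. Since $\De_f$ is dense in $\Ell{2}(\Omega)$, the functional $h\mapsto\int_\Omega h\,(x'\nach f)\,\ud\mu$ is bounded on a dense subspace and hence extends to $\Ell{2}(\Omega)$; as $x'\nach f$ is measurable and pairs integrably against every $h\in\De_f$ (in particular against $\car_B$ for $B\in\Sigma_f$ of finite measure, which exhaust $\{f\neq0\}$ by Lemma~\ref{P2.l.locbd}), a standard argument (monotone convergence applied to $|x'\nach f|^2\car_{A_n}$) shows $x'\nach f\in\Ell{2}(\Omega)$ with $\norm{x'\nach f}_2\le\norm{\Jei_f}\norm{x'}$.

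\textbf{\ref{item:P2.t.P2.ii}$\Rightarrow$\ref{item:P2.t.P2.iii}.} Fix $n$. For each $h\in\Ell{2}(\Omega)$, the element $\Jei_{f\car_{A_n}}(h)=\int_\Omega h\car_{A_n}f\,\ud\mu$ satisfies $\dprod{\Jei_{f\car_{A_n}}(h)}{x'}=\int_\Omega h\car_{A_n}(x'\nach f)\,\ud\mu$, so $|\dprod{\Jei_{f\car_{A_n}}(h)}{x'}|\le\norm{h}_2\norm{\car_{A_n}(x'\nach f)}_2\le\norm{h}_2\norm{x'\nach f}_2$. Hence $\norm{\Jei_{f\car_{A_n}}(h)}_X=\sup_{\norm{x'}\le1}|\dprod{\Jei_{f\car_{A_n}}(h)}{x'}|\le\norm{h}_2\sup_{\norm{x'}\le1}\norm{x'\nach f}_2$. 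By the closed graph theorem (or directly: $x'\mapsto x'\nach f$ is closed from $X'$ to $\Ell{2}(\Omega)$, hence bounded) the quantity $\norm{f}_{\Pe_2}:=\sup_{\norm{x'}\le1}\norm{x'\nach f}_2$ is finite, giving $\sup_n\norm{\Jei_{f\car_{A_n}}}\le\norm{f}_{\Pe_2}<\infty$.

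\textbf{\ref{item:P2.t.P2.iii}$\Rightarrow$\ref{item:P2.t.P2.i} and the norm identities.} Suppose $C:=\sup_n\norm{\Jei_{f\car_{A_n}}}<\infty$. For $h\in\De_f$, we have $h=\car_Ag$ with $A\in\Sigma_f$, $g\in\Ell{2}(\Omega)$, so $h\car_{A_n}\to h$ in $\Ell{2}(\Omega)$ by dominated convergence, and $h\car_{A_n}f\to hf$ in $\Ell{1}(\Omega;X)$ (dominated by $|g|\cdot\norm{\car_Af}_X\in\Ell{1}$ via Cauchy--Schwarz), whence $\Jei_{f\car_{A_n}}(h)\to\Jei_f(h)$ in $X$, so $\norm{\Jei_f(h)}\le C\norm{h}_2$. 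Thus $\Jei_f$ is bounded on $\De_f$ with norm $\le C$, and extends to $\extend{\Jei}_f$ on $\Ell{2}(\Omega)$. Combining the three inequalities already obtained, $\norm{\Jei_f}\le\sup_n\norm{\Jei_{f\car_{A_n}}}\le\norm{f}_{\Pe_2}\le\norm{\Jei_f}$, which forces equality throughout and establishes \eqref{P2.eq.P2-norm}. For the strong convergence: for $h\in\De_f$ we have shown $\Jei_{f\car_{A_n}}(h)\to\extend{\Jei}_f(h)$; since $\sup_n\norm{\Jei_{f\car_{A_n}}}=\norm{\extend{\Jei}_f}<\infty$ and $\De_f$ is dense in $\Ell{2}(\Omega)$, a standard $3\varepsilon$-argument upgrades this to $\Jei_{f\car_{A_n}}\to\extend{\Jei}_f$ strongly on all of $\Ell{2}(\Omega)$.

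The main obstacle I anticipate is not any single deep step but rather the careful bookkeeping in \ref{item:P2.t.P2.i}$\Rightarrow$\ref{item:P2.t.P2.ii}: one must pass from ``$x'\nach f$ pairs boundedly against $\De_f$'' to ``$x'\nach f\in\Ell{2}$'', which requires using $\car_{A_n}\nearrow\car_{\{f\neq0\}}$ together with the fact that $f$ (hence $x'\nach f$) vanishes off a $\sigma$-finite set, and applying monotone convergence to conclude $\int|x'\nach f|^2\le\norm{\Jei_f}^2\norm{x'}^2$ rather than merely getting an integrability-against-test-functions statement. Everything else is routine dominated/monotone convergence and density arguments.
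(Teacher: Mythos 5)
Your proof is correct, and it uses the same toolkit as the paper (duality against $\De_f$, the closed graph theorem for $x'\mapsto x'\nach f$, Cauchy--Schwarz, truncation by the $A_n$, and a density/uniform-boundedness argument for the strong convergence); the only structural difference is that you traverse the cycle in the opposite direction, proving \ref{item:P2.t.P2.i}$\Rightarrow$\ref{item:P2.t.P2.ii}$\Rightarrow$\ref{item:P2.t.P2.iii}$\Rightarrow$\ref{item:P2.t.P2.i}, whereas the paper proves \ref{item:P2.t.P2.i}$\Rightarrow$\ref{item:P2.t.P2.iii}$\Rightarrow$\ref{item:P2.t.P2.ii}$\Rightarrow$\ref{item:P2.t.P2.i}. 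The practical effect is that your ordering puts the monotone-convergence work into the leg \ref{item:P2.t.P2.i}$\Rightarrow$\ref{item:P2.t.P2.ii} (you correctly identified this as the delicate point: one tests against $h\car_{A_n}\in\De_f$, obtains $\norm{\car_{A_n}(x'\nach f)}_2\le\norm{\Jei_f}\,\norm{x'}$, and lets $n\to\infty$), while the paper's ordering gets \ref{item:P2.t.P2.i}$\Rightarrow$\ref{item:P2.t.P2.iii} for free and then extracts $\norm{x'\nach f}_2\le \norm{x'}\sup_n\norm{\Jei_{f\car_{A_n}}}$ in the step \ref{item:P2.t.P2.iii}$\Rightarrow$\ref{item:P2.t.P2.ii} by the same computation; your leg \ref{item:P2.t.P2.iii}$\Rightarrow$\ref{item:P2.t.P2.i} is an approximation argument in the spirit of Lemma~\ref{P2.l.core}, which the paper's cycle replaces by the direct Cauchy--Schwarz estimate in \ref{item:P2.t.P2.ii}$\Rightarrow$\ref{item:P2.t.P2.i}. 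Either way the three one-sided norm inequalities close up to give \eqref{P2.eq.P2-norm}, and your $3\varepsilon$-argument for the strong convergence is exactly the paper's. One cosmetic slip: the claim ``$h\car_{A_n}\to h$ in $\Ell{2}(\Omega)$'' need not hold, since $\car_{A_n}\nearrow\car$ is only assumed a.e.\ on $\{f\neq0\}$ and $h\in\De_f$ may live partly on $\{f=0\}$; but this claim is never used --- what you actually need and correctly justify is $h\car_{A_n}f\to hf$ in $\Ell{1}(\Omega;X)$, which is fine because $f$ vanishes on $\{f=0\}$. So drop that sentence and the argument is clean.
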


\begin{proof}
 ~\ref{item:P2.t.P2.i}$\dann$\ref{item:P2.t.P2.iii}: For $g \in
  \Ell{2}(\Omega)$ and $n \in \N$ we have
\[
\norm{\Jei_{f\car_{A_n}}(g)}
= \norm{\Jei_f(\car_{A_n} g)} \le \norm{\Jei_f} \norm{\car_{A_n} g}_2
\le \norm{\Jei_f} \norm{g}_2
\]
which implies that $\norm{\Jei_{\car_{A_n}} f} \le \norm{\Jei_f}$.

\smallskip 
\noindent
\ref{item:P2.t.P2.iii}$\dann$\ref{item:P2.t.P2.ii}: For each $n \in
\N$,
\begin{align*}
\int_{A_n} & \abs{x' \nach f}^2 = \int \abs{x'\nach (f\car_{A_n})}^2
= \sup_{\norm{h}_2 \le 1} \abs{ \dprod{ \int (f\car_{A_n}) h }{x'} }^2
\\ & \le  \sup_{\norm{h}_2 \le 1} \norm{x'}^2 \norm{\Jei_{f\car_{A_n}} h}^2
= \norm{x'}^2 \norm{\Jei_{f\car_{A_n}}}^2.
\end{align*} 
Since $A_n \nearrow \{f\neq 0\}$ up to a null set,~\ref{item:P2.t.P2.ii}
follows from~\ref{item:P2.t.P2.iii}.

\smallskip 
\noindent
\ref{item:P2.t.P2.ii}$\dann$\ref{item:P2.t.P2.i}: If $f$ is weakly
$\Ell{2}$, then by the closed graph theorem there must be $c\ge 0$
such that $\norm{x' \nach f}_2 \le c \norm{x'}$ for all $x'\in
X'$. Fix $h = \car_A g \in \De_f$, with $A\in \Sigma_f$ and $g\in
\Ell{2}$.  Then $ \dprod{\Jei_f(h)}{x'} = \int (x'\nach f) h$, whence by
Cauchy--Schwarz
\[
\abs{\dprod{\Jei_f(h)}{x'} }\le  \norm{x'\nach f}_2 \norm{h}_2
\le c \norm{x'} \norm{h}_2.
\]
This yields $\norm{\Jei_f(h)} \le c \norm{h}_2$, and~\ref{item:P2.t.P2.i}
follows.

\smallskip\noindent Finally, suppose that
\ref{item:P2.t.P2.i}--~\ref{item:P2.t.P2.iii} hold. Then
\eqref{P2.eq.P2-norm} has already been shown. For the strong
convergence note that $\Jei_{f\car_{A_n}}(h) \to \extend{\Jei}_f(h)$ for each
$h\in \De_f$, and this is dense in $\Ell{2}(\Omega)$.
\end{proof}

We let $\Pe_2(\Omega;X)$ be the space of strongly measurable functions
$f: \Omega \to X$ such that $\Jei_f$ is bounded, viz.~$f$ is weakly
$\Ell{2}$.  If $f\in \Pe_2(\Omega;X)$ then the extension $\extend{\Jei_f}$
of $\Jei_f$ to all of $\Ell{2}$ can be described by a {\em weak (= Pettis)
  integral}.  Namely for each $x' \in X'$ the function $x' \nach f$ is
$\Ell{2}$ and hence
\[ \dprod{\Jei_f(h)}{x'} = \int_\Omega \dprod{h(\omega) f(\omega)}{x'} \, 
\mu(\ud{\omega}) \quad \text{for all $h\in \Ell{2}(\Omega)$}.
\]

It is sometimes convenient to decide the boundedness of $\Jei_f$ on a
subset of its natural domain.

\begin{lemma}\label{P2.l.core}
  Let $f: \Omega \to X$ be $\mu$-measurable, and let $D$ be a subspace
  of $\De_f$, dense in $\Ell{2}(\Omega)$ and invariant under the
  multiplication by characteristic functions.  If $\Jei_f$ is bounded on
  $D$ then it is bounded on $\De_f$, with the same norm.
\end{lemma}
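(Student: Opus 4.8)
\textbf{Proof plan for Lemma~\ref{P2.l.core}.}
The plan is to reduce the boundedness question on $\De_f$ to the boundedness question on $D$ by a two-step approximation argument: first from $\De_f$ down to the ``bounded-support'' functions $\car_A g$ with $A\in\Sigma_f$ of finite measure and $g$ bounded, and then from those into $D$ via the density of $D$ in $\Ell2(\Omega)$. So suppose $\norm{\Jei_f(h)}\le c\norm{h}_2$ for all $h\in D$; we want to show the same for all $h\in\De_f$.

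First I would fix $h = \car_A g\in\De_f$ with $A\in\Sigma_f$ and $g\in\Ell2(\Omega)$. Write $g = \lim_n g_n$ in $\Ell2$ with $g_n\in D$ (possible since $D$ is dense). Then $\car_A g_n\in D$ by the invariance of $D$ under multiplication by characteristic functions, and $\car_A g_n\to\car_A g$ in $\Ell2(\Omega)$. Since $\car_A f\in\Ell2(\Omega;X)$ (because $A\in\Sigma_f$), the map $k\mapsto\Jei_{f\car_A}(k) = \int_\Omega k(\car_A f)\,\ud\mu$ is continuous from $\Ell2(\Omega)$ to $X$ by Remark~\ref{P2.r.L2}; hence $\Jei_f(\car_A g_n) = \Jei_{f\car_A}(g_n)\to\Jei_{f\car_A}(g) = \Jei_f(\car_A g)$ in $X$. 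Combining with the hypothesis on $D$,
\[
\norm{\Jei_f(\car_A g)} = \lim_n\norm{\Jei_f(\car_A g_n)}\le c\,\lim_n\norm{\car_A g_n}_2 = c\,\norm{\car_A g}_2\le c\,\norm{g}_2.
\]
This establishes the bound $\norm{\Jei_f(h)}\le c\norm{h}_2$ for every $h\in\De_f$, with the same constant $c$; in particular $\Jei_f$ is bounded on $\De_f$. The fact that the norm is exactly the same (not just $\le$) is immediate because $D\subseteq\De_f$, so the supremum defining the operator norm over $\De_f$ is at least that over $D$.

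The only point requiring a little care — and the nearest thing to an obstacle — is that $\Jei_f$ is defined with its natural domain $\De_f$, which need not be all of $\Ell2(\Omega)$ (cf.~Remark~\ref{P2.r.L2}), so one cannot simply invoke continuity of $\Jei_f$ on $\Ell2$; the argument must stay inside $\De_f$. This is handled by the observation above that, once we freeze the set $A\in\Sigma_f$, the restricted operator $\Jei_{f\car_A}$ \emph{is} globally defined and bounded on $\Ell2(\Omega)$, which is exactly what lets the limit pass. No further assumptions on $\mu$ or $X$ are needed.
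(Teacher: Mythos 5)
Your proof is correct, and it takes a somewhat different route than the paper. The paper fixes $g\in D$, bounds the truncated operators $\Jei_{f\car_{A_n}}$ uniformly by $c$ on the dense subspace $D$ (hence on all of $\Ell{2}(\Omega)$), where $(A_n)_n\subseteq\Sigma_f$ is a truncation sequence supplied by Lemma~\ref{P2.l.locbd}, and then invokes the characterization Theorem~\ref{P2.t.P2} — the implication (iii)$\Rightarrow$(i) together with the norm identity \eqref{P2.eq.P2-norm} — to conclude. You instead fix an arbitrary $h=\car_A g\in\De_f$ and approximate $g$ by elements $g_n\in D$, passing to the limit through the single globally bounded operator $\Jei_{f\car_A}$ (bounded precisely because $A\in\Sigma_f$, i.e.\ $\car_A f\in\Ell{2}(\Omega;X)$, via Remark~\ref{P2.r.L2}); this yields $\norm{\Jei_f(h)}\le c\norm{h}_2$ on $\De_f$ directly, with no appeal to Theorem~\ref{P2.t.P2} and no truncation sequence. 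Both arguments rest on the same two facts — that freezing a set in $\Sigma_f$ produces an operator bounded on all of $\Ell{2}(\Omega)$, and that $D$ is dense and stable under multiplication by characteristic functions — but yours is more self-contained and elementary, while the paper's version recycles Theorem~\ref{P2.t.P2}, which is needed elsewhere anyway. Your closing observations, that one cannot simply invoke continuity of $\Jei_f$ on $\Ell{2}(\Omega)$ since its natural domain is only $\De_f$, and that the norms coincide exactly because $D\subseteq\De_f$, are both correct.
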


\begin{proof}
  By Lemma~\ref{P2.l.locbd} we find a sequence $(A_n)_n\subseteq
  \Sigma_f$ such that $\car_{A_n} \nearrow \car$ almost
  everywhere on $\{f\neq 0\}$. Let $g\in D$ and $n \in \N$. Then
\[
\norm{\Jei_{f\car_{A_n}}g}= \norm{\Jei_f(\car_{A_n} g)} \le 
c \norm{\car_{A_n} g}_2 \le c \norm{g}_2,
\] 
where $c$ is the bound of $\Jei_f$ on $D$. Since $\Jei_{f\car_{A_n}}$ is
bounded and $D$ is dense, it follows that $\norm{\Jei_{f\car_{A_n}}} \le
c$, independent of $n \in \N$.  Hence, the assertion follows from
Theorem~\ref{P2.t.P2}.
\end{proof}

We have a Fatou-type property.

\begin{lemma}[$\Pe_2$-Fatou]\label{P2.l.fatou}
  If $(f_n)_{n \in \N}$ is a bounded sequence in $\Pe_2(\Omega;X)$
  with $f_n \to f$ almost everywhere, then $f\in \Pe_2(\Omega;X)$,
  $\extend{\Jei}_{f_n} \to \extend{\Jei}_f$ strongly, and
\[
\norm{f}_{\Pe_2} \le \liminf_{n\to \infty} \norm{f_n}_{\Pe_2}.
\]
\end{lemma}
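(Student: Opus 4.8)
The statement is a Fatou-type lemma for the space $\Pe_2(\Omega;X)$, and the natural strategy is to reduce it to the scalar Fatou lemma applied to the functions $x'\circ f_n$, just as in the proof of the $\Pe_2'$-Fatou Lemma~\ref{gamma.l.P2'-fatou}. First I would fix $x'\in X'$. Since $f_n\to f$ almost everywhere, $x'\circ f_n\to x'\circ f$ almost everywhere, and for any $h\in\Ell{2}(\Omega)$ the usual scalar Fatou lemma gives
\[
\int_\Omega \abs{h\cdot(x'\circ f)}\,\ud{\mu}
\le \liminf_{n\to\infty}\int_\Omega \abs{h\cdot(x'\circ f_n)}\,\ud{\mu}
\le \liminf_{n\to\infty}\norm{h}_{\Ell{2}}\,\norm{x'\circ f_n}_{\Ell{2}}
\le \norm{h}_{\Ell{2}}\,\norm{x'}\,\liminf_{n\to\infty}\norm{f_n}_{\Pe_2}.
\]
Taking the supremum over $\norm{h}_{\Ell{2}}\le 1$ shows $x'\circ f\in\Ell{2}(\Omega)$ with $\norm{x'\circ f}_{\Ell{2}}\le \norm{x'}\,\liminf_n\norm{f_n}_{\Pe_2}$, so $f$ is weakly $\Ell{2}$, i.e.\ $f\in\Pe_2(\Omega;X)$ by Theorem~\ref{P2.t.P2}, and taking the supremum over $\norm{x'}\le 1$ and using \eqref{P2.eq.P2-norm} yields the norm inequality $\norm{f}_{\Pe_2}\le\liminf_{n\to\infty}\norm{f_n}_{\Pe_2}$.

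It remains to prove the strong convergence $\extend{\Jei}_{f_n}\to\extend{\Jei}_f$. First I would upgrade the pointwise $\Ell{2}$-convergence $x'\circ f_n\to x'\circ f$ (a.e.) to \emph{weak} convergence in $\Ell{2}(\Omega)$: the sequence $(x'\circ f_n)_n$ is bounded in $\Ell{2}$ (by the previous paragraph's estimate applied to each $n$), and a.e.\ convergence of a bounded sequence in $\Ell{2}$ forces weak convergence to the a.e.\ limit — this is the standard argument (test against a dense set of bounded compactly-supported $h$ and use dominated convergence after passing to a subsequence, then note the full sequence converges because the limit is independent of the subsequence; cf.\ the proof of Lemma~\ref{P2.l.fatou}'s dual sibling). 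Weak convergence $x'\circ f_n\weakcon x'\circ f$ in $\Ell{2}(\Omega)$ for every $x'\in X'$ means precisely that $\extend{\Jei}_{f_n}\to\extend{\Jei}_f$ in the weak operator topology, since $\dprod{\extend{\Jei}_{f_n}(h)}{x'}=\int_\Omega h\,(x'\circ f_n)\,\ud{\mu}$ for all $h\in\Ell{2}(\Omega)$.

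To promote weak operator convergence to \emph{strong} convergence I would use the uniform boundedness $\sup_n\norm{\extend{\Jei}_{f_n}}=\sup_n\norm{f_n}_{\Pe_2}<\infty$ together with a density argument, showing convergence $\extend{\Jei}_{f_n}(h)\to\extend{\Jei}_f(h)$ in norm on a dense subset of $\Ell{2}(\Omega)$. The natural dense set is $\spann\{\car_A : \mu(A)<\infty\}$, and even simpler, by Lemma~\ref{P2.l.locbd} one can work with sets $A$ on which all the $f_n$ and $f$ are simultaneously controlled. I expect the main obstacle to be exactly this last step: weak operator convergence plus uniform boundedness does \emph{not} in general give strong convergence, so one genuinely needs to exhibit norm convergence on a dense set. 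Here I would exploit that on a set $A\in\Sigma_f$ of finite measure with $f$ bounded on $A$ (available by Lemma~\ref{P2.l.locbd}\ref{item:P2.l.locbd.a}), the restricted functions $\car_A f_n$ converge to $\car_A f$ almost everywhere and are (after a further reduction via Egoroff, discarding a small-measure set) dominated, so that $\car_A f_n\to\car_A f$ in $\Ell{2}(\Omega;X)$; this gives $\norm{\extend{\Jei}_{f_n}(\car_A g)-\extend{\Jei}_f(\car_A g)}_X=\norm{\Jei_{(f_n-f)\car_A}(g)}_X\le\norm{\car_A(f_n-f)}_{\Ell{2}(\Omega;X)}\norm{g}_{\Ell{2}}\to 0$ for every $g\in\Ell{2}(\Omega)$. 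Since such elements $\car_A g$ are dense in $\Ell{2}(\Omega)$ and $(\extend{\Jei}_{f_n})_n$ is uniformly bounded, the strong convergence $\extend{\Jei}_{f_n}\to\extend{\Jei}_f$ follows, completing the proof.
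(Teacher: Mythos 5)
Your proof is correct. It differs from the paper's own argument mainly in how the membership $f\in \Pe_2(\Omega;X)$ and the estimate $\norm{f}_{\Pe_2}\le\liminf_n\norm{f_n}_{\Pe_2}$ are obtained: you derive them from the scalar Fatou lemma applied to $x'\nach f_n$ together with the weak-$\Ell{2}$ characterisation and the norm identity \eqref{P2.eq.P2-norm} of Theorem~\ref{P2.t.P2} — exactly the strategy the paper itself uses for the dual statement, Lemma~\ref{gamma.l.P2'-fatou} — whereas the paper instead forms the dense set $D=\{\car_A g : A\in\Sigma_f,\ \norm{f-f_n}_{\Ell{2}(A)}\to 0\}$ (Egoroff plus Lemma~\ref{P2.l.locbd}), shows $\Jei_{f_n}(h)\to\Jei_f(h)$ for $h\in D$, and invokes Lemma~\ref{P2.l.core} to conclude boundedness of $\Jei_f$ with the $\liminf$ bound. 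For the strong convergence the two arguments coincide: your Egoroff-reduced sets $B$ with $\car_B(f_n-f)\to 0$ in $\Ell{2}(\Omega;X)$, combined with density and the uniform bound $\sup_n\norm{\extend{\Jei}_{f_n}}<\infty$, is precisely the ``standard approximation argument'' the paper alludes to. What each route buys: yours bypasses Lemma~\ref{P2.l.core} altogether (only the equivalence (i)$\Leftrightarrow$(ii) of Theorem~\ref{P2.t.P2} is needed) and makes the symmetry with the dual Fatou lemma explicit; the paper's is more economical in that the single dense set $D$ simultaneously delivers boundedness, the norm estimate and the strong convergence. Your intermediate weak-operator-topology step is correct but redundant, since norm convergence on a dense set together with uniform boundedness already gives strong convergence.
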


\begin{proof}
We form the set
\[
D := \{ \car_A g\suchthat g\in \Ell{2}(\Omega), A\in \Sigma_f,\,
\norm{f - f_n}_{\Ell{2}(A)} \to 0\}.
\]
By Egoroff's theorem and Lemma~\ref{P2.l.locbd}, $D$ is dense in
$\Ell{2}(\Omega)$.  Moreover, $D \subseteq \De_f$ by construction, and
$D$ is obviously invariant under multiplication by characteristic
functions.  If $h := \car_A g \in D$ then $f_n h \to fh$ in $\Ell{1}$
and hence $\Jei_{f_n}(h) =\int h f_n \to \int h f = \Jei_f(h)$.  We conclude
that
\[
\norm{\Jei_{f}(h)} = \lim_{n\to \infty}\norm{\Jei_{f_n}(h)}
\le \liminf_{n\to \infty}\norm{\Jei_{f_n}} \norm{h}_2.
\]
By Lemma~\ref{P2.l.core} it follows that $\Jei_f$ is bounded with norm
$\norm{\Jei_f} \le \liminf_n \norm{\Jei_{f_n}}$.  The rest follows by a
standard approximation argument.
\end{proof}

\section{Holomorphic functional calculus on sectors and strips}\label{app:fc}

For the reader's convenience we briefly
develop the two calculi most relevant in our context, namely the
calculus for sectorial and strip type operators. Although this has been
done at several places in the literature, e.g. in the second author's 
book \cite[Chapters 2 and 4]{HaaseFC}, the construction here is a little different in order
to achieve perfect correspondence between the sector and the strip case.

To begin with, we fix some notation. Let $\strip{0} := \R, \sector{0} := (0,\infty)$;
further,  for $\omega > 0$ we let
\[ \strip{\omega}  := \{ z\in \C \suchthat \abs{\im z} < \omega\}
\quad \text{and}\quad
\sector{\omega} := \{ z \in \C\ohne\{0\} \suchthat \abs{\arg z} < \omega\},
\]
where the latter is only meaningful if $\omega \le \pi$. In that case
the transformations $w = \log z$ and $z = \exp(w)$ form  a pair of mutually inverse
holomorphic mappings from $\sector{\omega}$ to $\strip{\omega}$ and vice 
versa. In particular, the functional calculus theories for 
$\Ha^\infty(\strip{\omega})$ and $\Ha^\infty(\sector{\omega})$ are 
equivalent. We shall concentrate on the strip case and 
only briefly touch upon the sector case.

\medskip
\noindent
For $\omega > 0$ the algebra of {\em elementary functions} on $\strip{\omega}$
is
\[ \elementary(\strip{\omega}) := \Big\{ f\in \Ha^\infty(\strip{\omega}) \suchthat
\int_{-\infty}^\infty \abs{f(r + \ui \alpha)} \ud{r} < \infty\, \,
\text{for all}\, \abs{\alpha} < \omega\Big\}.
\]
If $0 < \omega \le \pi$ we correspondingly define 
\[  \elementary(\sector{\omega}) := \Big\{ f \in \Ha^\infty(\sector{\omega}) \suchthat
\int_{0}^\infty \abs{f(r\ue^{\ui \alpha})} \tfrac{\ud{r}}{r} < \infty\, \,
\text{for all}\,\abs{\alpha} < \omega\Big\}.
\]
Then $\elementary(\sector{\omega}) = \{ f\nach\log \suchthat f\in 
\elementary(\strip{\omega})\}$.

\begin{rem}
It is common in the literature to use a class of 
elementary functions defined
via explicit growth conditions instead of integrability. In this approach, 
the class
\[ \Ha^\infty_0(\sector{\omega}) = \{ f\in \Ha^\infty(\sector{\omega}) 
\suchthat \exists s, C> 0 : \abs{f(z)} \le C \min(\abs{z}^s, \abs{z}^{-s})\}
\]
features prominently. However,
such  growth conditions are not compatible
with the $\exp/\log$-correspondence, whereas our definition is.
\end{rem}

It is clear that $f\in \elementary(\strip{\omega})$ if and only if
$f(\cdot + r) \in \elementary(\strip{\omega})$ for some/each $r\in \R$. 
Moreover, by Cauchy's theorem, the following formulae hold
for any elementary function $f\in \elementary(\strip{\omega})$:
\begin{align}
   f(z) = &\; \tfrac1{2\pi\ui} \int_{\partial\strip{\omega'} } \tfrac{f(\zeta)}{\zeta{-}z}\ud\zeta \label{eq:cauchy-formel}\\
        = &\; \tfrac1{2\pi\ui} \int_{\partial\strip{\omega'} } f(\zeta) \tfrac{\ue^{-(\zeta-z)^2} }{\zeta{-}z}\ud\zeta \qquad 
(z\in \strip{\omega'},\, 0 < \omega' < \omega).
\label{eq:gauss-cauchy-formel}
\end{align}
Note that for $z\in \C$ and $\zeta \in \strip{\omega}$
\[
\big|\ue^{-(\zeta - z)^2}\big| = \ue^{-\re(\zeta - z)^2} 
= \ue^{- (\re \zeta - \re z)^2} \cdot \ue^{(\im \zeta - \im z )^2}
\le \ue^{- (\re \zeta - \re z)^2} \ue^{(\omega + \abs{\im z})^2}.
\] 
Consequently, for fixed $z\in \C$ the function
$\zeta \to \ue^{-(\zeta - z)^2}$ is an elementary function
on $\strip{\omega}$. It follows that the
representation formula \eqref{eq:gauss-cauchy-formel} actually
holds for  {\em all} $f\in \Ha^\infty(\strip{\omega})$.

\begin{lemma}\label{fcru.l.elem-prop}
Let $0 < \alpha < \omega$ and $f\in \elementary(\strip{\omega})$.
Then the following assertions hold:
\begin{aufzi}
\item\label{fcru.l.elem-prop-a} $\sup_{\abs{s}\le \alpha} \int_{-\infty}^\infty \abs{f( r + \ui s)}\, 
\ud{r} < \infty$.
\item\label{fcru.l.elem-prop-b}  
$f\in \elementary(\strip{\alpha}) \cap \Ce_0(\cls{\strip{\alpha}})$.

\item\label{fcru.l.elem-prop-c} $\int_{\rand\strip{\alpha}} f(z) \, \ud{z} = 0$.
\item\label{fcru.l.elem-prop-d} $f'\in \elementary(\strip{\alpha})$.
\end{aufzi}
\end{lemma}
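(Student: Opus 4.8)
The plan is to derive all four assertions from the Gauss--Cauchy representation \eqref{eq:gauss-cauchy-formel}, which is available for every $f\in\elementary(\strip{\omega})$ and every base point $z\in\strip{\omega'}$ with $\alpha<\omega'<\omega$. Fix such an $\omega'$ and split the contour integral over $\rand\strip{\omega'}$ into the two horizontal lines $\R\pm\ui\omega'$. For $z=r+\ui s$ with $\abs{s}\le\alpha$ and $\zeta=t\pm\ui\omega'$ on the boundary one has $\zeta-z=(t-r)+\ui v$ with $\omega'-\alpha\le\abs{v}\le\omega'+\alpha$, hence $\abs{\ue^{-(\zeta-z)^2}}=\ue^{v^2-(t-r)^2}\le\ue^{(\omega'+\alpha)^2}\ue^{-(t-r)^2}$ and $\abs{\zeta-z}^{-1}\le(\omega'-\alpha)^{-1}$. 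Plugging these bounds into \eqref{eq:gauss-cauchy-formel} gives a pointwise estimate
\[
  \abs{f(r+\ui s)}\ \le\ C\int_\R\bigl(\abs{f(t+\ui\omega')}+\abs{f(t-\ui\omega')}\bigr)\,\ue^{-(t-r)^2}\,\ud t ,
\]
with $C=C(\omega',\alpha)$ independent of $r$ and $s$. Everything else will follow from this one inequality.

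For \ref{fcru.l.elem-prop-a} I would integrate the displayed bound over $r\in\R$ and use Tonelli together with $\int_\R\ue^{-(t-r)^2}\,\ud r=\sqrt\pi$ to get $\int_\R\abs{f(r+\ui s)}\,\ud r\le C\sqrt\pi\bigl(\norm[1]{f(\cdot+\ui\omega')}+\norm[1]{f(\cdot-\ui\omega')}\bigr)$, which is finite since $f\in\elementary(\strip{\omega})$ and $\omega'<\omega$, and is independent of $s$ for $\abs{s}\le\alpha$. For \ref{fcru.l.elem-prop-b}: $f$ is holomorphic, hence continuous, on the closed strip $\cls{\strip{\alpha}}\subseteq\strip{\omega}$, and $f\in\elementary(\strip{\alpha})$ by part \ref{fcru.l.elem-prop-a}; for the $\Ce_0$ property note that, for fixed $t$, $\ue^{-(t-r)^2}\to0$ as $\abs{r}\to\infty$ while $\abs{f(\cdot\pm\ui\omega')}\in\Ell{1}(\R)$ is an integrable majorant, so dominated convergence in the displayed estimate yields $f(r+\ui s)\to0$ as $\abs{r}\to\infty$, uniformly in $\abs{s}\le\alpha$; since $\abs{z}\to\infty$ inside $\cls{\strip{\alpha}}$ forces $\abs{\re z}\to\infty$, this is exactly $f\in\Ce_0(\cls{\strip{\alpha}})$.

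For \ref{fcru.l.elem-prop-c} I would apply Cauchy's theorem to $f$ on the rectangle with vertices $\pm R\pm\ui\alpha$: the two vertical sides contribute $\ui\int_{-\alpha}^{\alpha}\bigl(f(R+\ui s)-f(-R+\ui s)\bigr)\,\ud s$, which tends to $0$ as $R\to\infty$ by the uniform decay from part \ref{fcru.l.elem-prop-b}, while the horizontal sides converge to $\int_{\rand\strip{\alpha}}f(z)\,\ud z$ by the absolute integrability from part \ref{fcru.l.elem-prop-a}; letting $R\to\infty$ gives $\int_{\rand\strip{\alpha}}f(z)\,\ud z=0$. For \ref{fcru.l.elem-prop-d}, set $\delta:=\tfrac12(\omega-\alpha)>0$, so that $\cls{B(z,\delta)}\subseteq\strip{\omega}$ for every $z\in\cls{\strip{\alpha}}$; Cauchy's estimate gives $\abs{f'(z)}\le\delta^{-1}\norm{f}_{\Ha^\infty}$, so $f'$ is bounded on $\strip{\alpha}$, and Cauchy's integral formula for the derivative gives $\abs{f'(r+\ui s)}\le\tfrac1{2\pi\delta}\int_0^{2\pi}\abs{f(r+\ui s+\delta\ue^{\ui\theta})}\,\ud\theta$; integrating in $r$, substituting $u=r+\delta\cos\theta$, and applying the already-proved part \ref{fcru.l.elem-prop-a} with the intermediate width $\alpha+\delta\in(0,\omega)$ bounds $\int_\R\abs{f'(r+\ui s)}\,\ud r$ uniformly for $\abs{s}<\alpha$; hence $f'\in\elementary(\strip{\alpha})$.

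The only genuinely delicate point is the choice of kernel in the first step. The ordinary Cauchy kernel $(\zeta-z)^{-1}$ is not absolutely integrable in $\re z$ along a horizontal line, so \eqref{eq:cauchy-formel} is of no use for \ref{fcru.l.elem-prop-a}; one really needs the Gaussian damping in \eqref{eq:gauss-cauchy-formel}, after which all four parts reduce to routine applications of Tonelli's theorem, dominated convergence, and Cauchy's theorem.
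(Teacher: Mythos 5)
Your proof is correct and follows essentially the same route as the paper: the Gauss--Cauchy formula \eqref{eq:gauss-cauchy-formel} combined with Tonelli for \ref{fcru.l.elem-prop-a}, dominated convergence for \ref{fcru.l.elem-prop-b}, Cauchy's theorem on the rectangles with corners $\pm R \pm \ui\alpha$ for \ref{fcru.l.elem-prop-c}, and the Cauchy integral formula for $f'$ for \ref{fcru.l.elem-prop-d}. The only cosmetic difference is in \ref{fcru.l.elem-prop-d}, where the paper integrates $f(\zeta)/(\zeta-z)^2$ over $\rand\strip{\omega'}$ directly, while you use small circles of radius $\tfrac12(\omega-\alpha)$ together with part \ref{fcru.l.elem-prop-a}; both are equivalent routine estimates.
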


\begin{proof}
For the proof of a) fix $\alpha < \omega' < \omega$. Then for $0 \le s \le \alpha$,
\begin{align*}
\int_{\rand\strip{s}} & |f(z)|\,\abs{\ud{z}} 
\le \; \tfrac1{2\pi}  \int_{\rand\strip{s}} \int_{\rand\strip{\omega'}} \bigl|f(\zeta) \tfrac{ \ue^{-(\zeta-z)^2} }{\zeta-z} \bigr|\,\abs{\ud{\zeta}} \,\abs{\ud{z}} \\
 =  & \; \tfrac1{2\pi}   \int_{\rand\strip{\omega'}} |f(\zeta)| 
\int_{\rand\strip{s}} \bigl|\tfrac{ \ue^{-(\zeta-z)^2} }{\zeta-z}\bigr| \,
\abs{\ud{z}} \,\abs{\ud{\zeta}}\\
\le  & \; \tfrac1{2\pi} \norm{ f }_{ \Ell{1}(\partial\strip{\omega'})}
\int_{\rand\strip{s}} \tfrac{ \ue^{-(\re z)^2} \ue^{(\omega' + \alpha)^2}}{\omega' - \alpha} \,
\abs{\ud{z}} = 
\tfrac{\ue^{(\omega' + \alpha)^2}}{\sqrt{\pi}(\omega' - \alpha)} \,
\norm{ f }_{ \Ell{1}(\partial\strip{\omega'})}.
\end{align*}
b) To see that
$\abs{f(z)} \to 0$ as
$\abs{\re z} \to \infty, \abs{\im z} \le \alpha$  one uses
the representation formula \eqref{eq:cauchy-formel} or
\eqref{eq:gauss-cauchy-formel} and the dominated convergence theorem. 

\smallskip
\noindent
c) By Cauchy's formula one has $0 = \int_{R_n} f(z)\, \ud{z}$ where
$R_n$ is the rectangle with corners at $\pm n \pm \ui \alpha$, $n \in \N$.
When letting $n \to \infty$ the upper and the lower side of the rectangle
approach $\partial\strip{\omega}$ and the integrals over the left and
right side vanish since $f\in \Ce_0(\cls{\strip{\omega'}})$ by b).

\smallskip
\noindent
d) Let $\alpha < \omega'< \omega$. Then by Cauchy's integral formula, 
\[ f'(z) = \tfrac1{2\pi \ui} \int_{\rand{\strip{\omega'}}}
\tfrac{f(\zeta)\, \ud{\zeta}}{(\zeta - z)^2} \quad (\abs{\im z} < \omega').
\] 
In particular
\[ \int_{\abs{\im z} = \alpha} \abs{f(z)} \abs{\ud{z}}
\le  
\Big(\int_{\rand{\strip{\omega'}}} \tfrac{\abs{f(\zeta)} \, \abs{\ud{\zeta}}}{2\pi}
\Big) \, \Big( \max_{\zeta = \pm \ui \omega'}\int_{\abs{\im z} = \alpha} 
  \tfrac{\abs{\ud{z}}}{\abs{\zeta  - z}^2}\Big) < \infty.
\]
\end{proof}

\subsubsection*{{\bf Operators of Strip Type}}
  A closed operator $A$ on a Banach space $X$ 
 is called of {\em strip type}
  $\alpha \ge 0$, if $\sigma(A) \subseteq \overline{\strip{\alpha}}$
  and if for all $\beta>\alpha$, the resolvent $R(\cdot, A)$ is
  uniformly bounded on $\C\ohne   \strip{\beta}$. 
  If for each $\beta > \alpha$ we have an estimate $\norm{R(\lambda, A)}\lesssim ( \abs{\im\lambda}-\beta)^{-1}$
on $\C\ohne \cls{\strip{\beta}}$, 
$A$ is called of {\em strong strip type} $\alpha$. 

\medskip

\noindent For an operator $A$ of strip type $\alpha \ge 0$, $\omega > \alpha$ and 
an elementary function $f \in \elementary(\strip{\omega})$ there is a natural
definition of the operator $f(A) \in \Lin(X)$ by
\begin{equation}\label{fcru.eq.fc-def}
  f(A) \overset{\text{def}}{=} \tfrac1{2\pi \ui}
\int_{\partial\strip{\omega'}} f(z) R(z, A)\,\ud{z},
\end{equation}
which is independent of $\omega' \in (\alpha, \omega)$ by Cauchy's theorem.
The mapping $\elementary(\strip{\omega})\to \Lin(X)$ given by
$f \mapsto f(A)$ is called
the elementary calculus for $A$. It is rather routine to show by virtue of
the resolvent identity, the residue theorem and contour deformation arguments that 
this is a homomorphism of algebras, and that
\[ \big(\tfrac{f(z)}{\lambda - z}\big)(A) = f(A)R(\lambda,A)
\quad\text{and}\quad 
\big(\tfrac{1}{(\lambda - z)(\mu - z)}\big)(A) = R(\lambda,A) R(\mu,A)
\]
for all $\lambda, \mu \in \C \ohne \cls{\strip{\omega}}$, cf.~\cite[Chapter 2]{HaaseFC} or
\cite{BatHaaMub2013}.

\medskip
\noindent
An operator $A$ of strip type $\alpha\in  [0, \omega)$ on a Banach space $X$
{\em has a bounded $\Ha^\infty(\strip{\omega})$-calculus}  if  there is
 a constant $C\ge 0$ such that
\[\norm{f(A)} \le C \norm{f}_{\Ha^\infty(\strip{\omega})} 
\quad\text{for all}\, f\in \elementary(\strip{\omega}).
\]

\begin{lemma}\label{fcru.l.bdd-hinf-char}
Let $0 \le \alpha < \omega$, and let  $A$  be a densely defined 
operator $A$ of strip type $\alpha\in  [0, \omega)$ on a Banach space $X$.
Then $A$ has a bounded $\Ha^\infty(\strip{\omega})$-calculus  if  and only
if the elementary calculus has an extension to a bounded
algebra homomorphism $\Phi: \Ha^\infty(\strip{\omega}) \to \Lin(X)$. 
In this case, such an extension is unique and $\norm{\Phi(f)}\le C \norm{f}_\infty$
holds for every $f\in \Ha^\infty(\strip{\omega})$ if it holds for every $f\in
\elementary(\strip{\omega})$.
\end{lemma}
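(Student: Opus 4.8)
The plan is to treat the two implications separately; only the forward one has content. For ``$\Leftarrow$'': if $\Phi\colon\Ha^\infty(\strip{\omega})\to\Lin(X)$ is a bounded algebra homomorphism extending the elementary calculus, then $f(A)=\Phi(f)$ for every $f\in\elementary(\strip{\omega})$, so $\norm{f(A)}=\norm{\Phi(f)}\le C\norm{f}_\infty$ and $A$ has a bounded $\Ha^\infty(\strip{\omega})$-calculus. I would dispose of this in one line. For ``$\Rightarrow$'' I would first recall that the elementary calculus extends, via regularisation, to the natural functional calculus $\Phi$ on the algebra of regularisable functions, and observe that $e(z):=(\beta^2+z^2)^{-1}$ (equivalently $e(A)=R(\ui\beta,A)R(-\ui\beta,A)$) with $\beta>\omega$ is a \emph{universal regulariser}: it lies in $\elementary(\strip{\omega})$, $ef\in\elementary(\strip{\omega})$ for every $f\in\Ha^\infty(\strip{\omega})$ because $\elementary(\strip{\omega})$ is an ideal under multiplication by bounded holomorphic functions, and $e(A)$ is injective. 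Hence $\Phi(f)$ is a well-defined closed operator for every $f\in\Ha^\infty(\strip{\omega})$, satisfying $e(A)\Phi(f)=(ef)(A)$.

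The heart of the matter is to upgrade the a priori estimate to $\Phi(f)\in\Lin(X)$ with $\norm{\Phi(f)}\le C\norm{f}_\infty$. Here I would combine an approximation with the scalar convergence lemma, which is available since $A$ is densely defined and of strip type (\cite[Section 5.1]{HaaseFC}). Put $e_n(z):=n^2(n^2+z^2)^{-1}\in\elementary(\strip{\omega})$ for $n>\omega$; then $\norm{e_n}_\infty\le n^2/(n^2-\omega^2)\to1$ and $e_n\to1$ pointwise on $\strip{\omega}$. Given $f\in\Ha^\infty(\strip{\omega})$, set $f_n:=e_nf\in\elementary(\strip{\omega})$; then $f_n\to f$ pointwise, $\sup_n\norm{f_n}_\infty<\infty$, and by hypothesis $\Phi(f_n)=f_n(A)\in\Lin(X)$ with $\norm{\Phi(f_n)}\le C\norm{e_n}_\infty\norm{f}_\infty$, which is bounded in $n$. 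The convergence lemma then yields $\Phi(f)\in\Lin(X)$ and $\Phi(f_n)\to\Phi(f)$ strongly, whence for every $x\in X$
\[
\norm{\Phi(f)x}=\lim_{n\to\infty}\norm{\Phi(f_n)x}\le\Bigl(\liminf_{n\to\infty}C\norm{e_n}_\infty\Bigr)\norm{f}_\infty\norm{x}=C\norm{f}_\infty\norm{x}.
\]
That $f\mapsto\Phi(f)$ is a unital algebra homomorphism on $\Ha^\infty(\strip{\omega})$ then follows routinely: for $f,g\in\Ha^\infty(\strip{\omega})$ both $\Phi(fg)$ and $\Phi(f)\Phi(g)$ are everywhere-defined bounded operators, and regularising the product gives $e(A)^2\Phi(fg)=\Phi(e^2fg)=(ef)(A)(eg)(A)=e(A)^2\Phi(f)\Phi(g)$ by multiplicativity of the elementary calculus, so injectivity of $e(A)^2$ finishes it (linearity, and $\Phi(\car)=\Id$ via $e(A)\Phi(\car)=e(A)$, are easier).

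Uniqueness I would get the same way: if $\Psi\colon\Ha^\infty(\strip{\omega})\to\Lin(X)$ is another bounded algebra homomorphism extending the elementary calculus, then for each $f$ the function $ef$ is elementary, so $e(A)\Psi(f)=\Psi(e)\Psi(f)=\Psi(ef)=(ef)(A)=e(A)\Phi(f)$, and injectivity of $e(A)$ forces $\Psi(f)=\Phi(f)$.

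The main obstacle is precisely the boundedness step of the second paragraph: since $\elementary(\strip{\omega})$ is not sup-norm dense in $\Ha^\infty(\strip{\omega})$ (elementary functions vanish at infinity), one cannot extend the estimate by mere continuity, and a naive closed-operator argument would only control $\Phi(f)$ on a dense subspace. It is the density of $\dom(A)$ — entering through the convergence lemma — that forces $\dom(\Phi(f))=X$ and simultaneously preserves the constant $C$.
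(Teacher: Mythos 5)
Your proof is correct and follows essentially the same strategy as the paper: regularise $f$ by elementary multipliers $e_n$ with $e_n\to 1$ pointwise and $\norm{e_n}_\infty\to 1$ (so the constant $C$ survives the limit), and obtain uniqueness from a universal regulariser together with injectivity of $e(A)$. The only differences are cosmetic: the paper uses Gaussian regularisers $e_n(z)=\ue^{-z^2/n}$ and justifies the limit passage directly --- $(e_nf)(A)x\to\Phi(f)x$ for $x$ in the dense subspace $\dom(A^2)$, and then closedness of the regularised operator $\Phi(f)$ yields $\Phi(f)\in\Lin(X)$ --- whereas you take rational regularisers $n^2(n^2+z^2)^{-1}$ and delegate precisely this step to the scalar convergence lemma of \cite{HaaseFC}, which is legitimate since that lemma is proved there independently of the present statement.
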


\begin{proof}
Suppose first that the bounded algebra homomorphism
$\Phi: \Ha^\infty(\strip{\omega}) \to \Lin(X)$ extends
the elementary calculus. If $f\in \Ha^\infty(\strip{\omega})$ and 
$e\in \elementary(\strip{\omega})$ such that
$ef \in \elementary(\strip{\omega})$ and $e(A)$ is injective, then $(ef)(A) = \Phi(ef) = \Phi(e) \Phi(f) = e(A) \Phi(f)$. Hence 
\[ \Phi(f) = e(A)^{-1} (ef)(A)
\]
with the natural domain. Since each 
function $e(z) = (\lambda - z)^{-2}$ with $\abs{\im \lambda} > \omega$ is an instance, 
this shows uniqueness.

\smallskip
\noindent
Now suppose that 
$\norm{f(A)} \le C \norm{f}_\infty$ for each
$f\in \elementary(\strip{\omega})$. We consider the extension $\Phi$ to all of
$\Ha^{\infty}(\strip{\omega})$ by regularisation.
Let $f\in \Ha^\infty(\strip{\omega})$. 
Then for each $n \in \N$  the function $e_n(z) := \ue^{-(\mfrac{1}{n}) z^2}$
is elementary, hence also $f e_n$ is, and thus
\[
\norm{(e_nf)(A)} \le C \norm{e_nf}_\infty \le C \norm{e_n}_\infty \norm{f}_\infty
\le C \ue^{-\tfrac{1}{n}\omega^2} \norm{f}_\infty.
\]
On the other hand, it is easy to see that for $x$ in the dense subspace
$\dom(A^2)$ of $X$, one has $x\in \dom(f(A))$ and $(e_nf)(A)x \to \Phi(f)x$.
Hence $\Phi(f)$ is bounded by $C \norm{f}_\infty$, 
and since it is closed and densely defined,  $\Phi(f) \in \Lin(X)$ and 
$\norm{\Phi(f)} \le C \norm{f}_\infty$. 
\end{proof}

\subsubsection*{{\bf Sectorial Operators}}

A closed operator $A$ with dense domain and dense range on a Banach space $X$ 
is called {\em sectorial of angle}
$\alpha \in [0, \pi)$, if $\sigma(A) \subseteq \overline{\sector{\alpha}}$
and if for all $\beta\in (\alpha, \pi)$, the mapping  $z \mapsto z R(z, A)$ is
uniformly bounded on $\C\ohne  \strip{\beta}$. 

One can set up a functional calculus for sectorial operators on sectors analogously 
to the strip case. Namely, $f(A)$ is defined for an elementary function
$f\in \elementary(\sector{\omega})$ by means of \eqref{fcru.eq.fc-def} with $\rand\strip{\omega'}$
replaced by $\rand\sector{\omega'}$. For a general $f \in \Ha^\infty(\sector{\omega})$,
$f(A)$ is defined by regularisation as described above.
Then the sectorial  analogue of Lemma~\ref{fcru.l.bdd-hinf-char} holds.

It turns out \cite[Prop.~3.5.2]{HaaseFC}
that each sectorial operator $A$ of angle $\alpha$ 
has a logarithm $\log(A)$, which is of (strong) strip type $\alpha$. 
The functional calculi of these operators are linked via the exp/log-correspondence,
i.e., $f(\log A) = (f\nach \log)(A)$ for all $f\in \Ha^{\infty}(\strip{\omega})$,
see \cite[Theorem~4.2.4]{HaaseFC}. It is not true in general 
that every (strong) strip type operator is the logarithm
of a sectorial one \cite[Example 4.4.1]{HaaseFC}.
However, as long as one confines oneself to operators with bounded
$H^\infty$-calculus, the correspondence is perfect \cite[Prop.~5.5.3]{HaaseFC}, 
and hence it suffices to  consider in detail only one of these cases.

\section{\ELLONE-frame-bounded  sets}\label{app:l1-bdd}

Let $H$ be a Hilbert space.  A sequence $(f_\alpha)_{\alpha\in I}$ in $H$ is
called a {\em frame} for $H$ 
if there exist two constants $0<A<B$ such that 
\begin{equation}  \label{eq:def-frame}
   A^2 \norm{ h }_H^2 \le \sum_I  |\sprod{h}{f_\alpha}|^2 \le B^2 \norm{ h }_H^2
\qquad \text{for all
$h \in H$.}
\end{equation}
Equivalently, a frame is given by a pair of operators $(L, R)$ where
$R: H \to \ell_2(I)$ and $L: \ell_2(I) \to H$ such that $L\,R =
\text{Id}_H$. Indeed, in that case 
$f_\alpha:= R^*e_\alpha$, $\alpha \in I$, is a frame, 
where $(e_\alpha)_{\alpha \in I}$ is the canonical basis of 
$\ell_2(I)$. (One easily obtains (\ref{eq:def-frame}) with
$A=\norm{L}^{-1}$ and $B=\norm{R}$.)
Conversely, if $(f_\alpha)_{\alpha \in I}$ is a frame and 
$R: H \to \ell_2(I)$ is defined by $Rf := (\sprod{f}{f_\alpha})_{\alpha \in I}$,
then $R^*R$ is a selfadjoint, positive  and invertible operator, and 
hence $L:= (R^*R)^{-1}R^*$ satisfies $LR =\Id_H$. 
\vanish{
Indeed, denoting by $(\widetilde f_\alpha)$ the dual
frame, 
\[
h = \sum_{\alpha \in I} \sprod{h}{f_n} \widetilde f_n = \sum_{\alpha \in I} \sprod{h}{\widetilde f_n} f_n 
\]
which asserts that $R h = \sprod{h}{f_\alpha}_{\alpha\in I}$ and $L
(\lambda_\alpha) = \sum_{\alpha\in I} \lambda_\alpha \widetilde
f_\alpha$ provide such a pair of operators. Conversely, given a couple
$(L, R)$ of operators with $L\,R = \text{Id}_H$, let $f_\alpha =
R^*e_\alpha$ where $(e_\alpha)_{\alpha\in I}$ is an orthonormal basis
of $\ell_2(I)$. One easily obtains (\ref{eq:def-frame}) with
$A=\norm{L}^{-1}$ and $B=\norm{R}$.
}

Let $H$ be a Hilbert space.  A subset $M$ of $H$ is called 
\emph{\ELLONE-frame-bounded} if there exists a frame $(f_\alpha)_{\alpha\in I}$ 
of $H$ such that
\[
   \sup_{x \in M} \sum_{\alpha\in I} \abs{\sprod{x}{f_\alpha}} < \infty,
\]
In this case, in virtue of the above discussion, the 
{\em \ELLONE-frame-bound} of a subset $M \subseteq H$ is defined as
\begin{equation}  \label{eq:ell-eins-bdd}
 \abs{M}_1 := \inf  \;  \norm{L} \sup_{x \in M} \sum_{\alpha\in I} 
\abs{\sprod{Rx}{e_\alpha}} \}
\end{equation}
where the infimum is taken over all pairs of operators $(L, R)$ with
$R: H \to \ell_2(I)$ and $L: \ell_2(I) \to H$ such that $L R =
\Id_H$.  Let $X$ be a Banach space. An operator $T: X \to
H$, called {\em \ELLONE-frame-bounded} if $T$ maps the unit ball
of $X$ into an
\ELLONE-frame-bounded subset of $X$. In this case,
\[ 
   \abs{T}_{\ell_1} := \bigl| \{ Tx \suchthat \norm{x}_X\le 1\} \bigr|_1
\]
is called the {\em \ELLONE-frame-bound} of $T$.

\begin{rems}\label{lob.r.l1-bdd}
\begin{aufziii}
\item\label{item:l1-bdd-compact} \ELLONE-frame-bounded sets need not be compact. 

\item\label{item:l1-bdd-composition} Let $X, Y$ be Banach spaces.
  If $U: X \to H$ is \ELLONE-frame-bounded and $V: Y \to X$ is bounded, then
  $UV: Y \to H$ is \ELLONE-frame-bounded and
\[
     \abs{UV}_{\ell_1} \le \abs{U}_{\ell_1} \norm{V}.
\]

\item We point out that we do not know yet whether finite unions or simple
translates of 
  \ELLONE-frame-bounded sets are again \ELLONE-frame-bounded, something one
  would certainly expect to hold for a ``good'' boundedness concept.
  Consequently, we do not know whether the set of \ELLONE-frame-bounded
  operators $X \to H$ form a vector space.
\end{aufziii}
\end{rems}

\begin{lemma}\label{lob.l.l1-bdd-basic}
Let $H$ be any Hilbert space and $M \subseteq H$. Then the following
  assertions hold.
\begin{aufzi}
\item\label{item:label-lemma-a} If $M$ is \ELLONE-frame-bounded, then it is
  norm-bounded, with
   \[
     \sup_{x\in M} \norm{x} \le \abs{M}_1
   \]
   If $\spann (M)$ is finite-dimensional and $M$ is norm-bounded, then
   it is \ELLONE-frame-bounded.
 \item\label{item:label-lemma-b} If $M$ is \ELLONE-frame-bounded and $S: H
   \to K$ is an isomorphism into another Hilbert space, then $S(M)$ is
   \ELLONE-frame-bounded with
  \[ 
    \abs{S(M)}_1 \le \norm{S} \; \abs{M}_1
  \]
\item\label{item:label-lemma-c} If $M$ is \ELLONE-frame-bounded, then
  ${\cabsconv(M)}$ is \ELLONE-frame-bounded.
\end{aufzi}
\end{lemma}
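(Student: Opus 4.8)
The plan is to prove Lemma~\ref{lob.l.l1-bdd-basic} by unwinding the definition of \ELLONE-frame-boundedness in terms of factorisations $LR = \Id_H$ and estimating directly.

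\textbf{Part a).} For the norm bound, fix $x \in M$ and let $(L,R)$ be any pair with $R : H \to \ell_2(I)$, $L : \ell_2(I) \to H$, $LR = \Id_H$. Then $x = LRx$, so $\norm{x} \le \norm{L}\,\norm{Rx}_{\ell_2} \le \norm{L}\,\norm{Rx}_{\ell_1} = \norm{L}\sum_{\alpha\in I}\abs{\dprod{Rx}{e_\alpha}}$, using $\norm{\cdot}_{\ell_2}\le \norm{\cdot}_{\ell_1}$. Taking the supremum over $x\in M$ and then the infimum over $(L,R)$ gives $\sup_{x\in M}\norm{x} \le \abs{M}_1$. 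For the converse, if $V := \spann(M)$ has finite dimension $d$, pick an orthonormal basis $f_1,\dots,f_d$ of $V$ and let $P$ be the orthogonal projection $H \to V$. Take $I = \{1,\dots,d\}$, $Rh := (\dprod{h}{f_j})_{j=1}^d$ and $L(\lambda_j)_j := \sum_j \lambda_j f_j$; then $LR = P$ which is the identity on $V \supseteq M$ --- but we need $LR = \Id_H$, so instead extend $f_1,\dots,f_d$ to an orthonormal basis $(f_\alpha)$ of all of $H$ and use the associated full pair. For $x \in M \subseteq V$ we have $\dprod{x}{f_\alpha} = 0$ for $\alpha \notin \{1,\dots,d\}$, so $\sum_\alpha \abs{\dprod{x}{f_\alpha}} = \sum_{j=1}^d \abs{\dprod{x}{f_j}} \le \sqrt{d}\,\norm{x}$ by Cauchy--Schwarz; since $\norm{L} = \norm{R} = 1$ for an orthonormal basis, $\abs{M}_1 \le \sqrt{d}\sup_{x\in M}\norm{x} < \infty$.

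\textbf{Part b).} Let $S : H \to K$ be an isomorphism into $K$ (so $S$ is bounded and bounded below on its range; I read ``isomorphism into'' as a topological isomorphism onto a closed subspace $S(H)\subseteq K$). Given $(L,R)$ with $LR = \Id_H$, I would produce a pair $(\tilde L, \tilde R)$ for $K$ with $\tilde L\tilde R = \Id_K$ by setting $\tilde R := RS^{-1}Q : K \to \ell_2(I)$ and $\tilde L := SL : \ell_2(I) \to K$, where $Q : K \to S(H)$ is the orthogonal projection onto the closed subspace $S(H)$ and $S^{-1} : S(H) \to H$ is the (bounded) inverse on the range. Then $\tilde L\tilde R = SLRS^{-1}Q = SS^{-1}Q = Q \ne \Id_K$ in general, so this naive attempt needs a correction term on $S(H)^\perp$; one fixes it by replacing $\tilde R$ with $\tilde R \oplus R_0$ and $\tilde L$ with $\tilde L \oplus L_0$ where $(L_0,R_0)$ is any frame pair for the Hilbert space $S(H)^\perp$ (which exists, e.g.\ from an orthonormal basis), enlarging the index set to $I \sqcup I'$. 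For $x \in M$, $Sx \in S(H)$, so the $R_0$-component vanishes and $\sum_{\beta} \abs{\dprod{\tilde R Sx}{e_\beta}} = \sum_{\alpha\in I}\abs{\dprod{Rx}{e_\alpha}}$, while $\norm{\tilde L} \le \norm{S}\norm{L}$ after noting the direct-sum operator norm is the max and absorbing the (fixed) contribution of $L_0$ --- actually cleaner: choose $(L_0,R_0)$ with $\norm{L_0}\le\norm{S}\norm{L}$, which is possible by scaling an orthonormal basis pair. Then $\abs{S(M)}_1 \le \norm{S}\,\abs{M}_1$ after taking suprema and infima. I expect this patching on the orthogonal complement to be the fiddliest point; an alternative is to observe that an isomorphism into $K$ extends to an isomorphism of $H\oplus H^\perp$-type objects, but the direct-sum bookkeeping above is the most elementary route.

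\textbf{Part c).} Here I would use that for a fixed frame pair $(L,R)$, the functional $x \mapsto \sum_{\alpha}\abs{\dprod{Rx}{e_\alpha}} = \norm{Rx}_{\ell_1(I)}$ is the restriction to $H$ (via $R$) of a seminorm, hence is convex and absolutely homogeneous; moreover it is continuous on $H$ because $\norm{Rx}_{\ell_1} \ge \norm{Rx}_{\ell_2}$ only gives a lower bound, but continuity is not needed --- convexity and absolute homogeneity suffice. If $C := \sup_{x\in M}\norm{Rx}_{\ell_1} < \infty$, then for any absolutely convex combination $y = \sum_{k} c_k x_k$ with $x_k\in M$, $\sum_k\abs{c_k}\le 1$, we get $\norm{Ry}_{\ell_1} \le \sum_k \abs{c_k}\norm{Rx_k}_{\ell_1} \le C$, so $\sup_{y\in\conv_{\mathrm{abs}}(M)}\norm{Ry}_{\ell_1}\le C$. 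Passing to the closure $\cabsconv(M)$: if $y_n \to y$ in $H$ with $\norm{Ry_n}_{\ell_1}\le C$, then for every finite $F\subseteq I$, $\sum_{\alpha\in F}\abs{\dprod{Ry}{e_\alpha}} = \lim_n \sum_{\alpha\in F}\abs{\dprod{Ry_n}{e_\alpha}} \le C$ (finite sum, $R$ continuous), and taking the supremum over $F$ gives $\norm{Ry}_{\ell_1}\le C$. Hence $\abs{\cabsconv(M)}_1 \le \norm{L}\cdot C$ for every valid pair, and taking the infimum yields $\abs{\cabsconv(M)}_1 \le \abs{M}_1$; the reverse inequality is trivial since $M \subseteq \cabsconv(M)$, so in fact equality holds. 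The main obstacle overall is really just organising the factorisation gymnastics in part b); parts a) and c) are routine once the definitions are unwound.
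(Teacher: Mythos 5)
Your proposal is correct and follows essentially the same route as the paper, which simply declares a) and b) clear and settles c) by observing that the closed unit ball of $\ell_1(I)$ is absolutely convex and closed in $\ell_2(I)$ --- exactly the fact your finite-partial-sum/limit argument verifies by hand after pulling back along the fixed $R$. Your orthogonal-complement patch in b) (splitting $K = S(H)\oplus S(H)^\perp$, using $\tilde L = SL$, $\tilde R = RS^{-1}Q$ on the range and a rescaled orthonormal-basis pair on the complement, which contributes nothing for elements of $S(M)$) is a sound way to make precise what the paper leaves implicit, and the scaling of $L_0$ correctly preserves the bound $\abs{S(M)}_1 \le \norm{S}\,\abs{M}_1$.
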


\begin{proof}
  Parts~\ref{item:label-lemma-a}~and~\ref{item:label-lemma-b} are
  clear.  
  For the proof of~\ref{item:label-lemma-c} it suffices to
  notice that the closed unit ball of $\ell_1(I)$ is absolutely convex
  and closed in $\ell_2(I)$.
\end{proof}

\begin{rem}
Every \ELLONE-frame-bounded operator $T: X \to H$ factorises
through an $\ell_1$-space, but the converse is not true in general. 
Indeed, let $(f_n)_{n \in \N}$ be
a countable dense subset of the unit sphere 
$\{ f\in \ell_2 \suchthat \norm{f}_2 =1\}$ of $\ell_2$. Let $T: \ell_1 \to \ell_2$
be the operator defined by  $T(x_n)_n := \sum_n x_n f_n$. 
Then the image under $T$ of the unit ball of $\ell_1$ is
dense in the  unit ball of $\ell_2$, and hence $T$ is not \ELLONE-frame-bounded.
\end{rem}

For operators between Hilbert spaces, the class of
\ELLONE-frame-bounded operators coincides with the class of 
Hilbert--Schmidt operators.

\begin{lemma}\label{lob.l.l1-bdd-HS}
For an operator $T: K \to H$, $K$ and $H$ Hilbert spaces,
the following assertions are equivalent:
\begin{aufzii}
\item\label{item:ellone-bdd} $T$ is \ELLONE-frame-bounded.
\item\label{item:factors} $T$ factorises through an $\ell_1$-space.
\item\label{item:is-HS} $T$ is Hilbert-Schmidt.
\end{aufzii}
\end{lemma}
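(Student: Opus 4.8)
The plan is to run the cycle of implications \ref{item:ellone-bdd}$\then$\ref{item:factors}$\then$\ref{item:is-HS}$\then$\ref{item:ellone-bdd}. The implication \ref{item:ellone-bdd}$\then$\ref{item:factors} is just an unwinding of the definitions: choose a frame for $H$ witnessing the \ELLONE-frame-boundedness of $T$, present it as a pair $(L,R)$ with $R: H \to \ell_2(I)$, $L:\ell_2(I)\to H$ and $LR = \Id_H$ as in Appendix~\ref{app:l1-bdd}, and observe that the \ELLONE-frame-bound condition says exactly that $RT$ maps $K$ \emph{boundedly into $\ell_1(I)$}, not merely into $\ell_2(I)$. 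Since $\ell_1(I)$ embeds contractively into $\ell_2(I)$, the restriction of $L$ to $\ell_1(I)$ is bounded, and the identity $T = L\circ(RT)$ (using $LR=\Id_H$) displays $T$ as a factorisation through the $\ell_1$-space $\ell_1(I)$.

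For \ref{item:factors}$\then$\ref{item:is-HS} — the one genuinely substantial step, which I expect to be the main obstacle — I would write $T = BA$ with $A: K\to \ell_1(J)$ and $B:\ell_1(J)\to H$ bounded, and bring in Grothendieck's theorem in the form: every bounded operator from an $L_1$-space into a Hilbert space is $1$-summing, hence $2$-summing. Applied to $B$ and combined with the ideal property of the class of $2$-summing operators, this makes $T=BA$ a $2$-summing operator from $K$ to $H$; and an operator between Hilbert spaces is $2$-summing precisely when it is Hilbert--Schmidt, with $\pi_2(T)=\norm{T}_{HS}$. All three ingredients (Grothendieck's inequality, the ideal property, and the $2$-summing $=$ Hilbert--Schmidt identification) are available in \cite{DieJarTon1995}; the point is just to assemble them, and no new computation is required.

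For \ref{item:is-HS}$\then$\ref{item:ellone-bdd} I would invoke the singular value decomposition $Tx = \sum_n s_n \sprod{x}{e_n}\,g_n$ with orthonormal systems $(e_n)$ in $K$, $(g_n)$ in $H$ and $\sum_n s_n^2 = \norm{T}_{HS}^2 < \infty$, then enlarge $(g_n)$ to an orthonormal basis $(g_\alpha)_{\alpha\in I}$ of $H$. An orthonormal basis is a frame (with $A=B=1$), and for $\norm{x}_K\le 1$ only the $g_\alpha$ coming from the decomposition contribute, so Cauchy--Schwarz gives $\sum_{\alpha}\abs{\sprod{Tx}{g_\alpha}} = \sum_n s_n\abs{\sprod{x}{e_n}} \le (\sum_n s_n^2)^{1/2}(\sum_n\abs{\sprod{x}{e_n}}^2)^{1/2}\le \norm{T}_{HS}$. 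This is exactly \ELLONE-frame-boundedness of $T$, and in fact yields $\abs{T}_{\ell_1}\le \norm{T}_{HS}$; running the whole cycle then also produces the quantitative equivalence $\abs{T}_{\ell_1}\sim \norm{T}_{HS}$, with the implicit constant coming from Grothendieck's constant.
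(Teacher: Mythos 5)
Your proposal is correct and follows essentially the same route as the paper: the same factorisation $T = L\circ(RT)$ through $\ell_1(I)$ for (i)$\then$(ii), and the same singular-value-decomposition plus Cauchy--Schwarz/Bessel argument (extending the image orthonormal system to a basis) for (iii)$\then$(i), with the same bound $\abs{T}_{\ell_1}\le \norm{T}_{HS}$. The only difference is in (ii)$\then$(iii), where the paper simply cites \cite[Corollary~4.12]{DieJarTon1995} (Hilbert-space operators are Hilbert--Schmidt iff they factor through an $L_1$-space), while you reassemble that corollary's standard proof from Grothendieck's theorem, the summing-operator ideal property, and the identification of $2$-summing with Hilbert--Schmidt operators -- same mathematics, just unpacked.
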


\begin{proof}
Suppose that~\ref{item:ellone-bdd} holds, i.e., $T$ is \ELLONE-frame-bounded.
Let $R: H \to \ell_2(I)$ and $L: \ell_2(I) \to H$ as in 
  (\ref{eq:ell-eins-bdd}). Then $T = L R T$ factors as $T = V\,U$ with
\[
U: \left\{
  \begin{array}{lcl}
      K &\pfeil  &\ell_1(I) \\
      x &\mapsto &\dprod{R Tx}{e_\alpha}
  \end{array}
  \right.
\qquad\text{and}\qquad
V: \left\{
  \begin{array}{lcl}
      \ell_1(I) &\pfeil  & H\\
      (\lambda_\alpha)_\alpha &\mapsto & \sum_\alpha \lambda_\alpha L e_\alpha
  \end{array}
  \right.
\]
and we have~\ref{item:factors}.  Next, recall that \cite[Corollary
4.12]{DieJarTon1995} asserts that Hilbert space operators are
Hilbert-Schmidt if and only if they factor through an $\Lin_1$-space.
Hence~\ref{item:factors} implies~\ref{item:is-HS}. Finally,
if $T: K \to H$ is Hilbert-Schmidt, the singular value decomposition
yields a representation
\[
  T = \sum_n \tau_n \konj{f_n} \otimes e_n 
\]
with orthonormal systems $(e_n)_n$ and $(f_n)_n$ and scalars $\tau =
(\tau_n)_n \in \ell_2(\N)$. 
We extend $(e_n)_n$ in some way to an orthonormal basis
$(e_\alpha)_{\alpha\in I}$ of $H$. Then
\[ \sum_{\alpha}  \abs{\dprod{Tf}{e_\alpha}}
= \sum_n \abs{\dprod{Tf}{e_n}} = \sum_n \abs{\tau_n} \abs{\dprod{f}{f_n}}
\le \norm{\tau}_{\ell_{2}} \norm{f}_K
\]
by the Cauchy-Schwarz and the Bessel inequalities. Hence $T$ is
\ELLONE-frame-bounded with $\abs{T}_{\ell_1} \le \norm{\tau}_{\ell_2}
= \norm{T}_{HS}$.
\end{proof}

Let us provide some other  examples of \ELLONE-frame-bounded sets/operators.

\begin{exas}\label{lob.exas.l1-bdd}
\begin{aufziii}
\item \label{item:ex-l1-bdd-1} The {\em Wiener algebra}
  $\mathrm{A}(\torus)$ is the set of continuous functions on $\torus =
  \{ z\in \C \suchthat \abs{z} =1\}$ that have absolutely summable
  Fourier coefficients.  Obviously, the embedding $\mathrm{A}(\torus)
  \subseteq \Ell{2}(\torus)$ is \ELLONE-frame-bounded.

\item  \label{item:ex-l1-bdd-2}
  As a consequence of the above item, every embedding into
  $\Ell{2}(\torus)$ that factors through the Wiener algebra is
  \ELLONE-frame-bounded. This implies, e.g., that the embedding $\Ce^s[0,1]
  \subseteq \Ell{2}[0,1]$ is \ELLONE-frame-bounded for $s > 1/2$

\item  \label{item:ex-l1-bdd-3} 
  The embeddings $\mathrm{B}_{pq}^s[0,1] \subseteq \Ell{2}[0,1]$
  and $\We_p^s[0,1] \subseteq \Ell{2}[0,1]$ are \ELLONE-frame-bounded
  whenever $s > 1/2$.
\end{aufziii}
\end{exas}

We do not know whether the continuous analogue of Example
\ref{item:ex-l1-bdd-1} is true, namely whether the embedding
\[ 
\mathrm{A}(\R) := \{ f\in \Ell{1}(\R) \suchthat \fourier{f} \in \Ell{1}(\R) \}
\subseteq \Ell{2}(\R)
\]
is \ELLONE-frame-bounded.  However, we have the following.

\begin{lemma}\label{lob.l.W12}
  The canonical embedding ${\mathrm W}_1^2(\R) \embeds \Ell{2}(\R)$ is
  \ELLONE-frame-bounded.
\end{lemma}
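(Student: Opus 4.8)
The statement to prove is that the canonical embedding $\iota : \mathrm{W}_1^2(\R) \embeds \Ell{2}(\R)$ is \ELLONE-frame-bounded, where $\mathrm{W}_1^2(\R)$ denotes the Sobolev space of $\Ell{1}$-functions with weak derivatives up to order $2$ also in $\Ell{1}$ (this is the norm $\|f\|_{\mathrm{W}_1^2} = \int_\R \abs{f} + \abs{f'} + \abs{f''}\,\ud t$, consistent with the usage around Example~\ref{acsf.exa.shift-sf} and Lemma~\ref{lob.l.W12}'s invocation there). The plan is to produce an explicit frame $(f_\alpha)_\alpha$ of $\Ell{2}(\R)$ together with a uniform $\ell_1$-bound on $(\sprod{f}{f_\alpha})_\alpha$ over the unit ball of $\mathrm{W}_1^2(\R)$.

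First I would discretise $\R$ into unit intervals $I_k := [k, k+1)$, $k \in \Z$, and on each $I_k$ fix an orthonormal basis $(\varphi_{k,n})_{n \in \N_0}$ of $\Ell{2}(I_k)$, viewed as functions on $\R$ supported in $I_k$ (e.g. the shifted trigonometric system $\varphi_{k,n}(t) = \ue^{2\pi \ui n (t-k)}\car_{I_k}(t)$, or shifted Legendre-type polynomials). The union $(\varphi_{k,n})_{k,n}$ is then an orthonormal \emph{basis} of $\Ell{2}(\R)$, hence in particular a frame with $L = R^* = \mathrm{Id}$ in the language of Appendix~\ref{app:l1-bdd}. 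For $f \in \mathrm{W}_1^2(\R)$ I need to bound $\sum_{k,n} \abs{\sprod{f}{\varphi_{k,n}}}$. By the Cauchy--Schwarz inequality applied in $n$ for fixed $k$, one does \emph{not} immediately win: $\sum_n \abs{\sprod{f}{\varphi_{k,n}}} \le (\sum_n n^{-2})^{1/2}(\sum_n n^2\abs{\sprod{f}{\varphi_{k,n}}}^2)^{1/2}$ up to the $n=0$ term, and for the trigonometric system $\sum_n n^2 \abs{\sprod{f}{\varphi_{k,n}}}^2 \lesssim \|f'\|_{\Ell{2}(I_k)}^2 + \|f\|_{\Ell{2}(I_k)}^2$ by Parseval applied to $f'$ on $I_k$ (boundary terms from integration by parts are controlled because we may first split off a linear piece, or use that the local $\mathrm{H}^1$-norm dominates everything here). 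Thus $\sum_n \abs{\sprod{f}{\varphi_{k,n}}} \lesssim \|f\|_{\mathrm{H}^1(I_k)}$. Summing over $k$ and using Cauchy--Schwarz \emph{again}, this time with the weight coming from the decay of $\|f\|_{\mathrm{H}^1(I_k)}$, is the delicate point: $\sum_k \|f\|_{\mathrm{H}^1(I_k)}$ is an $\ell_1(\Z)$-sum of local norms, which is \emph{not} controlled by $\|f\|_{\mathrm{H}^1(\R)}$ in general. This is where the second derivative enters.

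The resolution — and the key step — is that the $\mathrm{W}_1^2$-norm controls $\sup$-norm decay: for $f \in \mathrm{W}_1^2(\R)$ one has $f, f' \in \Ell{1} \cap \mathrm{BUC}$ with $f, f' \to 0$ at $\pm\infty$, and moreover $\sum_k \big(\|f\|_{\Ell{\infty}(I_k)} + \|f'\|_{\Ell{\infty}(I_k)}\big) \lesssim \|f\|_{\mathrm{W}_1^2(\R)}$. Indeed, $\|f\|_{\Ell{\infty}(I_k)} \le \int_{I_k}\abs{f} + \int_{I_k}\abs{f'} \le \int_{I_{k-1}\cup I_k \cup I_{k+1}}(\abs{f}+\abs{f'})$ by the fundamental theorem of calculus within the interval (pick the point where $\abs f$ is smallest on $I_k$, which is $\le \int_{I_k}\abs f$, then integrate $\abs{f'}$), and similarly $\|f'\|_{\Ell{\infty}(I_k)} \le \int_{I_{k-1}\cup I_k\cup I_{k+1}}(\abs{f'}+\abs{f''})$; summing over $k$ gives the bound with a universal constant (each interval is counted thrice). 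Since $\|f\|_{\mathrm{H}^1(I_k)} \le \|f\|_{\Ell{\infty}(I_k)} + \|f'\|_{\Ell{\infty}(I_k)}$ (the interval has length $1$), we obtain
\[
\sum_{k,n} \abs{\sprod{f}{\varphi_{k,n}}} \;\lesssim\; \sum_k \|f\|_{\mathrm{H}^1(I_k)} \;\lesssim\; \|f\|_{\mathrm{W}_1^2(\R)}.
\]
This shows $\sup\{\sum_{k,n}\abs{\sprod{f}{\varphi_{k,n}}} : \|f\|_{\mathrm{W}_1^2} \le 1\} < \infty$, i.e. the image of the unit ball of $\mathrm{W}_1^2(\R)$ under $\iota$ is \ELLONE-frame-bounded, which is the assertion.

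The main obstacle is precisely the failure of $\sum_k \|f\|_{\mathrm{H}^1(I_k)} \lesssim \|f\|_{\mathrm{H}^1(\R)}$ — the naive local-to-global estimate loses; one genuinely needs the extra derivative (the $\Ell{1}$-norm of $f''$, equivalently $f' \in \Ell{1}$ plus $f \in \Ell{1}$) to force summable, not merely square-summable, local contributions, via the sup-norm decay argument above. A minor technical point to handle carefully: the integration-by-parts boundary terms in the Parseval estimate $\sum_n n^2\abs{\sprod f {\varphi_{k,n}}}^2 \lesssim \|f\|_{\mathrm{H}^1(I_k)}^2$ on a bounded interval; these are harmless since $\abs{f(k)}^2, \abs{f(k+1)}^2 \le \|f\|_{\Ell{\infty}(I_k)}^2 \lesssim \|f\|_{\mathrm{H}^1(I_k)}^2$ and are absorbed, but one should either choose the periodic trigonometric system and note $f$ need not be periodic (so a cleaner choice is the Legendre polynomials on $I_k$, for which the relevant Fourier-coefficient decay estimate in terms of derivatives is standard), or simply subtract the affine interpolant of $f$ on $I_k$ first. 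Either way the estimate $\sum_n\abs{\sprod f{\varphi_{k,n}}} \lesssim \|f\|_{\mathrm{H}^1(I_k)}$ holds with a universal constant, and the proof concludes as above.
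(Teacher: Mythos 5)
Your argument is correct, and it reaches the lemma by a genuinely different route than the paper. The paper's proof takes a smooth partition of unity $1=\sum_k \eta_k$ (translates of one bump supported in $(-\pi,\pi)$) and the Gabor frame $f_{n,k}=\eta_k\ue^{\ui n\cdot}$; two integrations by parts, with no boundary terms because the window is smooth and compactly supported, give $\abs{\dprod{g}{f_{n,k}}}\lesssim n^{-2}\int_{(k-\pi,k+\pi)}\bigl(\abs{g}+\abs{g'}+\abs{g''}\bigr)$, and the sum over $k$ then costs nothing since the windows have bounded overlap and the norm is $\Ell{1}$-based. You instead cut $\R$ sharply into unit intervals $I_k$, use an orthonormal basis of each $\Ell{2}(I_k)$ together with a local estimate $\sum_n\abs{\dprod{f}{\varphi_{k,n}}}\lesssim \norm{f}_{\We_2^1(I_k)}$ (one derivative in $\Ell{2}(I_k)$), and then prove the amalgam-type bound $\sum_k\norm{f}_{\We_2^1(I_k)}\lesssim\norm{f}_{\We_1^2(\R)}$ via local sup-norm estimates --- this is where $f''$ enters, and that step is correct. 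The paper's route buys freedom from any orthogonal-polynomial input and from boundary effects; yours buys a clean separation of the local role (one derivative, $\Ell{2}$) and the global role of the second derivative (summability over $k$ of the local norms), and in effect proves a slightly more flexible sufficient condition.

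One caveat: of your two proposed fixes for the boundary terms, only the Legendre one is sound. With the periodic trigonometric basis of $I_k$, subtracting the affine interpolant does not help, because the frame coefficients of $f$ still contain those of the affine piece, and the trigonometric coefficients of $t\mapsto t$ on an interval decay only like $1/n$; so $\sum_n\abs{\dprod{f}{\varphi_{k,n}}}=\infty$ whenever $f(k)\neq f(k+1)$, no matter how smooth $f$ is. For the shifted, normalised Legendre system the claimed local estimate does hold: using $\bigl((1-x^2)P_n'\bigr)'=-n(n+1)P_n$ the boundary terms vanish automatically, and the identity $(1-x^2)P_n'=\tfrac{n(n+1)}{2n+1}(P_{n-1}-P_{n+1})$ shows that the $n$-th Legendre coefficient of $f$ is, up to a factor of order $n^{-1}$, a difference of two Legendre coefficients of $f'$; Cauchy--Schwarz and Parseval for $f'$ then give $\sum_n\abs{\dprod{f}{\varphi_{k,n}}}\lesssim \norm{f}_{\Ell{2}(I_k)}+\norm{f'}_{\Ell{2}(I_k)}$ with a constant independent of $k$. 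So either commit to the Legendre choice (and include this short computation, since the $\We_2^1$-level summability is not the most commonly quoted decay estimate), or use a smooth window as in the paper's own proof, which sidesteps boundary effects entirely.
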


\begin{proof}
  Fix a function $0 \leq \eta \in \Ce^\infty(\R)$ with $\supp(\eta)
  \subseteq (\pi, \pi)$ and in such a way that with $\eta_k(t) :=
  \eta(t - k)$ for $k \in \Z$ one has
\[  
   1 = {\sum}_{k\in \Z} \eta_k.
\]
The double sequence $(f_{n,k})_{(n,k) \in \Z^2}$ given by $f_{n,k } :=
\eta_k \ue^{\ui n \cdot}$, forms a Gabor frame on $\Ell{2}(\R)$.  Let
$g\in \We^{2}_1(\R)$.  For $n=0$,
\[ 
 {\sum}_{k \in \Z} \abs{\int_{k-\pi}^{k + \pi} \eta_k(s) g(s) \,  \ud{s}} \le \norm{g}_{\Ell{1}}. 
\]
For $n \not= 0$, a twofold integration by parts (with vanishing
boundary terms) yields
\[ 
   \int_{k-\pi}^{k + \pi} \eta_k(s) g(s) \ue^{-\ui n s}\, \ud{s}
= - \tfrac{1}{n^2} \int_{k-\pi}^{k + \pi}[\eta_k(s) g(s)]'' \ue^{-\ui ns}\, \ud{s}.
\] 
Since $[\eta_k(s) g(s)]'' = \eta_k(s) g''(s) + 2 \eta_k'(s)g'(s) +
\eta_k'' g(s)$ and since the $\eta_k$'s are all translates of the same
function,
\[ 
   \bigl| [\eta_k(s) g(s)]'' \bigr| 
\lesssim \car_{(k - \pi, k + \pi)}(s)  \big( \abs{g(s)} +  \abs{g'(s)} + \abs{g''(s)} \big).
\]
Hence 
\[
   \sum_{n\in\Z^*, k\in\Z} \abs{\dprod{g}{f_{n,k}}} \lesssim \norm{g}_{{\mathrm W}_1^2}. \qedhere
\]
\end{proof}

Using interpolation techniques one can see that $\We^{\alpha}_1(\R)
\subseteq \Ell{2}(\R)$ is \ELLONE-frame-bounded for each $\alpha > 1$.

\def\cprime{$'$}

\end{document}